\documentclass[aps, pra, amsmath, amssymb, reperint, superscriptaddress, showkeys, tightenlines, twocolumn]{revtex4-2}

\usepackage{amssymb}
\usepackage{amsmath}
\usepackage[T1]{fontenc}
\usepackage{hyperref}
\usepackage{graphicx} 
\usepackage{quantikz} 
\usepackage{tikz} 
\usetikzlibrary {arrows.meta} 
\usetikzlibrary{external} 
\usepackage{xcolor}
\usepackage{caption}
\usepackage{subcaption}
\usepackage{tabularray}
\usepackage[export]{adjustbox} 
\usepackage[ruled,linesnumbered,noend]{algorithm2e} 
\SetKwComment{Comment}{/* }{ */} 
\usepackage{pict2e} 
\usepackage{relsize} 
\usepackage{comment} 
\usepackage{lipsum} 
\usepackage[normalem]{ulem} 
\usepackage{comment} 

\usepackage{amsthm} 
\theoremstyle{definition}
\newtheorem{theorem}{Theorem}[section] 
\newtheorem{lemma}[theorem]{Lemma}
\newtheorem{corollary}[theorem]{Corollary}

\makeatletter
\newcommand{\bigcomp}{%
  \DOTSB
  \mathop{\vphantom{\sum}\mathpalette\bigcomp@\relax}%
  \slimits@
}
\newcommand{\bigcomp@}[2]{%
  \begingroup\m@th
  \sbox\z@{$#1\sum$}%
  \setlength{\unitlength}{0.9\dimexpr\ht\z@+\dp\z@}%
  \vcenter{\hbox{%
    \begin{picture}(1,1)
    \bigcomp@linethickness{#1}
    \put(0.5,0.5){\circle{1}}
    \end{picture}%
  }}%
  \endgroup
}
\newcommand{\bigcomp@linethickness}[1]{%
  \linethickness{%
      \ifx#1\displaystyle 2\fontdimen8\textfont\else
      \ifx#1\textstyle 1.65\fontdimen8\textfont\else
      \ifx#1\scriptstyle 1.65\fontdimen8\scriptfont\else
      1.65\fontdimen8\scriptscriptfont\fi\fi\fi 3
  }%
}

\def\ps@pprintTitle{%
  \let\@oddhead\@empty
  \let\@evenhead\@empty
  \let\@oddfoot\@empty
  \let\@evenfoot\@empty
}

\makeatother

\begin{document}

\title{Local Equivalence Classes of Distance-Hereditary Graphs using Split Decompositions}

\author{Nicholas Connolly}
 \email{nicholas.connolly@oist.jp}
\affiliation{
Okinawa Institute of Science and Technology, 1919-1 Tancha, Onna-son, Kunigami-gun, 904-0495, Okinawa, Japan
}

\author{Shin Nishio}
 \email{shin.nishio@keio.jp}
\affiliation{
Graduate School of Science and Technology, Keio University, Yokohama, 223-8522, Kanagawa, Japan}
\affiliation{
Department of Physics \& Astronomy, University College London, London, WC1E 6BT, United Kingdom
}
\author{Kae Nemoto}
 \email{kae.nemoto@oist.jp}
\affiliation{
Okinawa Institute of Science and Technology, 1919-1 Tancha, Onna-son, Kunigami-gun, 904-0495, Okinawa, Japan
}

\begin{abstract}
Local complement is a graph operation formalized by Bouchet which replaces the neighborhood of a chosen vertex with its edge-complement.
This operation induces an equivalence relation on graphs; determining the size of the resulting equivalence classes is a challenging problem in general.
Bouchet obtained formulas only for paths and cycles, and brute-force methods are limited to very small graphs.
In this work, we extend these results by deriving explicit formulas for several broad families of distance-hereditary graphs, including complete multipartite graphs, clique-stars, and repeater graphs.
Our approach uses a technique known as split decomposition to establish upper bounds on equivalence class sizes, and we prove these bounds are tight through a combinatorial enumeration of the graphs' decomposed structure up to symmetry.

\begin{description}
\item[Keywords]
local complement, graph local equivalence, distance-hereditary graphs,\\split decomposition, complete multipartite, repeater graphs, clique-stars, graph orbit 
\end{description}
\end{abstract}

\maketitle

\section{Introduction}
\label{sect:introduction}

\subsection{Motivation}

Local complement is an operation on graphs, first introduced by Bouchet~\cite{bouchet1993recognizing}, which replaces the neighborhood of a chosen vertex with its edge-complement.
This operation defines an equivalence relation on graphs, and two graphs related in this way are said to be locally equivalent.
In this sense, graphs can be partitioned into disjoint equivalence classes, often referred to as local complement (LC) orbits~\cite{cabello2011optimal, adcock2020mapping}.
Determining the number of distinct graphs that appear in the LC orbit of a given graph is an interesting mathematical problem, and in recent years it has also received attention from the physics community because of its links to quantum information theory~\cite{hein2004multiparty,van2004graphical,dahlberg2018transforming,adcock2020mapping}.

The size of the LC orbit grows exponentially with respect to the number of vertices, which makes the task of enumeration a challenging one in general; Dahlberg et al. proved that counting the size of the orbit is \#P-complete~\cite{dahlberg2020counting}.
Bouchet obtained explicit formulas in special cases, such as path graphs and cycle graphs~\cite{bouchet1993recognizing}, but no such general formula is known for arbitrary graphs.
Other efforts at enumeration have relied on brute-force numerical searches~\cite{adcock2020mapping}, though these have been restricted to small graphs with up to nine vertices because of the rapid growth of the orbit size.
Nevertheless, when one considers graph families with a large amount of symmetry, the problem of counting locally equivalent graphs becomes more manageable.

In this work, we introduce a general approach for describing LC orbits based on the \textit{split decomposition}~\cite{cunningham1982decomposition,cunningham1980combinatorial}, a technique developed by Cunningham to break a graph into simpler irreducible components known as \textit{quotient graphs}.
Bouchet previously established that locally equivalent graphs share the same splits~\cite{bouchet1987reducing,bouchet1989connectivity}.
Building on this result, we show that there is a natural relationship between the LC orbit of a graph and the LC orbits of its quotient graphs, and that this relationship can be exploited to perform explicit orbit counts.
This method is especially effective for \textit{distance-hereditary} graphs~\cite{howorka1977characterization}, which decompose into trivial quotient graphs and therefore lend themselves to simple enumeration.

Several well-known graph families with highly symmetric structures are distance-hereditary, including complete graphs, complete multipartite graphs, and so-called repeater graphs.
By applying our method, we derive explicit formulas for the LC orbit size in a number of these families using exhaustive enumeration of the possible combinations of quotient graphs up to symmetry.
We also provide explicit sequences of local complements which transform one representative of an orbit into another.
In addition, we analyze quantities such as the edge count and vertex degrees across each equivalence class, features which are of particular interest for optimization questions in physics.

\subsection{Connection with Quantum Information Theory}

Beyond its mathematical appeal, this problem is motivated by the role graphs play in quantum information theory.
Certain quantum states, known as graph states~\cite{hein2004multiparty}, can be represented directly by simple graphs, and these states form a central resource for measurement-based quantum computation~\cite{raussendorf2001one,raussendorf2003measurement} and fusion-based quantum computation~\cite{bartolucci2023fusion}. Importantly, within the class of graph states, single-qubit Clifford operations admit a purely graph-theoretic description in terms of local complements (also called local complementations)~\cite{van2004graphical}.

Another important aspect is that graph states are a subclass of stabilizer states~\cite{gottesman1997stabilizer}, which have been extensively studied from the viewpoints of quantum error-correcting codes~\cite{cross2008codeword, schlingemann2001stabilizer} and computational complexity~\cite{bravyi2016improved}.  Moreover, every stabilizer state is local-Clifford equivalent to some graph state, i.~e., it can be transformed into a graph state by a sequence of single-qubit Clifford operations~\cite{schlingemann2001stabilizer}.

In this way, many physical questions can be rephrased as questions about locally equivalent graphs~\cite{van2004efficient, van2004graphical}. For a detailed overview of graph states and their applications, see~\cite{hein2006entanglement}.

The study of graph states has grown rapidly in recent years, with a wide range of results reported in the literature~\cite{cabello2011optimal,dahlberg2018transforming,dahlberg2022complexity,dahlberg2021entanglement,dahlberg2020transforming,adcock2020mapping,kumabe2024complexity,sharma2025minimizing,li2025generalized}. 
Much of this work is motivated by practical concerns, such as minimizing the experimental resources required to prepare an equivalent graph state.
This often reduces to the problem of identifying a graph in the LC orbit that is optimal with respect to some parameter, such as the number of edges or the maximum degree.
While heuristic algorithms can be used to explore the LC orbit, it is difficult to be certain that the solutions obtained are indeed optimal without a complete characterization of the equivalence class~\cite{sharma2025minimizing}.
Furthermore, even when local equivalence is known in principle, it may not be straightforward to identify the specific sequence of local complements that converts one graph into another~\cite{adcock2020mapping, concha2025mathsf}.

The results we present provide a means of addressing these questions.
Although we restrict our attention to distance-hereditary graphs, such graphs are already of considerable importance in communication theory, where the distance-hereditary property is closely related to the robustness of networks against loss of vertices~\cite{di1997routing, cicerone2003k}.
Photon loss error is a common phenomenon in optical systems, and hence using distance-hereditary (especially tree graph) networks can guard against this~\cite{varnava2006loss, zhan2020deterministic}.
Indeed, one of the families we analyze in detail—repeater graphs—was first introduced in the context of photonic quantum systems~\cite{azuma2015all}.
For these reasons, we hope that our characterizations of locally equivalent graphs will be of interest not only in mathematics but also in quantum information.

\subsection{Structure of this Paper}

Section~\ref{sect:background} begins with an introduction to the special graph families that will be studied here.
In Section~\ref{sect:local_complements} we define the local complement operation and describe the resulting equivalence classes, fixing our notation and outlining several equivalent ways to view these concepts.
Section~\ref{sect:split_decompositions} introduces the \textit{split decomposition} and presents the quotient-augmented strong split tree (QASST), which will be our main tool of analysis.
Section~\ref{sect:distance_hereditary_QASST} describes the properties of \textit{distance-hereditary} graphs in terms of the QASST.

In Sections~\ref{sect:LC_equivalence_classes_with_the_QASST} and ~\ref{sect:explicit_formulas_for_local_equivalence_classes}, we show how to establish upper and lower bounds on the size of LC orbits for distance-hereditary graphs, and we prove that these bounds coincide in certain important families.
We work out the case of complete bipartite graphs in detail, both because they are mathematically interesting and because they illustrate our enumeration method clearly. 
We also present general formulas for the orbit sizes of more complicated families, including complete $k$-partite graphs, clique-stars, and multi-leaf repeater graphs.
The enumeration details in these cases involve many combinatorial arguments, so we provide a summary in the main text and leave the full details to the Appendix.

These constitute the principal contributions of the paper.
Section~\ref{sect:conclusion} concludes with a brief summary of our results.

\section{Summary of Special Graph Classes}
\label{sect:background}

A basic review of some of the core concepts and notations in graph theory is included in \ref{app:graph_theory_fundamentals}.
We will be primarily interested in the study of several special families of distance-hereditary graphs.
A graph is called \textit{distance-hereditary}~\cite{howorka1977characterization} provided the connected induced subgraphs preserve distance between vertices.

In this section, we define these special graphs of interest and introduce a choice of notation to represent each.
An example illustrating each these types of graphs is shown in Figure~\ref{fig:Graph_Classes}.

\begin{figure*}[t]
\centering
{\small
\begin{tabular}{ccccccccc}
\includegraphics[page=1,scale=0.42]{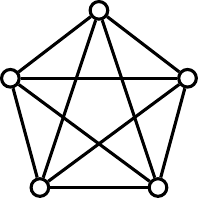}&\includegraphics[page=2,scale=0.42]{Figures/Special_Graph_Classes.pdf}&\includegraphics[page=3,scale=0.42]{Figures/Special_Graph_Classes.pdf}&\includegraphics[page=4,scale=0.42]{Figures/Special_Graph_Classes.pdf}&\includegraphics[page=5,scale=0.42]{Figures/Special_Graph_Classes.pdf}&\includegraphics[page=6,scale=0.42]{Figures/Special_Graph_Classes.pdf}&\includegraphics[page=7,scale=0.42]{Figures/Special_Graph_Classes.pdf}&\includegraphics[page=8,scale=0.42]{Figures/Special_Graph_Classes.pdf}&\includegraphics[page=9,scale=0.42]{Figures/Special_Graph_Classes.pdf}\\
$K_5$&$S_5$&$P_6$&$C_5$&$K_{3,2}$&$K_{2,2,2}$&$CS^1_{2,2,2}$&$R_3$&$MR_{4,3,2}$
\end{tabular}
}
\caption{Examples of special classes of graphs, all distance-hereditary except for the cycle.}
\label{fig:Graph_Classes}
\end{figure*}

\begin{itemize}
\item \textbf{Complete graph} $K_n$: the graph consisting of $n$ vertices and all possible edges between every pair of nodes. Each vertex has degree $n-1$. The total number of edges is $\frac{n(n-1)}{2}$.
\item \textbf{Star graph} $S_{n}$: the acyclic graph with $n+1$ vertices consisting of a single central vertex (the \textit{center} of $S_{n}$) and $n$ edges connecting to the remaining $n$ vertices to the center (the \textit{spokes} of the star). The center has degree $n$ and each spoke has degree 1.
\item \textbf{Path graph} $P_n$: the acyclic graph with $n$ vertices $V(P_n)=\{v_1,\cdots,v_n\}$ and $n-1$ edges $E(P_n)=\{(v_1,v_2),\cdots,(v_{n-1},v_n)\}$ corresponding to a path of length $n-1$ from $v_1$ to $v_n$. All vertices have degree 2 except for the degree 1 endpoints.
\item \textbf{Cycle graph} $C_n$: the graph with $n$ vertices $V(C_n)=\{v_1,\cdots,v_n\}$ and $n$ edges $E(C_n)=\{(v_1,v_2),\cdots,(v_{n-1},v_n),(v_n,v_1)\}$ corresponding to a cycle beginning and ending at $v_1$. All vertices have degree 2. 
\item \textbf{Complete bipartite graph} $K_{n,m}$: the bipartite graph $G$ containing $n+m$ vertices, where $V(K_{n,m})=U\sqcup V$ with $|U|=n$ and $|V|=m$, and there exist all possible edges between the vertices in $U$ and the vertices in $V$: $E(K_{n,m})=\{(v_i,v_j):v_i\in U,v_j\in V\}$.
Note that there are exactly $|U||V|=nm$ edges in $G$. The vertices in $U$ each have degree $m$ and the vertices in $V$ each have degree $n$.
\item \textbf{Complete $k$-partite graph} $K_{n_1,\cdots,n_k}$: the $k$-partite graph $K_{n_1,\cdots,n_k}$ containing $n_1+\cdots+n_k$ vertices, where $V(K_{n_1,\cdots,n_k})=U_1\sqcup\cdots\sqcup U_k$ with $|U_i|=n_i$, and there exist all possible edges between the vertices in $U_i$ and the vertices in $U_{j\neq i}$: $E(K_{n_1,\cdots,n_k})=\{(v,v'):v\in U_i, v'\in U_{j\neq i}\}$.
Note that there are exactly $\sum_{i=1}^{k-1}n_i\left(\sum_{j=i+1}^k n_j\right)$ edges in $K_{n_1,\cdots,n_k}$.
Each vertex $v\in U_i$ has degree $\text{deg}(v)=-n_i+\sum_{j=1}^kn_j$.
\item \textbf{Clique-Star} $CS_{n_1,\cdots,n_k}^r$: the graph consisting of $n_1+\cdots+n_k$ vertices total that can be partitioned into subsets $V(CS^r_{n_1,\cdots,n_k})=U_1\sqcup\cdots\sqcup U_k$ where $|U_i|=n_i$, similar to a $k$-partite graph. Each $U_i$ defines a clique in $CS_{n_1,\cdots,n_k}^r$. For some index $1\leq r\leq k$, the clique $U_r$ is the center of the clique-star, meaning that all vertices in $U_{i\neq r}$ are adjacent to all vertices in $U_r$. There exist no edges between vertices from different cliques $U_{i\neq r}$ and $U_{j\neq r}$.
There are $\sum_{i=1}^k\frac{n_i(n_i-1)}{2}+\sum_{i=1\neq r}^kn_in_r$ edges in total.
Each vertex $v\in U_{i\neq r}$ has degree $\text{deg}(v)=n_r+n_i-1$. Each vertex $u\in U_r$ has degree $\text{deg}(u)=-1+\sum_{i=1}^{k}n_i$.
\item \textbf{Repeater graph~\cite{azuma2015all}} $R_n$: the graph with $2n$ vertices consisting of the complete graph $K_n$ as a subgraph with the addition of a single additional vertex connected to each of the $n$ vertices in $K_n$. These additional vertices, each of degree 1, are referred to as the leaves. The non-leaf vertices each have degree $n$. The total number of edges is $\frac{n(n-1)}{2}+n$.
\item \textbf{Multi-leaf repeater graph} $MR_{n_1,\cdots,n_k}$: the graph with $n_1+\cdots+n_k$ vertices consisting of the complete graph $K_k$ as a subgraph with the addition of $n_i-1$ additional degree 1 vertices (leaves) attached to the $i^\text{th}$ vertex in the subgraph $K_k$. We require that $k\geq3$ and $n_i\geq2$. The $i^{th}$ non-leaf vertex in $K_k$ has degree $n_i+k-1$. The total number of edges is $\frac{k(k-1)}{2}+n_1+\cdots+n_k-k$.
\end{itemize}

Excluding cycle graphs with 5 or more vertices, each of the special graph classes enumerated above is an example of a distance-hereditary graph.
Cycles are included to provide a simple example of graphs which do not have this property.
We will reexamine graphs with the distance-hereditary property using split decompositions in Section~\ref{sect:split_decompositions}.

\section{Local Complements}
\label{sect:local_complements}

Local complement is an operation on graphs originally introduced by Bouchet ~\cite{bouchet1993recognizing,bouchet1991efficient,bouchet1988graphic}.
Among his contributions, Bouchet developed an algorithm to recognize when two graphs are considered equivalent with respect to this operation. He also used local complements to provide a characterization of those graphs equivalent to trees~\cite{bouchet1988transforming} as well as the class of \textit{circle graphs} (graphs defined via the intersection of chords in a circle, also known as \textit{alternance graphs})~\cite{bouchet1994circle,bouchet1987reducing}.
Following Bouchet's work, we will define the operation of local complement and introduce our basic notation that will be used throughout this paper.

\subsection{Local Complement as a Graph Operation}
\label{sect:graph_operation}

\textbf{Local complement} (LC) is an operation which transforms a graph in the following way. For a graph $G$ and any vertex $v\in V(G)$, the \textbf{local complement of $G$ with respect to $v$} is defined to be the graph $G'$ obtained from $G$ by replacing the subgraph $N_G(v)$ (the neighborhood of $v$ in $G$) with its \textbf{complementary} neighborhood $N_G^C(v)$ (Figure~\ref{fig:LC_example}).
Here, we define the complementary neighborhood of $v$ to have the same vertex set as $N_G(v)$, but complement the set of edges: for all $v_i,v_j\in V(N_G^C(v))=V(N_G(v))$, $(v_i,v_j)\in E(N_G^C(v))$ if and only if $(v_i,v_j)\notin E(N_G(v))$.

\begin{figure}
\centering
\includegraphics[page=1,width=0.7\linewidth]{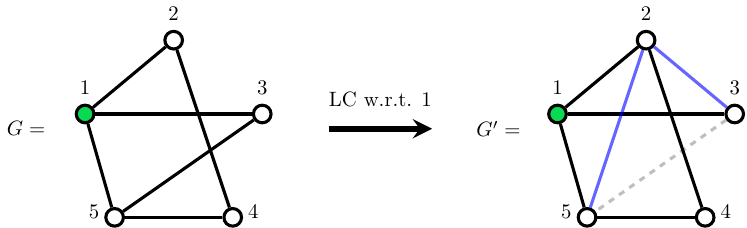}
\caption{Graphs related via local complement on vertex $1$. The edges between the vertices in $V(N_G(1))=\{2,3,5\}=V(N_{G'}(1))$ are deleted, while missing edges are added.}
\label{fig:LC_example}
\end{figure}

If $G'$ is the graph obtained from $G$ via local complement with respect to a vertex $v\in V(G)$, then $G$ and $G'$ have the same vertex set. 
Note that $v$ has the same set of neighboring vertices in both $G$ and $G'$.
Furthermore, $G'$ preserves all edges in $G$ which do not connect two neighbors of $v$.
This operation is self-inverse since $N_G^{CC}(v)=N_G(v)$: the local complement of $G'$ with respect to $v\in V(G')=V(G)$ is again the original graph $G$. The graphs $G$ and $G'$ are said to be \textbf{locally equivalent}, usually shortened to \textbf{LC equivalent}.

\subsection{Local Complement as a Function}
\label{sect:LC_function}

Local complement can be formalized as a function on graphs. Let ${\mathcal G}_n$ denote the set of simple graphs on $n$ vertices (not necessarily connected). For any graph $G\in{\mathcal G}_n$, we will denote the vertices using positive integers $V(G)=\{1,2,\cdots, n\}$, and all graphs in ${\mathcal G}_n$ will be said to have the same vertex set. For any vertex $i\in\{1,2,\cdots,n\}$, define the map $c_i:{\mathcal G}_n\rightarrow{\mathcal G}_n$ such that $c_i$ maps a graph $G\in{\mathcal G}_n$ to its local complement with respect to vertex $i$. Hence, the observation that local complement is self-inverse can be restated as $c_i^{-1}=c_i$.

With this description, it can be seen that the set of sequences of zero or more local complements on graphs in ${\mathcal G}_n$ forms a group under function composition. Let ${\mathcal L}_n$ denote this group, observing that it is generated by the \textbf{primitive local complements} ${\mathcal L}_n\coloneq\langle c_1,c_2,\cdots,c_n\rangle$. The identity map $\text{id}\in{\mathcal L}_n$ is regarded as an empty sequence. Given any sequence of local complements $f=c_{i_k}\circ c_{i_{k-1}}\circ\cdots\circ c_{i_2}\circ c_{i_1}\in{\mathcal L}_n$, where the indices $i_1,\cdots i_k\in\{1,\cdots,n\}$ need not be distinct, the inverse function is obtained by reversing the order of the sequence: $f^{-1}=c_{i_1}\circ c_{i_2}\circ\cdots\circ c_{i_{k-1}}\circ c_{i_k}\in{\mathcal L}_n$.

With this characterization in mind, we extend the definition of local equivalence to say that any two graphs $G,G'\in{\mathcal G}_n$ which can be transformed into each other via some sequence of local complements $f\in{\mathcal L}_n$ are \textbf{locally equivalent}. This is an \textit{equivalence relation} (i.e.~it is reflexive, symmetric, and transitive), and we will denote it by $G\cong_{\text{LC}}G'$.

Although graphs in ${\mathcal G}_n$ need not be connected in general, we will primarily be interested in the study of connected graphs. In particular, local complements preserve connectivity. This can be verified by comparing the complementary neighborhoods of any vertex $i\in V(G)$ for a graph $G\in{\mathcal G}_n$.
Every neighbor of $i$ in $G$ remains adjacent to $i$ in $c_i(G)$, and so all vertices in the same connected component as $i$ remain connected after taking the local complement. In other words, $c_i(G)$ is connected if and only if $G$ is.

\subsection{Local Complement as a Group Action}
\label{sect:group_action}

The relationship between the group ${\mathcal L}_n$ and the set ${\mathcal G}_n$ is that of a \textit{group action}, wherein ${\mathcal L}_n$ is said to \textit{act} on ${\mathcal G}_n$.
More generally, a \textbf{group action} $G\times S\rightarrow S$ consists of a group $G$ and a set $S$. Each group element $g\in G$, thought of as a map $g:S\rightarrow S$, is said to \textit{act on} each set element $s\in S$, with the image denoted $g(s)\in S$. For all $g_1,g_2\in G$ and $s\in S$, we have that $g_1g_2(s)=g_1(g_2(s))$. In other words, the binary operation in $G$ can be regarded as function composition, and the elements of $G$ thought of as functions from $S$ to $S$.

With these basic definitions, the group ${\mathcal L}_n$ (sequences of compositions of zero or more primitive local complements) and the set ${\mathcal G}_n$ (simple graphs on $n$ vertices) satisfy the definition of a group action. This also implies the existence of additional group theoretic structure, such as that of \textit{orbits} and \textit{stabilizers} (not to be confused with stabilizers in the context of quantum information theory).
For a group action $G\times S\rightarrow S$ and any set element $s\in S$, the \textbf{orbit} of $s$ is the set of images of $s$ under all group elements: $G(s)=\{g(s):g\in G\}\subseteq S$.
Contrastingly, the \textbf{stabilizer} of $s$ is the subset of group elements which leave $s$ fixed: $\text{Stab}_G(s)=\{g\in G:g(s)=s\}\leq G$.
Although we will not explore stabilizers in this work, the orbit of a graph under LC operations is our central topic of research.

\subsection{LC Equivalence Classes and LC Orbits}
\label{sect:LC_equivalence_classes_and_orbits}

We commented in Subsection~\ref{sect:LC_function} that \textit{local equivalence} is an \textit{equivalence relation}.
This is because the relation defined by $\cong_{\text{LC}}$ is \textit{reflexive} (since $\text{id}\in{\mathcal L}_n$, any $G\in{\mathcal G}_n$ is locally equivalent to itself), \textit{symmetric} ($G'=f(G)$ if and only if $G=f^{-1}(G')$ for any $f\in{\mathcal L}_n$), and \textit{transitive} ($G'=f(G)$ and $G''=g(G')$ imply that $g\circ f\in{\mathcal L}_n$ with $G''=g\circ f(G)$).
In particular, this means that we may study the \textit{equivalence classes} of locally equivalent graphs.
For a fixed number of vertices $n$, the set of simple graphs ${\mathcal G}_n$ can be partitioned into disjoint \textbf{LC equivalence classes}, wherein all graphs in the same equivalence class are locally equivalent.

In the context of group actions, these equivalence classes correspond exactly to the \textit{orbits} of the group action.
Given $G\in{\mathcal G}_n$, the \textbf{LC orbit} of $G$ under ${\mathcal L}_n$ is defined to be the set of images of $G$ under all possible functions in ${\mathcal L}_n$. We will usually denote this ${\mathcal O}(G)$, or in some cases ${\mathcal O}_{{\mathcal L}_n}^{{\mathcal G}_n}(G)$ when we wish to make the group ${\mathcal L}_n$ and the set ${\mathcal G}_n$ containing $G$ explicit:
$${\mathcal O}_{{\mathcal L}_n}^{{\mathcal G}_n}(G)=\{f(G):f\in{\mathcal L}_n\}\subseteq{\mathcal G}_n.$$
In other words, given any $G,G'\in{\mathcal O}(G)$, there exists a sequence of local complements $f\in{\mathcal L}_n$ such that $f(G)=G'$. By symmetry, we know that $G\in{\mathcal O}(G')$ as well.
More generally, we have that ${\mathcal O}(G')={\mathcal O}(G)$ for any $G'\in{\mathcal O}(G)$.

The LC orbit of any graph $G\in{\mathcal G}_n$ is precisely its LC equivalence class, and the collection of disjoint LC orbits under the action of ${\mathcal L}_n$ forms a partition of ${\mathcal G}_n$. 
In general, we write ${\mathcal O}(G)$ when we wish to refer to either.
As a simple example, Figure~\ref{fig:example_K4_orbit} shows the LC orbit for the complete graph on four (labeled) vertices.

\begin{figure}[t]
\centering
\includegraphics[width=1\linewidth,page=3]{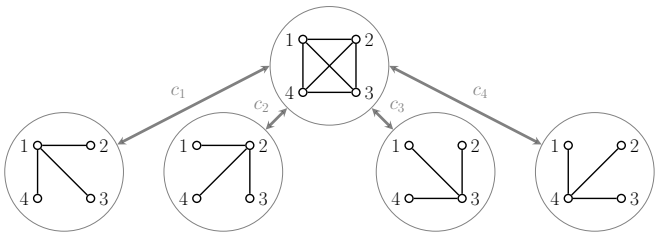}
\caption{The LC equivalence class of the complete graph on four vertices, the LC orbit ${\mathcal O}(K_4)$.}
\label{fig:example_K4_orbit}
\end{figure}

\subsection{LC Invariants}
\label{sect:LC_invariants}

Graph properties which are preserved under the operation of local complement are called \textbf{LC invariants}. 
The \textit{order} of a graph (i.e.~the number of vertices) is a simple example of an LC invariant, whereas the number of edges is not.
For a more interesting example, the property of being distance-hereditary is another feature preserved by LC operations.
In other words, if one graph in the LC orbit is determined to be distance-hereditary, then all graphs in this equivalence class are distance-hereditary.
In this paper, we will be principally interested in working with distance-heredity along with another LC invariant known as the \textit{split decomposition} of a graph (detailed in Section~\ref{sect:split_decompositions}).

Invariants are useful tools for distinguishing between LC orbits of non-equivalent graphs.
In his work with locally equivalent graphs, Bouchet proved a number of interesting LC invariants, including distance-heredity (equivalently known as total decomposability)~\cite{bouchet1988transforming} and the split decomposition~\cite{bouchet1987reducing,bouchet1987digraph,bouchet1989connectivity}.
One of Bouchet's key results was the recognition that the relationship between locally equivalent graphs could be restated using another mathematical structure known as \textit{isotropic systems}~\cite{bouchet1988graphic}. Based on this idea, Bouchet showed how to construct a set of equations given any two graphs defined on the same vertex set; the graphs are locally equivalent if and only if the equations have a solution.

In contrast to this, we approach the question of enumerating a few special families of locally equivalent graphs by first examining those graphs which have the same split decomposition. By exhaustively examining these families of graphs with the same splits, we show how these can be partitioned into disjoint LC orbits, and thus obtain explicit formulas for the size of the local equivalence class in these cases.

\section{Split Decompositions}
\label{sect:split_decompositions}

The \textbf{split decomposition} (also referred to as the \textbf{join decomposition}) of a graph $G$ is a recursive method to examine the structure of a graph originally introduced by Cunningham~\cite{cunningham1982decomposition,cunningham1980combinatorial}.
The technique is based on identifying complete bipartite subgraphs and then using these to separate the graph into components.
Although originally defined for directed graphs, this process generalizes naturally to undirected graphs as well~\cite{ma1990split,ma1994n2}.
We only consider the undirected case in this paper.

Among its applications, the split decomposition has been used to recognize certain graphs classes. These include, for example, circle graphs~\cite{bouchet1987reducing,gabor1989recognizing,spinrad1994recognition} and distance-hereditary graphs~\cite{gavoille2003distance,gioan2007dynamic,gioan2012split}.
Furthermore, there exist efficient algorithms to compute the split decomposition of any simple, undirected graph~\cite{ma1994n2,dahlhaus1994efficient,dahlhaus2000parallel,charbit2012linear}.
In Section~\ref{sect:distance_hereditary_QASST}, we will be particularly interested in examining the splits within certain subfamilies of distance-hereditary graphs.

In this section, we begin by reviewing the basic concepts of splits, building up to the formal definition of the split decomposition for a graph.
We then introduce a new tool based on this: the QASST of a graph, which will be our primary technique for studying graphs throughout this paper.

\subsection{Splits and Strong Splits}
\label{sect:splits_and_strong_splits}

Given a graph $G$, a {\bf split} in $G$ is defined to be a bipartition of the vertex set $V(G)=U_1\cup U_2$ into disjoint, nonempty subsets such that the subgraph induced by the edges crossing between $U_1$ and $U_2$ is a \textit{complete bipartite subgraph}. A split is called \textbf{trivial} if either $U_1$ or $U_2$ consists of a single vertex, and \textbf{nontrivial} otherwise. A graph is called \textbf{prime} (with respect to splits) if it contains no nontrivial splits. The 5-cycle $C_5$ is the smallest example of a prime graph.

Two splits $V(G)=U_1\cup U_2$ and $V(G)=U_1'\cup U_2'$ of the same graph $G$ are said to \textbf{cross} if each side of one split has nonempty intersection with each side of the other split: $U_1\cap U_1'\neq\emptyset$; $U_1\cap U_2'\neq\emptyset$; $U_2\cap U_1'\neq\emptyset$; and $U_2\cap U_2'\neq\emptyset$. A split is called \textbf{strong} if it is not crossed by any other split.
By definition, trivial splits are always strong.
Intuitively, the edge-induced subgraph corresponding to a nontrivial strong split is \textit{maximal} in the sense that it is not properly contained in a larger complete bipartite subgraph.
Figure~\ref{fig:Splits_Examples} shows an example of these basic concepts concerning splits.

\begin{figure*}[t]
\centering
\begin{subfigure}{0.2\textwidth}
\centering
\includegraphics[page=1,width=0.8\linewidth]{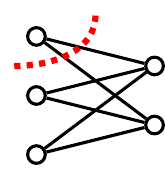}
\caption{Trivial split}
\label{fig:Splits_Examples_trivial_split}
\end{subfigure}\begin{subfigure}{0.4\textwidth}
\centering
\includegraphics[page=2,width=0.4\linewidth]{Figures/Section_Split_Decompositions_Figures.pdf}\includegraphics[page=3,width=0.4\linewidth]{Figures/Section_Split_Decompositions_Figures.pdf}
\caption{Two crossing nontrivial splits}
\label{fig:Splits_Examples_crossing_splits}
\end{subfigure}\begin{subfigure}{0.2\textwidth}
\centering
\includegraphics[page=4,width=0.8\linewidth]{Figures/Section_Split_Decompositions_Figures.pdf}
\caption{Strong split}
\label{fig:Splits_Examples_strong_split}
\end{subfigure}\begin{subfigure}{0.2\textwidth}
\centering
\includegraphics[page=5,width=0.8\linewidth]{Figures/Section_Split_Decompositions_Figures.pdf}
\caption{Prime graph}
\label{fig:Splits_Examples_prime_graph}
\end{subfigure}

\caption{Examples showing the basic concepts of splits in a graph, indicated by the red dashed line.}
\label{fig:Splits_Examples}
\end{figure*}

The split decomposition of a graph $G$, as introduced by Cunningham~\cite{cunningham1982decomposition,cunningham1980combinatorial}, refers to the recursive process by which a graph can be divided into components by finding splits.
This is not unique in general, but it is when restricting to strong splits.
From this perspective, the \textit{strong split decomposition} uses a \textit{minimal} number of \textit{maximal} splits.
In the special case of undirected graphs, Cunningham showed that any connected graph has a unique minimal decomposition whose components are \textit{prime}, \textit{complete}, or \textit{star} (Theorem 3 of~\cite{cunningham1982decomposition}). 

The decomposition process by which graphs are split recursively into components is naturally described by a tree, an idea which Gioan and Paul formalized using \textit{graph-labeled trees} in the particular case of distance-hereditary graphs~\cite{gioan2007dynamic,gioan2012split}.
A graph-labeled tree refers to an acyclic graph whose nodes are themselves labeled by a set of graphs, in this case the components of the split decomposition.
With some additional modifications, we follow the same basic approach as Gioan and Paul to build up a tree-like representation of the structural information encoded in the split decomposition.

\subsection{Strong Split Trees and Split Components}
\label{sect:split_trees_and_components}

In this paper, the \textbf{split decomposition} of a graph $G$ refers to a representation of the strong splits in $G$ with a tree $SST(G)$, which we will refer to as the \textbf{strong split tree} of $G$. The strong splits in $G$ are in bijection with the edges in $SST(G)$. The vertices in $G$ are in bijection with the leaves (degree 1 nodes) in $SST(G)$; an edge in $SST(G)$ incident with a leaf denotes the trivial split defined by the corresponding vertex in $G$. The strong split tree is most easily understood via an example, as shown in Figure~\ref{fig:Strong_Split_Tree_example}.

\begin{figure*}[t!]
\centering
\begin{subfigure}{0.65\textwidth}
\centering
\includegraphics[page=6,width=0.7\linewidth]{Figures/Section_Split_Decompositions_Figures.pdf}
\caption{A graph $G$, its strong splits, and its strong split tree $SST(G)$.}
\label{fig:Strong_Split_Tree_example_G_and_SST}
\end{subfigure}\begin{subfigure}{0.35\textwidth}
\centering
\includegraphics[page=7,width=0.7\linewidth]{Figures/Section_Split_Decompositions_Figures.pdf}
\caption{Quotient graphs $Q_1$, $Q_2$, and $Q_3$.}
\label{fig:Strong_Split_Tree_example_Quotient_Graphs}
\end{subfigure}

\bigskip

\begin{subfigure}{1\textwidth}
\centering
\includegraphics[width=0.85\linewidth,page=8]{Figures/Section_Split_Decompositions_Figures.pdf}
\caption{The two induced strong splits $U_1\cup V_1$ and $U_2\cup V_2$ corresponding to the internal edges in the strong split tree.}
\label{fig:Strong_Split_Tree_example_induced_splits}
\end{subfigure}

\caption{Example of a graph with two strong splits\footnote{Graph based on the figure by David Eppstein for the Wikipedia article on split decompositions:
\url{https://en.wikipedia.org/wiki/Split_(graph_theory)\#/media/File:Split_decomposition.svg}.}.
\ref{fig:Strong_Split_Tree_example_G_and_SST} A graph $G$ with its strong splits marked by red dashed lines and the corresponding strong split tree $SST(G)$.
\ref{fig:Strong_Split_Tree_example_Quotient_Graphs} The quotient graphs corresponding to this split decomposition; squares denote split-nodes.
\ref{fig:Strong_Split_Tree_example_induced_splits} Visualization of the relationship between the two internal edges in $SST(G)$ and the the corresponding strong splits in $G$. Deleting an edge in the tree divides it into two parts; the corresponding bipartition of leaves in the tree defines the split in the graph. Notice that the subgraphs induced by the edges crossing between both halves of the splits are complete bipartite.
}
\label{fig:Strong_Split_Tree_example}
\end{figure*}

Deleting any edge in $SST(G)$ divides the tree into two connected components, each a subtree of $SST(G)$. The bipartition of the leaves in $SST(G)$ defined by these two subtrees induces a bipartition of the vertices in the original graph $G$. This bipartition is exactly the strong split in $G$ corresponding to the deleted edge in $SST(G)$.

In general, the strong split tree of a graph $G$ can be computed recursively by identifying the strong splits and building up a series of intermediate graphs by ``collapsing" the edges defining a given strong split into a pair of adjacent virtual nodes. 
To distinguish these new virtual nodes from the original vertices in $V(G)$, we will refer to these as \textit{split-nodes}, and they will always be represented in figures by squares.
Since strong splits do not cross other splits, deleting the edge between the split-nodes effectively divides the graph into smaller components while preserving the remaining strong splits, allowing the process to continue.
The procedure described herein is similar to the split decomposition construction used in~\cite{gavoille2003distance}.

When all nontrivial strong splits have been collapsed in this way, the resulting disconnected components represent exactly the internal nodes of $SST(G)$, with internal edges in the tree defined by the pairs of split-nodes. The trivial splits of $G$ are then accounted for by adding a leaf to $SST(G)$ for each vertex in these components matching a vertex from the original graph $G$. For this reason, we will refer to the non-split-nodes as \textit{leaf-nodes} in the context of the strong split tree, and these will always be represented in figures by circles. 

Although the process of collapsing strong splits as described above can be carried out in any order, Cunningham showed that the original graph $G$ uniquely determines the resulting tree, what we refer to here as $SST(G)$, and that this decomposition is minimal in the sense that it uses as few splits as possible~\cite{cunningham1982decomposition}.
Furthermore, the final intermediate graphs obtained after collapsing all strong splits are also unique and have only three possible types: \textit{complete}, \textit{star}, or \textit{prime} graphs.

We will refer to these as the \textbf{quotient graphs} (also called \textbf{split-components}) of $G$ and associate to each the corresponding internal node of $SST(G)$.
Figure~\ref{fig:Strong_Split_Tree_example_Quotient_Graphs} shows an example of quotient graphs, where the labels on the vertices are formally defined in the next subsection.
Note that this construction ensures that the quotient graphs of $SST(G)$ contain no nontrivial splits.
A quotient graph is not properly a subgraph of $G$ since it may contain vertices which are not in the original graph (specifically, the split-nodes).
However, each quotient graph is isomorphic to an induced subgraph of $G$.

\subsection{The Quotient-Augmented Strong Split Tree}
\label{sect:qasst}

For our purposes, the split decomposition of $G$ consists of the strong split tree $SST(G)$ together with the quotient graphs obtained from it.
It is convenient to combine this information into a single graph-labeled tree representing the entire split decomposition. 
We define the \textbf{quotient-augmented strong split tree} of a graph $G$, denoted $QASST(G)$\footnote{QASST is read like \textit{kwast}, the Dutch word for \textit{brush}, reminiscent of the branching structure of the tree.}, to be the graph-labeled tree obtained from $SST(G)$ by deleting the leaves and then labeling the remaining internal nodes with the corresponding quotient graphs.
The QASST is a slightly modified version of the tree introduced by Gioan and Paul~\cite{gioan2007dynamic,gioan2012split} which makes explicit the distinction between split-nodes and leaf-nodes and their connection to strong splits in the original graph.
Figure~\ref{fig:qasst_example} shows an example of the QASST obtained from the graph of Figure~\ref{fig:Strong_Split_Tree_example}.

\begin{figure*}[t]
\centering
\includegraphics[page=9,width=0.6\linewidth]{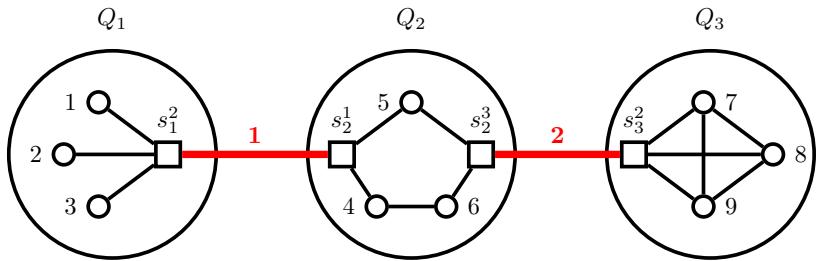}
\caption{The \textit{quotient-augmented strong split tree} (QASST) for the graph shown in Figure~\ref{fig:Strong_Split_Tree_example}. The QASST is obtained from the \textit{strong split tree} by deleting the leaves and labeling the remaining internal nodes by their corresponding \textit{quotient graphs}.
Edges between quotient graphs are in bijection with the strong splits in the original graph; these are marked in red to distinguish them from the edges within the quotient graphs.
Each of these red edges corresponds to a pair of \textit{split-nodes} (marked by squares) from two different quotient graphs, to which we connect them directly.
}
\label{fig:qasst_example}
\end{figure*}

We introduce the following notation to describe the components of the QASST.
The $n$ vertices of the graph $G\in{\mathcal G}_n$ are labeled by integers $\{1,\cdots,n\}$. The quotient graphs of $G$ are enumerated $Q_1,\cdots,Q_k$.
Recall that the leaves of $SST(G)$ are in bijection with the vertices of $G$, and the internal nodes are in bijection with the quotient graphs. Hence, matching this, the leaves of $SST(G)$ will be labeled $1,\cdots,n$ and the internal nodes will be labeled $Q_1,\cdots,Q_k$.
When convenient, we will also refer to the quotient graphs in $QASST(G)$ by $Q_1,\cdots,Q_k$.

Finally, consider the quotient graphs themselves, recalling that each $Q_i$ contains two types of nodes: leaf-nodes, corresponding to a subset of vertices from $G$, and split-nodes, corresponding to a nontrivial strong split of $G$ (and hence equivalently to an edge in $SST(G)$). Thus, we label the leaf-nodes matching the integer labels of the corresponding vertices of $G$, and we label the split-nodes based on the edges of $SST(G)$ according to the following rule.

Since split nodes come in pairs for adjacent quotient graphs in $SST(G)$, each edge $(Q_i,Q_j)\in E(SST(G))$ corresponds to two split-nodes (one in $Q_i$ and one in $Q_j$). As shown in Figure~\ref{fig:Strong_Split_Tree_example_Quotient_Graphs}, let $s_i^j$ denote the split-node in $Q_i$, and let $s_j^i$ denote the split-node in $Q_j$.
Hence, the subscript on the split-node matches the index of the quotient graph which contains it, and the superscript matches the index of its partner's quotient graph.
In this way, all vertices in all quotient graphs have distinct labels.
These are labeling conventions used for all of the split-nodes in Figures~\ref{fig:Strong_Split_Tree_example} and \ref{fig:qasst_example}.

\subsection{Computing the QASST from a Graph and Reconstructing a Graph from the QASST}
\label{sect:qasst_construction}

The recursive technique used to compute the strong split tree of a graph can be extended to compute the QASST. In the same way as described in Section~\ref{sect:split_trees_and_components}, after identifying the strong splits in a graph, the edges of the corresponding complete-bipartite subgraph can be collapsed one at a time into a pair of split-nodes, which we always indicate by squares in the figures.
These two split-nodes are joined by an edge, which we highlight in red to distinguish from edges incident to vertices from the original graph.
If a component obtained from collapsing in this way has no remaining strong splits, then this is exactly a quotient graph and we mark it as such.
This process is illustrated in Figure~\ref{fig:qasst_recursive_construction}.

\begin{figure*}[t]
\centering
\includegraphics[page=10,width=0.8\linewidth]{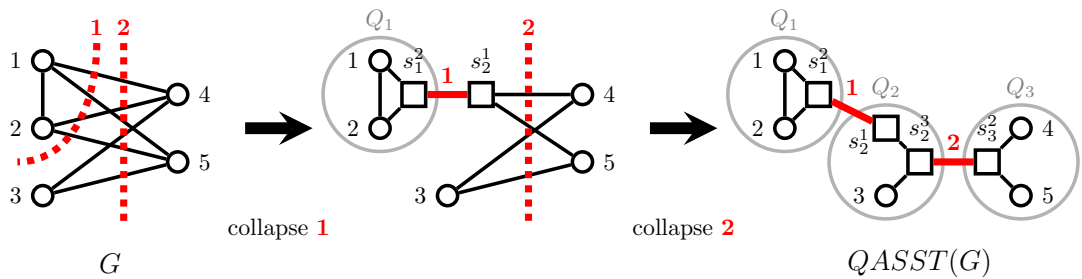}
\caption{Example of the recursive process to compute $QASST(G)$ by collapsing the edges corresponding to strong splits into pairs of split-nodes. Components with no remaining strong splits become quotient graphs.}
\label{fig:qasst_recursive_construction}
\end{figure*}

Given the quotient-augmented strong split tree of a graph, there is sufficient information to reconstruct the original graph. In other words, $G$ and $QASST(G)$ contain equivalent structural information. This is in contrast to $SST(G)$, which only contains information about the strong splits in $G$ when the quotient graphs are excluded.
We consider both the process of computing the QASST and the reconstruction of the original graph because, in later sections, we will use the QASST both to enumerate graphs and to find an optimal representive across the LC equivalence class.

The reconstruction of $G$ can be achieved by reversing the recursive construction of the QASST. This is done by iteratively merging adjacent quotient graphs in $QASST(G)$ until only a single component remains.
This is achieved by replacing pairs of split-nodes with full connections between their respective neighbors.
Split-nodes always come in pairs matching the strong splits from the original graph, and we mark these pairs explicitly in $QASST(G)$ by connecting them with red edges, as shown in the figures.
Keeping track of these pairs is necessary since a single quotient graph may have multiple split-nodes.
In this way, there is no ambiguity about how to reconnect nodes from different quotient graphs.

Figure~\ref{fig:reconstruct_graph_from_qasst} shows an example of a graph reconstructed from its QASST.
This mirrors the recursive process shown in Figure~\ref{fig:qasst_recursive_construction} in reverse.
Although we define it here for the QASST, note that this process can be used to reconstruct a graph for any graph-labeled split tree, even if the splits matching the edges are not strong. Pairs of split-nodes provide a convenient way to denote all-to-all connectivity between the split-nodes' respective neighbors. Although we use split-node pairs in this paper exclusively to represent strong splits, these could be used in a more general graph-labeled tree where a pair of split-nodes does not define a strong split.

\begin{figure*}[t]
\centering
\includegraphics[width=0.9\linewidth,page=11]{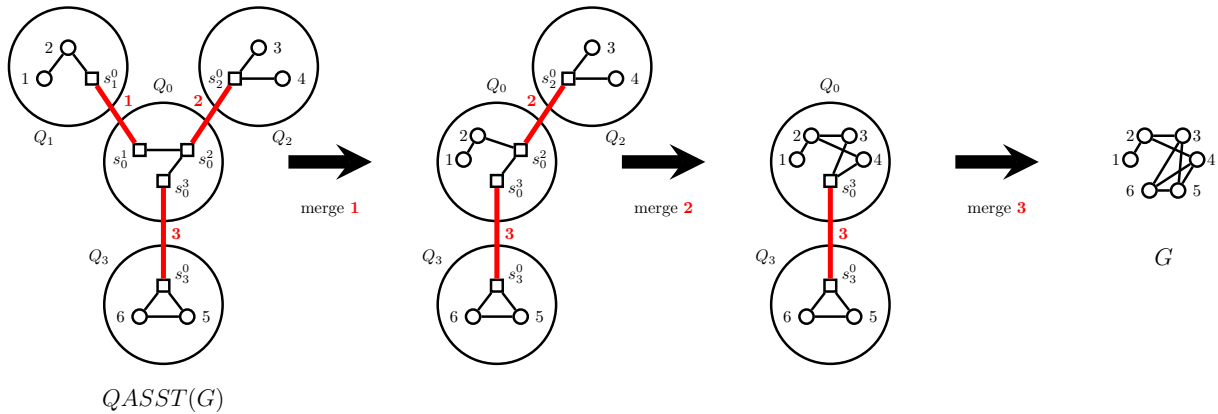}
\caption{Example showing how to reconstruct a graph from its QASST through a sequence of merging quotient graphs. Pairs of split-nodes from adjacent quotient graphs are replaced at each iteration with all-to-all connections between neighbors. This works for any split tree, even when the splits are not strong.}
\label{fig:reconstruct_graph_from_qasst}
\end{figure*}

\section{QASST Structure for Distance-Hereditary Graphs}
\label{sect:distance_hereditary_QASST}

Recall that distance-hereditary (DH) graphs are those graphs for which connected induced subgraphs preserve distance between nodes~\cite{howorka1977characterization}.
Here, we only consider connected graphs.
Some alternative and equivalent characterizations of DH graphs include: graphs admitting a recursive construction via twins~\cite{bandelt1986distance}; completely separable graphs~\cite{hammer1990completely}; totally decomposable graphs~\cite{cunningham1982decomposition}; and graphs of rank width 1\cite{oum2005rank}.
It will be useful to draw on these alternative characterizations for certain proofs, at which point we will elaborate these other definitions as needed.

Our goal in this section is to describe the relationship between a distance-hereditary graph and its quotient-augmented strong split tree.
In Subsection~\ref{sect:restrictions_on_quotient_graphs}, we identify restrictions on adjacencies between quotient graphs of different types.
We then describe how the QASST evolves under the recursive construction using one-vertex extensions in Subsection~\ref{sect:recursive_qasst_construction_for_DH_graphs}.
Finally, in Subsection~\ref{sect:qasst_evolution_for_induced_subgraphs} we discuss how the QASST of a DH graph changes under induced subgraphs.

\subsection{Restrictions on Quotient Graphs}
\label{sect:restrictions_on_quotient_graphs}

Cunningham established that the quotient graphs obtained via the split decomposition are either star, complete, or prime~\cite{cunningham1982decomposition}. However, when restricting to distance-hereditary graphs in particular, the quotient graphs are only star or complete (\cite{hammer1990completely}, Lemma 4.4 of~\cite{gavoille2003distance}). Indeed, this feature can be taken as an equivalent definition: a graph is distance-hereditary if and only if its split decomposition contains only star and complete quotient graphs.
A graph with this property is called \textbf{completely separable}.

Rather than starting with a graph and computing its split decomposition, we can imagine the reverse problem of beginning with a split decomposition and reconstructing the graph via the technique outlined in Subsection~\ref{sect:qasst_construction} and visualized in Figure~\ref{fig:reconstruct_graph_from_qasst}. Given any graph-labeled tree, where the nodes are labeled by some valid combination of quotient graphs, we can reconstruct a graph in this way. The quotient graphs are valid provided each contains the correct number of split-nodes (matching the degree of the node in the tree), and pairs of split-nodes from adjacent quotient graphs are properly identified, as in Figure~\ref{fig:reconstruct_graph_from_qasst}.

\begin{table*}[t]
\centering
\scalebox{0.9}{
\begin{tabular}{|c|ccc|}
\hline
$(Q_1,Q_2)$&$Q_2$ is sc&$Q_2$ is ss&$Q_2$ is c\\
\hline
&&&\\
$Q_1$ is sc&\includegraphics[width=0.2\linewidth,valign=c,page=1]{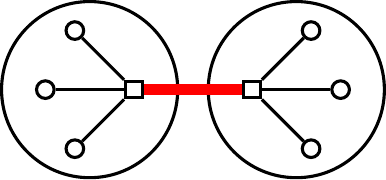}&\includegraphics[width=0.2\linewidth,valign=c,page=2]{Figures/Section_QASST_Structure_for_DH_Graphs.pdf}&\includegraphics[width=0.2\linewidth,valign=c,page=3]{Figures/Section_QASST_Structure_for_DH_Graphs.pdf}\\
&sc-sc&sc-ss&sc-c\\
&&&\\
$Q_1$ is ss&\includegraphics[width=0.2\linewidth,valign=c,page=4]{Figures/Section_QASST_Structure_for_DH_Graphs.pdf}&\includegraphics[width=0.2\linewidth,valign=c,page=5]{Figures/Section_QASST_Structure_for_DH_Graphs.pdf}&\includegraphics[width=0.2\linewidth,valign=c,page=6]{Figures/Section_QASST_Structure_for_DH_Graphs.pdf}\\
&ss-sc&ss-ss&ss-c\\
&&&\\
$Q_1$ is c&\includegraphics[width=0.2\linewidth,valign=c,page=7]{Figures/Section_QASST_Structure_for_DH_Graphs.pdf}&\includegraphics[width=0.2\linewidth,valign=c,page=8]{Figures/Section_QASST_Structure_for_DH_Graphs.pdf}&\includegraphics[width=0.2\linewidth,valign=c,page=9]{Figures/Section_QASST_Structure_for_DH_Graphs.pdf}\\
&c-sc&c-ss&c-c\\
\hline
\end{tabular}
}
\caption{The nine possibilities for two adjacent star or complete quotient graphs $Q_1$ and $Q_2$ in a split tree. There are three possibilities for each quotient graph depending on the location of the split-node involved in this split relative to the other nodes in the quotient graph. If the quotient graph is star, then the split node is either the center of the star or else it is a spoke; there is only one case if the quotient graph is complete. These cases are referred to as \textit{star-center} (sc), \textit{star-spoke} (ss), and \textit{complete} (c), respectively. By symmetry, of the nine cases shown above, six of these are distinct and the remaining three are redundant.}
\label{tab:adjacent_star_complete_quotient_graphs}
\end{table*}

We would like to characterize those graphs which can be obtained via this reconstruction when restricting to star and complete quotient graphs. Although the edges in the graph-labeled tree always represent splits, depending on the quotient graphs, these may not always be strong splits. To see this, consider the examples shown in Table~\ref{tab:adjacent_star_complete_quotient_graphs}, which illustrates the nine possible ways that two adjacent star or complete quotient graphs could be connected in a split tree (by symmetry, only six of these are distinct).
Although we show only two quotient graphs here, these possibilities generalize to a split tree of any size; what matters is the relationship between the pair of split-nodes matching the edge in the split tree.
Here, we make a distinction between two types of star quotient graphs: \textit{star-center}, in which the split-node is the center of the star, and \textit{star-spoke}, in which the split-node is not the center.

\begin{figure*}[t]
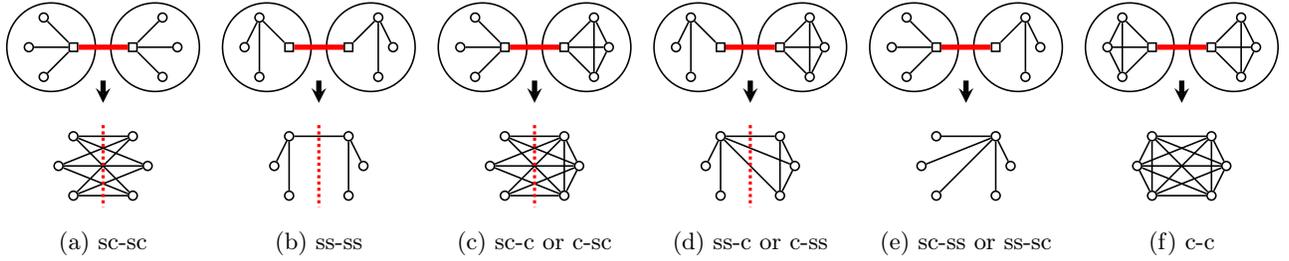

\centering
\begin{subfigure}{0.16\textwidth}
\centering
\includegraphics[width=0.9\textwidth,page=10]{Figures/Section_QASST_Structure_for_DH_Graphs.pdf}
\caption{sc-sc}
\label{fig:quotient_combo_valid_sc_sc}
\end{subfigure}\begin{subfigure}{0.16\textwidth}
\centering
\includegraphics[width=0.9\textwidth,page=11]{Figures/Section_QASST_Structure_for_DH_Graphs.pdf}
\caption{ss-ss}
\label{fig:quotient_combo_valid_ss_ss}
\end{subfigure}\begin{subfigure}{0.16\textwidth}
\centering
\includegraphics[width=0.9\textwidth,page=12]{Figures/Section_QASST_Structure_for_DH_Graphs.pdf}
\caption{sc-c or c-sc}
\label{fig:quotient_combo_valid_sc_c}
\end{subfigure}\begin{subfigure}{0.16\textwidth}
\centering
\includegraphics[width=0.9\textwidth,page=13]{Figures/Section_QASST_Structure_for_DH_Graphs.pdf}
\caption{ss-c or c-ss}
\label{fig:quotient_combo_valid_ss_c}
\end{subfigure}\begin{subfigure}{0.16\textwidth}
\centering
\includegraphics[width=0.9\textwidth,page=14]{Figures/Section_QASST_Structure_for_DH_Graphs.pdf}
\caption{sc-ss or ss-sc}
\label{fig:quotient_combo_invalid_ss_sc}
\end{subfigure}\begin{subfigure}{0.16\textwidth}
\centering
\includegraphics[width=0.9\textwidth,page=15]{Figures/Section_QASST_Structure_for_DH_Graphs.pdf}
\caption{c-c}
\label{fig:quotient_combo_invalid_c_c}
\end{subfigure}
\caption{Examples of the reconstructed graphs from two adjacent star or complete quotient graphs for each of the six distinct cases outlined in Table~\ref{tab:adjacent_star_complete_quotient_graphs}.
Cases~\ref{fig:quotient_combo_valid_sc_sc}--\ref{fig:quotient_combo_valid_ss_c} correspond to strong splits in the graph.
These are valid and pairs of adjacent quotient graphs matching these four cases may occur in the split decomposition of a graph.
However, cases~\ref{fig:quotient_combo_invalid_ss_sc} and \ref{fig:quotient_combo_invalid_c_c} correspond to non-strong splits of a star or complete graph, respectively.
Hence, these two cases are invalid and cannot occur in the split decomposition.
}
\label{fig:quotient_graph_combinations_valid}
\end{figure*}

Of the six distinct possibilities illustrated in Table~\ref{tab:adjacent_star_complete_quotient_graphs}, all but two of these represent a strong split, as can be seen in Figure~\ref{fig:quotient_graph_combinations_valid}. The remaining two represent valid splits, but they are not strong. 
Two adjacent complete quotient graphs yield a larger complete graph (Figure~\ref{fig:quotient_combo_invalid_c_c}), while the combination of a star-spoke and a star-center is a larger star graph (Figure~\ref{fig:quotient_combo_invalid_ss_sc}).
Since complete and star graphs are indecomposable under strong splits, these two possibilities cannot occur in the split decomposition of any graph.
However, the remaining four cases of Figure~\ref{fig:quotient_graph_combinations_valid} preserve strong splits and hence match the split decomposition of the reconstructed graph.

What this shows is that, for any split tree with star and complete quotient graphs, the splits are strong provided each edge in the tree corresponds to one of the four valid cases shown in Figure~\ref{fig:quotient_graph_combinations_valid}. In other words, such a split tree and quotient graphs represent the split decomposition of a graph, and this graph is totally decomposable by definition and hence necessarily distance-hereditary.
We will exploit this relationship in Section~\ref{sect:explicit_formulas_for_local_equivalence_classes} to enumerate those distance-hereditary graphs which are equivalent under local complements.

\subsection{Recursive QASST Construction for distance-hereditary Graphs}
\label{sect:recursive_qasst_construction_for_DH_graphs}

An alternative characterization for distance-hereditary graphs is the class of connected graphs which can be constructed recursively from a single vertex using a sequence of \textbf{one-vertex extensions} via twins, of which there are three possible operations (Theorem 1 of~\cite{bandelt1986distance}):
\begin{enumerate}
\item addition of a \textbf{pendant vertex} $p$ (connect to just a single existing vertex $q$);
\item addition of a \textbf{false twin} $p$ (connect to only the neighbors of an existing vertex $q$);
\item addition of a \textbf{true twin} $p$ (connect to both an existing vertex $q$ and its neighbors).
\end{enumerate}
In all cases, the existing vertex $q$ and its one-vertex extension $p$ are called \textit{twins}.
An example of these three one-vertex extensions is shown in Figure~\ref{fig:one-vertex_extensions}. Note that the construction of a distance-hereditary graph described by these operations is non-unique in general; there may be many equivalent sequences of one-vertex extensions which yield the same graph.

\begin{figure*}[t]
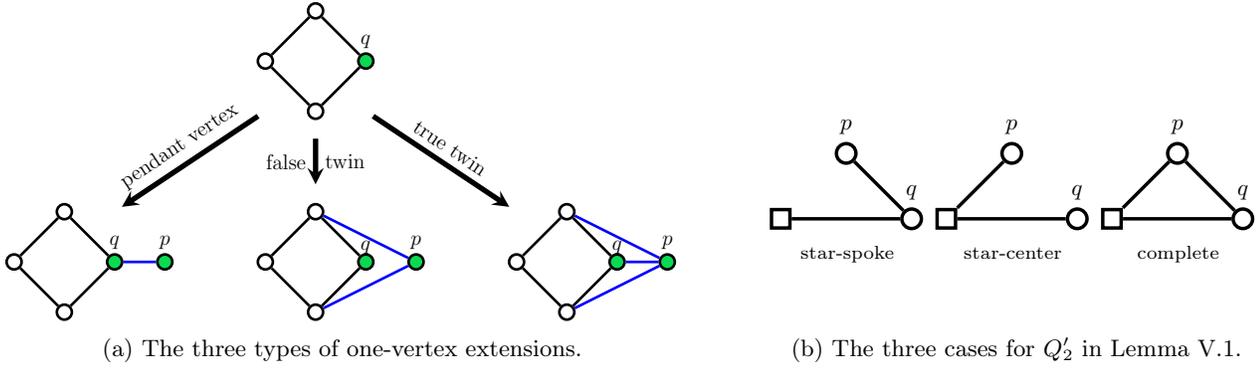

\centering
\begin{subfigure}{0.5\textwidth}
\centering
\includegraphics[width=1\textwidth,page=16]{Figures/Section_QASST_Structure_for_DH_Graphs.pdf}
\caption{The three types of one-vertex extensions.}
\label{fig:one-vertex_extensions}
\end{subfigure}\begin{subfigure}{0.5\textwidth}
\centering
{\scriptsize
\begin{tabular}{ccc}
\includegraphics[width=0.23\textwidth,page=18]{Figures/Section_QASST_Structure_for_DH_Graphs.pdf}&\includegraphics[width=0.23\textwidth,page=19]{Figures/Section_QASST_Structure_for_DH_Graphs.pdf}&\includegraphics[width=0.23\textwidth,page=20]{Figures/Section_QASST_Structure_for_DH_Graphs.pdf}\\
star-spoke&star-center&complete\\
\\ \\
\end{tabular}
}
\caption{The three cases for $Q_2'$ in Lemma~\ref{thm:one-vertex_quotient_evolution_lemma}.}
\label{fig:one_vertex_evolution_Q2_possibilities}
\end{subfigure}
\caption{
\ref{fig:one-vertex_extensions} The three types of one-vertex extension $p$ of an existing node $q$; $p$ is called a \textit{twin} of $q$. Any distance-hereditary graph can be constructed recursively from a single vertex using only these operations.
\ref{fig:one_vertex_evolution_Q2_possibilities} The three possible cases for the evolution of a quotient graph after a one-vertex extension. When a new split is introduced, the extended quotient graph divides into two parts, matching one of these possibilities.
}
\label{fig:one-vertex_extension_possibilities}
\end{figure*}

The recursive construction used to define a given distance-hereditary graph can also be used to compute its QASST. If $G$ is a distance-hereditary graph and $G'$ is obtained from $G$ via one of the three one-vertex extensions described above, then $QASST(G')$ is obtained from a slight modification of $QASST(G)$. By showing how the QASST changes under these operations, we provide a general characterization of the QASST of any distance-hereditary graph starting from a single vertex.
First, however, we formalize the evolution of the QASST via the following lemma.

\begin{lemma}
\label{thm:one-vertex_quotient_evolution_lemma}
Let $G$ be a graph with a vertex $q\in V(G)$. Let $Q$ be the quotient graph containing $q$ in the split decomposition of $G$.
If $G'$ is a graph obtained from $G$ by a one-vertex extension $p$ of $q$, then $QASST(G')$ is computed from $QASST(G)$ by converting $Q$ into $Q'$ according to a number of cases listed below.

Either $Q'$ is obtained from $Q$ by using the same vertex extension $p$ of $q$ within the quotient graph $Q$, or else $Q$ splits into a disjoint union of two quotient graphs $Q'=Q_1'\cup Q_2'$ which are connected by an edge between split-nodes in the QASST. $Q_1'$ is obtained from $Q$ by replacing $q$ with a split-node. $Q_2'$ is a graph with three vertices: $q$, $p$, and a split-node (Figure~\ref{fig:one_vertex_evolution_Q2_possibilities}). This pair of split-nodes in $Q_1'$ and $Q_2'$ defines an edge in $QASST(G')$.
Depending on the structure of $Q$ and type of vertex extension, there are twelve sub-cases in total, all summarized in Table~\ref{tab:one-vertex_extension_new_quotient_graph}.
\begin{enumerate}
\item Suppose that $Q$ is a star with center $q$ (Figure~\ref{fig:one-vertex_extension_quotient_graph_evolution_star_q_center}).
\begin{enumerate}
\item If $p$ is a pendant vertex of $q$, then $Q'$ is obtained from $Q$ by adding a spoke $p$.
\item If $p$ is a false-twin of $q$, then $Q'=Q_1'\cup Q_2'$, where $Q_2'=S_2$ is star-center with spokes $p$ and $q$. 
\item If $p$ is a true-twin of $q$, then $Q'=Q_1'\cup Q_2'$, where $Q_2'=K_3$ is complete. 
\end{enumerate}
\item Suppose that $Q$ is star with spoke $q$ (Figure~\ref{fig:one-vertex_extension_quotient_graph_evolution_star_q_spoke}).
\begin{enumerate}
\item If $p$ is a pendant vertex of $q$, then $Q'=Q_1'\cup Q_2'$, where $Q_2'=S_{2}$ is star-spoke with center $q$ and spokes $p$ and a split-node. 
\item If $p$ is a false-twin of $q$, then $Q'$ is obtained from $Q$ by adding a spoke $p$.
\item If $p$ is a true-twin of $q$, then $Q'=Q_1'\cup Q_2'$, where $Q_2'=K_3$ is complete. 
\end{enumerate}
\item Suppose that $Q$ is complete with a vertex $q$ (Figure~\ref{fig:one-vertex_extension_quotient_graph_evolution_complete}).
\begin{enumerate}
\item If $p$ is a pendant vertex of $q$, then $Q'=Q_1'\cup Q_2'$, where $Q_2'=S_{2}$ is star-spoke with center $q$ and spokes $p$ and a split-node. 
\item If $p$ is a false-twin of $q$, then $Q'=Q_1'\cup Q_2'$, where $Q_2'=S_{2}$ is star-center with spokes $q$ and $p$. 
\item If $p$ is a true-twin of $q$, then $Q'$ is obtained from $Q$ by adding one additional vertex $p$ and full connections to all other vertices.
\end{enumerate}
\item Suppose that $Q$ is prime with a vertex $q$ (Figure~\ref{fig:one-vertex_extension_quotient_graph_evolution_prime}).
\begin{enumerate}
\item If $p$ is a pendant vertex of $q$, then $Q'=Q_1'\cup Q_2'$, where $Q_2'=S_{2}$ is star-spoke with center $q$ and spokes $p$ and a split-node. 
\item If $p$ is a false-twin of $q$, then $Q'=Q_1'\cup Q_2'$, where $Q_2'=S_{2}$ is star-center with spokes $q$ and $p$. 
\item If $p$ is true-twin of $q$, then $Q'=Q_1'\cup Q_2'$, where $Q_2'=K_3$ is complete. 
\end{enumerate}
\end{enumerate}
\end{lemma}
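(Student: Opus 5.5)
Throughout, write $G'=G+p$ for the extended graph. The plan is to build a candidate graph-labeled tree $T'$ from $QASST(G)$ and check that it is $QASST(G')$. Concretely, $T'$ agrees with $QASST(G)$ except at the node $Q$, which is replaced by $Q'$ exactly as each sub-case prescribes. Two facts must then be verified: first, that the reconstruction procedure of Subsection~\ref{sect:qasst_construction} applied to $T'$ returns $G'$; second, that every tree edge of $T'$ is a strong split and every node label is prime, star or complete. By Cunningham's uniqueness theorem for the minimal strong split decomposition, these two facts force $T'=QASST(G')$.

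\textbf{Step 1: reconstruction.} The key observation is that $q$ is a leaf-node of $Q$. So when the tree is merged back into a graph, the adjacencies of $q$ in $G$ are determined entirely by its adjacencies inside $Q$, expanded through the split-nodes of $Q$ to the vertices of $G$ they represent. Each extension type then transfers directly.
\begin{itemize}
\item A pendant, false twin or true twin of $q$ performed \emph{inside} $Q$ reconstructs to the same extension performed in $G$.
\item In the two-node case, $Q_1'$ has a split-node where $q$ used to be, and its partner in $Q_2'$ is adjacent to exactly the correct subset of $\{q,p\}$. The three shapes of Figure~\ref{fig:one_vertex_evolution_Q2_possibilities} give the three extension types:
\begin{itemize}
\item star-spoke with center $q$: $p$ is a pendant of $q$;
\item star-center: $p$ and $q$ share the same neighbours (false twins);
\item $K_3$: $p$ and $q$ are true twins.
\end{itemize}
\end{itemize}
This step is a uniform and routine check, done once rather than twelve times.

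\textbf{Step 2: which sub-cases keep $Q$ as one node.} I will show that the extended quotient graph stays star or complete in exactly the three sub-cases 1(a), 2(b) and 3(c):
\begin{itemize}
\item 1(a): adding a pendant at the star center gives a larger star.
\item 2(b): adding a false twin of a spoke gives another spoke.
\item 3(c): adding a true twin to a clique gives a larger clique.
\end{itemize}
These graphs are indecomposable, so $Q'$ is a legitimate label. In all other sub-cases, and in every prime sub-case, I instead show that $\{p,q\}$ against the rest is a split of $G'$ and use the two-node form.
\begin{itemize}
\item In $Q_1'$, replacing $q$ by a split-node is just a relabelling. Hence $Q_1'\cong Q$, and it remains prime, star or complete.
\item $Q_2'$ has three vertices and is either $S_2$ or $K_3$, so it is also indecomposable.
\end{itemize}

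\textbf{Step 3: strongness (the main obstacle).} The hard part is proving that the new edge $Q_1'Q_2'$, and every old edge incident to $Q$, is strong in $G'$. For the new edge I will use the adjacency classification of Figure~\ref{fig:quotient_graph_combinations_valid}. The new edge must not be of type sc--ss or c--c, since either of those would mean the node should have been merged. I check this sub-case by sub-case; Table~\ref{tab:one-vertex_extension_new_quotient_graph} is essentially this check.
\begin{itemize}
\item 1(b): the split-node of $Q_1'$ sits at the star center, and $Q_2'$ is star-center, so the edge is sc--sc (valid).
\item 1(c): the edge is sc--c (valid).
\item 2(a): the new split-node in $Q_1'$ is a spoke, and the split-node of $Q_2'$ is a spoke, so the edge is ss--ss (valid).
\item 2(c): the edge is ss--c (valid).
\item 3(a): the edge is c--ss (valid).
\item 3(b): the edge is c--sc (valid).
\end{itemize}
This case check is exactly what rules out the merges that would otherwise happen in 1(a), 2(b) and 3(c). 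The prime sub-cases are automatically fine, because the classification of Figure~\ref{fig:quotient_graph_combinations_valid} only forbids merges between star and complete nodes.

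The old edges incident to $Q$ have unchanged endpoint types, because $Q_1'\cong Q$ or $Q'$ has the same shape as $Q$. A split crossing one of them in $G'$ would restrict, after deleting $p$, to a split crossing it in $G$. The only exception is a crossing split that isolates $p$ alone, which is trivial and does not count. So these edges remain strong.

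For the cases where $Q$ is prime, I would prove strongness of the new split directly. Suppose a split of $G'$ crossed it. Restricting that split to $Q$ and deleting $p$ would then yield a nontrivial split of the prime graph $Q$, which is a contradiction.
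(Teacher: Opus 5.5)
Your proposal is correct and follows essentially the paper's route: the paper's proof is the visual twelve-case check of Figure~\ref{fig:one-vertex_extension_quotient_graph_evolution} and Table~\ref{tab:one-vertex_extension_new_quotient_graph}, carried out on a single node $Q$. You make that check rigorous by verifying the reconstruction, the label types, and the edge-type rules of Figure~\ref{fig:quotient_graph_combinations_valid}, and then invoking Cunningham's uniqueness. One caution: your auxiliary arguments that delete $p$ from a crossing split (for the old edges) or ``restrict a split to $Q$'' (in the prime case) are not sound as written --- the case you dismiss as trivial is really a split whose side containing $p$ meets the $q$-side of the old split only in $p$, and that split is nontrivial --- but these arguments are redundant, since unchanged endpoint types together with the same edge-type classification already settle both points.
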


\begin{figure*}[t]
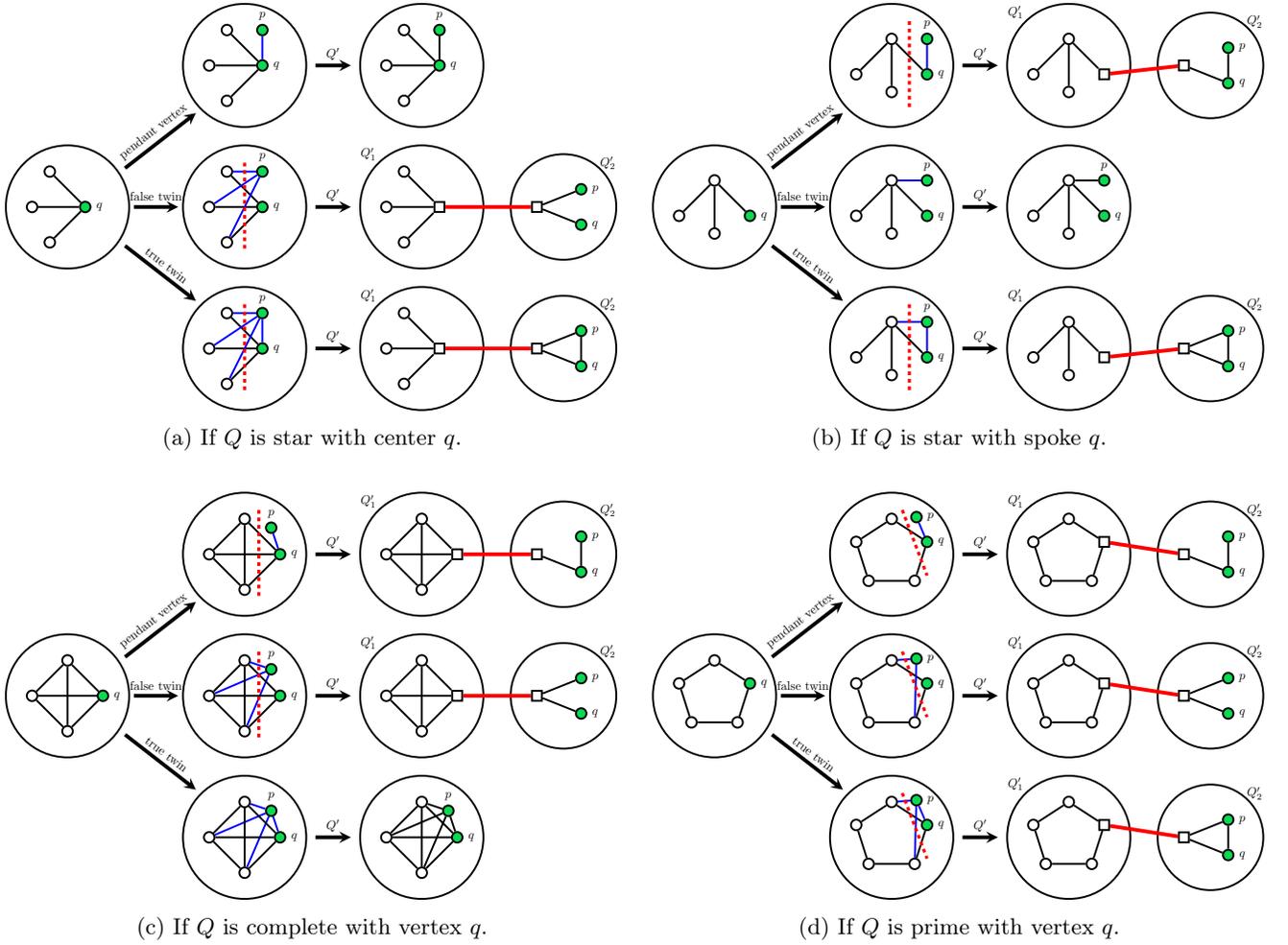

\centering
\begin{subfigure}{0.5\textwidth}
\centering
\includegraphics[width=0.95\textwidth,page=21]{Figures/Section_QASST_Structure_for_DH_Graphs.pdf}
\caption{If $Q$ is star with center $q$.}
\label{fig:one-vertex_extension_quotient_graph_evolution_star_q_center}
\end{subfigure}\begin{subfigure}{0.5\textwidth}
\centering
\includegraphics[width=0.95\textwidth,page=22]{Figures/Section_QASST_Structure_for_DH_Graphs.pdf}
\caption{If $Q$ is star with spoke $q$.}
\label{fig:one-vertex_extension_quotient_graph_evolution_star_q_spoke}
\end{subfigure}

\bigskip

\begin{subfigure}{0.5\textwidth}
\centering
\includegraphics[width=0.95\textwidth,page=23]{Figures/Section_QASST_Structure_for_DH_Graphs.pdf}
\caption{If $Q$ is complete with vertex $q$.}
\label{fig:one-vertex_extension_quotient_graph_evolution_complete}
\end{subfigure}\begin{subfigure}{0.5\textwidth}
\centering
\includegraphics[width=0.95\textwidth,page=24]{Figures/Section_QASST_Structure_for_DH_Graphs.pdf}
\caption{If $Q$ is prime with vertex $q$.}
\label{fig:one-vertex_extension_quotient_graph_evolution_prime}
\end{subfigure}
\caption{Visual proof of Lemma~\ref{thm:one-vertex_quotient_evolution_lemma}, showing the evolution under one-vertex extensions of a quotient graph $Q$ in the split decomposition of a graph $G$, where $Q$ contains a vertex $q\in V(G)$. If $G'$ is the graph obtained from $G$ by a one-vertex extension $p$ of $q$ (either a pendant vertex, a false-twin, or a true-twin), then $Q$ evolves into $Q'$ in the split decomposition of $G'$ as shown.}
\label{fig:one-vertex_extension_quotient_graph_evolution}
\end{figure*}

\begin{table*}[t!]
\centering
\scalebox{0.9}{
\begin{tabular}{|c|ccc|}
\hline
Original quotient $Q$&pendant vertex $p$&false twin $p$&true twin $p$\\
\hline
&&&\\
\includegraphics[scale=0.5,page=25]{Figures/Section_QASST_Structure_for_DH_Graphs.pdf}&\includegraphics[scale=0.5,page=29]{Figures/Section_QASST_Structure_for_DH_Graphs.pdf}&\includegraphics[scale=0.5,page=30]{Figures/Section_QASST_Structure_for_DH_Graphs.pdf}&\includegraphics[scale=0.5,page=31]{Figures/Section_QASST_Structure_for_DH_Graphs.pdf}\\
$Q$ is star (center $q$)&$Q'$&$Q'=Q_1'\cup Q_2'$&$Q'=Q_1'\cup Q_2'$\\
&&&\\
\includegraphics[scale=0.5,page=26]{Figures/Section_QASST_Structure_for_DH_Graphs.pdf}&\includegraphics[scale=0.5,page=32]{Figures/Section_QASST_Structure_for_DH_Graphs.pdf}&\includegraphics[scale=0.5,page=33]{Figures/Section_QASST_Structure_for_DH_Graphs.pdf}&\includegraphics[scale=0.5,page=34]{Figures/Section_QASST_Structure_for_DH_Graphs.pdf}\\
$Q$ is star (spoke $q$)&$Q'=Q_1'\cup Q_2'$&$Q'$&$Q'=Q_1'\cup Q_2'$\\
&&&\\
\includegraphics[scale=0.5,page=27]{Figures/Section_QASST_Structure_for_DH_Graphs.pdf}&\includegraphics[scale=0.5,page=35]{Figures/Section_QASST_Structure_for_DH_Graphs.pdf}&\includegraphics[scale=0.5,page=36]{Figures/Section_QASST_Structure_for_DH_Graphs.pdf}&\includegraphics[scale=0.5,page=37]{Figures/Section_QASST_Structure_for_DH_Graphs.pdf}\\
$Q$ is complete&$Q'=Q_1'\cup Q_2'$&$Q'=Q_1'\cup Q_2'$&$Q'$\\
&&&\\
\includegraphics[scale=0.5,page=28]{Figures/Section_QASST_Structure_for_DH_Graphs.pdf}&\includegraphics[scale=0.5,page=38]{Figures/Section_QASST_Structure_for_DH_Graphs.pdf}&\includegraphics[scale=0.5,page=39]{Figures/Section_QASST_Structure_for_DH_Graphs.pdf}&\includegraphics[scale=0.5,page=40]{Figures/Section_QASST_Structure_for_DH_Graphs.pdf}\\
$Q$ is prime&$Q'=Q_1'\cup Q_2'$&$Q'=Q_1'\cup Q_2'$&$Q'=Q_1'\cup Q_2'$\\
&&&\\
\hline
\end{tabular}
}
\caption{Summary of the twelve possible cases for computing a quotient graph $Q'$, the evolution of an existing quotient graph $Q$ under a one-vertex extension $p$ of $q$, as described in Lemma~\ref{thm:one-vertex_quotient_evolution_lemma} and Figure~\ref{fig:one-vertex_extension_quotient_graph_evolution}.}
\label{tab:one-vertex_extension_new_quotient_graph}
\end{table*}

\begin{proof}
Although this lemma has a lengthy statement, this is merely to list all of the combinations of possibilities. The proof of these cases follows readily from a visual argument, as summarized in Figure~\ref{fig:one-vertex_extension_quotient_graph_evolution}. The twelve subcases for the resulting quotient graph $Q'$ are also enumerated in Table~\ref{tab:one-vertex_extension_new_quotient_graph}. In these examples, we show a graph with no nontrivial strong splits and hence only a single quotient graph. However, this same technique can be used in the more general case of a QASST with any number of quotient graphs by applying a vertex extension to a single quotient graph. 
\end{proof}

The preceding lemma shows how the sequence of one-vertex extensions used to construct a distance-hereditary graph can also be used to define a corresponding sequence of evolutions in the QASST.
We conclude this section with an alternative proof of the fact that quotient graphs of distance-hereditary graphs are necessarily star or complete, a result which was originally established in \cite{hammer1990completely}. Our proof leverages Lemma~\ref{thm:one-vertex_quotient_evolution_lemma} using an inductive argument based on the sequence of one-vertex extensions used to construct the graph.

Before this, we pause here to make a comment about the evolution of the quotient graphs under one-vertex extensions as described in Table~\ref{tab:one-vertex_extension_new_quotient_graph}.
As shown, the quotient graph $Q$ either evolves into a single new quotient graph $Q'$, or else it evolves into two quotient graphs $Q'=Q_1'\cup Q_2'$. In the former case, if $Q$ is star or complete, then $Q'$ is also star or complete with one additional vertex.
In the latter case, $Q_1'\cong Q$ except with $q$ replaced by a split-node, and $Q_2'$ belongs to one of the three cases described in Figure~\ref{fig:one_vertex_evolution_Q2_possibilities}, all of which are star or complete.
Hence, if the original $Q$ is star or complete, then both $Q_1'$ and $Q_2'$ are also star or complete.
We rely on this fact for the inductive step in proving the following theorem.

\begin{theorem}[Originally established in~\cite{hammer1990completely}]
\label{thm:distance_hereditary_iff_star_complete_quotient}
A graph $G$ is distance-hereditary if and only if the quotient graphs in the split decomposition of $G$ are star or complete (i.e.~$G$ is completely separable).
\end{theorem}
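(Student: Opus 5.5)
We prove the two directions separately. The forward direction goes by induction on the number of vertices along a one-vertex-extension construction, using Lemma~\ref{thm:one-vertex_quotient_evolution_lemma}. The reverse direction reconstructs the graph from its QASST and produces a twin or pendant vertex that can be removed.

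\emph{Forward direction.} Let $G$ be connected and distance-hereditary. By Theorem~1 of~\cite{bandelt1986distance}, there is a sequence $G_1,G_2,\dots,G_n=G$ in which $G_1$ is a single vertex and each $G_{i+1}$ is obtained from $G_i$ by adding a pendant vertex, a false twin, or a true twin $p$ of some $q\in V(G_i)$.

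We induct on $i$ to show that every quotient graph of $G_i$ is star or complete.
\begin{enumerate}
\item \emph{Base cases.} A single vertex, or two adjacent vertices, is trivially complete (equivalently a degenerate star). We adopt the convention that graphs with at most three vertices are treated directly: every connected graph on three vertices is $P_3=S_2$ or $K_3$.
\item \emph{Inductive step.} Let $Q$ be the quotient graph of $G_i$ containing $q$. By hypothesis $Q$ is star or complete, so only cases 1--3 of Lemma~\ref{thm:one-vertex_quotient_evolution_lemma} apply. Quotient graphs not containing $q$ are unchanged.
\item \emph{Case analysis.} If $Q'$ is $Q$ with a vertex added, the lemma gives a star with an extra spoke, or a complete graph with an extra vertex. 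If $Q'=Q_1'\cup Q_2'$, then $Q_1'\cong Q$ with $q$ relabelled as a split-node, and $Q_2'$ is one of $S_2$ or $K_3$ (Figure~\ref{fig:one_vertex_evolution_Q2_possibilities}). Both are star or complete.
\end{enumerate}
This is exactly the remark preceding the theorem, so the induction closes.

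\emph{Reverse direction.} Suppose every quotient graph of $G$ is star or complete. We show that $G$ arises by one-vertex extensions, again by induction on $|V(G)|$. Choose a leaf node $Q$ of the tree $SST(G)$ with its leaves removed; if the tree has a single node, $Q=G$ is itself a star or complete graph, and these are clearly distance-hereditary.

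Such a $Q$ has exactly one split-node $s$, so it has at least two leaf-nodes once $|Q|\ge 3$. Pick two leaf-nodes $p,q$ of $Q$ and reconstruct $G$ via the merging procedure of Subsection~\ref{sect:qasst_construction}.
\begin{itemize}
\item If $Q$ is complete, $p$ and $q$ are true twins in $G$.
\item If $Q$ is a star whose center is $s$ or another leaf-node, choose $p,q$ to be spokes other than $s$. They are false twins in $G$, since both are adjacent to exactly the same things (through the center).
\item If $Q$ is a star whose only spokes are $s$ and a single leaf-node (possible when $|Q|=3$ with the center a leaf-node), the spoke is a pendant vertex of the center.
\end{itemize}

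Deleting $p$ leaves a graph whose split tree is obtained by removing $p$ from $Q$. If $Q$ then has only two vertices, we contract $Q$ into its neighbour. The remaining quotient graphs are still star or complete, though some splits may no longer be strong. Here we use the fact that merging sc-ss or c-c pairs again yields a star or complete graph (Figure~\ref{fig:quotient_graph_combinations_valid}), so the strong decomposition of $G-p$ still has star/complete quotients. By induction $G-p$ is distance-hereditary, and $G$ is a one-vertex extension of it, hence distance-hereditary by~\cite{bandelt1986distance}.

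\emph{Main obstacle.} The delicate points are twofold. First, the base cases and the treatment of small quotient graphs ($|Q|\le 3$, where star and complete overlap) must be handled carefully. Second, in the reverse direction we must check that deleting a vertex, and re-contracting any non-strong splits this creates, preserves the star/complete property of the quotients. I would handle this second point by a short case check over the six configurations of Table~\ref{tab:adjacent_star_complete_quotient_graphs}.
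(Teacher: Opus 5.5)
Your forward direction is the paper's own argument: induction along a Bandelt--Mulder extension sequence using the quotient-evolution lemma. Your reverse direction, however, takes a genuinely different route. The paper does not rebuild $G$ at all. It quotes Oum's theorem that a connected graph is distance-hereditary iff it has rank-width $1$, together with the theorem that the rank-width of a graph is the maximum rank-width of its split components, and notes that stars and complete graphs have rank-width $1$. That takes two lines, but all the content is outsourced to external results, and to a characterization unrelated to the one used in the forward direction.

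Your twin-elimination argument stays inside the Bandelt--Mulder/split-tree framework, and it is correct. A leaf node $Q$ of the quotient tree has at least three vertices and exactly one split-node. It therefore always yields one of: true twins (if $Q$ is complete), false twins (a star with at least two spokes other than $s$), or a pendant vertex (a three-vertex star centered at a leaf-node). In each case $G$ is a one-vertex extension of $G-p$. The cost of your route is identifying the strong decomposition of $G-p$. That step implicitly uses Cunningham's uniqueness of reduced decompositions: a tree with star/complete labels and no c--c or sc--ss edge is the canonical one.

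The merging step you plan is never actually triggered. Removing a non-center spoke, or a vertex of a complete node, leaves the role of $s$ in $Q$ unchanged. Contracting a two-vertex $Q$ merely turns a split-node of its neighbour into a leaf-node. So no edge changes type, and the six-configuration case check is unnecessary.

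Better still, you can drop strongness entirely by strengthening the induction hypothesis to: ``every connected graph represented by \emph{some} graph-labeled split tree with star or complete labels is distance-hereditary.'' Your leaf-node argument proves this verbatim, after contracting two-vertex nodes. It also gives a rigorous justification of the paper's informal claim in Subsection~\ref{sect:restrictions_on_quotient_graphs} that such trees always reconstruct distance-hereditary graphs.
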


\begin{proof}
If $G$ is a distance-hereditary graph, then it can be constructed using a sequence of one-vertex extensions (Theorem 1 of~\cite{bandelt1986distance}).
We proceed by induction on the number of vertices in $G$, noting for the base case that a graph consisting of a single vertex is both star and complete and is the only graph in the split decomposition. Lemma~\ref{thm:one-vertex_quotient_evolution_lemma} describes explicitly how the quotient graphs in the split decomposition of $G$ evolve under this sequence of one-vertex extensions (we assume that $G$ is connected). Specifically, if the preceding graph in the sequence of one-vertex extensions contains only star and complete quotient graphs, then so does the next graph, up until $G$. This establishes the first part of the theorem.

To prove the converse, we use one of the alternative characterizations of distance-hereditary graphs: a connected graph is distance-hereditary if and only if it has rank-width 1 (Proposition 7.3 of \cite{oum2005rank}).
Star graphs and complete graphs are special cases of distance-hereditary graph, and hence the graphs in these families always have rank-width 1.
Another result relates the rank-width of a graph to the quotient graphs in its split decomposition: the rank-width of a graph is the maximum rank-width of all of its quotient graphs (Theorem 4.3 of \cite{hlinveny2008width}).
Therefore, $G$ necessarily has rank-width 1 and is thus distance-hereditary if its split decomposition consists only of star and complete quotient graphs.
\end{proof}

\subsection{QASST Evolution for Induced Subgraphs}
\label{sect:qasst_evolution_for_induced_subgraphs}
Continuing our study of the structural properties of the QASST for distance-hereditary graphs, we now consider their behavior under induced subgraphs.
Induced subgraphs of distance-hereditary graphs are also distance-hereditary by definition. Furthermore, there is a natural relationship between the split decompositions of two graphs related in this way. Here, we seek to characterize how the QASST of a distance-hereditary graph evolves when restricting to induced subgraphs.
This examination is partly motivated by the connection with quantum information theory, wherein certain measurements on graph states apply local complements and vertex deletions~\cite{dahlberg2018transforming}, operations which characterize vertex minors~\cite{oum2005rank}.
In general, a new QASST for an induced subgraph can be obtained through a process of modifying and merging quotient graphs in the QASST of the original graph.

Consider a distance-hereditary graph $G$ with split decomposition described by the graph-labeled tree $QASST(G)=(Q_1,\cdots,Q_k)$. In the trivial case where $G$ is star or complete, then $k=1$ and the split decomposition consists of a single quotient graph $Q_1=G$. In this case, every connected induced subgraph of $G$ is a smaller star or complete graph, likewise with a single quotient graph in the QASST.
If $G$ is a nontrivial distance-hereditary graph, then $QASST(G)=(Q_1,\cdots,Q_k)$ consists of a graph-labeled tree of two or more quotient graphs. In this case, every quotient graph is incident to at least one other quotient graph via an edge in $QASST(G)$; this edge corresponds to a pair of split-nodes in the adjacent quotient graphs. Hence, every quotient graph of $G$ contains at least one split-node.

Suppose that $H$ is a connected induced subgraph of a nontrivial distance-hereditary graph $G$.
From the quotient graphs $Q_1,\cdots,Q_k$ of $G$, we may consider corresponding induced subgraphs $Q_1',\cdots,Q_k'$ obtained by deleting any leaf-nodes not contained in $H$.
This results in an intermediate graph-labeled tree $(Q_1',\cdots,Q_k')$ from which $H$ may be reconstructed, although this is not necessarily $QASST(H)$.
Although each $Q_i'$ obtained in this way is still star or complete since they are induced subgraphs of the original quotient graph $Q_i$, the splits described by the edges in this tree may not be strong depending on the cases described in Figure~\ref{fig:quotient_graph_combinations_valid}.
However, this graph-labeled tree may be refined into $QASST(H)$ by merging together some subset of the intermediate quotient graphs $Q_1',\cdots,Q_k'$ until all remaining splits are strong.
Merging entails deleting a pair of connected split-nodes from adjacent quotient graphs and replacing these with all-to-all connections between the split-nodes' neighbors, as was shown in Figure~\ref{fig:reconstruct_graph_from_qasst}.

Since the intermediate quotient graphs $Q_1',\cdots,Q_k'$ are obtained from $Q_1,\cdots,Q_k$ by deleting some combination of leaf-nodes, and every quotient graph has at least one split-node, all resulting graphs have at least one split-node as well.
This remains true when merging quotient graphs unless only a single graph remains.
As a consequence of this, there are three possibilities for each intermediate quotient graph $Q_1',\cdots,Q_k'$. These are:
\begin{enumerate}
\item $|V(Q_i')|=1$ ($Q_i'$ consists of a single split-node);
\item $|V(Q_i')|=2$ ($Q_i'$ consists of two split-nodes, or one split-node and one leaf-node);
\item $|V(Q_i')|\geq 3$.
\end{enumerate}
Any intermediate quotient graph containing two or fewer vertices (cases i and ii) can be merged into a neighboring quotient graph.
This is because a connected graph with one or two vertices is simultaneously complete, star-center, and star-spoke with respect to any vertex.
Hence, such a graph necessarily falls into one of the invalid cases described in Figure~\ref{fig:quotient_graph_combinations_valid} since an adjacent edge in the graph-labeled tree does not represent a strong split. Examples of these cases are illustrated in Figure~\ref{fig:induced_subgraph_QASST}.

\begin{figure*}[t]
\centering
\begin{subfigure}{0.33\textwidth}
\centering
\includegraphics[width=0.8\textwidth,page=1]{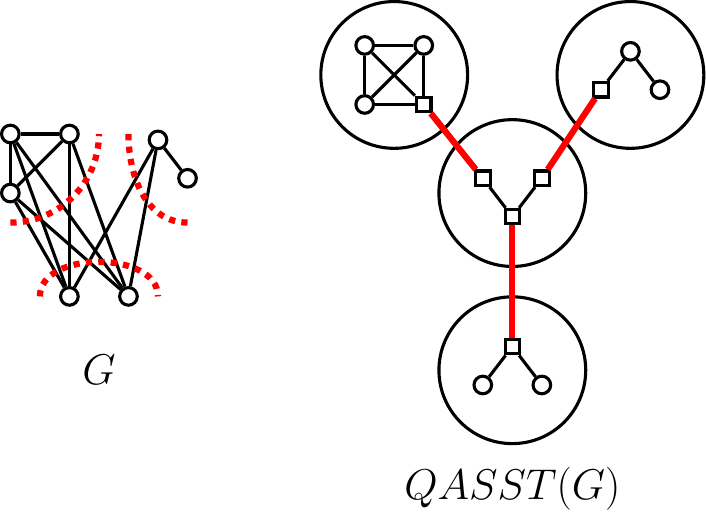}
\caption{Original graph $G$}
\label{fig:induced_subgraph_QASST_G}
\end{subfigure}\begin{subfigure}{0.66\textwidth}
\centering
\includegraphics[width=0.8\textwidth,page=2]{Figures/Section_QASST_Structure_for_DH_Graphs_part2.pdf}
\caption{Induced quotient with a single remaining vertex}
\label{fig:induced_subgraph_QASST_H1}
\end{subfigure}\\

\bigskip

\begin{subfigure}{0.5\textwidth}
\centering
\includegraphics[width=0.8\textwidth,page=3]{Figures/Section_QASST_Structure_for_DH_Graphs_part2.pdf}
\caption{Induced quotient with two remaining vertices}
\label{fig:induced_subgraph_QASST_H2}
\end{subfigure}\begin{subfigure}{0.5\textwidth}
\centering
\includegraphics[width=0.8\textwidth,page=4]{Figures/Section_QASST_Structure_for_DH_Graphs_part2.pdf}
\caption{Induced quotient with three or more remaining vertices}
\label{fig:induced_subgraph_QASST_H3}
\end{subfigure}
\caption{Examples of how the QASST evolves for induced subgraphs. A graph-labeled tree for the subgraph is obtained from $QASST(G)$ by deleting leaf-nodes corresponding to excluded vertices. For each quotient graph, there are several cases depending on the number of remaining vertices. Quotient graphs with two or fewer vertices are merged into their neighbors by deleting a pair of adjacent split-nodes. This process continues until all red edges represent strong splits or else only a single quotient graph remains.}
\label{fig:induced_subgraph_QASST}
\end{figure*}

As shown in Figure~\ref{fig:induced_subgraph_QASST_H1}, the process of merging two quotient graphs results in a new intermediate quotient graph, but since the structure of the graph can change after merging, incident edges in the graph-labeled tree might no longer represent valid strong splits.
Specifically, this occurs whenever such an edge represents a join of type sc-ss, ss-sc, or c-c (in the sense of Figure~\ref{fig:quotient_graph_combinations_valid}).
In these cases, the intermediate quotient graphs may also be merged with a neighbor.
This process of merging continues until either all remaining edges in the graph-labeled tree represent valid strong splits, or else only a single quotient graph remains.
Note that the former case is not possible if there exists a quotient graph with fewer than three vertices.
Algorithm~\ref{alg:induced_QASST} outlines this procedure using pseudo code.

Although we only consider connected induced subgraphs here, note that this procedure readily generalizes to disconnected induced subgraphs as well. The only difference is that the intermediate graph-labeled tree must first be divided into connected components.

\begin{algorithm}[ht]
    \caption{Compute Induced Subgraph QASST}
    \label{alg:induced_QASST}
    \KwData{An induced subgraph $H\leq G$, the split decomposition $QASST(G)=(Q_1,\cdots,Q_k)$.}
    \KwResult{Subgraph split decomposition $QASST(H)=(Q_1',\cdots,Q_\ell')$.}

    \For{$i=1,\cdots,k$}{
        Initialize induced subgraph $Q_i'\leq Q_i$\Comment*[r]{delete leaf-nodes not in $H$}
    }
    Initialize $QASST(H)=(Q_1',\cdots,Q_k')$;\\
    \If{$k=1$}{
        Return $QASST(H)$\Comment*[r]{trivial case}
    }
    \While{there exist adjacent $(Q_i',Q_j')\in E(QASST(H))$ whose corresponding split is not strong}{
        \Comment{The split between $(Q_i',Q_j')$ is of type sc-ss, ss-sc, or c-c}
        Remove $Q_j'$ from $QASST(H)$;\\
        Update $Q_i'=\textbf{merge}(Q_i',Q_j')$ in $QASST(H)$;
    }
    Return $QASST(H)=(Q_1',\cdots,Q_\ell')$;\\
\end{algorithm}

\section{Examining LC Equivalence Classes using the QASST}
\label{sect:LC_equivalence_classes_with_the_QASST}

Through the lens of split decompositions, we now consider how this tool can be used to help us characterize the orbits of locally equivalent graphs. Although we will primarily restrict ourselves to distance-hereditary graphs, many of the facts discussed here apply broadly to any simple, connected graph.

\subsection{Invariant Split Decomposition}

We showed in Section~\ref{sect:split_decompositions} that the quotient augmented strong split tree of a graph $G$ can be used to fully recover the original graph, and hence $QASST(G)$ contains equivalent information to $G$. In contrast to this, the strong split tree without the quotient graphs is not enough to perform this reconstruction.
Although it contains less information, $SST(G)$ allows us to examine the split structure of a graph $G$ even without knowing its exact connectivity.
In fact, this is significant for our exploration into local equivalence classes because Bouchet showed that LC equivalent graphs have the same splits~\cite{bouchet1987reducing,bouchet1989connectivity}.

\begin{lemma}[Bouchet~\cite{bouchet1987reducing}, Lemma 2.1]
\label{thm:splits_in_locally_equivalent_graphs}
Locally equivalent graphs have the same splits.
\end{lemma}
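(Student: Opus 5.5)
Since $\cong_{\text{LC}}$ is generated by the primitive maps $c_v$, it suffices to show that a single local complement preserves splits: if $V(G)=U_1\sqcup U_2$ is a split of $G$, then it is also a split of $G'=c_v(G)$ for every $v\in V(G)$. Because $c_v$ is self-inverse, the same statement applied to $G'$ gives the converse inclusion, and induction on the length of $f\in{\mathcal L}_n$ extends it to every locally equivalent pair. Throughout I will use the reformulation that $U_1\sqcup U_2$ is a split exactly when there are subsets $A_1\subseteq U_1$ and $A_2\subseteq U_2$ (the ``frontiers'') such that the edges of $G$ crossing the bipartition are precisely the pairs $A_1\times A_2$. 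Equivalently, every vertex of $U_1$ has either no neighbours in $U_2$ or exactly $A_2$ there, and symmetrically for $U_2$.

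The central step is a case analysis on $v$; by symmetry I take $v\in U_1$. Local complementation at $v$ only toggles pairs $\{x,y\}\subseteq N_G(v)$, so edges inside $U_1$ or inside $U_2$ may change, but those are irrelevant to the split condition. For the crossing edges there are two cases.
\begin{enumerate}
\item If $v\notin A_1$, then $N_G(v)\cap U_2=\emptyset$, so no crossing pair lies inside $N_G(v)$. The crossing edges are unchanged and the frontiers remain $A_1,A_2$.
\item If $v\in A_1$, then $N_G(v)\cap U_2=A_2$, and the crossing pairs that get toggled are exactly $(N_G(v)\cap U_1)\times A_2$. Each of these pairs was already an edge exactly when its $U_1$-endpoint lay in $A_1$. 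After toggling, the crossing edge set is $A_1'\times A_2$ with $A_1'=A_1\,\triangle\,(N_G(v)\cap U_1)$, which is again complete bipartite between a subset of $U_1$ and a subset of $U_2$.
\end{enumerate}
In both cases $U_1\sqcup U_2$ remains a split of $G'$. Nonemptiness of the two sides is untouched since the vertex set is fixed, and degenerate frontiers ($A_1'$ or $A_2$ empty) are harmless because they still satisfy the definition.

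The only subtle point, which I expect to be the main bookkeeping obstacle, is the second case: one must check that it is the \emph{symmetric difference} with $N_G(v)\cap U_1$ that appears, and that $v$ itself is not toggled since $v\notin N_G(v)$. I will verify this by tracking, for each $x\in U_1$, whether $\{x,y\}$ with $y\in A_2$ is toggled (exactly when $x\in N_G(v)$) and whether it was originally present (exactly when $x\in A_1$). Since pairs with $y\in U_2\setminus A_2$ are never toggled, the crossing edges keep the product form $A_1'\times A_2$.

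Finally, because the family of splits is identical for locally equivalent graphs, the crossing relation between splits, and hence which splits are strong, is also preserved. This is what will later justify $SST(G)=SST(G')$. I would state that as a remark after the proof rather than as part of it.
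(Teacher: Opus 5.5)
Your proof is correct. The paper gives no argument for this lemma: it cites Bouchet~\cite{bouchet1987reducing} and uses the result only as a black box to obtain Corollary~\ref{thm:LC_graphs_STT}.

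Your route is the natural elementary verification, and it makes that step of the paper self-contained. You reduce to a single $c_v$ and encode the split by frontiers, so that the crossing edges are exactly $A_1\times A_2$. For $v\in A_1$ the toggled crossing pairs are then $(N_G(v)\cap U_1)\times A_2$, which gives the new crossing set $\bigl(A_1\triangle(N_G(v)\cap U_1)\bigr)\times A_2$.

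Your computation also records how the split data moves under $c_v$, which the citation does not:
\begin{enumerate}
\item The far frontier $A_2$ stays fixed, while $G[A_2]$ is complemented, because $A_2\subseteq N_G(v)$.
\item The near frontier is updated by a symmetric difference.
\end{enumerate}
This is exactly what Algorithm~\ref{alg:LC_propagation} encodes when it passes a local complement across a pair of split-nodes. So your argument could also serve as a justification of that algorithm, which the paper only motivates informally.

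One small tightening: you do not need the remark that degenerate frontiers are ``harmless.'' If $v$ has a neighbour across the split, then $A_2=N_G(v)\cap U_2\neq\emptyset$, and $v\in A_1'$ since $v\notin N_G(v)$. Otherwise no crossing pair is toggled. Hence the crossing edge set of $c_v(G)$ is empty if and only if that of $G$ is, and the conclusion holds under any convention for the empty case.
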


This statement applies both to strong splits and splits which are not strong.
Given that the strong splits in a graph define the edges in the strong split tree, this implies that all graphs in the same local equivalence class have the same strong split tree.
In other words, for any graph $G$, we have that $SST(G)$ is an invariant of the LC orbit ${\mathcal O}(G)$.
We state this here as a corollary.

\begin{corollary}
\label{thm:LC_graphs_STT}
If $G_1$ and $G_2$ are locally equivalent, then $SST(G_1)=SST(G_2)$.
\end{corollary}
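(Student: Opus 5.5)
The corollary should follow from Lemma~\ref{thm:splits_in_locally_equivalent_graphs} together with the fact that $SST(G)$ is determined by the family of strong splits of $G$. Suppose $G_1\cong_{\text{LC}}G_2$, so $G_2=f(G_1)$ for some $f\in{\mathcal L}_n$. I will first reduce to a single primitive local complement $c_i$. Equality of split trees is transitive, so induction on the length of the sequence $f$ handles the general case.

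\textbf{Step 1: strong splits agree.} By Lemma~\ref{thm:splits_in_locally_equivalent_graphs}, the set of bipartitions $V=U_1\cup U_2$ that are splits of $G_1$ equals the set of splits of $G_2$. Crossing is defined only in terms of the two bipartitions, with no reference to edges. Hence a split is crossed by some other split of $G_1$ if and only if it is crossed by some split of $G_2$. So the strong splits of $G_1$ and $G_2$ are the same family of bipartitions, and this holds for both trivial and nontrivial splits.

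\textbf{Step 2: the tree is determined by its strong splits.} Recall from Section~\ref{sect:split_trees_and_components} how $SST(G)$ is built. Its leaves are the vertices $1,\dots,n$. Its edges are in bijection with the strong splits, and deleting an edge induces exactly the corresponding bipartition of the leaves. The strong splits are pairwise non-crossing, so they form a compatible family of bipartitions of the leaf set. A standard fact about such families (the splits-equivalence theorem for trees) is that a compatible family of bipartitions containing all trivial ones determines a unique tree with leaves labelled $1,\dots,n$ and no degree-2 internal nodes. Cunningham's uniqueness result, quoted in Section~\ref{sect:split_trees_and_components}, gives this directly: the process of collapsing strong splits yields a tree that depends only on those splits. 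Therefore $SST(G_1)=SST(G_2)$ as leaf-labelled trees.

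\textbf{Expected obstacle.} The only subtle point is Step~2. One must check that $SST(G)$ depends on $G$ only through its set of strong splits and not also through the quotient graphs. I would argue this by building the internal nodes as the equivalence classes obtained by refining the leaf set along the compatible splits, which depends only on the bipartitions. Alternatively, I would cite Cunningham's uniqueness theorem. Either way, the quotient graphs may differ between $G_1$ and $G_2$, but the underlying tree cannot.
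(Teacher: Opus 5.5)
Your proposal is correct and is essentially the paper's argument. The paper derives the corollary directly from Lemma~\ref{thm:splits_in_locally_equivalent_graphs} by noting that the strong splits define the edges of $SST(G)$. Your Steps 1 and 2 simply spell out the two facts the paper leaves implicit: being strong depends only on the family of splits as bipartitions, and a compatible family of leaf bipartitions determines the leaf-labelled tree.
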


As for the quotient graphs, these can change under local complements, but there is a natural bijective relationship between the quotient graphs obtained via the split decomposition of two locally equivalent graphs.
In particular, two quotient graphs corresponding in this way must have the same number of vertices, both leaf-nodes and split-nodes in matching proportions.

In connection with distance-hereditary graphs, we can make an even stronger statement using another result from Bouchet.

\begin{lemma}[Bouchet~\cite{bouchet1988transforming}, Corollary 3.3]
\label{thm:prime_graph_local_equivalence}
A graph is distance-hereditary if and only if it is totally decomposable.
\end{lemma}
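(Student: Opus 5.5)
The plan is to reduce the statement to Theorem~\ref{thm:distance_hereditary_iff_star_complete_quotient}. The only real work is to compare the two natural readings of ``totally decomposable'' with the notion of being completely separable. I would take Cunningham's definition: $G$ is \emph{totally decomposable} if every induced subgraph of $G$ with at least four vertices has a nontrivial split. I would then prove that this holds if and only if every quotient graph in the split decomposition of $G$ is star or complete. Once that is done, Theorem~\ref{thm:distance_hereditary_iff_star_complete_quotient} converts this into distance-heredity, and the lemma follows. I restrict throughout to connected graphs, as the paper does.

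\emph{Completely separable $\Rightarrow$ totally decomposable.} Suppose all quotient graphs of $G$ are star or complete. By Theorem~\ref{thm:distance_hereditary_iff_star_complete_quotient}, $G$ is distance-hereditary. Any connected induced subgraph $H$ of $G$ is then distance-hereditary by definition. Applying the theorem again, $H$ is completely separable; this can also be read off directly from Algorithm~\ref{alg:induced_QASST}, since merging star and complete quotients along the invalid joins again yields star or complete graphs.

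Now assume $|V(H)|\ge 4$. If $QASST(H)$ has two or more quotient graphs, any red edge of the tree gives a nontrivial strong split of $H$, so we are done. Otherwise $H$ is itself a star or a complete graph on at least four vertices, and a nontrivial split can be written down explicitly:
\begin{itemize}
\item for $K_m$, any bipartition into two parts of size at least $2$ is a split;
\item for a star, put the center together with one spoke on one side and the remaining (at least two) spokes on the other side.
\end{itemize}
Disconnected induced subgraphs with at least four vertices are handled separately: a component gives a split with no crossing edges, and this split is nontrivial unless the graph has an isolated vertex, a case that can be treated by hand.

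\emph{Totally decomposable $\Rightarrow$ completely separable.} Suppose instead that some quotient graph $Q$ of $G$ is prime. By Cunningham's theorem, $Q$ is neither star nor complete, so it has at least five vertices; the smallest prime graph is $C_5$. As recalled in Section~\ref{sect:split_trees_and_components}, each quotient graph is isomorphic to an induced subgraph of $G$. Concretely, replace each split-node by a leaf-node of $G$ chosen from the corresponding side of the split; because the crossing edges form a complete bipartite graph, the adjacencies are preserved. The resulting induced subgraph has at least four vertices and no nontrivial split, since it is prime. This contradicts total decomposability.

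The main obstacle is the justification in this last step that the quotient graph is genuinely an induced subgraph. One has to choose the representative leaf through each split-node consistently, and check that the representatives chosen for different split-nodes of the same $Q$ do not interact badly. I would argue this by following the reconstruction in Figure~\ref{fig:reconstruct_graph_from_qasst}, using induction on the number of merges. A secondary point is a mismatch of conventions: some authors state total decomposability via the split tree having only degenerate nodes. In that formulation the equivalence with completely separable holds by definition, and the lemma is literally Theorem~\ref{thm:distance_hereditary_iff_star_complete_quotient}.
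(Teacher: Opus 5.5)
Your argument is sound, but it is not the paper's route, mainly because you use a different definition of \emph{totally decomposable}.

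\textbf{What the paper does.} The paper gives no proof of this lemma; it cites Bouchet. The surrounding text defines total decomposability as a decomposition by (not necessarily strong) splits into quotient graphs with exactly three vertices. It then cites Cunningham for the fact that this holds exactly when the strong-split quotients are star or complete, and closes with Theorem~\ref{thm:distance_hereditary_iff_star_complete_quotient}.

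\textbf{What you do differently.} You share that last step. In place of the citation of Cunningham, you prove a bridge by hand for the hereditary definition: every induced subgraph on at least four vertices has a nontrivial split. Your tools are heredity of distance-heredity, explicit splits of $K_m$ and of stars, and the fact that every quotient graph is an induced subgraph of $G$.

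\textbf{What each route buys.} Yours is more self-contained and gives the forbidden-prime-induced-subgraph picture. However, it does not literally address the paper's three-vertex definition. Your closing remark that the equivalence then holds ``by definition'' concerns yet another formulation (degenerate nodes), not the paper's. To reach the paper's definition you still need two facts:
\begin{itemize}
\item star and complete quotients refine into three-vertex stars and triangles (easy);
\item a prime quotient $Q$, which has at least five vertices, cannot occur when $G$ has a three-vertex decomposition.
\end{itemize}
Your Direction~2 idea gives the second fact almost verbatim. Restrict the three-vertex decomposition tree, whose internal nodes have degree three, to the leaves of an induced copy of $Q$. There are at least four such leaves, so some tree edge separates them into two groups of size at least two. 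That split of $G$ restricts to a nontrivial split of the copy of $Q$, a contradiction.

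\textbf{Two small repairs in Direction~2.}
\begin{itemize}
\item The vertex replacing a split-node must come from the frontier of that split, i.e.\ a vertex incident to the complete bipartite crossing edges. An arbitrary vertex on that side can lose its adjacencies to the other nodes of $Q$. With frontier representatives the adjacency check goes through. Since the paper already asserts in Section~\ref{sect:split_trees_and_components} that each quotient graph is isomorphic to an induced subgraph of $G$, you may simply cite that instead of inducting on merges.
\item The implication is stated backwards. It should read: if $Q$ is neither star nor complete, then by Cunningham it is prime.
\end{itemize}
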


A graph is \textbf{totally decomposable} in the sense of Cunningham and Bouchet if it admits a decomposition using splits (not necessarily \textit{strong} splits) into quotient graphs with exactly three nodes~\cite{cunningham1982decomposition,bouchet1988transforming}. In fact, this occurs precisely when the quotient graphs of the \textit{strong} split decomposition (what Cunningham refers to as the \textit{minimal} or \textit{canonical} decomposition) are star or complete (what Cunningham refers to as \textit{brittle} split components)~\cite{cunningham1982decomposition}.
This is exactly what it means for a graph to be completely separable, and hence also distance-hereditary.
Bouchet showed further that the property of being distance-hereditary is preserved under local complements~\cite{bouchet1988transforming}, as is the property of being prime (i.e.~containing no nontrivial splits)~\cite{bouchet1987reducing}.

\begin{lemma}[Bouchet~\cite{bouchet1988transforming}, Corollary 4.2]
\label{thm:distance_hereditary_LC_equivalence}
Any graph locally equivalent to a totally decomposable [distance-hereditary] graph is totally decomposable [distance-hereditary].
\end{lemma}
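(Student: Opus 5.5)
It suffices to treat a single local complement $c_v$: any $f\in{\mathcal L}_n$ is a finite composition of primitive local complements, so the general statement follows by induction on the length of $f$. Fix a totally decomposable (equivalently, by Lemma~\ref{thm:prime_graph_local_equivalence}, distance-hereditary) graph $G$ and a vertex $v$, and set $G'=c_v(G)$. The goal is to exhibit a total decomposition of $G'$.

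\textbf{Route via the split tree.} By Lemma~\ref{thm:splits_in_locally_equivalent_graphs}, $G$ and $G'$ have the same splits, and by Corollary~\ref{thm:LC_graphs_STT}, $SST(G')=SST(G)$. The quotient graphs of $G'$ are therefore in bijection with those of $G$, with the same numbers of leaf-nodes and split-nodes. It then suffices to show that each quotient graph of $G'$ is star or complete; Theorem~\ref{thm:distance_hereditary_iff_star_complete_quotient} finishes the argument.

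To control the quotients I would use the description of a quotient graph as an induced subgraph. For a node $Q$ of the split tree, choose one representative leaf-vertex of $G$ on the far side of each split-node of $Q$. Any such choice realises $Q$ as an induced subgraph of $G$, since across a split the adjacencies are complete bipartite between the "frontier" sets.

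\textbf{Main step.} I expect the main obstacle to be showing that local complementation commutes with taking quotients, in the sense that the quotient of $G'$ at $Q$ is locally equivalent to the quotient of $G$ at $Q$. Concretely, $Q'=Q$, or $Q'=c_w(Q)$ where $w$ is either $v$ (if $v\in Q$) or the split-node of $Q$ pointing toward $v$.

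To prove this, I would choose representatives cleverly. If $v$ lies beyond some split-node $s$ of $Q$, take $v$'s side representative to be a neighbour of $v$ on the frontier, or $v$ itself, when possible. The neighbourhood of $v$ restricted to the representatives is then either empty or exactly the neighbourhood of $s$ in $Q$, and restricting $c_v$ to the induced subgraph gives $c_s$ on $Q$. Vertex deletion commutes with local complement at a retained vertex, and removing $v$ itself after complementing a neighbour-set matching $s$'s neighbourhood gives the same thing. Some care is needed when $v$ is not on a frontier; then its neighbourhood within the representatives is empty and $Q$ is unchanged.

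\textbf{Conclusion.} Granting this, each quotient of $G'$ is a local complement of a star or complete graph. Local complements of a star at its centre give a complete graph, and at a spoke give a star with that spoke as centre. Local complements of a complete graph at any vertex give a star centred there. The class of stars and complete graphs is therefore closed under local complementation, so every quotient of $G'$ is star or complete. Hence $G'$ is completely separable, and thus totally decomposable and distance-hereditary. Iterating over the sequence $f$ yields the lemma.
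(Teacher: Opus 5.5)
Your argument is correct in substance, but it takes a different route from the paper. The paper gives no proof of this lemma; it simply imports it from Bouchet (Corollary 4.2 of \cite{bouchet1988transforming}).

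\textbf{Comparison with the paper.} You instead derive the lemma from the paper's own later machinery. Your main step is the one-step case of the LC propagation of Algorithm~\ref{alg:LC_propagation} (Theorem~\ref{thm:LC_equiv_graphs_have_LC_equiv_quotients}). You combine it with Lemma~\ref{thm:splits_in_locally_equivalent_graphs}, closure of the class of stars and complete graphs under local complementation, and the converse direction of Theorem~\ref{thm:distance_hereditary_iff_star_complete_quotient}. This is legitimate, because the paper's proof of that theorem does not use the present lemma.

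Your route also yields more than the bare statement. Each quotient is either fixed or hit by a single local complement, at $v$ or at the split-node facing $v$, and this is exactly the information the later counting exploits. The price is that the converse of Theorem~\ref{thm:distance_hereditary_iff_star_complete_quotient} is proved in the paper via rank-width. Once rank-width is on the table, the lemma follows more directly: the cut-rank function is invariant under local complementation, so rank-width at most $1$ (i.e.\ distance-heredity, for connected graphs) is preserved.

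\textbf{Points to tighten.} Neither of the following is fatal.

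First, a local complement of a star at a spoke is the identity, since a spoke has only one neighbour. It does not re-centre the star; re-centring needs $c_w\circ c_{\mathrm{centre}}$. Your closure claim is unaffected.

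Second, the representatives cannot be ``any'' vertex on the far side of a split-node. They must be frontier vertices, and when you compute $Q'$ they must be frontier vertices of $G'$, not of $G$. Let $A_s$ be the far side containing $v$.
\begin{itemize}
\item If $v\in F(A_s)$, then after $c_v$ the frontier of $A_s$ becomes $F(A_s)\triangle N_{A_s}(v)$.
\item If $v\notin F(A_s)$, the frontier of $A_s$ is unchanged.
\item The frontiers of all far sides not containing $v$ are unchanged in every case.
\end{itemize}
So when $v$ is on the frontier you must take $v$ itself as the representative. A frontier neighbour of $v$ falls off the frontier of $G'$, so that choice fails. With $v$ as representative, $G'[R]=c_v(G[R])\cong c_s(Q)$.

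When $v$ is off the frontier, $N(v)\subseteq A_s$ and no crossing edge changes. Then $v$ has at most one neighbour among the representatives (not necessarily zero), so nothing is toggled and $Q'=Q$. With these points made explicit, your main step is rigorous.
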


\begin{lemma}[Bouchet~\cite{bouchet1987reducing}, Lemma 2.2]
\label{thm:prime_graph_local_equivalence}
Any graph locally equivalent to a prime graph is prime.
\end{lemma}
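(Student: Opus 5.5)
The claim to prove is the last lemma: if $G\cong_{\text{LC}}G'$ and $G$ is prime, then $G'$ is prime. Since $\cong_{\text{LC}}$ is generated by the primitive local complements $c_v$, and each $c_v$ is self-inverse, it suffices to treat a single step. Concretely, I will show that a bipartition $V=U_1\sqcup U_2$ with $|U_1|,|U_2|\ge 2$ is a split of $G$ if and only if it is a split of $c_v(G)$. Then $G$ and $c_v(G)$ have exactly the same nontrivial splits, and primality transfers along any sequence of local complements. This is really Lemma~\ref{thm:splits_in_locally_equivalent_graphs}, but I will re-derive it so the argument is self-contained.

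I will use the rank formulation of splits. For a bipartition $(U_1,U_2)$, let $A=A_G[U_1,U_2]$ be the $U_1\times U_2$ submatrix of the adjacency matrix over $\mathbb{F}_2$. The crossing edges form a complete bipartite graph (possibly empty) exactly when $A=xy^T$ for $0/1$ vectors $x,y$, i.e.\ when $\operatorname{rank}A\le 1$. If $G$ is connected, rank $0$ is impossible for a nonempty bipartition. Since local complements preserve connectivity, a split is then the same as $\operatorname{rank}_{\mathbb{F}_2}A_G[U_1,U_2]=1$.

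Next I compute how $c_v$ acts on this matrix. Assume without loss of generality that $v\in U_1$, and let $n$ be the indicator vector of $N_G(v)$. Local complement at $v$ toggles the pair $(a,b)$ exactly when both $a$ and $b$ lie in $N_G(v)$. Hence
\[
A_{c_v(G)}[U_1,U_2]=A+n_{U_1}\,n_{U_2}^T .
\]
The row of $A$ indexed by $v$ is exactly $n_{U_2}^T$, because $v$ is not adjacent to itself. Also the entry of $n_{U_1}$ at $v$ is $0$, so row $v$ is left unchanged. The update therefore adds a multiple of row $v$ to the rows indexed by $N_G(v)\cap U_1$. These are elementary row operations, so the rank is preserved. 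By symmetry the same holds when $v\in U_2$.

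It follows that splits, and in particular nontrivial splits, are identical for $G$ and $c_v(G)$, which gives the lemma. The main point to handle carefully is the bookkeeping in the matrix identity: checking that row $v$ itself is unaffected, and that the rank-$0$ case cannot sneak in. Connectivity of a prime graph handles the latter, or one can simply regard rank-$0$ bipartitions as disconnections, which are also preserved.
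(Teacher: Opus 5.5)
Your proof is correct. The paper gives no argument of its own for this lemma: it cites Bouchet, and in context the statement is an immediate corollary of Lemma~\ref{thm:splits_in_locally_equivalent_graphs} (locally equivalent graphs have the same splits, hence the same nontrivial splits). That lemma is also only cited. You follow the same reduction, but you make it self-contained by proving split invariance through the $\mathbb{F}_2$ cut-rank.

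The key step is the identity $A_{c_v(G)}[U_1,U_2]=A+n_{U_1}n_{U_2}^T$. Here $n_{U_2}^T$ is row $v$ of $A$ and $v\notin N_G(v)$, so $c_v$ acts by elementary row (or column) operations. This is the argument familiar from Oum's work on rank-width.

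It gives slightly more than Bouchet's statement: the cut-rank of every bipartition is invariant, not just the property of having rank at most $1$. So rank-$0$ and rank-$1$ cuts are preserved separately, and it no longer matters whether disconnecting bipartitions count as splits. It also fits with the rank-width characterization the paper uses in the proof of Theorem~\ref{thm:distance_hereditary_iff_star_complete_quotient}. The paper's citation-only route is shorter but leaves the key invariance as a black box.
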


Bouchet leveraged split decompositions to characterize those graphs which are locally equivalent to certain special graph cases, including trees~\cite{bouchet1988transforming} and circle graphs~\cite{bouchet1987reducing}. 
Trees are a special case of distance-hereditary graph; these are precisely the graphs which can be constructed recursively using only pendant vertices.
Every distance-hereditary graph is also a special case of the broader family of circle graphs, although there exist non-distance-hereditary circle graphs.

We conclude this subsection by stating the result concerning trees proven by Bouchet~\cite{bouchet1988transforming}, which was originally conjectured by Mulder~\cite{mulder1986}.

\begin{theorem}[Mulder's Conjecture~\cite{mulder1986}, proven by Bouchet~\cite{bouchet1988transforming}, Corollary 5.4]
Any two locally equivalent trees are isomorphic.
\end{theorem}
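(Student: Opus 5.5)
The plan is to use the invariance of the strong split tree (Corollary~\ref{thm:LC_graphs_STT}) and show that, for a tree, the strong split tree alone already determines the graph up to isomorphism. Let $T_1$ and $T_2$ be locally equivalent trees on the vertex set $\{1,\cdots,n\}$. By Corollary~\ref{thm:LC_graphs_STT}, $SST(T_1)=SST(T_2)$. The cases $n\leq 3$ are disposed of directly: the only connected graphs are $K_1$, $K_2$, $P_3$, and $K_3$, and among these the only tree on $n$ vertices is unique up to isomorphism. Assume $n\geq 4$ from now on.

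\textbf{Step 1: quotient graphs of a tree are stars with leaf-node centers.} Let $T$ be any tree. It is distance-hereditary, so by Theorem~\ref{thm:distance_hereditary_iff_star_complete_quotient} every quotient graph is star or complete. Each quotient graph has at least three vertices, because a quotient with at most two vertices would fall into an invalid case of Figure~\ref{fig:quotient_graph_combinations_valid}. Each quotient graph is also isomorphic to an induced subgraph of $T$. Since $T$ contains no triangle, there is no complete quotient graph $K_m$ with $m\geq 3$, so every quotient graph is a star.

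Next I rule out sc-sc joins. If two adjacent stars are joined at their centers, then each side has at least two spokes. Merging replaces the split-node pair by all-to-all connections between these spokes, which produces an induced $C_4$ in $T$ (taking induced subgraphs of quotients is compatible with reconstruction). This contradicts acyclicity. By Figure~\ref{fig:quotient_graph_combinations_valid} the only remaining valid join between stars is ss-ss, so no split-node is ever a center. Hence every quotient star $Q$ has as its center a leaf-node $c_Q$, which is a genuine vertex of $T$.

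\textbf{Step 2: reconstruct $T$ from $SST(T)$ and the centers.} Apply the merging procedure of Figure~\ref{fig:reconstruct_graph_from_qasst} to this description. An ss-ss join between $Q$ and $Q'$ joins the unique neighbor of the split-node in $Q$, namely $c_Q$, to the unique neighbor of the split-node in $Q'$, namely $c_{Q'}$. Therefore the edges of $T$ are exactly of two kinds:
\begin{itemize}
\item $c_Q c_{Q'}$ for each internal edge $QQ'$ of $SST(T)$;
\item $c_Q v$ for each leaf $v\neq c_Q$ of $SST(T)$ attached to $Q$.
\end{itemize}
It follows that $T$ is isomorphic to the tree obtained from the subtree of internal nodes of $SST(T)$ by attaching $\ell_Q-1$ pendant vertices to each internal node $Q$. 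Here $\ell_Q\geq 1$ is the number of $SST$-leaves adjacent to $Q$. This isomorphism type depends only on $SST(T)$ and not on which leaf was chosen as the center $c_Q$.

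\textbf{Step 3: conclude.} Apply Step 2 to both $T_1$ and $T_2$, which share the same strong split tree. The two trees are isomorphic to the same tree built from $SST(T_1)=SST(T_2)$, so $T_1\cong T_2$. Note that the labelings may differ, because the chosen centers may differ; this is why only isomorphism, and not equality, is claimed.

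The main obstacle is Step 1, specifically the claim that an sc-sc join forces a $C_4$. To make it rigorous I would choose, in each of the two stars, two spokes. Each such spoke either is a leaf-node or leads, through its split-node subtree, to some vertex of $T$ adjacent to it across that split. The split-node pairing then guarantees that the four chosen vertices induce a $4$-cycle in $T$. Verifying this needs a short lemma: every split-node has, on its far side, a vertex of $T$ that realizes its adjacencies. This lemma follows from the reconstruction procedure by induction on the number of quotient graphs.
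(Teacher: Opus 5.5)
Your proof is correct, and it takes a different route from the paper. The paper gives no argument for this statement; it only quotes it from Bouchet (Corollary 5.4 of the 1988 paper).

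You derive it from the paper's own machinery:
\begin{itemize}
\item the invariance $SST(T_1)=SST(T_2)$ (Corollary~\ref{thm:LC_graphs_STT}), which rests only on Bouchet's more basic lemma that splits are LC-invariant;
\item the star/complete dichotomy for quotient graphs;
\item triangle-freeness, to eliminate complete quotients;
\item acyclicity, to force every join to be ss--ss.
\end{itemize}
After that, the merging rule shows that a tree is determined up to isomorphism by its strong split tree.

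The step you flag as the main obstacle is lighter than you suggest. The $4$-cycle need not be induced, since a tree contains no cycle subgraph at all. If both split-nodes of a join are star centers, each has at least two spokes. Following each spoke outward, using only that quotient graphs are connected, gives two distinct vertices on each side of that split, all adjacent across it. The complete bipartite crossing graph therefore contains $K_{2,2}$, and no non-adjacency checks are needed.

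Your route also gives more than the bare citation does. It shows that the trees in $\mathcal{O}(T)$ are precisely those obtained from $SST(T)$ by choosing a center leaf-node $c_Q$ in each quotient graph. A pivot $c_u\circ c_v\circ c_u$ on a pendant edge $uv$ simply swaps $u$ and $v$, so every such choice is realized. For $n\geq 3$ this gives exactly $\prod_Q \ell_Q$ labeled trees in the class. That is the same kind of orbit-level bookkeeping the paper carries out for its other families, whereas the citation is shorter but opaque.
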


\subsection{Propagation of Local Complements through the QASST}

In the spirit of Bouchet, we seek to characterize the LC equivalence classes of certain families of distance-hereditary graphs using the split decomposition.
We do this by exploiting the QASST.
To understand this, we first make explicit the effect of local complements applied to a graph on that graph's QASST.

Recall that a local complement with respect to a chosen vertex $v\in V(G)$ affects the edges in $N_G(v)$. However, two adjacent vertices in $G$ may belong to different quotient graphs in $QASST(G)$.
This means that local complement can have an effect on the edges in multiple quotient graphs.
Intuitively, we can interpret local complements as \textit{transmitting} through adjacent quotient graphs in the QASST via pairs of adjacent split-nodes. Figure~\ref{fig:LC_propagation_thru_QASST_example} shows an explicit example, and we formalize this idea in terms of the recursive function defined in Algorithm~\ref{alg:LC_propagation}.

\begin{figure*}[t]
\centering
\begin{subfigure}{0.33\textwidth}
\centering
\includegraphics[width=0.7\textwidth,page=1]{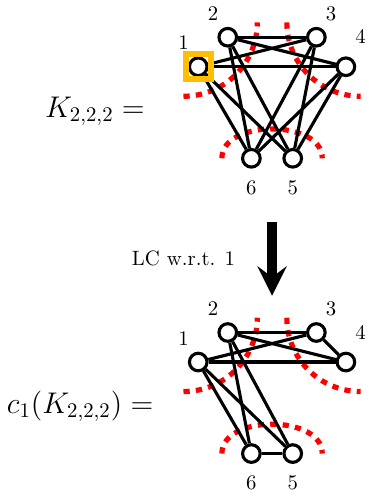}
\caption{Graphs related by local complement}
\label{fig:LC_propagation_original_graphs}
\end{subfigure}\begin{subfigure}{0.66\textwidth}
\centering
\includegraphics[width=0.8\textwidth,page=2]{Figures/Section_Exploring_LC_Equivalence_Classes_with_the_QASST.pdf}
\caption{Propagation of LC operations through the corresponding QASST}
\label{fig:LC_propagation_QASSTs}
\end{subfigure}
\caption{Example of local complement propagating through the QASST. Part~\ref{fig:LC_propagation_original_graphs} shows an example of the graph obtained by applying local complement with respect to vertex 1 in $K_{2,2,2}$. Part~\ref{fig:LC_propagation_QASSTs} shows the effect of this on the QASST. Local complement in $K_{2,2,2}$ corresponds to applying a series of local complements on the quotient graphs of $QASST(K_{2,2,2})$ via the function defined in Algorithm~\ref{alg:LC_propagation}. In this example, local complements are transmitted across each pair of split-nodes. The final quotient graphs obtained in $QASST(c_1(K_{2,2,2}))$ are: $Q_1'=c_1(Q_1)$, $Q_0'=c_{s_0^1}(Q_0)$, $Q_2'=c_{s_2^0}(Q_2)$, and $Q_3'=c_{s_3^0}(Q_3)$.}
\label{fig:LC_propagation_thru_QASST_example}
\end{figure*}

\begin{algorithm}[htb]
    \caption{LC Propagation}
    \label{alg:LC_propagation}
    \KwData{A vertex $v$, quotient graphs $Q_1,\cdots,Q_k$.}
    \KwResult{Quotient graphs $Q_1',\cdots,Q_k'$.}

    \For{$\ell=1,\cdots,k$}{
        Initialize $Q_\ell'=Q_\ell$;\\
    }
    Identify $Q_i$ for which $v\in V(Q_i)$;\\
    Update $Q_i'=c_v(Q_i)$\Comment*[r]{apply LC w.r.t. $v$ to $Q_i$}
    \For{split-node $s_i^j\in N_{Q_i}(v)$}{
        \Comment{Apply LC on neighboring quotient graphs recursively}
        Update $(Q_1',\cdots,Q_k')=\textbf{LC Propagation}(s_j^i,Q_1',\cdots,Q_k')$;\\
    }
    Return $(Q_1',\cdots,Q_k')$;\\
\end{algorithm}

Local complement on a graph $G$ is only with respect to a vertex $v\in V(G)$; $v$ corresponds to some leaf-node in one of the quotient graphs of $QASST(G)$.
The first call of the function defined in Algorithm~\ref{alg:LC_propagation} must use a leaf-node, but observe that during the recursive step, every subsequent call of the function is with respect to a split-node.
Furthermore, each call of the function only computes one instance of local complement as determined by the input vertex.
Split-nodes always come in pairs, representing two adjacent quotient graphs in the QASST. Hence, for a given split-node $s_i^j\in N_{Q_i}(v)$, the recursive step of the algorithm calls the function using that split-node's partner $s_j^i\in Q_j$.
In this way, a series of local complements propagates through $QASST(G)$.

Split-nodes can be thought of intuitively as bundles of fully connected edges between pairs of adjacent quotient graphs. Hence, when applying local complement on a quotient graph with respect to the neighbor of a split-node, this propagates to local complement on the next quotient graph via that split-node's partner.
Since $QASST(G)$ is a tree, this process will eventually terminate after transmitting from the original node through some number of bridges in the QASST defined by these pairs of split-nodes.
Not every quotient graph needs to be visited; only those quotient graphs which contain vertices adjacent to the original vertex $v\in V(G)$.

Figure~\ref{fig:LC_propagation_thru_QASST_example} shows the effect of propagating local complements on the QASST. In this example, the function of Algorithm~\ref{alg:LC_propagation} has three recursive iterations after the initial function call. This sequence of local complements is visualized by a branching path through the QASST. After the initial leaf-node, all subsequent local complements are with respect to the split-node where this path enters the corresponding quotient graph.

Algorithm~\ref{alg:LC_propagation} shows that local complement applied to a graph induces some number of local complements on the quotient graphs of the QASST.
In particular, this means that any two graphs related via local complement will have either the same quotient graphs or quotient graphs related via some induced local complement.
This can be extended to the full LC orbit to prove the following fact about locally equivalent graphs.

\begin{theorem}
\label{thm:LC_equiv_graphs_have_LC_equiv_quotients}
If $G_1$ and $G_2$ are locally equivalent graphs, then the quotient graphs in $QASST(G_1)$ and $QASST(G_2)$ are also locally equivalent.
\end{theorem}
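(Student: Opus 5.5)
We argue by induction on the length of a sequence of local complements relating $G_1$ and $G_2$. Since $G_1\cong_{\text{LC}}G_2$, there is $f=c_{i_k}\circ\cdots\circ c_{i_1}\in{\mathcal L}_n$ with $f(G_1)=G_2$. Because local equivalence is transitive, including on each fixed quotient vertex set, it suffices to treat a single primitive local complement $G'=c_v(G)$ and then chain the steps.

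First we fix the correspondence between quotient graphs. By Corollary~\ref{thm:LC_graphs_STT}, $SST(G)=SST(G')$. Hence the internal nodes of the two trees, and therefore the quotient graphs, are in canonical bijection $Q_\ell\leftrightarrow Q_\ell'$. Each matched pair has the same vertex labels: the leaf-nodes are determined by the leaves of the SST adjacent to the internal node, and the split-nodes $s_\ell^j$ by its incident internal edges. So $Q_\ell$ and $Q_\ell'$ live on the same labelled vertex set, and the claim ``$Q_\ell\cong_{\text{LC}}Q_\ell'$'' makes sense in the sense of Section~\ref{sect:LC_function}.

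The core step is to show that the output of Algorithm~\ref{alg:LC_propagation} on $(v,Q_1,\dots,Q_k)$ is exactly $QASST(c_v(G))$. We do this by checking edge by edge that the graph-labeled tree returned by the algorithm reconstructs $c_v(G)$, using the merging procedure of Section~\ref{sect:qasst_construction}.

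Two vertices $x,y$ of $G$ are adjacent iff, along the tree path between their quotient graphs, every consecutive pair of nodes is adjacent in the relevant quotients. These are $x$ to the first split-node, split-node to split-node within each intermediate quotient, and the last split-node to $y$. The neighbourhood $N_G(v)$ is the set of leaves reachable from $v$ by such ``alternating'' paths.

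Local complementation at $v$ toggles $xy$ exactly when $x,y\in N_G(v)$. Suppose the paths from $v$ to $x$ and from $v$ to $y$ diverge in quotient $Q_m$, which they enter via split-node $s$ (or $s=v$ if $m=i$). Then $x$ and $y$ are represented in $Q_m$ by nodes $a,b\in N_{Q_m}(s)$. Toggling $ab$ in $Q_m$ toggles $xy$ in the reconstruction and no other pair. The recursion in Algorithm~\ref{alg:LC_propagation} applies $c_s$ precisely to those $Q_m$ reached through neighbours, that is, those containing such divergence points. It touches each quotient at most once, because the QASST is a tree.

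Thus the reconstructed graph is $c_v(G)$. Every new quotient is a local complement of the old one, so it remains prime, star or complete up to LC. Star and complete quotients stay star or complete: $c$ at a star centre gives complete, and $c$ at a spoke leaves the star unchanged. Prime quotients stay prime by Lemma~\ref{thm:prime_graph_local_equivalence}, so they contain no nontrivial splits. The edges of the tree are the same strong splits by Corollary~\ref{thm:LC_graphs_STT}. By the uniqueness of Cunningham's decomposition, the returned tree is therefore $QASST(c_v(G))$, with each $Q_\ell'$ equal to $Q_\ell$ or to $c_s(Q_\ell)$ for a single node $s$.

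The main obstacle is this last identification. One must check that the propagated tree has no newly non-strong edges (for example, two adjacent quotients becoming c-c or sc-ss). Otherwise the QASST of $c_v(G)$ would be a coarsening of the propagated tree, and the bijection would fail. We would handle this by invoking Corollary~\ref{thm:LC_graphs_STT} together with the uniqueness of the minimal decomposition: the number of strong splits is unchanged, so no merging can occur. Once the single-step case is established, induction along $f$ gives $Q_\ell^{(2)}=g_\ell(Q_\ell^{(1)})$ for some $g_\ell$ in the local complementation group on $V(Q_\ell)$, proving the theorem.
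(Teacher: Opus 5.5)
Your proof is correct and follows the paper's route: you fix the quotient-graph bijection through Bouchet's invariance of splits (Corollary~\ref{thm:LC_graphs_STT}), push each primitive local complement through the QASST via Algorithm~\ref{alg:LC_propagation}, and chain along the sequence; your identification of the propagated tree with $QASST(c_v(G))$ (same bipartitions, labels still prime, star or complete, hence no merging possible) justifies a step the paper only asserts. One sentence needs repair: toggling an edge $ab$ of $Q_m$ affects every pair of leaves whose tree path runs through $a$ and $b$, not only $xy$. The edge-by-edge check actually rests on a different fact: on any leaf-to-leaf tree path, only the quotient closest to $v$'s quotient can have its path edge toggled, because in every other visited quotient on that path the entry split-node $s$ lies on the path and $c_s$ never changes edges at $s$.
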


\begin{proof}
Suppose $G_1,G_2\in{\mathcal G}_n$ are locally equivalent graphs.
Thus, there exists some sequence of primitive local complements $f=c_{i_\ell}\circ\cdots\circ c_{i_1}\in{\mathcal L}_n$ for which $G_2=f(G_1)$.
As locally equivalent graphs, $G_1$ and $G_2$ have the same splits (Lemma~\ref{thm:splits_in_locally_equivalent_graphs}) and hence there is a natural bijection of quotient graphs $Q_1^1,\cdots,Q_k^1$ of $QASST(G_1)$ and $Q_1^2,\cdots,Q_k^2$ of $QASST(G_2)$.
By repeated application of Algorithm~\ref{alg:LC_propagation}, for each corresponding pair of quotient graphs $Q_j^1$ and $Q_j^2$, there exists a sequence of primitive local complements $f_j=c_{j_\ell}\circ\cdots\circ c_{j_1}\in{\mathcal L}_{|V(Q_j)|}$ such that $f_j(Q_j^1)=Q_j^2$, which shows that $Q_j^1$ and $Q_j^2$ are locally equivalent.
This completes the proof.
\end{proof}

\begin{figure*}[t]
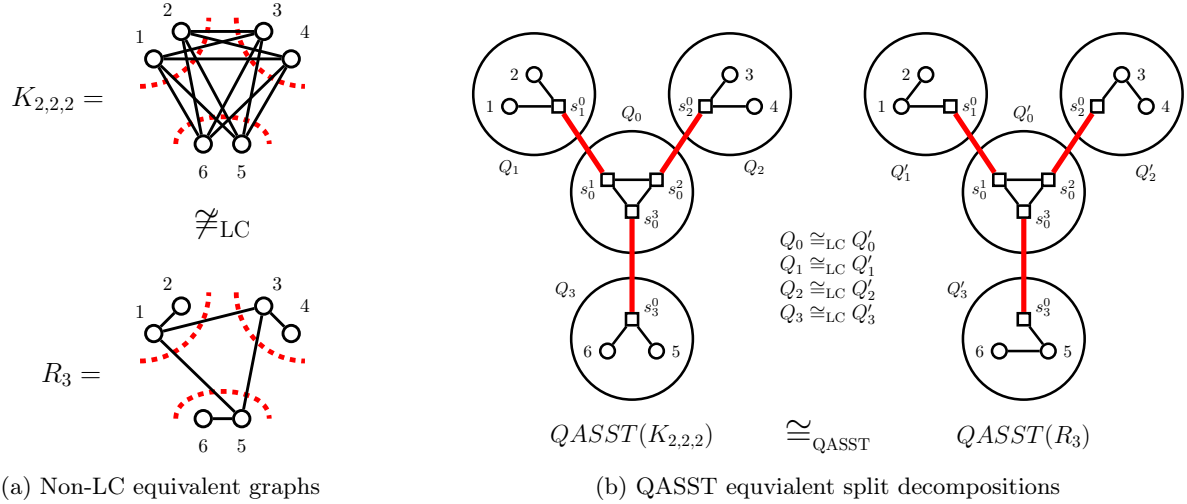

\centering
\begin{subfigure}{0.33\textwidth}
\centering
\includegraphics[width=0.7\textwidth,page=3]{Figures/Section_Exploring_LC_Equivalence_Classes_with_the_QASST.pdf}
\caption{Non-LC equivalent graphs}
\label{fig:non_LC_equivalent_QASST_equivalent_graphs}
\end{subfigure}\begin{subfigure}{0.66\textwidth}
\centering
\includegraphics[width=0.8\textwidth,page=4]{Figures/Section_Exploring_LC_Equivalence_Classes_with_the_QASST.pdf}
\caption{QASST equvialent split decompositions}
\label{fig:non_LC_equivalent_QASST_equivalent_QASSTs}
\end{subfigure}
\caption{An example of QASST equivalent graphs $K_{2,2,2}$ and $R_3$ which are not LC equivalent. These graphs have the same strong splits: $STT(K_{2,2,2})=STT(R_3)$; and each quotient graph of $K_{2,2,2}$ is locally equivalent to each quotient graph of $R_3$. However, these two graphs are not locally equivalent (these are special cases of graphs shown to be non-LC equivalent in \ref{app:LC_non-equivalence}).}
\label{fig:Non_LC_equivalent_QASST_equivalent_counterexample}
\end{figure*}

Note that the converse of Theorem~\ref{thm:LC_equiv_graphs_have_LC_equiv_quotients} does not hold. There exist graphs $G_1$ and $G_2$ for which $SST(G_1)=SST(G_2)$ (i.e.~the graphs have the same strong splits), and each quotient graph of $QASST(G_1)$ is locally equivalent to each corresponding quotient graph of $QASST(G_2)$, but $G_1$ and $G_2$ are not themselves locally equivalent.
Figure~\ref{fig:Non_LC_equivalent_QASST_equivalent_counterexample} shows one such counterexample.
Define two graphs with these properties (the same strong split tree and locally equivalent quotient graphs) to be \textbf{QASST equivalent}.
QASST equivalence can be thought of as a weaker version of LC equivalence.

Theorem~\ref{thm:LC_equiv_graphs_have_LC_equiv_quotients} can be used as a tool to check when two graphs with the same strong split tree are not locally equivalent.
Although the counterexample of Figure~\ref{fig:Non_LC_equivalent_QASST_equivalent_counterexample} shows that it cannot distinguish between all LC equivalence classes, it does imply that two graphs with non-locally equivalent quotient graphs cannot be locally equivalent.
Since star and complete graphs are always locally equivalent, this is of limited utility in the distance-hereditary case since these are the only types of quotient graphs possible.
However, for the non-distance-hereditary case (i.e.~allowing for prime quotient graphs), this fact suggests that a general classification of LC orbits should begin with classifying the orbits of prime graphs.

\subsection{Counting QASST Equivalences}
\label{sect:QASST_equivalence}

Although QASST equivalent graphs are not necessarily LC equivalent, Theorem~\ref{thm:LC_equiv_graphs_have_LC_equiv_quotients} implies that LC equivalent graphs must be QASST equivalent.
The size of the QASST equivalence class of any graph $G$ depends on the sizes of the LC equivalence classes of the quotient graphs of $G$.
Since $QASST$ equivalent graphs have the same strong splits, the product of the sizes of the LC orbits of the quotient graphs immediately gives an upper bound on the total number of QASST equivalent graphs.
In other words, if $G$ is a graph whose split decomposition has quotient graphs $Q_1,\cdots,Q_k$, then the number of graphs QASST equivalent to $G$ is at most $|{\mathcal O}(Q_1)||{\mathcal O}(Q_2)|\cdots|{\mathcal O}(Q_k)|$, where ${\mathcal O}(Q_i)$ is the LC orbit of the $i^{\text{th}}$ quotient graph in $QASST(G)$.
This bound simply counts the number of ways that combinations of locally equivalent quotient graphs could be substituted into $QASST(G)$.

However, recall from the examples of Figure~\ref{fig:quotient_graph_combinations_valid} that certain combinations of adjacent quotient graphs in a split tree do not correspond to strong splits. In this case, the split decomposition would not be preserved.
Hence, to convert the stated upper bound into an equality, these invalid cases must be excluded.
Recall from Figure~\ref{fig:quotient_graph_combinations_valid} that the two types of invalid possibilities for adjacent quotient graphs in the QASST occur only between neighboring star or complete graphs.
Prime quotient graphs in the QASST can be adjacent to any other type of quotient graph; this will always correspond to a strong split.
Thus, we need only consider the former two cases when counting the number of invalid combinations. 
All other combinations of star, complete, or prime quotient graphs in a split tree preserve strong splits.
We state these observations as a lemma.

\begin{lemma}
\label{thm:counting_QASST_equivalence}
If $G$ is a graph with quotient graphs $Q_1,\cdots,Q_k$ in $QASST(G)$, then the number of graphs $QASST$ equivalent to $G$ is
\begin{eqnarray}\label{eq:count_QASST_equivalence}
\Phi(G)&=&|{\mathcal O}(Q_1)|\cdots|{\mathcal O}(Q_k)|-\mu(G),
\end{eqnarray}
where ${\mathcal O}(Q_i)$ is the LC orbit of the $i^{\text{th}}$ quotient graph of $QASST(G)$ and $\mu(G)$ denotes the number of distinct combinations of locally equivalent quotient graphs which do not preserve strong splits.
\end{lemma}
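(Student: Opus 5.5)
The plan is to set up an explicit map from graphs QASST equivalent to $G$ to tuples of quotient graphs, and to show it is a bijection onto the set of \emph{valid} tuples. Concretely, let $\mathcal{T}$ be the set of tuples $(Q_1',\cdots,Q_k')$ with $Q_i'\in{\mathcal O}(Q_i)$ for every $i$, where each $Q_i'$ keeps the labelled vertex set of $Q_i$, including its split-nodes $s_i^j$. Call such a tuple \emph{valid} if, when substituted into the tree $SST(G)$ with the same pairing of split-nodes, every red edge represents a strong split. Then $|\mathcal{T}|=|{\mathcal O}(Q_1)|\cdots|{\mathcal O}(Q_k)|$, and by definition $\mu(G)$ is the number of invalid tuples. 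The lemma therefore reduces to proving that QASST-equivalent graphs are in bijection with valid tuples.

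Define $\Psi$ on valid tuples by reconstructing a graph through merging split-node pairs, as in Subsection~\ref{sect:qasst_construction}.

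Step 1 shows that $\Psi$ lands in the QASST class of $G$. For a valid tuple, every tree edge is a strong split of $H=\Psi(Q_1',\cdots,Q_k')$. Each $Q_i'$ has no nontrivial splits. For star and complete quotients this holds because the LC orbit of a star or complete graph consists of stars and complete graphs, all split-free. For prime quotients it follows from Lemma~\ref{thm:prime_graph_local_equivalence}. By Cunningham's uniqueness of the minimal decomposition, the labelled tree together with $(Q_i')$ is exactly $QASST(H)$. Hence $SST(H)=SST(G)$, and each quotient is LC equivalent to the corresponding $Q_i$, so $H$ is QASST equivalent to $G$.

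Step 2 shows injectivity. Since $QASST(H)$ determines $H$, and $H$ determines its QASST uniquely, distinct valid tuples give distinct graphs. Here I must check that the labelling of split-nodes is canonical: the split-node $s_i^j$ is identified by the tree edge $(Q_i,Q_j)$, which is the same for all graphs sharing $SST(G)$.

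Step 3 shows surjectivity. A graph $H$ that is QASST equivalent to $G$ has $SST(H)=SST(G)$ and quotients $Q_i^H\cong_{\text{LC}}Q_i$ on the same labelled vertex sets, so $(Q_i^H)$ is a tuple in $\mathcal{T}$. Its edges are strong splits of $H$, so the tuple is valid, and reconstruction returns $H$.

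Step 4 characterizes invalid tuples. Using Table~\ref{tab:adjacent_star_complete_quotient_graphs} and Figure~\ref{fig:quotient_graph_combinations_valid}, a tuple is invalid exactly when some edge joins two star/complete quotients in an sc-ss, ss-sc, or c-c configuration. Edges incident to a prime quotient are always strong, since the prime quotient admits no split that could merge with the neighbour. This is what makes $\mu(G)$ a computable count of local configurations rather than an abstract quantity.

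The main obstacle is Step 1: showing that when all edges are locally valid, the resulting decomposition is the strong one, with no additional crossing splits. I would argue this through Cunningham's characterization. A decomposition in which every component is prime or brittle (star or complete), and in which no two adjacent brittle components can be merged into a single star or complete graph, is the canonical decomposition. The only merges that produce a star or complete graph are exactly the sc-ss and c-c cases. I would cite this from \cite{cunningham1982decomposition} rather than reprove it.
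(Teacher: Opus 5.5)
Your proposal is correct and takes the paper's approach: substitute LC-equivalent quotients into the fixed split tree and discard the combinations whose red edges are not strong. Your explicit bijection and appeal to Cunningham's uniqueness theorem supply the injectivity and ``no extra strong splits'' details that the paper leaves implicit.

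One wording fix in Step~1: stars and complete graphs on four or more vertices are not split-free, since every bipartition of them is a split, although none of the nontrivial ones is strong. What you actually need is that each $Q_i'$ is prime or degenerate (star or complete), which is exactly the hypothesis of the Cunningham criterion you state in your final paragraph.
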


If restricting to the distance-hereditary case, computing the first part of the formula above is relatively easy. Recall that star and complete graphs are locally equivalent with LC orbits satisfying $|{\mathcal O}(K_n)|=|{\mathcal O}(S_{n})|=n+1$.
Hence, knowing the number of vertices in each quotient graph is sufficient to compute the first part of $\Phi(G)$ for any distance-hereditary graph $G$.
As for the second part, $\mu(G)$ depends explicitly on the structure of $QASST(G)$; there is no generic formula which can be stated elegantly.
Indeed, $QASST(G)$ defines a constraint satisfaction problem, wherein each edge in the split tree defines additional constraints and $\Phi(G)$ counts the number of solutions satisfying all constraints.
This problem is known to $\#P$-complete~\cite{dahlberg2020counting} and hence challenging in general, but by restricting to certain special cases of distance-hereditary graphs with sufficiently simple QASSTs, we can still derive some specific formulas for $\mu(G)$.
Section~\ref{sect:explicit_formulas_for_local_equivalence_classes} illustrates a number of these special families of graphs where we may compute explicit expressions for $\Phi(G)$.

\begin{figure*}[t]
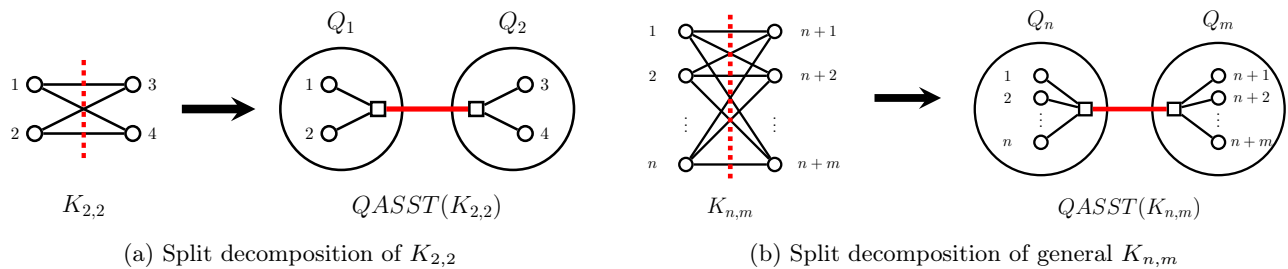

\centering
\begin{subfigure}{0.47\textwidth}
\centering
\includegraphics[width=0.9\textwidth,page=30]{Figures/Section_Exploring_LC_Equivalence_Classes_with_the_QASST.pdf}
\caption{Split decomposition of $K_{2,2}$}
\label{fig:K22_split_decomposition}
\end{subfigure}\begin{subfigure}{0.53\textwidth}
\centering
\includegraphics[width=0.9\textwidth,page=31]{Figures/Section_Exploring_LC_Equivalence_Classes_with_the_QASST.pdf}
\caption{Split decomposition of general $K_{n,m}$}
\label{fig:Knm_split_decomposition}
\end{subfigure}
\caption{
The split decomposition of a complete bipartite graph in the simple case $K_{2,2,}$ and general case $K_{n,m}$.
There is always a single strong split and two quotient graphs regardless of the number of vertices.
}
\label{fig:complete_bipartite_split_decomposition}
\end{figure*}

We conclude this section with a simple example illustrating the formula of Equation~\ref{eq:count_QASST_equivalence}.
Consider the complete bipartite graph $K_{2,2}$, which has a single strong split as shown in Figure~\ref{fig:K22_split_decomposition}.
This graph is equivalent to the 4-cycle $C_4$, whose complete LC orbit is illustrated in Adcock et al. (Figure 1d of~\cite{adcock2020mapping}), and is known to have size $|\mathcal{O}(K_{2,2})|=11$. 
$K_{2,2}$ has two star-center quotient graphs with two spokes each: $Q_1\cong Q_2\cong S_{2+1}$. Since $|{\mathcal O}(S_{2})|=4$, the number of QASST equivalent graphs is hence bounded above by $\Phi(K_{2,2})\leq|{\mathcal O}(Q_1)||{\mathcal O}(Q_2)|=|{\mathcal O}(S_2)|^2=4^2=16$.

\def\CBTableScale{0.65} 
\begin{table*}[tp]
\centering
\begin{tabular}{|cc|cc|c|c|}
\hline
$Q_1$&(count)&$Q_2$&(count)&Total&Transformation from $K_{2,2}$\\
\hline
sc&1&sc&1&1&sc-sc\\
\includegraphics[scale=\CBTableScale,valign=c,page=6]{Figures/Section_Exploring_LC_Equivalence_Classes_with_the_QASST.pdf}&&\includegraphics[scale=\CBTableScale,valign=c,page=10]{Figures/Section_Exploring_LC_Equivalence_Classes_with_the_QASST.pdf}&&&$\text{id}(K_{2,2})=\includegraphics[scale=\CBTableScale,valign=c,page=14]{Figures/Section_Exploring_LC_Equivalence_Classes_with_the_QASST.pdf}$\\
\hline
sc&1&c&1&1&sc-c\\
\includegraphics[scale=\CBTableScale,valign=c,page=6]{Figures/Section_Exploring_LC_Equivalence_Classes_with_the_QASST.pdf}&&\includegraphics[scale=\CBTableScale,valign=c,page=11]{Figures/Section_Exploring_LC_Equivalence_Classes_with_the_QASST.pdf}&&&$c_1(K_{2,2})=\includegraphics[scale=\CBTableScale,valign=c,page=15]{Figures/Section_Exploring_LC_Equivalence_Classes_with_the_QASST.pdf}$\\
\hline
c&1&sc&1&1&c-sc\\
\includegraphics[scale=\CBTableScale,valign=c,page=7]{Figures/Section_Exploring_LC_Equivalence_Classes_with_the_QASST.pdf}&&\includegraphics[scale=\CBTableScale,valign=c,page=10]{Figures/Section_Exploring_LC_Equivalence_Classes_with_the_QASST.pdf}&&&$c_3(K_{2,2})=\includegraphics[scale=\CBTableScale,valign=c,page=16]{Figures/Section_Exploring_LC_Equivalence_Classes_with_the_QASST.pdf}$\\
\hline
ss&2&c&1&2&ss-c\\
\includegraphics[scale=\CBTableScale,valign=c,page=8]{Figures/Section_Exploring_LC_Equivalence_Classes_with_the_QASST.pdf}&&\includegraphics[scale=\CBTableScale,valign=c,page=11]{Figures/Section_Exploring_LC_Equivalence_Classes_with_the_QASST.pdf}&&&$c_1\circ c_3(K_{2,2})=\includegraphics[scale=\CBTableScale,valign=c,page=17]{Figures/Section_Exploring_LC_Equivalence_Classes_with_the_QASST.pdf}$\\
\includegraphics[scale=\CBTableScale,valign=c,page=9]{Figures/Section_Exploring_LC_Equivalence_Classes_with_the_QASST.pdf}&&&&&$c_2\circ c_3(K_{2,2})=\includegraphics[scale=\CBTableScale,valign=c,page=18]{Figures/Section_Exploring_LC_Equivalence_Classes_with_the_QASST.pdf}$\\
\hline
c&1&ss&2&2&c-ss\\
\includegraphics[scale=\CBTableScale,valign=c,page=7]{Figures/Section_Exploring_LC_Equivalence_Classes_with_the_QASST.pdf}&&\includegraphics[scale=\CBTableScale,valign=c,page=12]{Figures/Section_Exploring_LC_Equivalence_Classes_with_the_QASST.pdf}&&&$c_3\circ c_1(K_{2,2})=\includegraphics[scale=\CBTableScale,valign=c,page=19]{Figures/Section_Exploring_LC_Equivalence_Classes_with_the_QASST.pdf}$\\
&&\includegraphics[scale=\CBTableScale,valign=c,page=13]{Figures/Section_Exploring_LC_Equivalence_Classes_with_the_QASST.pdf}&&&$c_4\circ c_1(K_{2,2})=\includegraphics[scale=\CBTableScale,valign=c,page=20]{Figures/Section_Exploring_LC_Equivalence_Classes_with_the_QASST.pdf}$\\
\hline
ss&2&ss&2&4&ss-ss\\
\includegraphics[scale=\CBTableScale,valign=c,page=8]{Figures/Section_Exploring_LC_Equivalence_Classes_with_the_QASST.pdf}&&\includegraphics[scale=\CBTableScale,valign=c,page=12]{Figures/Section_Exploring_LC_Equivalence_Classes_with_the_QASST.pdf}&&&$c_1\circ c_3\circ c_1(K_{2,2})=\includegraphics[scale=\CBTableScale,valign=c,page=21]{Figures/Section_Exploring_LC_Equivalence_Classes_with_the_QASST.pdf}$\\
\includegraphics[scale=\CBTableScale,valign=c,page=9]{Figures/Section_Exploring_LC_Equivalence_Classes_with_the_QASST.pdf}&&\includegraphics[scale=\CBTableScale,valign=c,page=13]{Figures/Section_Exploring_LC_Equivalence_Classes_with_the_QASST.pdf}&&&$c_2\circ c_3\circ c_2(K_{2,2})=\includegraphics[scale=\CBTableScale,valign=c,page=22]{Figures/Section_Exploring_LC_Equivalence_Classes_with_the_QASST.pdf}$\\
&&&&&$c_1\circ c_4\circ c_1(K_{2,2})=\includegraphics[scale=\CBTableScale,valign=c,page=23]{Figures/Section_Exploring_LC_Equivalence_Classes_with_the_QASST.pdf}$\\
&&&&&$c_2\circ c_4\circ c_2(K_{2,2})=\includegraphics[scale=\CBTableScale,valign=c,page=24]{Figures/Section_Exploring_LC_Equivalence_Classes_with_the_QASST.pdf}$\\
\hline
\hline
c&1&c&1&1&c-c (invalid)\\
\includegraphics[scale=\CBTableScale,valign=c,page=7]{Figures/Section_Exploring_LC_Equivalence_Classes_with_the_QASST.pdf}&&\includegraphics[scale=\CBTableScale,valign=c,page=11]{Figures/Section_Exploring_LC_Equivalence_Classes_with_the_QASST.pdf}&&&\includegraphics[scale=\CBTableScale,valign=c,page=25]{Figures/Section_Exploring_LC_Equivalence_Classes_with_the_QASST.pdf}\\
\hline
sc&1&ss&2&2&sc-ss (invalid)\\
\includegraphics[scale=\CBTableScale,valign=c,page=6]{Figures/Section_Exploring_LC_Equivalence_Classes_with_the_QASST.pdf}&&\includegraphics[scale=\CBTableScale,valign=c,page=12]{Figures/Section_Exploring_LC_Equivalence_Classes_with_the_QASST.pdf}&\includegraphics[scale=\CBTableScale,valign=c,page=13]{Figures/Section_Exploring_LC_Equivalence_Classes_with_the_QASST.pdf}&&\includegraphics[scale=\CBTableScale,valign=c,page=28]{Figures/Section_Exploring_LC_Equivalence_Classes_with_the_QASST.pdf} \includegraphics[scale=\CBTableScale,valign=c,page=29]{Figures/Section_Exploring_LC_Equivalence_Classes_with_the_QASST.pdf}\\
\hline
ss&2&sc&1&2&ss-sc (invalid)\\
\includegraphics[scale=\CBTableScale,valign=c,page=8]{Figures/Section_Exploring_LC_Equivalence_Classes_with_the_QASST.pdf}&\includegraphics[scale=\CBTableScale,valign=c,page=9]{Figures/Section_Exploring_LC_Equivalence_Classes_with_the_QASST.pdf}&\includegraphics[scale=\CBTableScale,valign=c,page=10]{Figures/Section_Exploring_LC_Equivalence_Classes_with_the_QASST.pdf}&&&\includegraphics[scale=\CBTableScale,valign=c,page=26]{Figures/Section_Exploring_LC_Equivalence_Classes_with_the_QASST.pdf} \includegraphics[scale=\CBTableScale,valign=c,page=27]{Figures/Section_Exploring_LC_Equivalence_Classes_with_the_QASST.pdf}\\
\hline
\end{tabular}
\caption{Enumeration of the valid and invalid quotient graph symmetries based on $QASST(K_{2,2})$.}
\label{tab:K2,2_quotient_graph_symmetries}
\end{table*}

Of these 16 possible combinations for two adjacent quotient graphs, some will correspond to valid strong splits while others will be invalid in the sense of Figure~\ref{fig:quotient_graph_combinations_valid} (i.e.~combinations of the form sc-ss, ss-sc, or c-c). 
Table~\ref{tab:K2,2_quotient_graph_symmetries} enumerates all of these possibilities, valid and invalid.
Observe that we recover the 11 locally equivalent graphs enumerated in Adcock et al. We also include a transformation showing that the corresponding graph is indeed locally equivalent to $K_{2,2}$.
The remaining 5 invalid cases show that $\mu(K_{2,2})=5$, so that Equation~\ref{eq:count_QASST_equivalence} gives $\Phi(K_{2,2})=16-5=11$.
Furthermore, the graphs reconstructed from the split trees for these invalid cases coincide exactly with the LC orbit of $K_4$ pictured in Figure~\ref{fig:example_K4_orbit}.
In this example of $K_{2,2}$, the QASST equivalence class matches the LC equivalence, although this need not be true in general.

\subsection{Bounds on LC Equivalence Class Sizes Using QASST Equivalences}
\label{sect:upper_bound_on_LC_orbit_sizes}

Since LC equivalent graphs are necessarily QASST equivalent, Lemma~\ref{thm:counting_QASST_equivalence} immediately gives an upper bound on the size of the LC orbit.
In general, the QASST equivalence class can be partitioned into disjoint LC equivalence classes.
Hence, using non-LC equivalent graph representatives from each component of the partition, the combined orbits of these graphs exhaust the QASST equivalence class.
We state these facts here as corollaries of the preceding lemma.

\begin{corollary}
\label{thm:LC_orbit_upper_bound}
The size of the LC equivalence class of any graph $G$ is bounded above by the size of its QASST equivalence class:
\begin{eqnarray}
|{\mathcal O}(G)|&\leq&\Phi(G).
\end{eqnarray}
\end{corollary}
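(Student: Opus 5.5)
The plan is to show that the LC orbit ${\mathcal O}(G)$ is a subset of the QASST equivalence class of $G$, and then read off the bound from Lemma~\ref{thm:counting_QASST_equivalence}. Since both sets live in ${\mathcal G}_n$ (graphs on the fixed labeled vertex set $\{1,\cdots,n\}$), containment immediately gives $|{\mathcal O}(G)|\leq\Phi(G)$.

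First, fix any $G'\in{\mathcal O}(G)$, so that $G'=f(G)$ for some $f\in{\mathcal L}_n$. By Corollary~\ref{thm:LC_graphs_STT}, which rests on Bouchet's Lemma~\ref{thm:splits_in_locally_equivalent_graphs}, we have $SST(G')=SST(G)$. This gives the natural bijection between the quotient graphs $Q_1,\cdots,Q_k$ of $G$ and $Q_1',\cdots,Q_k'$ of $G'$, with matching leaf-node and split-node labels.

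Second, by Theorem~\ref{thm:LC_equiv_graphs_have_LC_equiv_quotients}, each $Q_j'$ is locally equivalent to $Q_j$, so $Q_j'\in{\mathcal O}(Q_j)$. Hence $G'$ satisfies both defining conditions of QASST equivalence: the same strong split tree and locally equivalent quotient graphs. Therefore ${\mathcal O}(G)$ is contained in the QASST equivalence class of $G$.

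Third, invoke Lemma~\ref{thm:counting_QASST_equivalence}, which states that this class has exactly $\Phi(G)$ elements. The one subtle point, and the step I expect to need care, is that the map $G'\mapsto(Q_1',\cdots,Q_k')$ must be injective and land among the tuples counted by $\Phi(G)$. Injectivity holds because a graph is reconstructed uniquely from its QASST (Subsection~\ref{sect:qasst_construction}). The tuple is counted because $QASST(G')$ is a genuine split decomposition, so every edge is a strong split and the tuple is not among the $\mu(G)$ invalid combinations. With these checks, $|{\mathcal O}(G)|\leq\Phi(G)$ follows.
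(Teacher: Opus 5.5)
Your proposal is correct and follows the paper's argument. Corollary~\ref{thm:LC_graphs_STT} and Theorem~\ref{thm:LC_equiv_graphs_have_LC_equiv_quotients} place ${\mathcal O}(G)$ inside the QASST equivalence class of $G$, which has size $\Phi(G)$ by Lemma~\ref{thm:counting_QASST_equivalence}; your injectivity check (unique reconstruction from the QASST, landing only on valid tuples) just spells out what the paper compresses into the word ``immediately.''
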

\begin{corollary}
\label{thm:LC_orbits_exhaust_QASST_equivalence}
If $G_1,\cdots,G_k$ are QASST equivalent graphs which are pairwise non-LC equivalent, and if every graph in this QASST equivalence class is LC equivalent to some graph from this list, then
\begin{eqnarray}
|{\mathcal O}(G_1)|+\cdots+|{\mathcal O}(G_k)|&=&\Phi(G_1).
\end{eqnarray}
\end{corollary}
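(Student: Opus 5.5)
The plan is to view QASST equivalence as an equivalence relation on ${\mathcal G}_n$, observe that it is coarser than LC equivalence, and then count a disjoint union. Write ${\mathcal Q}(G_1)$ for the QASST equivalence class of $G_1$. By Lemma~\ref{thm:counting_QASST_equivalence}, $|{\mathcal Q}(G_1)|=\Phi(G_1)$.

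First I will check that QASST equivalence is an equivalence relation. Equality of strong split trees is clearly reflexive, symmetric and transitive. Local equivalence of corresponding quotient graphs under the natural bijection is also an equivalence relation, because the bijection is induced by the common split tree and is therefore compatible across three graphs. Since the $G_i$ are pairwise QASST equivalent, we then have ${\mathcal Q}(G_i)={\mathcal Q}(G_1)$ for every $i$.

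Second, I will show that each LC orbit lies inside the QASST class. By Corollary~\ref{thm:LC_graphs_STT} and Theorem~\ref{thm:LC_equiv_graphs_have_LC_equiv_quotients}, every $H\in{\mathcal O}(G_i)$ is QASST equivalent to $G_i$, hence to $G_1$. This gives ${\mathcal O}(G_i)\subseteq{\mathcal Q}(G_1)$, and therefore $\bigcup_i{\mathcal O}(G_i)\subseteq{\mathcal Q}(G_1)$.

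Third, I will prove the reverse inclusion and disjointness. By hypothesis every $H\in{\mathcal Q}(G_1)$ is LC equivalent to some $G_i$, so $H\in{\mathcal O}(G_i)$ and the union is all of ${\mathcal Q}(G_1)$. LC orbits are equivalence classes (Section~\ref{sect:LC_equivalence_classes_and_orbits}), so any two are either equal or disjoint. Because the $G_i$ are pairwise non-LC equivalent, the orbits ${\mathcal O}(G_i)$ are pairwise distinct and hence pairwise disjoint. Taking cardinalities of this disjoint union gives $\sum_i|{\mathcal O}(G_i)|=|{\mathcal Q}(G_1)|=\Phi(G_1)$.

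The only delicate point is the transitivity of QASST equivalence, which is what lets ${\mathcal Q}(G_1)$ be a well-defined class containing every orbit. Even this step can be avoided: by the hypothesis, every member of the class is LC equivalent to some $G_i$, and Theorem~\ref{thm:LC_equiv_graphs_have_LC_equiv_quotients} then places each orbit ${\mathcal O}(G_i)$ directly inside the class of $G_1$. The rest of the argument is routine counting.
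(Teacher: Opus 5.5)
Your argument is correct and is essentially the paper's: the QASST class of $G_1$ has $\Phi(G_1)$ members by Lemma~\ref{thm:counting_QASST_equivalence}, it contains each $\mathcal{O}(G_i)$ by Theorem~\ref{thm:LC_equiv_graphs_have_LC_equiv_quotients}, and by hypothesis it is partitioned into the disjoint orbits $\mathcal{O}(G_1),\dots,\mathcal{O}(G_k)$, so the cardinalities add up to $\Phi(G_1)$. One small quibble is that your closing remark does not really bypass transitivity: placing $\mathcal{O}(G_i)$ inside the class of $G_1$, rather than that of $G_i$, still composes $H\sim G_i$ with $G_i\sim G_1$. This is harmless, because your first step already justified transitivity through the common labelling of quotient graphs by $SST(G_1)$.
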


The split decomposition of a graph can be used with combinatorial arguments to give an exhaustive enumeration of graphs in the same QASST equivalence class.
However, while Corollary~\ref{thm:LC_orbit_upper_bound} gives a useful upper bound, the existence of non-LC equivalent graphs which are QASST equivalent shows that this technique is insufficient for computing the size of the LC orbit.
Even so, computing the size of the QASST equivalence class is a useful starting point in combination with the result of Corollary~\ref{thm:LC_orbits_exhaust_QASST_equivalence}.
By verifying when two QASST equivalent graphs are non-LC equivalent, either with an invariant or through Bouchet's algorithm~\cite{bouchet1991efficient,bouchet1993recognizing}, we can leverage this upper bound by enumerating all LC orbits which partition the QASST equivalence class.

The brute force technique used to examine the QASST equivalence class of $K_{2,2}\cong C_4$ (Table~\ref{tab:K2,2_quotient_graph_symmetries}) is adequate for small cases like this example, but it does not give the general formula for a family of graphs.
However, we can extend this to more general cases by counting the sizes and combinations of quotient graph symmetry classes (c, sc, or ss) rather than enumerating specific quotient graphs.
Quotient graphs in the same symmetry class are isomorphic, as are the graphs reconstructed from these.
By showing that a representative of a symmetry class is LC equivalent to the chosen graph via an explicit transformation, we can extend this to the entire symmetry class merely by relabeling the vertices in the quotient graphs.
In this way, we will derive a lower bound for the size of the LC orbit that can be compared with the upper bound based on QASST equivalence.

The split decompositions of certain locally equivalent distance-hereditary graphs in particular have a high amount of symmetry. Since the vertices in our graphs are labeled, in many cases two locally equivalent graphs are isomorphic with a different choice of labels. This occurs because there are many distinct ways to have a star-spoke quotient graph merely by choosing a different node as the center of the star.
We exploit this for our purposes by counting the combinations of symmetry classes, which are determined by the QASST rather than the number of vertices.

We return to $K_{2,2}$ to illustrate an explicit example of this process.
Table~\ref{tab:K2,2_quotient_graph_symmetries} enumerates the symmetry classes of locally equivalent quotient graphs, while counting the number of distinct graphs which give this same symmetry.
Those combinations with the same symmetries yield isomorphic graphs, which can be verified by inspection in this case.
Furthermore, this table gives an explicit transformation (non-unique) in terms of compositions of primitive local complements, starting from $K_{2,2}$.

Clearly, the existence of an explicit transformation proves that each valid graph in Table~\ref{tab:K2,2_quotient_graph_symmetries} is locally equivalent to $K_{2,2}$. Hence, counting the number of possibilities gives a simple method for computing a lower bound on the size of the LC orbit. Our lower bound of 11 shown here coincides with the upperbound of $\Phi(K_{2,2})=11$, which is enough to confirm the size of the orbit.
Although we have already viewed the entire orbit of $K_{2,2}$, this technique of counting symmetries will also work for larger graphs where listing the whole LC orbit is not feasible, in particular for general cases such as $K_{n,m}$ (Figure~\ref{fig:Knm_split_decomposition}).

Observe that the explicit transformations for graphs in the same equivalence class of Table~\ref{tab:K2,2_quotient_graph_symmetries} follow a regular pattern. More generally, there is a relationship between the choice of center vertex for a star-spoke quotient graph and the corresponding transformation. Hence, rather than listing all possibilities, it is sufficient to give a single representative of the symmetry class; all remaining possibilities can be obtained by changing the index of vertex at the center of the star.
Thus, we obtain a generic expression for how to transform to any graph in this symmetry class.
By fixing a symmetry class for each quotient graph, the number of LC equivalent graphs with the same symmetry is the product of the sizes of the symmetry classes.
Computing the lower bound on the LC orbit then reduces to adding the totals.

\section{Explicit Formulas for LC Orbit Sizes of Selected Graph Families}
\label{sect:explicit_formulas_for_local_equivalence_classes}

One of Bouchet's original applications involving local complements was to count the number of graphs locally equivalent to a given graph~\cite{bouchet1993recognizing}. For example, he derived the following explicit formulas for the size of the local equivalence class of an arbitrary path graph $P_n$ and an arbitrary cycle graph $C_n$:
\begin{eqnarray}
|{\mathcal O}(P_n)|&=&\frac{\sqrt{3}}{6}\Big((1+\sqrt{3})^{n+1}-(1-\sqrt{3})^{n+1}\Big);\\
|{\mathcal O}(C_n)|&=&(1+\sqrt{3})^n+(1-\sqrt{3})^n-4(2^{n-1}+(-1)^n)/3.\nonumber\\
\end{eqnarray}
In the same spirit but with a different approach, we will derive explicit formulas for some families of distance-hereditary graph. Like the example of Table~\ref{tab:K2,2_quotient_graph_symmetries}, this approach is based on counting symmetries in the split decomposition.

\subsection{Complete Bipartite Graphs}
\label{sect:compelte_bipartite_graphs_LC_orbit_size}

Complete bipartite graphs have been shown to be locally equivalent to certain types of repeater graphs~\cite{tzitrin2018local} (specifically \textit{binary stars}, which consist of two star graphs with an additional edge connecting their centers). This is just one possible symmetry class that can be obtained using the QASST.

A generic complete bipartite graph $K_{n,m}$ has a split decomposition consisting of two star-center quotient graphs, with QASST shown in Figure~\ref{fig:Knm_split_decomposition}. Let $Q_n$ and $Q_m$ denote the left and right quotient graphs. Observe that each is a star graph with a split-node center and $n$ or $m$ leaf-node spokes, respectively.
In particular, $K_{n,m}$ is distance-hereditary by Theorem~\ref{thm:distance_hereditary_iff_star_complete_quotient}.
If either $n=1$ or $m=1$, then $K_{n,m}$ reduces to a star graph, and so we also assume that $n,m\geq2$.

Let $[n]=\{1,\cdots,n\}$ denote the set of leaf-node indices in $Q_n$ and $[m]=\{n+1,\cdots,n+m\}$ denote the set of leaf-node indices in $Q_m$.
Let $i^n\in[n]$ and $i^m\in[m]$ refer to a choice of leaf-node index from each quotient graph.
Thanks to the symmetry of the locally equivalent quotient graphs, the explicit LC transformations we introduce here can be written up to a choice of index from each quotient graph. A different choice yields a distinct but isomorphic graph.

Table~\ref{tab:complete_bipartite_quasst_equivalences} enumerates all combinations of LC equivalent quotient graphs which can be substituted into $QASST(K_{n,m})$ up to symmetry, including the invalid cases listed here in the first block of the table.
Counting the c-c, sc-ss, and ss-sc cases yields a total of $\mu(K_{n,m})=n+m+1$ invalid combinations. Hence, the size of the QASST equivalence class of $K_{n,m}$ is computed as
\begin{eqnarray}
\Phi(K_{n,m})&=&|{\mathcal O}(Q_n)||{\mathcal O}(Q_m)|-\mu(K_{n,m})\nonumber\\
&=&(n+2)(m+2)-(n+m+1)\nonumber\\
&=&nm+n+m+3.
\end{eqnarray}
This is an upper bound on the total number of graphs in the LC orbit of $K_{n,m}$.
However, explicit transformations for these remaining $nm+n+m+3$ cases are listed in the second block of Table~\ref{tab:complete_bipartite_quasst_equivalences}, showing that each of these is obtainable using local complements and thus exhausting the LC orbit.
Since Table~\ref{tab:complete_bipartite_quasst_equivalences} provides an explicit local complement transformation for each of the remaining $nm+n+m+3$ valid cases, the upper bound is tight, and thus $|O(K_{n,m})|=\Phi(K_{n,m})$.
Any star-spoke quotient graph can use any choice of leaf-node for the center of the star without changing the symmetry. In terms of the transformation from $K_{n,m}$, the choice of index on the center leaf-node matches the index $i^n\in[n]$ or $i^m\in[m]$ of the local complements used in the transformation.

\begin{table*}[t]
\centering
\begin{tabular}{|cc|cc|c|c|c|}
\hline
$Q_n$&(count)&$Q_m$&(count)&Total&Transformation from $K_{n,m}$&Number of Edges\\
\hline
c&1&c&1&1&invalid&N/A\\
sc&1&ss&$m$&$m$&invalid&N/A\\
ss&$n$&sc&1&$n$&invalid&N/A\\
\hline
sc&1&sc&1&1&$\text{id}(K_{n,m})$&$nm$\\
sc&1&c&1&1&$c_{i^n}(K_{n,m})$&$nm+\frac{m(m-1)}{2}$\\
c&1&sc&1&1&$c_{i^m}(K_{n,m})$&$nm+\frac{n(n-1)}{2}$\\
ss&$n$&c&1&$n$&$c_{i^n}\circ c_{i^m}(K_{n,m})$&$n+m-1+\frac{m(m-1)}{2}$\\
c&1&ss&$m$&$m$&$c_{i^m}\circ c_{i^n}(K_{n,m})$&$n+m-1+\frac{n(n-1)}{2}$\\
ss&$n$&ss&$m$&$nm$&$c_{i^n}\circ c_{i^m}\circ c_{i^n}(K_{n,m})$&$n+m-1$\\
\hline
\end{tabular}
\caption{
Enumeration of the combinations of symmetric quotient graphs in $QASST(K_{n,m})$, where each component is either complete (c), star-center (sc), or star-spoke (ss). The first block represents the $n+m+1$ cases which do not preserve strong splits; these are not LC equivalent to $K_{n,m}$. The remaining $nm+n+m+3$ cases are all obtainable from $K_{n,m}$ with local complements using the given transformations. In this notation, $i^n\in[n]=\{1,\cdots,n\}$ denotes any index on the leaf-nodes from $Q_n$, and $i^m\in[m]=\{n+1,\cdots,n+m\}$ denotes any index on the leaf-nodes from $Q_m$. The resulting graphs with the same symmetry will be isomorphic, with the center of the star-spoke quotient graph matching the vertex $i^n$ in $Q_n$ and $i^m$ in $Q_m$.
The number of edges in the corresponding graph is also given for each valid case.
}
\label{tab:complete_bipartite_quasst_equivalences}
\end{table*}

The QASST symmetry classes obtained here are consistent with those for $K_{2,2}$  enumerated in Table~\ref{tab:K2,2_quotient_graph_symmetries}.
These facts give the following general result regarding the size of the LC orbit for any complete bipartite graph.

\begin{theorem}
If $n,m\geq2$, then the local equivalence class of the complete bipartite graph $K_{n,m}$ has size
\begin{eqnarray}
|{\mathcal O}(K_{n,m})|&=&nm+n+m+3.
\end{eqnarray}
\end{theorem}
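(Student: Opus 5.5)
The plan is a two-sided bound: an upper bound from QASST equivalence and a matching lower bound from explicit local complement sequences.

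\textbf{Upper bound.} By Corollary~\ref{thm:LC_graphs_STT}, every graph in ${\mathcal O}(K_{n,m})$ has the same strong split tree as $K_{n,m}$. That tree has a single internal edge joining two quotient graphs. $Q_n$ is a star with split-node center and $n$ leaf-node spokes, and $Q_m$ is the analogous star with $m$ spokes. By Theorem~\ref{thm:LC_equiv_graphs_have_LC_equiv_quotients}, each graph in the orbit is therefore determined by a pair of quotient graphs $(Q_n',Q_m')$ lying in ${\mathcal O}(S_n)\times{\mathcal O}(S_m)$. Since $|{\mathcal O}(S_k)|=k+2$ on $k+1$ vertices, this gives $(n+2)(m+2)$ candidate pairs.

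I will classify the elements of ${\mathcal O}(S_k)$ relative to the split-node. There is one star with the split-node at its center (sc), one complete graph (c), and $k$ stars whose center is one of the $k$ leaf-nodes (ss). These account for all $k+2$ elements. By the discussion of Figure~\ref{fig:quotient_graph_combinations_valid}, exactly the pairs of type c-c, sc-ss and ss-sc fail to give a strong split. These number $1+m+n$, so $\mu(K_{n,m})=n+m+1$. Lemma~\ref{thm:counting_QASST_equivalence} then yields $\Phi(K_{n,m})=nm+n+m+3$, and Corollary~\ref{thm:LC_orbit_upper_bound} gives $|{\mathcal O}(K_{n,m})|\le nm+n+m+3$.

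\textbf{Lower bound.} I will show that every valid pair is realized by some $f(K_{n,m})$ with $f\in{\mathcal L}_{n+m}$. Fix $i^n\in[n]$ and $i^m\in[m]$, and track the quotient graphs with Algorithm~\ref{alg:LC_propagation}.

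1. Applying $c_{i^n}$ complements $N(i^n)=[m]$. In the QASST this leaves $Q_n$ unchanged as sc, since local complement at a spoke of a star has neighborhood of size one. It propagates to $c$ at the split-node of $Q_m$, turning $Q_m$ into a complete graph. This realizes sc-c, and symmetrically $c_{i^m}$ realizes c-sc.

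2. Applying $c_{i^n}\circ c_{i^m}$: in $Q_n$, now complete, local complement at the leaf $i^n$ produces a star centered at $i^n$, which is ss. In $Q_m$ the propagated local complement is at the split-node of the complete graph, which is harmless for the type. This realizes ss-c with the star centered at $i^n$, giving $n$ graphs. Symmetrically, $c_{i^m}\circ c_{i^n}$ gives $m$ graphs of type c-ss.

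3. Applying $c_{i^n}\circ c_{i^m}\circ c_{i^n}$ turns the complete $Q_m$ into a star centered at $i^m$. This yields all $nm$ graphs of type ss-ss.

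Each valid pair is a distinct labeled graph, since $G$ is recoverable from its QASST. Adding the identity, the valid pairs number $1+1+1+n+m+nm$, which matches the upper bound and gives the equality.

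\textbf{Main obstacle.} The delicate step is verifying the quotient-graph bookkeeping in the lower bound. For each composite sequence I must confirm the resulting type (c, sc, or ss) and, for ss, which leaf is the center. The cleanest approach is to compute the actual graph directly and read off its split decomposition. For example, $c_{i^n}\circ c_{i^m}(K_{n,m})$ should be the graph in which $i^n$ is adjacent to all of $[n]\setminus\{i^n\}$ and to all of $[m]$, $[m]$ is a clique, and $[n]\setminus\{i^n\}$ has no other edges. I will check that its quotient graphs are a star centered at $i^n$ and a complete graph. The remaining cases then follow by relabeling symmetry.
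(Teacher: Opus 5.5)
Your proposal is correct and follows the paper's proof essentially step for step: the same upper bound $\Phi(K_{n,m})=(n+2)(m+2)-(n+m+1)$ via Lemma~\ref{thm:counting_QASST_equivalence} and Corollary~\ref{thm:LC_orbit_upper_bound}, and, for the matching lower bound, the same explicit sequences $\text{id}$, $c_{i^n}$, $c_{i^m}$, $c_{i^n}\circ c_{i^m}$, $c_{i^m}\circ c_{i^n}$, $c_{i^n}\circ c_{i^m}\circ c_{i^n}$ listed in Table~\ref{tab:complete_bipartite_quasst_equivalences}. One slip in your step 2 needs fixing: after $c_{i^m}$ the quotient $Q_m$ is still sc, not complete, and the propagated local complement at its center split-node is what makes it complete; local complement at a vertex of a complete quotient is not type-preserving (it yields a star centered there, which here would give the invalid ss-sc), so the justification you wrote is wrong, although your conclusion ss-c and your direct computation of $c_{i^n}\circ c_{i^m}(K_{n,m})$ are correct.
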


For each graph $G\in{\mathcal O}(K_{n,m})$, Table~\ref{tab:complete_bipartite_quasst_equivalences} also includes the number of edges in $G$ based on the QASST decomposition. Since each graph in this LC orbit contains $|V(G)|=n+m$ vertices, we know that $|E(G)|\geq m+n-1$ at minimum, which happens if and only if $G$ is a tree.
This occurs when $Q_n$ and $Q_m$ are both star-spoke, and hence any member of this symmetry class is a minimal edge representative of the local equivalence class. We state this here as a theorem.

\begin{theorem}
Let $G\in{\mathcal O}(K_{n,m})$ be locally equivalent to a complete bipartite graph, where $n,m\geq2$. $G$ is a minimal edge representative of the local equivalence class if and only if $G$ is a \textit{binary star}, which occurs when $QASST(G)$ matches the last row of Table~\ref{tab:complete_bipartite_quasst_equivalences}.
In this case, the number of edges is
\begin{eqnarray}
|E(G)|&=&n+m-1.
\end{eqnarray}
\end{theorem}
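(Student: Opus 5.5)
The statement to prove is the minimal-edge theorem for ${\mathcal O}(K_{n,m})$. The plan is to use the complete classification of ${\mathcal O}(K_{n,m})$ given in Table~\ref{tab:complete_bipartite_quasst_equivalences}, and then compare edge counts across the six valid symmetry classes.

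First, by the preceding theorem, every $G\in{\mathcal O}(K_{n,m})$ has $QASST(G)$ equal to one of the six valid rows of that table. To justify the edge column, I will reconstruct $G$ from its two quotient graphs as in Subsection~\ref{sect:qasst_construction}. The split contributes all edges between $N_{Q_n}(s)$ and $N_{Q_m}(s')$, where $s,s'$ are the split-nodes. The remaining edges are the internal edges of each quotient graph among its leaf-nodes. For a star-center quotient this neighborhood is all leaves and there are no internal edges. For a complete quotient it is again all leaves, together with all $\binom{\cdot}{2}$ internal edges. For a star-spoke quotient the split-node is adjacent only to the center, and the internal edges form the star on the leaves.

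Checking each row then gives the counts listed. For instance, ss--ss gives $1+(n-1)+(m-1)=n+m-1$ edges. The row ss--c gives $m+(n-1)+\binom m2$ edges, which matches the table. The other rows are similar.

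The main step is the lower bound $|E(G)|\ge n+m-1$. This holds for every $G$ in the orbit, because local complements preserve connectivity and a connected graph on $n+m$ vertices has at least $n+m-1$ edges, with equality iff it is a tree. I then need to show that only the ss--ss row attains this bound. The sc--sc row has $nm$ edges, and $nm>n+m-1$ because $(n-1)(m-1)>0$ for $n,m\ge2$. The rows sc--c and c--sc have even more edges. The rows ss--c and c--ss exceed $n+m-1$ by $\binom m2\ge1$ and $\binom n2\ge1$ respectively. So $G$ is edge-minimal iff its QASST is of type ss--ss.

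It remains to identify the ss--ss graphs as binary stars. In that case the reconstructed graph consists of the center $i^n$ of $Q_n$ adjacent to the other $n-1$ leaves of $Q_n$, the center $i^m$ of $Q_m$ adjacent to the other $m-1$ leaves of $Q_m$, and the single split edge $(i^n,i^m)$. This is exactly two stars with their centers joined, which is a tree with $n+m-1$ edges.

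For the converse I would argue as follows. Any binary star on these labels that lies in the orbit is a tree with $n+m-1$ edges. By the counts above, its QASST must therefore be the ss--ss row. The only delicate point is the side condition that the star centers must lie in $[n]$ and in $[m]$ respectively. I expect this to follow automatically from the orbit classification, so I will not treat it as a separate obstacle.
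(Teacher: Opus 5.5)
Your proposal is correct and follows essentially the same argument as the paper. You read off the edge counts of the six valid rows of Table~\ref{tab:complete_bipartite_quasst_equivalences}, apply the connected-graph bound $|E(G)|\geq n+m-1$ (with equality only for trees), and note that only the ss--ss row, two stars joined at their centers, attains it; you also check strictness for the other five rows more carefully than the paper does, and the side condition you flag at the end is already settled by your argument, since an ss--ss QASST forces the two centers to be leaf-nodes of $Q_n$ and $Q_m$.
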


In addition to counting edges, we may also identify the maximum vertex degree of any graph in ${\mathcal O}(K_{n,m})$ using the QASST decomposition.
We do this by computing the maximum vertex degree of leaf-nodes in $Q_n$ and $Q_m$ and then comparing these values.
These results are summarized in Table~\ref{tab:complete_bipartite_max_vetex_degrees}.

\begin{table*}[t]
\centering
\begin{tabular}{|c|c|c|c|c|}
\hline
$Q_n$&$Q_m$&Max Degree in $Q_n$&Max Degree in $Q_m$&Max Degree in $G$\\
\hline
sc&sc&$m$&$n$&$\text{max}\{n,m\}$\\
sc&c&$m$&$n+(m-1)$&$n+m-1$\\
c&sc&$(n-1)+m$&$n$&$n+m-1$\\
ss&c&$n$&$n+(m-1)$&$n+m-1$\\
c&ss&$(n-1)+m$&$m$&$n+m-1$\\
ss&ss&$n$&$m$&$\text{max}\{n,m\}$\\
\hline
\end{tabular}
\caption{
The maximum vertex degree for graphs locally equivalent to a complete bipartite graph in each of the six valid cases of Table~\ref{tab:complete_bipartite_quasst_equivalences}.
These are obtained by comparing the maximum vertex degrees of leaf-nodes from $Q_n$ and $Q_m$.
}
\label{tab:complete_bipartite_max_vetex_degrees}
\end{table*}

Since the graphs with which we work have labeled vertices, each graph in the LC orbit is considered distinct. However, many of these graphs will be isomorphic. If we instead consider the local equivalence class of unlabeled graphs, then isomorphic graphs would be taken to be equal. The size of the LC orbit up to isomorphism has previously been examined in the literature~\cite{adcock2020mapping}.
We end this subsection by deriving a few facts about the size of the LC orbit for unlabeled complete bipartite graphs.

If $n\neq m$, then graphs from different QASST symmetry classes in Table~\ref{tab:complete_bipartite_quasst_equivalences} cannot be isomorphic.
One way to see this immediately is by observing that the graphs in each symmetry class have a different number of edges.
Observe that the first three classes contain only a single graph (the sc-sc, sc-c, and c-sc cases), while the remaining three classes contain graphs for every combination of center nodes for star-spoke quotient graphs (the ss-c, c-ss, and ss-ss cases); in the latter case, these graphs are isomorphic via a different choice of center.
Hence, when $n\neq m$, the size of the LC orbit up to isomorphism is 6.

When $n=m$, observe that cases 2 and 3 from Table~\ref{tab:complete_bipartite_quasst_equivalences} are symmetric (sc-c and c-sc), as are cases 4 and 5 (c-ss and ss-c).
The graphs in these symmetry classes will be isomorphic by mapping the leaf-nodes from $Q_n$ to $Q_m$ and vice-versa.
Hence, when $n=m$, the size of the LC orbit up to symmetry is only 4.
We conclude with a statement of these facts as a theorem.

\begin{theorem}
Consider the LC orbit of the \text{unlabeled} complete bipartite graph $K_{n,m}$, wherein isomorphic graphs are taken to be equal. Assume that $n,m\geq2$.
If $n\neq m$, then $|{\mathcal O}(K_{n,m})/\cong|=6$, matching the six symmetry classes of Table~\ref{tab:complete_bipartite_quasst_equivalences}.
If $n=m$, then $|{\mathcal O}(K_{n,m})/\cong|=4$; in this case, symmetry classes 2 and 3 are identical, as are classes 4 and 5.
\end{theorem}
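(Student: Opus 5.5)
The plan is to count isomorphism classes in the labeled orbit ${\mathcal O}(K_{n,m})$, which by the preceding theorem is exactly the union of the six valid symmetry classes of Table~\ref{tab:complete_bipartite_quasst_equivalences}. First I show that graphs in the same symmetry class are isomorphic. The three singleton classes (sc-sc, sc-c, c-sc) need nothing. For ss-c, c-ss and ss-ss, changing the star-spoke center from $i^n$ to $j^n$, or from $i^m$ to $j^m$, is realized by the transposition of those two labels. This transposition fixes $K_{n,m}$ and conjugates the transformation $c_{i^n}\circ c_{i^m}\circ\cdots$ into $c_{j^n}\circ c_{j^m}\circ\cdots$. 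It therefore maps one representative onto the other, so the number of isomorphism classes is at most $6$.

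Next I separate the classes. The edge counts in Table~\ref{tab:complete_bipartite_quasst_equivalences} are
$$nm,\quad nm+\tbinom{m}{2},\quad nm+\tbinom{n}{2},\quad n+m-1+\tbinom{m}{2},\quad n+m-1+\tbinom{n}{2},\quad n+m-1,$$
and edge count is an isomorphism invariant. Two facts handle most pairs. Since $nm>n+m-1$ whenever $n,m\geq2$, each of the first three counts exceeds the corresponding count among the last three, so the ``sc-type'' classes differ from the ``ss-type'' classes with the same complete part. Also, $\binom{n}{2}=\binom{m}{2}$ forces $n=m$ when $n,m\geq2$.

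The remaining cross comparisons are what I expect to be the main obstacle, because edge counts alone can coincide. For example, $nm+\binom{m}{2}$ could equal $n+m-1+\binom{n}{2}$ for special pairs $(n,m)$, and $nm$ could equal $n+m-1+\binom{n}{2}$. For these pairs I will use additional isomorphism invariants instead of trying to rule out the coincidences numerically.
\begin{itemize}
\item \emph{Tree or not.} The ss-ss class is the only acyclic class.
\item \emph{Bipartite or not.} The sc-sc class is the only bipartite one besides the tree.
\item \emph{Clique number.} The classes with a complete quotient contain a clique of size $m+1$ or $n+1$.
\item \emph{Degree sequence.} Reading degrees from the quotients as in Table~\ref{tab:complete_bipartite_max_vetex_degrees}: in sc-c the $n$ vertices of $Q_n$ have degree $m$ and the $m$ vertices of $Q_m$ have degree $n+m-1$. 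In ss-c the center has degree $n+m-1$, the other $n-1$ vertices of $Q_n$ have degree $1$, and the vertices of $Q_m$ have degree $m$.
\end{itemize}
Degree-$1$ vertices exist only in the classes with a star-spoke quotient. This separates $\{$sc-c, c-sc$\}$ from $\{$ss-c, c-ss$\}$ and sc-sc from ss-ss, so all six classes are distinct when $n\neq m$, giving $6$.

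When $n=m$, the map that swaps the leaf labels of $Q_n$ with those of $Q_m$ is an automorphism of $K_{n,n}$ that exchanges the two quotients. It sends the sc-c class to the c-sc class and the ss-c class to the c-ss class, so those pairs merge. The degree and tree/bipartite invariants above still separate sc-sc, sc-c, ss-c and ss-ss from one another, giving exactly $4$.
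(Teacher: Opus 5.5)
Your argument is correct and has the same skeleton as the paper's. Graphs within one symmetry class are isomorphic by relabeling the center of each star-spoke quotient, and for $n=m$ the side-swapping automorphism of $K_{n,n}$ identifies sc-c with c-sc and ss-c with c-ss.

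The genuine difference is in the separation step, and there your version is the sound one. The paper's only justification that distinct classes are non-isomorphic is the claim that they have pairwise different edge counts. As you suspected, that claim fails for special parameters:
\begin{itemize}
\item for $(n,m)=(3,4)$, the graph $K_{3,4}$ (sc-sc) and the ss-c graphs both have $12$ edges;
\item for $(n,m)=(3,2)$, the sc-c and c-ss graphs both have $7$ edges;
\item for $n=m=2$, the graph $C_4$ (sc-sc) and the ss-c graph (a triangle with a pendant vertex) both have $4$ edges, so even the count of $4$ is not secured by edge counts.
\end{itemize}

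Your route closes this gap. You combine the two edge-count facts ($nm>n+m-1$, and $\binom{n}{2}=\binom{m}{2}$ only if $n=m$) with three further invariants: acyclicity; bipartiteness, since every class with a complete quotient contains a triangle; and the existence of degree-$1$ vertices, which occur exactly when some quotient is star-spoke. Together these separate all fifteen pairs of classes when $n\neq m$ and all six remaining pairs when $n=m$. The clique-number bullet is redundant.

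One small point of precision: a transposition inside $[n]$ moves only the $Q_n$ center. For the ss-ss class you should compose one transposition inside $[n]$ with one inside $[m]$, each of which fixes $K_{n,m}$, or simply relabel the QASST directly.
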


\subsection{Complete $k$-Partite Graphs}
\label{sect:complete_k-partite_graphs}

\begin{figure*}[tp]
\centering
\begin{subfigure}{0.55\textwidth}
\centering
\includegraphics[width=0.8\textwidth,page=1]{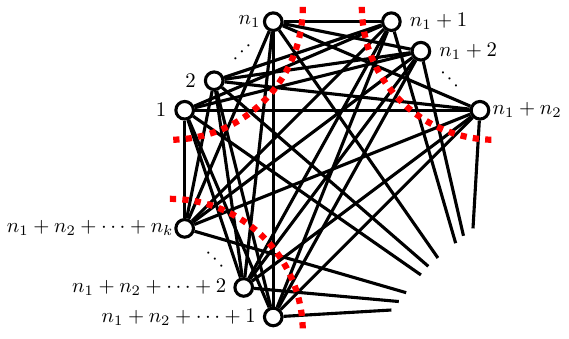}
\caption{Complete $k$-partite graph $K_{n_1,n_2,\cdots,n_k}$}
\label{fig:complete_k-partite_generic_graph}
\end{subfigure}\begin{subfigure}{0.45\textwidth}
\centering
\includegraphics[width=0.8\textwidth,page=2]{Figures/Section_Explicit_Formulas_for_LC_Orbits.pdf}
\caption{Split decomposition $QASST(K_{n_1,n_2,\cdots,n_k})$}
\label{fig:complete_k-partite_generic_QASST}
\end{subfigure}

\bigskip

\begin{subfigure}{0.55\textwidth}
\centering
\includegraphics[width=0.8\textwidth,page=3]{Figures/Section_Explicit_Formulas_for_LC_Orbits.pdf}
\caption{Clique-star $CS^1_{n_1,n_2,\cdots,n_k}$}
\label{fig:clique-star_generic_graph}
\end{subfigure}\begin{subfigure}{0.45\textwidth}
\centering
\includegraphics[width=0.8\textwidth,page=4]{Figures/Section_Explicit_Formulas_for_LC_Orbits.pdf}
\caption{Split decomposition $QASST(CS^1_{n_1,n_2,\cdots,n_k})$}
\label{fig:clique-star_generic_QASST}
\end{subfigure}

\bigskip

\begin{subfigure}{0.55\textwidth}
\centering
\includegraphics[width=0.8\textwidth,page=5]{Figures/Section_Explicit_Formulas_for_LC_Orbits.pdf}
\caption{Multi-leaf repeater $MR_{n_1,n_2,\cdots,n_k}$}
\label{fig:multi-leaf_repeater_generic_graph}
\end{subfigure}\begin{subfigure}{0.45\textwidth}
\centering
\includegraphics[width=0.8\textwidth,page=6]{Figures/Section_Explicit_Formulas_for_LC_Orbits.pdf}
\caption{Split decomposition $QASST(MR_{n_1,n_2,\cdots,n_k})$}
\label{fig:multi-leaf_repeater_generic_QASST}
\end{subfigure}

\caption{
Split decompositions of a generic complete $k$-partite graph (\ref{fig:complete_k-partite_generic_graph},\ref{fig:complete_k-partite_generic_QASST}), clique-star (\ref{fig:clique-star_generic_graph},\ref{fig:clique-star_generic_QASST}), and multi-leaf repeater graph (\ref{fig:multi-leaf_repeater_generic_graph},\ref{fig:multi-leaf_repeater_generic_QASST}).
These graphs are QASST equivalent with quotient graphs $Q_0,Q_1,\cdots,Q_k$.
In each case, the strong split tree is a star, consisting of a central $Q_0$ connected to $Q_1,\cdots,Q_k$ arranged in a ring around the center.
A complete $k$-partite graph occurs when $Q_0$ is complete and each $Q_1,\cdots,Q_k$ is star-center.
A clique-star occurs when $Q_0$ is star-center (pointing towards some $Q_{r>0}$) and each $Q_1,\cdots,Q_k$ is complete.
A multi-leaf repeater graph occurs when $Q_0$ is complete and each $Q_1,\cdots,Q_k$ is star-spoke.
Other QASST equivalent combinations are also possible, but these represent special cases.
}
\label{fig:complete_k_partite_CS_MLR_QASST_decomposition}
\end{figure*}

Generalizing from the complete bipartite case, we consider the complete $k$-partite graph $K_{n_1,\cdots,n_k}$, where $k\geq3$ and $n_i\geq2$ (Figure~\ref{fig:complete_k-partite_generic_graph}).
The split decomposition and QASST for a generic graph of this form is shown in Figure~\ref{fig:complete_k-partite_generic_QASST}, consisting of a ring of star-center quotient graphs $Q_1,\cdots,Q_k$ arranged around a central complete graph $Q_0$.
The quotient graphs $Q_1,\cdots,Q_k$ each contain $n_1,\cdots,n_k$ leaf-nodes, respectively, connected to a single split-node as shown.
The central quotient graph $Q_0$ is complete on $k$ split-nodes.

In much the same way as the complete bipartite graph, we may compute upper and lower bounds on the size of the LC orbit by examining the symmetries of the QASST decomposition. However, there are many more cases to consider and the full analysis is quite technical.
We leave the details to the appendix but highlight the results here.

For a generic complete $k$-partite graph $K_{n_1,\cdots,n_k}$, the size of the QASST equivalence class is derived in \ref{sect:complete_k-partite_general_QASST_decomposition} and reproduced here.
\begin{equation}
\Phi(K_{n_1,\cdots,n_k})=\prod_{i=1}^k(n_i+1)+2\sum_{j=1}^k\left[\prod_{i=1\neq j}^k(n_i+1)\right] \ .\tag{\ref{eq:complete_k-partite_QASST_upper_bound}}
\end{equation}
This is an upper bound on the number of graphs locally equivalent to $K_{n_1,\cdots,n_k}$. A lower bound on the size of the LC orbit is derived in \ref{app:complete_k-partite_LC_orbit_lower_bound} and we prove that this lower bound coincides with the size of the full orbit (Theorem~\ref{thm:complete_k-partite_LC_orbit_size}).
\begin{equation}
|{\mathcal O}(K_{n_1,\cdots,n_k})|=\underbrace{\prod_{i=1}^k(n_i+1)}_{\text{even products}}+\sum_{j=1}^k\left(\prod_{i\in[k]\setminus\{j\}}(n_i+1)\right) \ .\tag{\ref{eq:complete_k-partite_LC_orbit_size}} 
\end{equation}
Note that we specify \textit{even products} of $\prod_{i=1}^k(n_i+1)$ in this formula, meaning only those terms from the fully expanded form which use an even number of $n_i$ such as $n_1n_2$ or $n_1n_3n_7n_8$. This includes a product of zero $n_i$ (a product over an empty set), which is 1.

Through a further analysis of the symmetry classes in the QASST decomposition, we identify the structure of a minimal edge representative of the LC orbit and an explicit formula for the number of edges (Theorem~\ref{thm:complete_k-partite_minimal_edge_representative}). However, which graph in the orbit corresponds to a minimal edge representative depends on both the size of $k$ relative to $n_1,\cdots,n_k$ as well as the parity of $k$; it will not be the same graph in all cases. A full analysis is outlined in \ref{app:edge_counting}.
Similarly, we also examine the maximum vertex degree of those graphs locally equivalent to $K_{n_1,\cdots,n_k}$ in Section~\ref{app:max_vertex_degrees}.
We derive a general formula for a graph minimizing the maximum vertex degree across the LC orbit based on the values of $k,n_1,\cdots,n_k$ (Theorem~\ref{thm:ckp_min_mvd}).

Finally, we also consider the question of isomorphic graphs in the LC orbit of $K_{n_1,\cdots,n_k}$ in \ref{app:LC_and_isomorphisms}.
We only examine the special case when $n_1=\cdots=n_k$, for which we derive a formula for the number of distinct graphs in the LC orbit up to isomorphism (Theorem~\ref{thm:complete_k-partite_LC_orbit_up_to_isomorphism}), restated here:
\begin{equation}
\left|{\mathcal O}(K_{n_1,\cdots,n_k})/\cong\right|=\lfloor\tfrac{k}{2}\rfloor+k+1. \tag{\ref{eq:complete_k-partite_LC_orbit_size_up_to_isomorphism}}
\end{equation}

\subsection{Clique-Stars}
\label{clique-stars}

Unlike the case of a complete bipartite graph, the LC equivalence class of a complete $k$-partite graph does not exhaust the QASST equivalence class: $|\mathcal O(K_{n_1,\cdots,n_k})|<\Phi(K_{n_1,\cdots,n_k})$.
In other words, there exist graphs QASST equivalent to $K_{n_1,\cdots,n_k}$ which are not LC equivalent.
One of these graphs, and the graph we take as the representative for this other local equivalence class, is the \textit{clique-star} $CS^r_{n_1,\cdots,n_k}$ (Figure~\ref{fig:clique-star_generic_graph}).
We show in \ref{app:LC_non-equivalence} that these graphs are not LC equivalent (Theorem~\ref{thm:LC_non-equivalence}).

A clique-star can be viewed as a generalization of a star graph (recall that a \textit{clique} refers to a fully connected subset of nodes in a graph).
Rather than a center node with several spokes, instead we have a clique-center surrounded by a number of clique-spokes.
The cliques themselves are enumerated $1,\cdots,k$ and contain $n_1,\cdots,n_k$ vertices, respectively.
In the notation $CS^r_{n_1,\cdots,n_k}$, $1\leq r\leq k$ is taken to be the index of the central-clique.
Although the graphs themselves are distinct, using any clique as the center yields a locally equivalent graph. Hence, we usually default to $r=1$ in figures.

The clique-star has the same strong splits as a complete $k$-partite graph, but the quotient graphs are different (Figure~\ref{fig:clique-star_generic_QASST}). Each quotient graph $Q_1,\cdots,Q_n$ is a complete graph, with $n_i$ leaf-nodes and a single split-node.
The central quotient graph $Q_0$ consists of $k$ split-nodes, and is star-center pointing towards $Q_r$. We denote this by $Q_0=\text{sc}_r$.

Since they are QASST equivalent, we have that $\Phi(CS^r_{n_1,\cdots,n_k})=\Phi(K_{n_1,\cdots,n_k})$, with explicit formula given by Equation~\ref{eq:complete_k-partite_QASST_upper_bound}.
As with the complete $k$-partite case, we derive a lower bound on the size of the LC orbit in \ref{app:clique-star_LC_orbit_lower_bound}. We then show that this coincides with the size of the full orbit (Theorem~\ref{thm:clique-star_LC_orbit_size}).
\begin{equation}
|{\mathcal O}(CS^r_{n_1,\cdots,n_k})|=\underbrace{\prod_{i=1}^k(n_i+1)}_{\text{odd products}}+\sum_{j=1}^k\left(\prod_{i\in[k]\setminus\{j\}}(n_i+1)\right) \ .\tag{\ref{eq:clique-star_LC_orbit_size}}
\end{equation}
In the expression above, \textit{odd products} in $\prod_{i=1}^k(n_i+1)$ refers to those terms in the fully expanded form using only an odd number of $n_i$, such as $n_1$ or $n_1n_2n_3$.

As in the preceding case, we also identify the structure of a minimal edge representative of the LC orbit through an analysis of the QASST symmetry classes (Theorem~\ref{thm:clique-star_minimal_edge_representative}). There are a few different possibilities depending on the size and parity of $k$ relative to the values of $n_1,\cdots,n_k$, but each of these cases is also outlined in \ref{app:edge_counting}.
Likewise, we derive a formula for graph with minimal maximum vertex degree across the LC orbit of $CS^r_{n_1,\cdots,n_k}$ (Theorem~\ref{thm:cs_min_mvd}), leaving the technical details to \ref{app:max_vertex_degrees}.

As for the consideration of graphs in the LC orbit of $CS^r_{n_1,\cdots,n_k}$ up to isomorphism, this question is also explored in \ref{app:LC_and_isomorphisms}. In the special case when $n_1=\cdots=n_k$, we prove in Theorem~\ref{thm:clique-star_LC_orbit_up_to_isomorphism} that the number of distinct isomorphism classes within the local equivalence class is
\begin{equation}
\left|{\mathcal O}(K_{n_1,\cdots,n_k})/\cong\right|=\lceil\tfrac{k}{2}\rceil+k. \tag{\ref{eq:clique-star_LC_orbit_up_to_isomorphism}}
\end{equation}

\subsection{Multi-Leaf Repeater Graphs}
\label{sect:multi-leaf_repeater_graphs}

Together, clique-stars and complete $k$-partite graphs exhaust the QASST equivalence class. This can be seen by the fact that
\begin{eqnarray*}
|{\mathcal O}(K_{n_1,\cdots,n_k})|+|{\mathcal O}(CS^r_{n_1,\cdots,n_k})|&=&\Phi(K_{n_1,\cdots,n_k})\\
&=&\Phi(CS^r_{n_1,\cdots,n_k}).
\end{eqnarray*}
However, we mention here another interesting family of graphs contained in the same QASST equivalence class.
These are \textit{multi-leaf repeater graphs} $MR_{n_1,\cdots,n_k}$, which are generalizations of the standard repeater graphs introduced in~\cite{azuma2015all}.

A multi-leaf repeater consists of a complete graph on $k$ vertices, and each of these vertices has an additional $n_i-1$ leaves attached to it (Figure~\ref{fig:multi-leaf_repeater_generic_graph}).
This is in contrast to standard repeater graph $R_k$, wherein only a single additional leaf is attached to each vertex of the complete graph.
In this sense, a repeater graph can be regarded as a special case of multi-leaf repeater graph where each $n_i=2$: $R_k=MR_{2,\cdots,2}$.

A multi-leaf repeater has the same strong split tree as the clique-star and complete $k$-partite graph but with a different combination of quotient graphs $Q_0,Q_1,\cdots,Q_k$ (Figure~\ref{fig:multi-leaf_repeater_generic_QASST}).
The central quotient graph $Q_0$ is complete on $k$ split-nodes.
Each quotient graph $Q_1,\cdots,Q_k$ is star-spoke with $n_i$ leaf-nodes and a single split-node.

The key observation regarding the local equivalence of multi-leaf repeater graphs is that, depending on the parity of $k$, they can be locally equivalent to a complete $k$-partite graph (when $k$ is even) or to a clique-star (when $k$ is odd) (Theorem~\ref{thm:multi-leaf_repeater_graph_LC_equivalence}).
We provide more details about the relationship between multi-leaf repeaters and these other two graph families in \ref{app:local_equivalence_of_MLR}, including explicit LC transformations between these graphs.
Often, graphs of this form will be minimal edge representatives of the local equivalence class, but not always. Some examples are included in Figure~\ref{fig:complete_k-partite_LC_orbit_MER_examples}.

\section{Conclusion}
\label{sect:conclusion}

This paper introduced a new method to count the size of graph local equivalence classes.
The key technique for doing this is the examination of quotient graph symmetry classes in the quotient-augmented strong split tree, a graph-labeled tree used to represent the split decomposition of a graph.
This method is especially promising for distance-hereditary graphs given the simplicity of the quotient graphs in these cases.
We illustrated the utility of this technique by fully classifying the local equivalence class for several interesting families of distance-hereditary graph, including complete bipartite graphs, complete $k$-partite graphs, clique-stars, and multi-leaf repeater graphs.
Furthermore, we derived explicit formulas counting the number of locally equivalent graphs in each case, along with explicit LC transformations between graphs in the same LC orbit, as well as finding minimal edge and minimal maximum vertex degree representatives from each orbit.
These results represent a significant step forward for an active topic of research, especially given the connection to and recent developments in quantum information theory.

Other interesting distance-hereditary graphs that could be classified using this method in future work include bipartite multi-leaf repeater graphs, fully-connected layer graphs, and caterpillar graphs.
It would also be interesting to extend these techniques to non-distance-hereditary cases through an examination of the local equivalence classes for prime graphs.
Moreover, the combination of Algorithms~\ref{alg:induced_QASST} and \ref{alg:LC_propagation} give a method for computing the QASST of a \textit{vertex minor} (a graph is a \textit{vertex minor} of another graph if it can be obtained through a sequence of local complements and vertex deletions~\cite{oum2005rank}).
It is challenging in general to identify when a graph is a vertex minor, and so these techniques could be employed to study this related problem.

\section*{Acknowledgments}
\label{sect:acknowledgements}

We thank Kenneth Goodenough, Kyle Stanley Grant, and Dan Browne for valuable and illuminating discussions. This work was supported in part by the JST Moonshot R\&D Program (Grant Nos. JPMJMS2061 \& JPMJMS226C). SN  also acknowledges support from the New Energy and Industrial Technology Development Organization (NEDO, JPNP23003) and a JSPS Overseas Research Fellowship.


\bibliographystyle{abbrv} 
\bibliography{references.bib}

\appendix
\section{Review of Graph Theory Fundamentals}
\label{app:graph_theory_fundamentals}

A \textbf{graph} $G=(V,E)$ consists of a set of \textbf{vertices} $V=\{v_1,\cdots,v_n\}$ and a set of \textbf{edges} $E=\{e_1,\cdots,e_m\}$, where each edge $e_{\ell}=(v_i,v_j)$ denotes a pair of connected vertices. Unless otherwise stated, we will assume that all graphs we work with are \textbf{simple}, meaning there do not exist self-loops ($(v_i,v_i)\notin E$) and there do not exist multi-edges between vertices (i.e.~there exists at most one edge $(v_i,v_j)\in E$ between any pair of vertices $v_i,v_j\in V$.)
We assume that edges are not directed (i.e.~$(v_i,v_j)=(v_j,v_i)$).
If there exists an edge $e=(v_i,v_j)\in E$, then the vertices $v_i,v_j\in V$ are said to be {\bf adjacent}, and the edge $e=(v_i,v_j)$ is said to be {\bf incident} to both $v_i$ and $v_j$.
We consider only finite graphs.
Graphs need not be connected in general, but in this paper we restrict ourselves to connected graphs.

For a general graph $G$, we will denote the set of vertices as $V(G)$ and the set of edges as $E(G)$.
A \textbf{subgraph} $H$ of $G$ is a graph for which $V(H)\subseteq V(G)$ and $E(H)\subseteq E(G)$, noting that $e=(v_i,v_j)\in E(H)$ implies that $v_i,v_j\in V(H)$.
Given a subset of vertices $S\subseteq V(G)$, the \textbf{induced subgraph} $G[S]$ of $G$ is the subgraph for which $V(G[S])=S$ and $E(G(S))=\{(v_i,v_j)\in E(G):v_i,v_j\in S\}$. That is to say, $G(S)$ inherits any edges in $G$ between the vertices in $S$. A subgraph induced by a subset of edges can be defined in a similar way.

Given a node $v_i\in V(G)$, the \textbf{degree} of this vertex $\text{deg}(v_i)\in{\mathbb Z}_{\geq0}$ is the number of edges in $E(G)$ incident to $v_i$. The \textbf{neighborhood} of $v_i$ in $G$ is the subgraph $N_G(v_i)$ of $G$ induced by the vertices adjacent to $v_i$: $V(N_G(v_i))=\{v_j\in V(G):(v_i,v_j)\in E(G)\}$.
With this definition, $|V(N_G(v_i))|=\text{deg}(v_i)$.
We choose to state the parent graph $G$ in $N_G(v)$ explicitly to help distinguish between vertices with the same label from different graphs.

A \textbf{walk} between two vertices $v,v'\in V(G)$ is an ordered sequence of $\ell$ edges denoted $(v_0,v_1),(v_1,v_2),\cdots,(v_{\ell-1},v_{\ell})$ between $\ell+1$ pairs of adjacent vertices, where $v=v_0$ and $v'=v_\ell$. In a walk, edges and vertices need not be distinct. The length of the walk is $\ell$, the number of edges.
A \textbf{trail} is a walk in which all edges are distinct, but not necessarily vertices.
A \textbf{path} is a walk in which both vertices and edges are distinct.
A \textbf{cycle} is a trail in which vertices are distinct except for the first and the last.
A graph is \textbf{acyclic}, or a \textbf{tree}, if it contains no subgraphs which are cycles. Given a tree $T$, the degree 1 vertices in $T(V)$ are known as the \textbf{leaves} of the tree, while the other vertices are said to be \textbf{internal}.

A graph is called \textbf{connected} if there exists a path between every pair of vertices.
Given any two vertices $v,v'\in V(G)$ in a connected graph $G$, their \textbf{distance} $\text{dist}(v,v')$ is defined to be the length of the shortest path between them.
A graph $G$ is called \textbf{distance-hereditary} (also called \textbf{completely separable} or \textbf{totally decomposable}) provided that, for any connected induced subgraph $H$ of $G$, the distance between any two vertices $v,v'\in V(H)$ is also the distance between these same two vertices in $G$~\cite{howorka1977characterization,bandelt1986distance,hammer1990completely}.
An alternative characterization of graphs with this property is provided by the \textbf{rank-width} of a graph~\cite{oum2005rank,oum2006approximating,oum2017rank}; Oum proved that a connected graph is distance-hereditary if and only if it has rank-width 1~\cite{oum2005rank}. This fact plays a role in the proof of Theorem~\ref{thm:distance_hereditary_iff_star_complete_quotient}.

It is often useful to examine specific paths and cycles within a graph, and hence we provide the following alternative characterization of these in terms of subgraphs.
A \textbf{path} $P$ of length $\ell$ between two vertices $v,v'\in V(G)$ is a subgraph of $G$ with $\ell+1$ distinct vertices that can be sequentially ordered $V(P)=\{v=v_0,v_1,\cdots,v_{\ell-1},v_{\ell}=v'\}$
and $\ell$ edges between subsequent vertices $E(P)=\{(v_{0},v_{1}),(v_{1},v_{2}),\cdots,(v_{\ell-1},v_{\ell})\}$.
A \textbf{cycle} $C$ of length $\ell$ based at $v\in V(G)$ is a subgraph of $G$ defined as above in the same way as a path of length $\ell$, except beginning and ending at the same vertex $v=v_0=v_{\ell}$ (so there are $\ell$ edges between $\ell$ distinct vertices in total). Note that any vertex $v\in V(C)$ can be chosen equivalently as the base point of the cycle, but it is often useful to compare different cycles through a fixed point.
Furthermore, deleting any edge $e=(v_i,v_j)$ in a cycle $C$ of length $\ell$ breaks the cycle, resulting in a path of length $\ell-1$ from $v_i$ to $v_j$.

A graph $G$ is called \textbf{bipartite} if the vertex set can be partitioned into two disjoint, nonempty subsets $V(G)=U\sqcup V$ such that there do not exist any edges between the vertices in $U$, nor do there exist any edges between the vertices in $V$. In other words, for all edges $e=(v_i,v_j)\in E(G)$, if $v_i\in U$, then $v_j\in V$ and vice versa.
More generally, a graph $G$ is called \textbf{$k$-partite} if its vertex set can be partitioned into $k$ nonempty, disjoint subsets $V(G)=U_1\sqcup\cdots\sqcup U_k$ such that there do not exist any edges between the vertices in each subset $U_i$.
Note that any $k$-partite graph is automatically $k+1$-partite provided there exists a subset $U_i$ with at least two vertices that can be further subdivided into $U_i=U_i'\sqcup U_i''$.

A graph $G$ is called \textbf{complete} if all vertices are adjacent.
A subset of vertices $S\subseteq V(G)$ is called a \textbf{clique} in $G$ if all of the vertices in $S$ are adjacent. Equivalently, the induced subgraph $G(S)$ is complete.

Two graphs $G$ and $G'$ are called \textbf{isomorphic}, denoted $G\cong G'$, if there exists an edge-preserving bijection of vertex sets $\phi:V(G)\rightarrow V(G')$. By edge preserving, we mean that $(v_1,v_2)\in E(G)$ if and only if $(\phi(v_1),\phi(v_2))\in E(G')$. If such a map exists, we call $\phi$ a \textbf{graph isomorphism} of $G$ and $G'$ and write $G'=\phi(G)$.
In this paper, we will primarily work with \textbf{labeled graphs}, meaning that each vertex has a distinct label, usually consecutive integers.
In this context, isomorphic graphs with different labels will be considered distinct from each other.

\section{Complete $k$-partite and Clique-Star QASST Structure}
\label{app:complete_k-partite_clique-star_qasst_structure}

In the next few sections, our goal is to build up to a general classification of the LC orbits of complete $k$-partite graphs and clique-stars (Figures~\ref{fig:complete_k-partite_generic_graph} and \ref{fig:clique-star_generic_graph}), two classes of graph which are QASST equivalent but not LC equivalent.
This will also account for the class of multi-leaf repeater graphs, which we will show are always LC equivalent to one of these two cases.
We start with an analysis of these graphs' QASST structure.

\subsection{Basic Structure and Notation}

Let $G$ be any graph QASST equivalent to a complete $k$-partite graph $K_{n_1,\cdots,n_k}$, including but not limited to a clique-star $CS^r_{n_1,\cdots,n_k}$ or multi-leaf repeater graph $MR_{n_1,\cdots,n_k}$.
The total number of vertices in $G$ is $n=n_1+\cdots+n_k$. It has a QASST consisting of quotient graphs $Q_0,Q_1,\cdots,Q_k$ as illustrated in the examples of Figure~\ref{fig:complete_k_partite_CS_MLR_QASST_decomposition}, with the components $Q_1,\cdots,Q_k$ arranged around $Q_0$.
We will refer to $Q_i$ as the $i^{\text{th}}$ component of $QASST(G)$, where $n_1,\cdots,n_k$ denote the number of leaf-nodes in $Q_1,\cdots,Q_k$, respectively.
$Q_0$ is referred to as the central quotient graph and it consists of $k$ split-nodes.
Assume $k\geq3$ and $n_i\geq2$.
Let $[k]=\{1,\cdots,k\}$ denote the ordered set of indices between 1 and $k$ (the indices of the quotient graphs of $G$, excluding $Q_0$).
Suppose that the indices of the $n=n_1+\cdots+n_k$ vertices are ordered sequentially, continuing from each of the $k$ components as in the examples of Figure~\ref{fig:complete_k_partite_CS_MLR_QASST_decomposition}. Denote the subset of vertex indices in each component as $[n_1],\cdots,[n_k]$.
Hence, $[n_1]=\{1,\cdots,n_1\}$, $[n_2]=\{n_1+1,\cdots,n_1+n_2\}$ and so forth.
For any $n_i$, the general expression for the subset of indices is $[n_i]=\left\{\left(\sum_{\ell=1}^{i-1}n_\ell\right)+1,\cdots,\left(\sum_{\ell=1}^{i-1}n_\ell\right)+n_i\right\}$.

In the discussion which follows, many of the LC equivalent graphs we consider will be isomorphic up to a choice of index from $[n_i]$.
In order to give an explicit transformation between LC equivalent graphs, it is necessary to specify a sequence of local complements of the form $c_{\ell}$ for some $\ell\in[n_i]$, but the exact choice is arbitrary up to relabeling (i.e.~a different choice of index in $[n_i]$ yields an isomorphic graph).
In such cases, we will use a superscript with the notation $c_{\ell}^{n_i}$ to denote a local complement with respect to any choice of index $\ell\in[n_i]$. We will also write $\bigcomp_{i\in I}f_i$ to denote a composition of functions over a set of indices $I$.

Finally, some transformations involve an operation known as an \textbf{edge pivot}. Any edge $(i,j)\in E(G)$ defines an edge pivot on $G$ via a composition of three primitive local complements: $\text{ep}(i,j)=c_i\circ c_j\circ c_i$. Note that the order of the endpoints of the edge is irrelevant; if $(i,j)=(j,i)\in E(G)$, then $\text{ep}(i,j)=\text{ep}(j,i)$.
In some explicit expressions, the component $Q_i$ of the graph matters, but not the exact index $\ell\in[n_i]$. When such cases require an edge pivot between nodes from two different components with indices $i,j\in[k]$, we introduce the following general expression $\text{ep}([n_i],[n_j])$ below.
This definition allows for generic formulas involving edge-pivots, which may not correspond to existing edges in some cases.
\begin{eqnarray}\label{eq:component_edge_pivot}
\text{ep}([n_i],[n_j])&=&\begin{cases}c_{\ell}^{n_i}\circ c_{\ell'}^{n_j}\circ c_{\ell}^{n_i}&\text{if $i\neq j$ and $(\ell,\ell')\in E(G)$}\\\text{id}&\text{if $i=j$ or $(\ell,\ell')\notin E(G)$}\end{cases}\nonumber\\
\end{eqnarray}
Equation~\ref{eq:component_edge_pivot} is used to define a formula on the QASST which has a non-unique expression; a different choice of indices $\ell\in[n_i]$ and $\ell'\in[n_j]$ would give a different sequence of local complements, but these would define the same transformation on the QASST.

\subsection{General QASST Decomposition}
\label{sect:complete_k-partite_general_QASST_decomposition}

As when we analyzed complete bipartite graphs, our first objective is to derive a general formula computing the total size of the QASST equivalence class by calculating the number of distinct combinations of quotient graphs which preserve strong splits.
Our enumeration is based on an examination of the symmetries in these graphs.
For reference in the following discussion, Figures~\ref{fig:complete_k-partite_split_decomposition} and \ref{fig:clique-star_split_decomposition} show examples of a complete 4-partite graph and QASST equivalent clique-star, along with their split decompositions.

\begin{figure*}[t]
\centering

\begin{subfigure}{0.33\textwidth}
\centering
\includegraphics[width=0.8\textwidth,page=1]{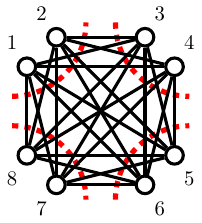}
\caption{$K_{2,2,2,2}$}
\label{fig:complete_k-partite_split_decomposition_graph}
\end{subfigure}\begin{subfigure}{0.33\textwidth}
\centering
\includegraphics[width=0.8\textwidth,page=2]{Figures/Appendix_Complete_k-Partite_and_Clique-Star_QASST_Structure.pdf}
\caption{$SST(K_{2,2,2,2})$}
\label{fig:complete_k-partite_split_decomposition_SST}
\end{subfigure}\begin{subfigure}{0.33\textwidth}
\centering
\includegraphics[width=0.9\textwidth,page=3]{Figures/Appendix_Complete_k-Partite_and_Clique-Star_QASST_Structure.pdf}
\caption{$QASST(K_{2,2,2,2})$}
\label{fig:complete_k-partite_split_decomposition_QASST}
\end{subfigure}
\caption{The split decomposition of the complete $4$-partite graph $K_{2,2,2,2}$.}
\label{fig:complete_k-partite_split_decomposition}
\end{figure*}

\begin{figure*}[t]
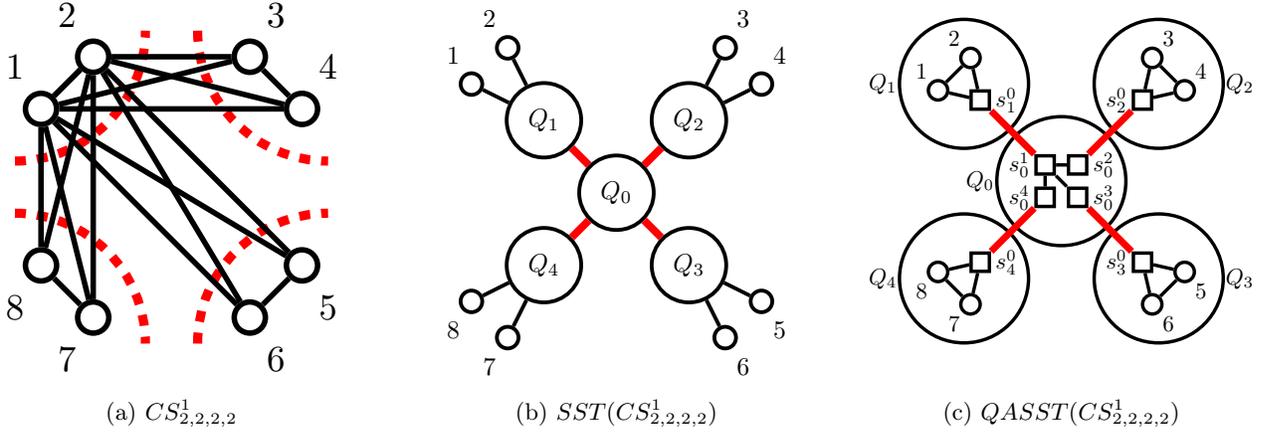

\centering

\begin{subfigure}{0.33\textwidth}
\centering
\includegraphics[width=0.8\textwidth,page=4]{Figures/Appendix_Complete_k-Partite_and_Clique-Star_QASST_Structure.pdf}
\caption{$CS^1_{2,2,2,2}$}
\label{fig:clique-star_split_decomposition_graph}
\end{subfigure}\begin{subfigure}{0.33\textwidth}
\centering
\includegraphics[width=0.8\textwidth,page=5]{Figures/Appendix_Complete_k-Partite_and_Clique-Star_QASST_Structure.pdf}
\caption{$SST(CS^1_{2,2,2,2})$}
\label{fig:clique-star_split_decomposition_SST}
\end{subfigure}\begin{subfigure}{0.33\textwidth}
\centering
\includegraphics[width=0.9\textwidth,page=6]{Figures/Appendix_Complete_k-Partite_and_Clique-Star_QASST_Structure.pdf}
\caption{$QASST(CS^1_{2,2,2,2})$}
\label{fig:clique-star_split_decomposition_QASST}
\end{subfigure}

\caption{The split decomposition of the clique-star $CS^1_{2,2,2,2}$.}
\label{fig:clique-star_split_decomposition}
\end{figure*}

The central quotient graph $Q_0$ can either be complete (c) as in Figure~\ref{fig:complete_k-partite_split_decomposition_QASST} or else star ($\text{sc}_j$); in the latter case, the center of the star always points towards one of the other components, indicated by the index $j\in[k]$.
In the example of Figure~\ref{fig:clique-star_split_decomposition_QASST}, the central quotient graph is $\text{sc}_1$ because $Q_0$ is a star whose center ``points" towards the quotient graph $Q_1$ in the QASST. This is denoted by the superscript in the notation $CS^1_{2,2,2,2}$ to distinguish it from clique-stars with a different choice of center

The other quotient graphs $Q_1,\cdots,Q_k$ have three possibilities: complete (c), star-center (sc), or star-spoke (ss) (recall that the distinction between star-center and star-spoke refers to the location of the split-node).
When a component $Q_i$ is star-spoke (i.e.~the single split-node is a spoke of the star), there are $n_i$ possible leaf-nodes that can be the center of the star, but each of these is symmetric. We make no distinction between these cases when listing the symmetry classes of $Q_i$, but we use these to count the size of each symmetry class.
By contrast, there is only one way for $Q_i$ to be complete or star-center.
Counting the size of a symmetry class amounts to comparing the number of star-spoke quotient graphs, avoiding redundancies.

Recall that certain combinations of adjacent quotient graphs do not preserve strong splits, and hence would define an invalid split decomposition if adjacent in the QASST. Our goal is to count the number of valid combinations of quotient graphs. There are two broad cases that can be distinguished based on the structure of the central quotient graph $Q_0$.
\begin{enumerate}
\item \textit{$Q_0$ is c.}

As long as $Q_{i>0}$ is not also c, any combination of quotient graphs preserves strong splits and is hence valid. That is to say, $Q_i$ can be sc (1 possibility) or ss ($n_i$ possibilities). The total number of combinations is thus $\prod_{i=1}^k(n_i+1)$.

\item \textit{For fixed $j\in[k]$, $Q_0$ is $\text{sc}_j$.}

In this case, $Q_j$ cannot be ss; it must be either c or sc (2 possibilities). For all remaining quotient graphs $Q_{i\neq0,j}$, these cannot be sc; they must be c or ss ($n_i+1$ possibilities). For fixed $j\in[k]$, this gives $2\prod_{i=1\neq j}^k(n_i+1)$ possible combinations.
\end{enumerate}

The size of the full QASST equivalence class, denoted $\Phi(K_{n_1,\cdots,n_k})$, is computed by combining the two cases above, wherein we sum over all choices of $j\in[k]$ for the second case. This yields the following formula:
\begin{eqnarray}\label{eq:complete_k-partite_QASST_upper_bound}
\Phi(K_{n_1,\cdots,n_k})&=&\prod_{i=1}^k(n_i+1)+2\sum_{j=1}^k\left[\prod_{i=1\neq j}^k(n_i+1)\right].\nonumber\\
\end{eqnarray}
Recall that the size of the QASST equivalence class is an upper bound on the size of the LC equivalence class, and so $|{\mathcal O}(K_{n_1,\cdots,n_k})|\leq\Phi(K_{n_1,\cdots,n_k})$.
Since they are QASST equivalent, we also have that $|{\mathcal O}(CS^r_{n_1,\cdots,n_k})|\leq\Phi(K_{n_1,\cdots,n_k})$.

\section{Classification of Complete $k$-partite and Clique-Star LC Orbits}
\label{app:classification_of_complete_k-partite_clique-star_LC_orbits}

In \ref{app:LC_non-equivalence}, we will verify explicitly that a complete $k$-partite graph $K_{n_1,\cdots,n_k}$ and the clique-star $CS^r_{n_1,\cdots,n_k}$ are not LC equivalent (Theorem~\ref{thm:LC_non-equivalence}).
Since they are QASST equivalent, this means that the sum of the sizes of their combined LC orbits must still be bounded above by the quantity computed in Equation~\ref{eq:complete_k-partite_QASST_upper_bound}: $|{\mathcal O}(K_{n_1,\cdots,n_k})|+|{\mathcal O}(CS^r_{n_1,\cdots,n_k})|\leq\Phi(K_{n_1,\cdots,n_k})$.
By computing lower bounds on the size of the LC orbit for each of these graphs, we will show that together they achieve the upper bound.
To compute such a lower bound, in addition to counting quotient graph symmetries, we will also give an explicit sequence of LC operations which transforms the original graph into a specified LC equivalent graph.
Doing so guarantees that the graph is included in the same local equivalence class, and hence the full LC orbit must be at least large enough to contain all of the graphs found in this way.
In contrast to the upper bound $\Phi(G)$, we will use the notation $\phi(G)$ to indicate the lower bound obtained in this way. We count labeled graphs as we did in Section~\ref{sect:compelte_bipartite_graphs_LC_orbit_size}.

\subsection{A Lower Bound on the Complete $k$-Partite LC Orbit}
\label{app:complete_k-partite_LC_orbit_lower_bound}

Each of the graphs in the LC equivalence class of $K_{n_1,\cdots,n_k}$ has a QASST decomposition which falls into one of three broad symmetry classes.
By symmetry class, we mean a choice of quotient graphs falling into one of three cases (c, sc, or ss) with this particular QASST structure.
These can be distinguished from each other by the behavior of the central quotient graph $Q_0$.
We begin by enumerating these cases.

\begin{enumerate}
\item \textit{$Q_0$ is c.}

For any even size subset of indices $I_{\text{even}}\subseteq[k]$, $Q_i$ is ss for each $i\in I_{\text{even}}$ and sc for each $i\in[k]\setminus I_{\text{even}}$. The total number of ss components is hence even.

\item \textit{For fixed $j\in[k]$, $Q_0$ is $\text{sc}_j$ and $Q_j$ is sc.}

For any even size subset of indices $I_{\text{even}}^{\setminus j}\subseteq[k]\setminus\{j\}$, $Q_i$ is ss for each $i\in I_{\text{even}}^{\setminus j}$ and c for each $i\in[k]\setminus\left(\{j\}\cup I_{\text{even}}^{\setminus j}\right)$. The number of ss components is even.

\item \textit{For fixed $j\in[k]$, $Q_0$ is $\text{sc}_j$ and $Q_j$ is c.}

For any odd size subset of indices $I_{\text{odd}}^{\setminus j}\subseteq[k]\setminus\{j\}$, $Q_i$ is ss for each $i\in I_{\text{odd}}^{\setminus j}$ and c for each $i\in[k]\setminus\left(\{j\}\cup I_{\text{odd}}^{\setminus j}\right)$. The number of ss components is odd.

\end{enumerate}

\begin{table*}[tp]
\centering
\begin{tabular}{|c|l|c|c|}
\hline
Case&Symmetry Class Rules&Number of Symmetries&Transformation from $K_{n_1,\cdots,n_k}$\\
\hline
1&\footnotesize$\begin{array}{l}\text{$Q_0$ is c}\\\text{Even number of $Q_i$ are ss}\\\text{All other $Q_i$ are sc}\end{array}$ &$\mathlarger{\sum_{I_{\text{even}}\subseteq [k]}\left(\prod_{i\in I_{\text{even}}}n_i\right)}$&$\mathlarger{f=\bigcomp_{I_s=\{i,j\}}^{ I_1\sqcup\cdots\sqcup I_t=I_{\text{even}}}\text{ep}([n_i],[n_j])}$\\
\hline
2(j)&\footnotesize$\begin{array}{l}\text{$Q_0$ is $\text{sc}_j$ and $Q_j$ is sc}\\\text{Even number of $Q_i$ are ss}\\\text{All other $Q_i$ are c}\end{array}$&$\mathlarger{\sum_{I_{\text{even}}^{\setminus j}\subseteq[k]\setminus\{j\}}\left(\prod_{i\in I_{\text{even}}^{\setminus j}}n_i\right)}$&$\mathlarger{f=\left(\bigcomp_{i\in I_{\text{even}}^{\setminus j}}c_{\ell}^{n_i}\right)\circ c_{\ell'}^{n_j}}$\\
\hline
3(j)&\footnotesize$\begin{array}{l}\text{$Q_0$ is $\text{sc}_j$ and $Q_j$ is c}\\\text{Odd number of $Q_i$ are ss}\\\text{All other $Q_i$ are c}\end{array}$&$\mathlarger{\sum_{I_{\text{odd}}^{\setminus j}\subseteq[k]\setminus\{j\}}\left(\prod_{i\in I_{\text{odd}}^{\setminus j}}n_i\right)}$&$\mathlarger{f=\left(\bigcomp_{i\in I_{\text{odd}}^{\setminus j}}c_{\ell}^{n_i}\right)\circ c_{\ell'}^{n_j}}$\\
\hline
\end{tabular}
\caption{
The three basic QASST symmetry classes in the LC orbit of $K_{n_1,\cdots,n_k}$, including the number of distinct graphs in this symmetry class and an explicit transformation to one such graph. The transformations are defined with respect to a choice of fixed subset of indices $I\subseteq [k]$ matching the conditions specified.
The exact order of the compositions across $I$ is unimportant; a different choice would define a different expression for $f$, but it would have the same effect on the graph. 
In Case 1, $I_{\text{even}}\subseteq[k]$ is an even size subset of indices and hence can be partitioned into a total of $t=|I_{\text{even}}|/2$ subsets of size 2; $I_{\text{even}}=I_1\sqcup\cdots\sqcup I_t$ is any choice of such a partition. The transformation is defined by a corresponding sequence of $t$ edge pivots via Equation~\ref{eq:component_edge_pivot}. Cases 2 and 3 are defined with respect to any choice of $j\in[k]$. In general, for cases where $Q_i$ is ss, the index $\ell\in[n_i]$ of the primitive local complement $c_{\ell}=c_{\ell}^{n_i}$ used in the transformation defines the center leaf-node of the star.
}
\label{tab:complete_k-partite_quasst_equivalences}
\end{table*}

\begin{figure*}[t]
\centering
\begin{subfigure}{0.33\textwidth}
\centering
\includegraphics[width=0.9\textwidth,page=7]{Figures/Appendix_Complete_k-Partite_and_Clique-Star_QASST_Structure.pdf}\\
\medskip
\includegraphics[width=0.7\textwidth,page=8]{Figures/Appendix_Complete_k-Partite_and_Clique-Star_QASST_Structure.pdf}
\begin{eqnarray*}
I_{\text{even}}&=&\{1,2\}\subseteq[4]\\
f&=&\underbrace{c_1\circ c_3\circ c_1}_{\text{ep}([n_1],[n_2])}
\end{eqnarray*}
\caption{Case 1}
\label{fig:K2222_QASST_symmetry_examples_case1}
\end{subfigure}\begin{subfigure}{0.33\textwidth}
\centering
\includegraphics[width=0.9\textwidth,page=9]{Figures/Appendix_Complete_k-Partite_and_Clique-Star_QASST_Structure.pdf}\\
\medskip
\includegraphics[width=0.7\textwidth,page=10]{Figures/Appendix_Complete_k-Partite_and_Clique-Star_QASST_Structure.pdf}\\
\begin{eqnarray*}
I_{\text{even}}^{\setminus1}&=&\{2,3\}\subseteq[4]\setminus\{1\}\\
f&=&c_5\circ c_3\circ c_1\\
\end{eqnarray*}
\caption{Case 2}
\label{fig:K2222_QASST_symmetry_examples_case2}
\end{subfigure}\begin{subfigure}{0.33\textwidth}
\centering
\includegraphics[width=0.9\textwidth,page=11]{Figures/Appendix_Complete_k-Partite_and_Clique-Star_QASST_Structure.pdf}
\medskip
\includegraphics[width=0.7\textwidth,page=12]{Figures/Appendix_Complete_k-Partite_and_Clique-Star_QASST_Structure.pdf}
\begin{eqnarray*}
I_{\text{odd}}^{\setminus1}&=&\{2\}\subseteq[4]\setminus\{1\}\\
f&=&c_3\circ c_1\\
\end{eqnarray*}
\caption{Case 3}
\label{fig:K2222_QASST_symmetry_examples_case3}
\end{subfigure}

\caption{Examples of graphs locally equivalent to $K_{2,2,2,2}$ from each of the three symmetry classes outlined in Table~\ref{tab:complete_k-partite_quasst_equivalences}. This includes the QASST decomposition, the specific choice of index set $I\subseteq [4]=\{1,2,3,4\}$ from among the quotient graph indices, and a corresponding explicit transformation $f$ for which $G=f(K_{2,2,2,2})$. The lower bound on the size of the full LC orbit as computed by the formula in Equation~\ref{eq:complete_k-partite_lower_bound} is $\phi(K_{2,2,2,2})=149$.}
\label{fig:K2222_QASST_symmetry_examples}
\end{figure*}

These rules are summarized in Table~\ref{tab:complete_k-partite_quasst_equivalences}.
Figure~\ref{fig:K2222_QASST_symmetry_examples} shows examples of graphs for each of these three cases which are locally equivalent to the complete $4$-partite graph $K_{2,2,2,2}$ illustrated in Figure~\ref{fig:complete_k-partite_split_decomposition}.
Note that the requirement for an even number of terms allows for zero terms, which corresponds to an empty subset of indices $I_{\text{even}}=\emptyset$. In this case, a product over the indices in an empty set is taken to be 1.
Table~\ref{tab:complete_k-partite_quasst_equivalences} counts all graphs falling into one these three cases and provides an explicit transformation to one such graph.

The transformations included in the table are specified by a given subset of vertices $I\in[k]$, up to a choice of indices for primitive local complements. For a given set $I$, there are many possible such formulas, but each will yield an isomorphic graph.
In general these transformations are non-unique.
Across all symmetry classes in Table~\ref{tab:complete_k-partite_quasst_equivalences} and all choices of index $j$, the total number of distinct graphs in the LC orbit of $K_{n_1,\cdots,n_k}$ is obtained by summing the three cases in the table across all choices of $j\in[k]$.
We compute this formula as follows.
\begin{widetext}
\begin{eqnarray}\label{eq:complete_k-partite_lower_bound}
\phi(K_{n_1,\cdots,n_k})&=&\sum_{I_{\text{even}}\subseteq [k]}\left(\prod_{i\in I_{\text{even}}}n_i\right)+\sum_{j=1}^k\left[\sum_{I_{\text{even}}^{\setminus j}\subseteq[k]\setminus\{j\}}\left(\prod_{i\in I_{\text{even}}^{\setminus j}}n_i\right)+\sum_{I_{\text{odd}}^{\setminus j}\subseteq[k]\setminus\{j\}}\left(\prod_{i\in I_{\text{odd}}^{\setminus j}}n_i\right)\right]\nonumber\\
&=&\sum_{I_{\text{even}}\subseteq [k]}\left(\prod_{i\in I_{\text{even}}}n_i\right)+\sum_{j=1}^k\left[\sum_{I^{\setminus j}\subseteq[k]\setminus\{j\}}\left(\prod_{i\in I^{\setminus j}}n_i\right)\right]\nonumber\\
&=&\underbrace{\prod_{i=1}^k(n_i+1)}_{\text{only terms with even number of products}}+\sum_{j=1}^k\left(\prod_{i\in[k]\setminus\{j\}}(n_i+1)\right)
\end{eqnarray}
\end{widetext}

Since Table~\ref{tab:complete_k-partite_quasst_equivalences} gives an explicit sequence of local complements to transform $K_{n_1,\cdots,n_k}$ into any QASST equivalent graph described by one of these three cases, we may conclude that all such graphs are locally equivalent.
Hence, by counting all such graphs, Equation~\ref{eq:complete_k-partite_lower_bound} gives a lower bound on the size of the LC orbit.
Let $\phi(K_{n_1,\cdots,n_k})$ denote the bound computed in this way.

\subsection{A Lower Bound on the Clique-Star LC Orbit}
\label{app:clique-star_LC_orbit_lower_bound}

Now consider the clique-star $CS^r_{n_1,\cdots,n_k}$ whose clique-center corresponds to the quotient graph $Q_r$, where $r\in[k]$. Given another index $s\in[k]$, the clique-center can be changed from $Q_r$ to $Q_s$ via the edge pivot $\text{ep}([n_r],[n_s])$ of Equation~\ref{eq:component_edge_pivot}.
To define an explicit transformation starting from a given clique-star, in some cases it will be useful to start by changing the clique-center in this way.
We will keep with the index $r\in[k]$ to describe the clique-center of a generic clique-star when writing explicit LC transformations, but this choice is arbitrary and nothing is lost by choosing $r=1$.

As we did for the complete $k$-partite graph, the symmetry classes for the clique-star can be divided into three broad cases, which we enumerate here.
Again, these can be distinguished from each other by the behavior of $Q_0$.

\begin{table*}[tp]
\centering
\begin{tabular}{|c|l|c|c|}
\hline
Case&Symmetry Class Rules&Number of Symmetries&Transformation from $CS^r_{n_1,\cdots,n_k}$\\
\hline
1&\footnotesize$\begin{array}{l}\text{$Q_0$ is c}\\\text{Odd number of $Q_i$ are ss}\\\text{All other $Q_i$ are sc}\end{array}$ &$\mathlarger{\sum_{I_{\text{odd}}\subseteq [k]}\left(\prod_{i\in I_{\text{odd}}}n_i\right)}$&$\mathlarger{f=c_{\ell'}^{n_r}\circ\left(\bigcomp_{i\in I_{\text{odd}}\setminus\{r\}}c_{\ell}^{n_i}\right)}$\\
\hline
2(j)&\footnotesize$\begin{array}{l}\text{$Q_0$ is $\text{sc}_j$ and $Q_j$ is sc}\\\text{Odd number of $Q_i$ are ss}\\\text{All other $Q_i$ are c}\end{array}$&$\mathlarger{\sum_{I_{\text{odd}}^{\setminus j}\subseteq[k]\setminus\{j\}}\left(\prod_{i\in I_{\text{odd}}^{\setminus j}}n_i\right)}$&$\mathlarger{f=\left(\bigcomp_{i\in I_{\text{odd}}^{\setminus j}}c_{\ell}^{n_i}\right)\circ\text{ep}([n_r],[n_j])}$\\
\hline
3(j)&\footnotesize$\begin{array}{l}\text{$Q_0$ is $\text{sc}_j$ and $Q_j$ is c}\\\text{Even number of $Q_i$ are ss}\\\text{All other $Q_i$ are c}\end{array}$&$\mathlarger{\sum_{I_{\text{even}}^{\setminus j}\subseteq[k]\setminus\{j\}}\left(\prod_{i\in I_{\text{even}}^{\setminus j}}n_i\right)}$&$\mathlarger{f=\left(\bigcomp_{i\in I_{\text{even}}^{\setminus j}}c_{\ell}^{n_i}\right)\circ\text{ep}([n_r],[n_j])}$\\
\hline
\end{tabular}
\caption{
The three basic QASST symmetry classes in the LC orbit of $CS^r_{n_1,\cdots,n_k}$, a $(k-1)$-spoke clique-star with clique-center $Q_r$.
We count the total number of distinct graphs in each symmetry class and give the general formula for an explicit transformation to such a graph based on a choice of index set $I\in[k]$ matching the specified conditions.
Again, the order of the composition is unimportant; the expression for $f$ is non-unique, but will transform the graph in the same way.
In the transformation for Case 1, note that the final primitive local complement in the composition is $c_{\ell'}^{n_r}$, regardless of whether or not $r\in I_{\text{odd}}$. This occurs because the clique-center is chosen to be $r\in[k]$; a different choice of clique-center would substitute the corresponding index in $[k]$. For Cases 2 and 3, the first step is to change the clique-center from $Q_r$ to $Q_j$ via the edge pivot of Equation~\ref{eq:component_edge_pivot}, if necessary.
For quotient graphs $Q_i$ which are ss, the center of the star is the leaf-node with index $\ell\in[n_i]$ of the primitive local complement $c_\ell=c_{\ell}^{n_i}$ used in the transformation.
}
\label{tab:clique-star_quasst_equivalences}
\end{table*}

\begin{figure*}[t]
\centering

\begin{subfigure}{0.33\textwidth}
\centering
\includegraphics[width=0.9\textwidth,page=13]{Figures/Appendix_Complete_k-Partite_and_Clique-Star_QASST_Structure.pdf}\\
\medskip
\includegraphics[width=0.7\textwidth,page=14]{Figures/Appendix_Complete_k-Partite_and_Clique-Star_QASST_Structure.pdf}
\begin{eqnarray*}
I_{\text{odd}}&=&\{1,2,3\}\subseteq[4]\\
f&=&c_1\circ c_5\circ c_3
\end{eqnarray*}
\caption{Case 1}
\label{fig:CS2222_QASST_symmetry_examples_case1}
\end{subfigure}\begin{subfigure}{0.33\textwidth}
\centering
\includegraphics[width=0.9\textwidth,page=15]{Figures/Appendix_Complete_k-Partite_and_Clique-Star_QASST_Structure.pdf}\\
\medskip
\includegraphics[width=0.7\textwidth,page=16]{Figures/Appendix_Complete_k-Partite_and_Clique-Star_QASST_Structure.pdf}
\begin{eqnarray*}
I_{\text{odd}}^{\setminus1}&=&\{2\}\subseteq[4]\setminus\{1\}\\
f&=&c_3
\end{eqnarray*}
\caption{Case 2}
\label{fig:CS2222_QASST_symmetry_examples_case2}
\end{subfigure}\begin{subfigure}{0.33\textwidth}
\centering
\includegraphics[width=0.9\textwidth,page=17]{Figures/Appendix_Complete_k-Partite_and_Clique-Star_QASST_Structure.pdf}\\
\medskip
\includegraphics[width=0.7\textwidth,page=18]{Figures/Appendix_Complete_k-Partite_and_Clique-Star_QASST_Structure.pdf}
\begin{eqnarray*}
I_{\text{even}}^{\setminus1}&=&\{2,3\}\subseteq[4]\setminus\{1\}\\
f&=&c_5\circ c_3
\end{eqnarray*}
\caption{Case 3}
\label{fig:CS2222_QASST_symmetry_examples_case3}
\end{subfigure}

\caption{Examples of graphs locally equivalent to $CS^1_{2,2,2,2}$ from each of the three symmetry classes outlined in Table~\ref{tab:clique-star_quasst_equivalences}. This includes the QASST decomposition, the specific choice of index set $I\subseteq [4]=\{1,2,3,4\}$ from among the quotient graph indices, and a corresponding explicit transformation $f$ for which $G=f(CS^1_{2,2,2,2})$. The lower bound on the size of the full LC orbit as computed by the formula in Equation~\ref{eq:clique-star_lower_bound} is $\phi(CS^1_{2,2,2,2})=148$.}
\label{fig:CS2222_QASST_symmetry_examples}
\end{figure*}

\begin{enumerate}
\item \textit{$Q_0$ is c.}

For any odd size subset of indices $I_{\text{odd}}\subseteq[k]$, $Q_i$ is ss for each $i\in I_{\text{odd}}$ and sc for each $i\in[k]\setminus I_{\text{odd}}$. The total number of ss components is hence odd.

\item \textit{For fixed $j\in[k]$, $Q_0$ is $\text{sc}_j$ and $Q_j$ is sc.}

For any odd size subset of indices $I_{\text{odd}}^{\setminus j}\subseteq[k]\setminus\{j\}$, $Q_i$ is ss for each $i\in I_{\text{odd}}^{\setminus j}$ and c for each $i\in[k]\setminus\left(\{j\}\cup I_{\text{odd}}^{\setminus j}\right)$. The number of ss components is odd.

\item \textit{For fixed $j\in[k]$, $Q_0$ is $\text{sc}_j$ and $Q_j$ is c.}

For any even size subset of indices $I_{\text{even}}^{\setminus j}\subseteq[k]\setminus\{j\}$, $Q_i$ is ss for each $i\in I_{\text{even}}^{\setminus j}$ and c for each $i\in[k]\setminus\left(\{j\}\cup I_{\text{even}}^{\setminus j}\right)$. The number of ss components is even.
\end{enumerate}

These three rules for the clique-star LC symmetry classes are summarized in Table~\ref{tab:clique-star_quasst_equivalences}. Figure~\ref{fig:CS2222_QASST_symmetry_examples} shows examples of graphs for each of these three cases which are locally equivalent to the clique-star $CS^1_{2,2,2,2}$ illustrated in Figure~\ref{fig:clique-star_split_decomposition}.
Again, we allow for empty index sets $I_{\text{even}}=\emptyset$ among the even size subsets; these are counted as subsets of size 0 and products over an empty set of indices are taken to be 1.
Table~\ref{tab:clique-star_quasst_equivalences} enumerates all of the symmetries falling into one of these three cases along with a corresponding explicit transformation (non-unique) starting from a clique-star.

The index sets $I\in[k]$ used to count the transformations in the table are also used to specify the explicit LC transformations.
For components $Q_i$ which are ss, the center of the star corresponds to the leaf-node with index $\ell\in[n_i]$ matching the primitive local complement $c_\ell=c_\ell^{n_i}$ used in the transformation.
As in the complete k-partite case, a different choice of $\ell\in[n_i]$ would yield a distinct but isomorphic graph.
These transformations are non-unique in general, but showing that at least one exists is sufficient to conclude local equivalence.
We compute a lower bound $\phi(CS^r_{n_1,\cdots,n_k})$ on the total number of graphs LC equivalent to the clique-star by combining the number of symmetries as follows.

\begin{widetext}
\begin{eqnarray}\label{eq:clique-star_lower_bound}
\phi(CS^r_{n_1,\cdots,n_k})&=&\sum_{I_{\text{odd}}\subseteq [k]}\left(\prod_{i\in I_{\text{odd}}}n_i\right)+\sum_{j=1}^k\left[\sum_{I_{\text{odd}}^{\setminus j}\subseteq[k]\setminus\{j\}}\left(\prod_{i\in I_{\text{odd}}^{\setminus j}}n_i\right)+\sum_{I_{\text{even}}^{\setminus j}\subseteq[k]\setminus\{j\}}\left(\prod_{i\in I_{\text{even}}^{\setminus j}}n_i\right)\right]\nonumber\\
&=&\sum_{I_{\text{odd}}\subseteq [k]}\left(\prod_{i\in I_{\text{odd}}}n_i\right)+\sum_{j=1}^k\left[\sum_{I^{\setminus j}\subseteq[k]\setminus\{j\}}\left(\prod_{i\in I^{\setminus j}}n_i\right)\right]\nonumber\\
&=&\underbrace{\prod_{i=1}^k(n_i+1)}_{\text{only terms with odd number of products}}+\sum_{j=1}^k\left(\prod_{i\in[k]\setminus\{j\}}(n_i+1)\right)
\end{eqnarray}
\end{widetext}

\subsection{Sizes of the LC Orbits}

Since the complete $k$-partite graph and the clique-star are QASST equivalent but known to be in different LC orbits, the sum of the two lower bounds $\phi(K_{n_1,\cdots,n_k})$ and $\phi(CS^r_{n_1,\cdots,n_k})$ taken together must be bounded above by the size of the QASST equivalence class:
$$\phi(K_{n_1,\cdots,n_k})+\phi(CS^r_{n_1,\cdots,n_k})\leq\Phi(K_{n_1,\cdots,n_k}).$$
We may compare these two lower bounds with the formula for the size of the QASST equivalence class computed in Equation~\ref{eq:complete_k-partite_QASST_upper_bound}.
Combining Equations~\ref{eq:complete_k-partite_lower_bound} and \ref{eq:clique-star_lower_bound}, we see that we reach the upper bound.

\begin{widetext}
\begin{eqnarray}
\phi(K_{n_1,\cdots,n_k})+\phi(CS^r_{n_1,\cdots,n_k})&=&\left[\underbrace{\prod_{i=1}^k(n_i+1)}_{\text{only terms with even number of products}}+\sum_{j=1}^k\left(\prod_{i\in[k]\setminus\{j\}}(n_i+1)\right)\right]+\nonumber\\
&&\left[\underbrace{\prod_{i=1}^k(n_i+1)}_{\text{only terms with odd number of products}}+\sum_{j=1}^k\left(\prod_{i\in[k]\setminus\{j\}}(n_i+1)\right)\right]\nonumber\\
&=&\prod_{i=1}^k(n_i+1)+2\sum_{j=1}\left(\prod_{i=1\neq j}^k(n_i+1)\right)\\
&=&\Phi(K_{n_1,\cdots,n_k})\nonumber
\end{eqnarray}
\end{widetext}

In other words, the LC equivalence classes of the complete $k$-partite graph and the clique-star together exhaust the QASST equivalence class.
In particular, we may infer that the two lower bounds computed in Equations~\ref{eq:complete_k-partite_lower_bound} and \ref{eq:clique-star_lower_bound} correspond to the actual sizes of the LC orbits, respectively, since there is no room for additional graphs.
We state these here as theorems.
\begin{theorem}\label{thm:complete_k-partite_LC_orbit_size}
The size of the LC orbit of the complete $k$-partite graph $K_{n_1,\cdots,n_k}$, where $k\geq 3$ and each $n_i\geq2$, is
\begin{eqnarray}\label{eq:complete_k-partite_LC_orbit_size}
|{\mathcal O}(K_{n_1,\cdots,n_k})|&=&\underbrace{\prod_{i=1}^k(n_i+1)}_{\text{even products}}+\sum_{j=1}^k\left(\prod_{i\in[k]\setminus\{j\}}(n_i+1)\right).\nonumber\\
\end{eqnarray}
\end{theorem}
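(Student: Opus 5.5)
The proof is a squeeze between the lower bounds of the previous subsections and the QASST upper bound. First, the lower bound $|{\mathcal O}(K_{n_1,\cdots,n_k})|\geq\phi(K_{n_1,\cdots,n_k})$ comes from Table~\ref{tab:complete_k-partite_quasst_equivalences}. For every admissible index set $I$, every $j\in[k]$, and every choice of star centers $\ell\in[n_i]$, the listed transformation $f$ produces a graph whose QASST has the prescribed quotient types. These graphs are pairwise distinct because their quotient-graph labelings differ, and the reconstruction in Section~\ref{sect:qasst_construction} is injective. I would verify each table entry by tracing Algorithm~\ref{alg:LC_propagation}. A local complement $c_\ell$ with $\ell\in[n_i]$ toggles $Q_i$ at $\ell$ and then propagates through the split-nodes to $Q_0$, and from there to the neighbors of the partner split-node. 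Under the usual star/complete correspondences ($c$ at a spoke of $S$ gives sc$\leftrightarrow$c; c at the center of a star or at any vertex of a complete graph swaps star and complete, etc.), this determines the parity bookkeeping. The key observation is that the number of ss components changes parity exactly when $Q_0$ switches between c and $\text{sc}_j$ in the pattern recorded in the three cases. Summing the three cases over $j$ gives Equation~\ref{eq:complete_k-partite_lower_bound}. The algebraic identity there uses that $\sum_{I\subseteq S}\prod_{i\in I}n_i=\prod_{i\in S}(n_i+1)$, with the even-size terms isolated in the first summand.

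For the same reason, the clique-star lower bound $|{\mathcal O}(CS^r_{n_1,\cdots,n_k})|\geq\phi(CS^r_{n_1,\cdots,n_k})$ follows from Table~\ref{tab:clique-star_quasst_equivalences} and Equation~\ref{eq:clique-star_lower_bound}.

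Next I would invoke Theorem~\ref{thm:LC_non-equivalence} from the appendix: $K_{n_1,\cdots,n_k}$ and $CS^r_{n_1,\cdots,n_k}$ are not LC equivalent. Hence their orbits are disjoint subsets of the common QASST equivalence class. By Theorem~\ref{thm:LC_equiv_graphs_have_LC_equiv_quotients} and Corollary~\ref{thm:LC_orbit_upper_bound}, both orbits lie in that class, so
\[
\phi(K_{n_1,\cdots,n_k})+\phi(CS^r_{n_1,\cdots,n_k})\le|{\mathcal O}(K_{n_1,\cdots,n_k})|+|{\mathcal O}(CS^r_{n_1,\cdots,n_k})|\le\Phi(K_{n_1,\cdots,n_k}).
\]
The computation at the end of the previous subsection shows that the leftmost sum equals $\Phi$ from Equation~\ref{eq:complete_k-partite_QASST_upper_bound}: the even and odd parts recombine to the full product, and the $j$-sums double. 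All inequalities are therefore equalities. Since each orbit is at least its own lower bound, each one is exactly its lower bound. In particular $|{\mathcal O}(K_{n_1,\cdots,n_k})|=\phi(K_{n_1,\cdots,n_k})$, which is the stated formula.

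The main obstacle is the first step: rigorously confirming that the generic transformations in Table~\ref{tab:complete_k-partite_quasst_equivalences} land in the claimed symmetry classes for every $I$. The edge pivots in Case~1 are the hardest part. I would handle this by induction on $|I|$. The induction checks that a single $\text{ep}([n_i],[n_j])$, applied to a configuration with $Q_0$ complete, converts the sc components $Q_i,Q_j$ into ss components centered at the chosen leaves, and that it leaves $Q_0$ and the other components unchanged, because the two induced local complements on $Q_0$ cancel in pairs. Cases 2 and 3 then need one propagation computation each, plus induction on the number of subsequent $c^{n_i}_\ell$. The non-equivalence theorem carries the separation and is taken as given.
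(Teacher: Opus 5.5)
Your proposal is correct and is the paper's own argument: the lower bounds $\phi$ come from the explicit transformations in Tables~\ref{tab:complete_k-partite_quasst_equivalences} and \ref{tab:clique-star_quasst_equivalences}, Theorem~\ref{thm:LC_non-equivalence} makes the two orbits disjoint inside the QASST class, and $\phi(K_{n_1,\cdots,n_k})+\phi(CS^r_{n_1,\cdots,n_k})=\Phi(K_{n_1,\cdots,n_k})$ forces equality. Your propagation check of the edge pivots and of Cases~2 and 3 is also correct, and it fills in what the paper only asserts.

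The one inaccuracy is in your heuristic aside, and your argument does not depend on it:
\begin{itemize}
\item The ss-parity does not change exactly when $Q_0$ switches type. Subcase 1-c switches $Q_0$ to $\text{sc}_i$ with no parity change, and subcase 2(j)-d changes the parity with $Q_0$ fixed.
\item A local complement at a spoke of a star leaves that star itself unchanged.
\end{itemize}
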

\begin{theorem}\label{thm:clique-star_LC_orbit_size}
The size of the LC orbit of the clique-star $CS^r_{n_1,\cdots,n_k}$, where $k\geq3$ and each $n_i\geq2$, is
\begin{eqnarray}\label{eq:clique-star_LC_orbit_size}
|{\mathcal O}(CS^r_{n_1,\cdots,n_k})|&=&\underbrace{\prod_{i=1}^k(n_i+1)}_{\text{odd products}}+\sum_{j=1}^k\left(\prod_{i\in[k]\setminus\{j\}}(n_i+1)\right).\nonumber\\
\end{eqnarray}
\end{theorem}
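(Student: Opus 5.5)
The plan is a squeeze between the lower bound $\phi(CS^r_{n_1,\cdots,n_k})$ of Equation~\ref{eq:clique-star_lower_bound} and the upper bound left over from the QASST equivalence class. First I establish the lower bound rigorously. For each row of Table~\ref{tab:clique-star_quasst_equivalences} I verify that the stated transformation $f$ produces the claimed combination of quotient graphs. I would do this by tracking $f$ through the QASST with Algorithm~\ref{alg:LC_propagation}. A local complement at a leaf-node of $Q_i$ acts on $Q_i$, propagates through the split-node to $Q_0$, and from there into the neighbouring components. So it suffices to check how c, sc and ss alternate under a single primitive local complement, and then to apply this rule repeatedly.

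The graphs listed in the table are pairwise distinct, because they have different QASSTs, or different centres of their star-spoke components. Since the QASST determines the graph, summing the counts over the three cases and over all $j\in[k]$ gives $|{\mathcal O}(CS^r_{n_1,\cdots,n_k})|\geq\phi(CS^r_{n_1,\cdots,n_k})$. The algebra in Equation~\ref{eq:clique-star_lower_bound} turns this count into the closed form. It uses two facts: summing the odd and even subsets of $[k]\setminus\{j\}$ together gives $\prod_{i\neq j}(n_i+1)$, and the odd-size subsets of $[k]$ give the odd-product terms of $\prod_i(n_i+1)$.

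For the upper bound I combine Corollary~\ref{thm:LC_orbit_upper_bound} with Theorem~\ref{thm:LC_non-equivalence}, which says that $K_{n_1,\cdots,n_k}$ and $CS^r_{n_1,\cdots,n_k}$ are not LC equivalent. Their LC orbits are therefore disjoint subsets of the same QASST equivalence class, whose size is given by Equation~\ref{eq:complete_k-partite_QASST_upper_bound}. Together with the lower bound $\phi(K_{n_1,\cdots,n_k})$ for the complete $k$-partite orbit, this gives
\begin{equation*}
|{\mathcal O}(CS^r_{n_1,\cdots,n_k})|\leq\Phi(K_{n_1,\cdots,n_k})-|{\mathcal O}(K_{n_1,\cdots,n_k})|\leq\Phi(K_{n_1,\cdots,n_k})-\phi(K_{n_1,\cdots,n_k}).
\end{equation*}
The displayed computation $\phi(K)+\phi(CS)=\Phi(K)$ shows that the right-hand side equals $\phi(CS^r_{n_1,\cdots,n_k})$. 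Hence equality holds throughout, which proves the theorem and, simultaneously, Theorem~\ref{thm:complete_k-partite_LC_orbit_size}.

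The main obstacle is the first step: checking that each transformation really lands in the stated symmetry class, especially the parity rule for the number of ss components. My first approach would be induction on $|I|$. I would show that applying $c_\ell^{n_i}$ with $i\neq j$ flips $Q_i$ between c and ss and leaves the other components unchanged. Separately, I would show that the edge pivot $\text{ep}([n_r],[n_j])$ moves the centre of $Q_0$ from $r$ to $j$ and fixes every other component.

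A secondary but essential point is that the argument relies on Theorem~\ref{thm:LC_non-equivalence}. Without it the two orbits could coincide, and the squeeze argument would fail.
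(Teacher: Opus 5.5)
Your squeeze is the paper's argument: lower bounds from the explicit transformations, disjointness of the two orbits from Theorem~\ref{thm:LC_non-equivalence}, and $\phi(K_{n_1,\cdots,n_k})+\phi(CS^r_{n_1,\cdots,n_k})=\Phi(K_{n_1,\cdots,n_k})$. However, the lemma you propose for the step you call the main obstacle is false.

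\textbf{The counterexample.} Take $Q_0=\mathrm{sc}_j$ and $i\neq j$. A local complement at a leaf-node of $Q_i$ does not leave the other components alone. By Algorithm~\ref{alg:LC_propagation} it passes to $s_0^i$, which is a spoke of $Q_0$ whose only neighbour is the centre $s_0^j$. So $Q_0$ itself is unchanged, but the propagation continues through $s_0^j$ to $s_j^0$. Local complement at $s_j^0$ complements the set of leaf-nodes of $Q_j$. Hence $Q_j$ toggles between c and sc at every such step; these are subcases 2(j)-b,d and 3(j)-b,d of Table~\ref{tab:LC_closure_clique-star}. You can check this directly on $CS^r_{n_1,\cdots,n_k}$. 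For $v$ in a non-central clique $U_i$, $c_v$ complements $N(v)=(U_i\setminus\{v\})\cup U_r$. This deletes every edge inside the central clique $U_r$, so $Q_r$ becomes sc.

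\textbf{Why this breaks your verification.} With your lemma, row 2(j) of Table~\ref{tab:clique-star_quasst_equivalences} could never be produced. After $\mathrm{ep}([n_r],[n_j])$ gives $CS^j$, $Q_j$ would remain c. An odd index set would then land in ($\mathrm{sc}_j$, $Q_j$ c, odd number of ss components). That is class 3(j) of the complete $k$-partite orbit, which contradicts the non-equivalence theorem you rely on.

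\textbf{The correct rules.} A step $c_\ell^{n_i}$ with $i\neq j$ flips $Q_i$ (c $\leftrightarrow$ ss) and $Q_j$ (c $\leftrightarrow$ sc) simultaneously. A spoke of an ss $Q_i$ changes nothing at all. Hence the parity of $|I|$ plus the indicator of ``$Q_j$ is sc'' is invariant. This coupling, not an independent count of ss components, is what produces the parity rules of the table. Your edge-pivot claim is correct as a net effect on $CS^r_{n_1,\cdots,n_k}$.

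For row 1 you also need the rule for a local complement at a leaf-node of the central component $Q_j$. It reaches the centre $s_0^j$ and turns $Q_0$ into c. It then turns every c component among the $Q_{i\neq j}$ into sc and leaves ss components untouched, while $Q_j$ goes from c to ss or stays sc. With these two single-step rules the induction does verify all three rows.

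\textbf{A smaller correction.} Distinctness of the counted graphs does not follow from ``the QASST determines the graph.'' It follows because each graph has a unique strong split decomposition. It also needs each listed configuration to use only the valid adjacencies sc--sc, sc--c, ss--ss, ss--c, c--sc, c--ss, so that the listed tree really is the QASST of the resulting graph.
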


Furthermore, any graph QASST equivalent to a complete $k$-partite graph must fall into one of these two orbits.
This last observation in particular can be applied to the special cases of repeater graphs and multi-leaf repeater graphs.

\subsection{Local Equivalence of Multi-leaf Repeater Graphs}
\label{app:local_equivalence_of_MLR}

Let $MR_{n_1,\cdots,n_k}$ denote a multi-leaf repeater graph, which consists of a complete graph on $k$ nodes, each of which has one or more leaves attached to it (Figure~\ref{fig:multi-leaf_repeater_generic_graph}).
We assume that $k\geq 3$ and that $n_i\geq2$, so that the $i^{\text{th}}$ node in the central complete graph has an additional $n_i-1$ degree 1 nodes attached to it.
Note that $MR_{n_1,\cdots,n_k}$ is distance-hereditary.
The multi-leaf repeater graph has a QASST consisting of a central complete graph $Q_0$ on $k$ split-nodes, surrounded by a ring of star-spoke graphs $Q_1,\cdots,Q_k$, each containing a single split-node and $n_i$ leaf-nodes (Figure~\ref{fig:multi-leaf_repeater_generic_QASST}).
In other words, $MR_{n_1,\cdots,n_k}$ is QASST equivalent to $K_{n_1,\cdots,n_k}$, but it may or may not be LC equivalent depending on the parity of $k$.
Note that this QASST falls into Case 1 of Table~\ref{tab:complete_k-partite_quasst_equivalences} when $k$ is even, and into Case 1 of Table~\ref{tab:clique-star_quasst_equivalences} when $k$ is odd.
We state this as a theorem.

\begin{figure*}[t]
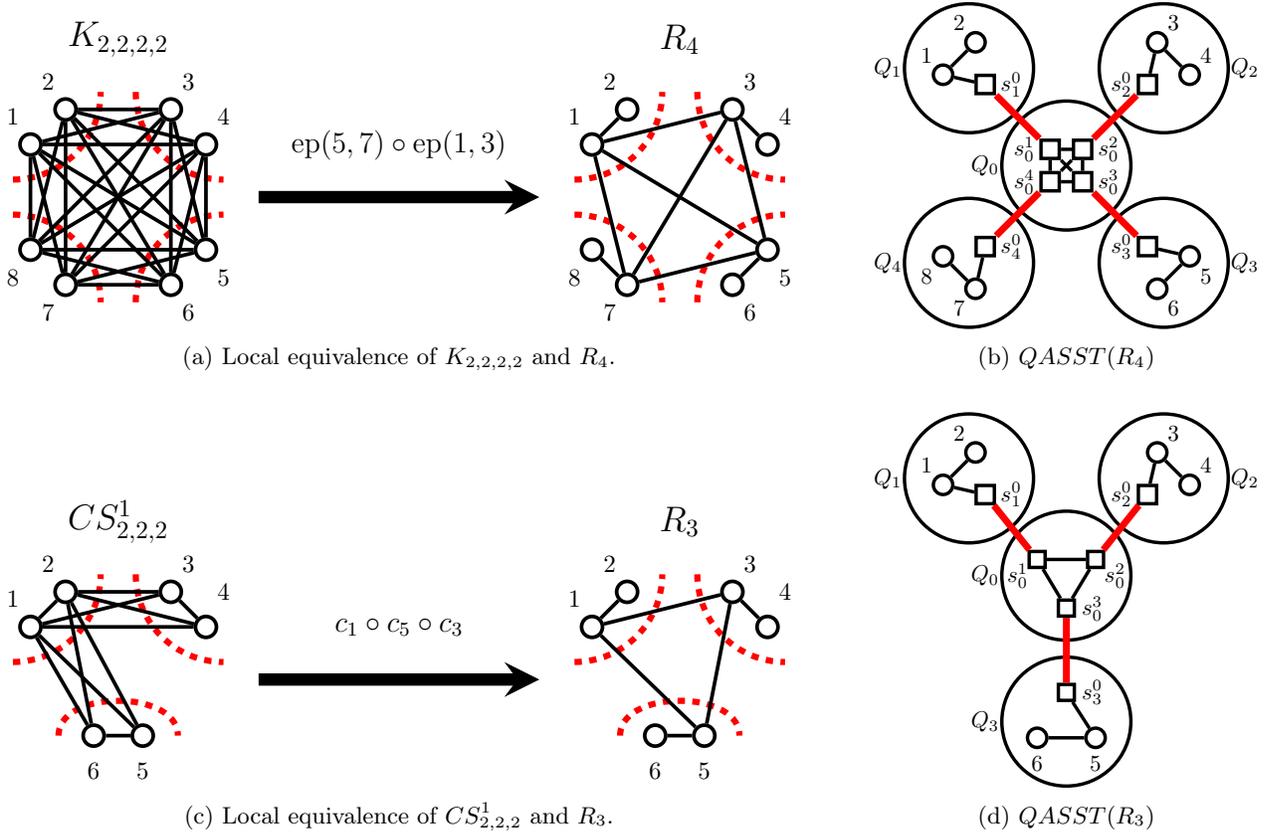

\centering

\begin{subfigure}{0.66\textwidth}
\centering
\includegraphics[width=0.9\textwidth,page=19]{Figures/Appendix_Complete_k-Partite_and_Clique-Star_QASST_Structure.pdf}
\caption{
Local equivalence of $K_{2,2,2,2}$ and $R_4$.
}
\label{fig:Transform_K2222_to_R4}
\end{subfigure}\begin{subfigure}{0.33\textwidth}
\centering
\includegraphics[width=0.9\textwidth,page=20]{Figures/Appendix_Complete_k-Partite_and_Clique-Star_QASST_Structure.pdf}
\caption{$QASST(R_4)$}
\label{fig:R4_QASST_decomposition}
\end{subfigure}

\bigskip

\begin{subfigure}{0.66\textwidth}
\centering
\includegraphics[width=0.9\textwidth,page=21]{Figures/Appendix_Complete_k-Partite_and_Clique-Star_QASST_Structure.pdf}
\caption{
Local equivalence of $CS^1_{2,2,2}$ and $R_3$.
}
\label{fig:Transform_CS222_to_R3}
\end{subfigure}\begin{subfigure}{0.33\textwidth}
\centering
\includegraphics[width=0.9\textwidth,page=22]{Figures/Appendix_Complete_k-Partite_and_Clique-Star_QASST_Structure.pdf}
\caption{$QASST(R_3)$}
\label{fig:R3_QASST_decomposition}
\end{subfigure}

\caption{Illustration of the local equivalence between repeater graphs and complete $k$-partite graphs (when $k$ is even) or clique-stars (when $k$ is odd). The explicit transformations and QASST decompositions match the rules of Tables~\ref{tab:complete_k-partite_quasst_equivalences} and \ref{tab:clique-star_quasst_equivalences}.}
\label{fig:MLR_LC_equivalence_examples}
\end{figure*}

\begin{theorem}\label{thm:multi-leaf_repeater_graph_LC_equivalence}
Let $MR_{n_1,\cdots,n_k}$ denote a multi-leaf repeater graph, where $k\geq3$ and each $n_i\geq 2$.
If $k$ is even, then $MR_{n_1,\cdots,n_k}\in{\mathcal O}(K_{n_1,\cdots,n_k})$.
If $k$ is odd, then $MR_{n_1,\cdots,n_k}\in{\mathcal O}(CS^r_{n_1,\cdots,n_k})$.
\end{theorem}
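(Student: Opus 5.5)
The proof reads the QASST of $MR_{n_1,\cdots,n_k}$ off the classification already established. By construction, $MR_{n_1,\cdots,n_k}$ has the same strong split tree as $K_{n_1,\cdots,n_k}$: a central $Q_0$ joined to $Q_1,\cdots,Q_k$. Here $Q_0$ is complete on $k$ split-nodes, and each $Q_i$ is star-spoke, with center the $i$-th vertex of the central $K_k$ and with the split-node as one spoke. Each $Q_i$ is a star on $n_i+1$ vertices and $Q_0=K_k$, so each quotient graph is locally equivalent to the corresponding quotient graph of $K_{n_1,\cdots,n_k}$; stars and complete graphs on the same vertex set are LC equivalent. Every edge of the tree is of type c-ss, which is a valid (strong) split by Figure~\ref{fig:quotient_graph_combinations_valid}. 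Hence $MR_{n_1,\cdots,n_k}$ is QASST equivalent to $K_{n_1,\cdots,n_k}$, and it is one of the $\Phi(K_{n_1,\cdots,n_k})$ combinations counted in Equation~\ref{eq:complete_k-partite_QASST_upper_bound}, namely the one with $Q_0$ complete and all $k$ components ss.

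Next I invoke the exhaustion result. By Theorems~\ref{thm:complete_k-partite_LC_orbit_size} and \ref{thm:clique-star_LC_orbit_size}, $\phi(K_{n_1,\cdots,n_k})+\phi(CS^r_{n_1,\cdots,n_k})=\Phi(K_{n_1,\cdots,n_k})$, and the two orbits are disjoint (Theorem~\ref{thm:LC_non-equivalence}). Therefore every QASST-equivalent graph lies in exactly one of the two orbits. To decide which, I use the tables: the symmetry classes listed in Tables~\ref{tab:complete_k-partite_quasst_equivalences} and \ref{tab:clique-star_quasst_equivalences} are realized within the respective orbits, and together they partition all valid combinations. With $Q_0=\mathrm{c}$, Case 1 of the $K$-table requires an even number of ss components, while Case 1 of the clique-star table requires an odd number. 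For $MR_{n_1,\cdots,n_k}$ the set of ss indices is $I=[k]$, of size $k$. So $MR_{n_1,\cdots,n_k}$ belongs to the $K$-orbit when $k$ is even and to the clique-star orbit when $k$ is odd.

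To make this constructive, I would also exhibit the transformation read off the tables, and check that this graph equals $MR_{n_1,\cdots,n_k}$ itself, not just an isomorphic copy. For $k$ even, partition $[k]$ into pairs $\{i,j\}$ and apply $\text{ep}([n_i],[n_j])$ for each pair, choosing in $[n_i]$ the leaf-node that is the central vertex of $MR$. For $k$ odd, apply the local complements $c_\ell^{n_i}$ for $i\neq r$ and finish with $c^{n_r}_{\ell'}$, again choosing the hub vertices. The main obstacle is this identification. The table transformations are stated only up to the choice of center index, and the star-spoke center of $Q_i$ is exactly the index used. So choosing the indices to be the hub vertices of $MR$ gives precisely $MR_{n_1,\cdots,n_k}$, because the QASST determines the graph uniquely (Subsection~\ref{sect:qasst_construction}). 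I would double-check this index matching on $R_4$ and $R_3$ against Figure~\ref{fig:MLR_LC_equivalence_examples}.
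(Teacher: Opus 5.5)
Your proposal is correct and follows the paper's argument. The paper likewise observes that $QASST(MR_{n_1,\cdots,n_k})$ has $Q_0$ complete and every $Q_i$ star-spoke, so $I=[k]$ places it in Case~1 of Table~\ref{tab:complete_k-partite_quasst_equivalences} when $k$ is even and in Case~1 of Table~\ref{tab:clique-star_quasst_equivalences} when $k$ is odd, and the explicit transformations there give membership; your exhaustion/disjointness step is not needed for this. Your extra check that taking the hub vertices as the local-complement indices yields $MR_{n_1,\cdots,n_k}$ itself, not just an isomorphic copy, is also correct: hub-pair pivots work for $k$ even, and for $k$ odd the sequence $c_{a_i}$ ($i\neq r$) followed by $c_{a_r}$ works because the center clique is complemented $k-1$ times, an even number, and so is restored. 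This check makes explicit what the paper leaves implicit.
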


Local equivalence of the complete bipartite graph and certain variations of the repeater graph is already known in the literature~\cite{tzitrin2018local}, but Theorem~\ref{thm:multi-leaf_repeater_graph_LC_equivalence} applies to the general class of multi-leaf repeater graphs. This includes the standard repeater graph $R_k$ (consisting of a complete graph on $k$ vertices with a single extra leaf on each vertex; these were originally introduced as quantum repeaters~\cite{azuma2015all}). This occurs in the special case when $n_1=n_2=\cdots=n_k=2$, in which case $MR_{2,\cdots,2}=R_k$.
Figure~\ref{fig:MLR_LC_equivalence_examples} shows examples of repeater graphs and their local equivalence for even and odd choices of $k$.
From the explicit constructions, $\phi(K_{n_1,\cdots,n_k})\le |O(K_{n_1,\cdots,n_k})|$ and $\phi(CS^r_{n_1,\cdots,n_k})\le |O(CS^r_{n_1,\cdots,n_k})|$. Since the two orbits are disjoint and contained in the same QASST-equivalence class, $|O(K_{n_1,\cdots,n_k})|+|O(CS^r_{n_1,\cdots,n_k})|\le \Phi(K_{n_1,\cdots,n_k})$. Together with $\phi(K_{n_1,\cdots,n_k})+\phi(CS^r_{n_1,\cdots,n_k})=\Phi(K_{n_1,\cdots,n_k})$, we conclude equality throughout, hence $|O(K_{n_1,\cdots,n_k})|=\phi(K_{n_1,\cdots,n_k})$ and $|O(CS^r_{n_1,\cdots,n_k})|=\phi(CS^r_{n_1,\cdots,n_k})$.

\section{LC Non-Equivalence of Complete $k$-Partite and the Clique-Star}
\label{app:LC_non-equivalence}

It remains to verify that the complete $k$-partite graph $K_{n_1,\cdots,n_k}$ and clique-star $CS^r_{n_1,\cdots,n_k}$ indeed belong to disjoint local equivalence classes.
One way to prove this is by showing that the three symmetry classes introduced in Tables~\ref{tab:complete_k-partite_quasst_equivalences} and \ref{tab:clique-star_quasst_equivalences} are closed under LC operations.

\subsection{LC Closure of the Complete $k$-Partite QASST Symmetry Classes}

Suppose that $G\in{\mathcal O}(K_{n_1,\cdots,n_k})$ belongs to any of the three symmetry classes outlined in Table~\ref{tab:complete_k-partite_quasst_equivalences}; we will refer to these cases as 1, 2(j), and 3(j). It is sufficient to show that the graph $c_v(G)$ obtained from a primitive local complement $c_v\in{\mathcal L}_{|G(V)|}$ for any choice of vertex $v\in(G)$ also belongs to one of these three cases.

If $v\in V(G)$ is any vertex, then it also corresponds to a leaf-node $v\in V(Q_i)$ for some quotient graph $Q_i$ of $QASST(G)$.
If $Q_i$ is complete, then local complement on $G$ with respect to any choice of leaf-node in $Q_i$ yields a star graph, and all such stars are isomorphic, and hence lie in the same QASST symmetry class. A similar fact holds if $Q_i$ is star-center since all of the leaf-nodes are spoke. If $Q_i$ is star-spoke, we only need to distinguish between the center leaf-node and spoke leaf-nodes.
Hence, for any of the three cases for $G$, we only need to consider a few subcases distinguishing between these possibilities.
These subcases are outlined in Table~\ref{tab:LC_closure_complete_k-partite}.
Figure~\ref{fig:LC_closure_complete_k-partite} illustrates each of the subcases in this table for simple examples of graphs from ${\mathcal O}(K_{2,2,2,2})$.

For any $G\in\mathcal{O}(K_{n_1,\cdots,n_k})$ with symmetry class belonging to case 1, 2(j), or 3(j), we see that local complement with respect to any choice of vertex $v\in V(G)$ yields a graph also belonging to one of these three symmetry classes.
That is to say, no operation transforms $G$ into a graph with QASST symmetry class belonging to one of the three cases for the clique-star specified in Table~\ref{tab:clique-star_quasst_equivalences}, and so we may conclude that $G$ is not locally equivalent to any of these. 

\begin{table*}[tp]
\centering
\begin{tabular}{|c|c|l|c|}
\hline
$QASST(G)$&Subcase&Condition for Vertex $v\in V(G)$&$QASST(c_v(G))$\\
\hline
Case 1&1-a&$v\in V(Q_i)$ is center of ss $Q_i$&Case 3(i)\\
&1-b&$v\in V(Q_i)$ is spoke of ss $Q_i$&Case 1\\
&1-c&$v\in V(Q_i)$ is any node of sc $Q_i$&Case 2(i)\\
\hline
Case 2(j)&2(j)-a&$v\in V(Q_j)$ is any node of sc $Q_j$&Case 1\\
&2(j)-b&$v\in V(Q_i)$ is center of ss $Q_i$&Case 3(j)\\
&2(j)-c&$v\in V(Q_i)$ is spoke of ss $Q_i$&Case 2(j)\\
&2(j)-d&$v\in V(Q_i)$ is any node of c $Q_i$&Case 3(j)\\
\hline
Case 3(j)&3(j)-a&$v\in V(Q_j)$ is any node of c $Q_j$&Case 1\\
&3(j)-b&$v\in V(Q_i)$ is center of ss $Q_i$&Case 2(j)\\
&3(j)-c&$v\in V(Q_i)$ is spoke of ss $Q_i$&Case 3(j)\\
&3(j)-d&$v\in V(Q_i)$ is any node of c $Q_i$&Case 2(j)\\
\hline
\end{tabular}
\caption{
The 11 subcases show that any graph in ${\mathcal O}(K_{n_1,\cdots,n_k})$ with a QASST structure belonging to one of the three symmetry classes introduced in Table~\ref{tab:complete_k-partite_quasst_equivalences} remains in one of these classes. 
}
\label{tab:LC_closure_complete_k-partite}
\end{table*}

\begin{figure*}[tp]
\centering
\includegraphics[height=0.95\textheight,page=23]{Figures/Appendix_Complete_k-Partite_and_Clique-Star_QASST_Structure.pdf}
\caption{Examples of local complements on simple graphs from $\mathcal{O}(K_{2,2,2,2})$ falling into each of the 11 subcases described in Table~\ref{tab:LC_closure_complete_k-partite}.}
\label{fig:LC_closure_complete_k-partite}
\end{figure*}

\subsection{LC Closure of the Clique-Star QASST Symmetry Classes}

Next we consider the LC orbit of the clique-star, supposing that $G\in{\mathcal O}(CS^r_{n_1,\cdots,n_k})$ belongs to any of the three symmetry classes outlined in Table~\ref{tab:clique-star_quasst_equivalences}. Again, we refer to these cases as 1, 2(j), and 3(j).
We proceed in the same way as we did for the complete $k$-partite graph by identifying the symmetry class of $QASST(c_v(G))$, where $c_v\in{\mathcal L}_{|V(G)|}$ is the primitive local complement corresponding to any choice of a vertex $v\in V(G)$.
The resulting analysis is shown in Table~\ref{tab:LC_closure_clique-star}, with simple examples of the subcases shown in Figure~\ref{fig:LC_closure_clique-star} taken from ${\mathcal O}(CS^1_{2,2,2,2})$.

\begin{table*}[tp]
\centering
\begin{tabular}{|c|c|l|c|}
\hline
$QASST(G)$&Subcase&Condition for Vertex $v\in V(G)$&$QASST(c_v(G))$\\
\hline
Case 1&1-a&$v\in V(Q_i)$ is center of ss $Q_i$&Case 3(i)\\
&1-b&$v\in V(Q_i)$ is spoke of ss $Q_i$&Case 1\\
&1-c&$v\in V(Q_i)$ is any node of sc $Q_i$&Case 2(i)\\
\hline
Case 2(j)&2(j)-a&$v\in V(Q_j)$ is any node of sc $Q_j$&Case 1\\
&2(j)-b&$v\in V(Q_i)$ is center of ss $Q_i$&Case 3(j)\\
&2(j)-c&$v\in V(Q_i)$ is spoke of ss $Q_i$&Case 2(j)\\
&2(j)-d&$v\in V(Q_i)$ is any node of c $Q_i$&Case 3(j)\\
\hline
Case 3(j)&3(j)-a&$v\in V(Q_j)$ is any node of c $Q_j$&Case 1\\
&3(j)-b&$v\in V(Q_i)$ is center of ss $Q_i$&Case 2(j)\\
&3(j)-c&$v\in V(Q_i)$ is spoke of ss $Q_i$&Case 3(j)\\
&3(j)-d&$v\in V(Q_i)$ is any node of c $Q_i$&Case 2(j)\\
\hline
\end{tabular}
\caption{
The 11 subcases showing that any graph in ${\mathcal O}(CS^r_{n_1,\cdots,n_k})$ with a QASST structure belonging to one of the three symmetry classes introduced in Table~\ref{tab:clique-star_quasst_equivalences} remains in one of these classes.
Note that these exactly match the subcases of Table~\ref{tab:LC_closure_complete_k-partite} and this is not a coincidence.
}
\label{tab:LC_closure_clique-star}
\end{table*}

\begin{figure*}[tp]
\centering
\includegraphics[height=0.95\textheight,page=24]{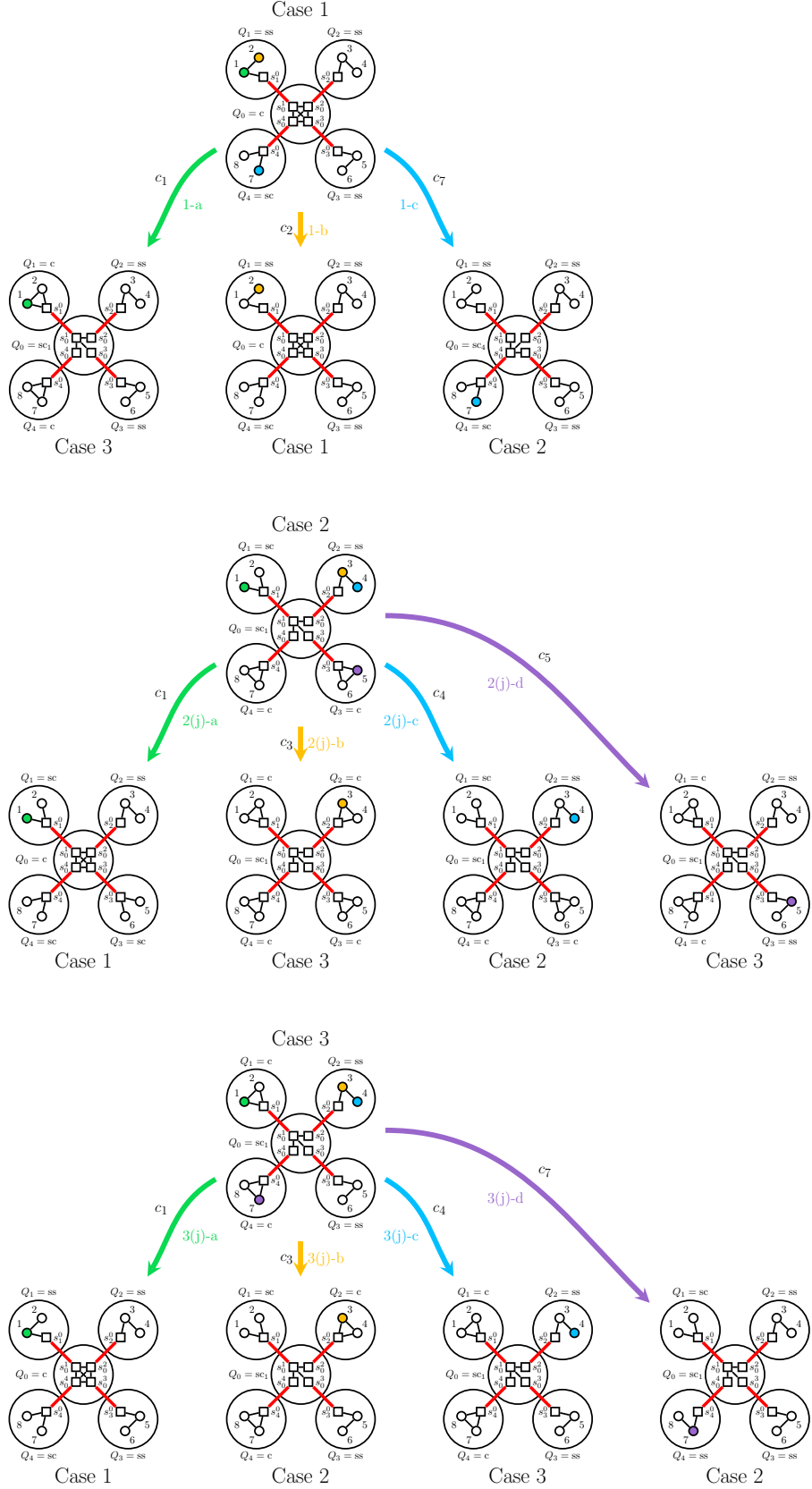}
\caption{Examples of local complements on simple graphs from ${\mathcal O}(CS^1_{2,2,2,2})$ falling into each of the 11 subcases described in Table~\ref{tab:LC_closure_clique-star}.}
\label{fig:LC_closure_clique-star}
\end{figure*}

Interestingly, when enumerated in the same order, we see that the subcases in Table~\ref{tab:LC_closure_clique-star} are identical to those in Table~\ref{tab:LC_closure_complete_k-partite}.
The reader may verify this by comparing the examples of Figure~\ref{fig:LC_closure_complete_k-partite} and Figure~\ref{fig:LC_closure_clique-star}.
This occurs because the only distinction between the three QASST symmetry classes of the complete $k$-partite graph and those of the clique-star is the parity of the number of star-spoke quotient graphs (either even or odd).

\subsection{Disjoint Local Equivalence Classes}

Taken together, Tables~\ref{tab:LC_closure_complete_k-partite} and \ref{tab:LC_closure_clique-star} show that no graph in ${\mathcal O}(K_{n_1,\cdots,n_k})$ can be converted into a graph in ${\mathcal O}(CS^r_{n_1,\cdots,n_k})$ via local complements, establishing the following theorem.

\begin{theorem}\label{thm:LC_non-equivalence}
The complete $k$-partite graph $K_{n_1,\cdots,n_k}$ and clique-star $CS^r_{n_1,\cdots,n_k}$ are not locally equivalent, and so ${\mathcal O}(K_{n_1,\cdots,n_k})\cap{\mathcal O}(CS^r_{n_1,\cdots,n_k})=\emptyset$.
\end{theorem}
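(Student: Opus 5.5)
The plan is to find an LC-invariant that takes different values on $K_{n_1,\cdots,n_k}$ and on $CS^r_{n_1,\cdots,n_k}$. The natural candidate is the union of the three symmetry classes of Table~\ref{tab:complete_k-partite_quasst_equivalences}. Call this set $\mathcal{S}_K$; it is a subset of the QASST equivalence class. I would show that $\mathcal{S}_K$ is closed under every primitive local complement $c_v$. Then ${\mathcal O}(K_{n_1,\cdots,n_k})\subseteq\mathcal{S}_K$, since $K_{n_1,\cdots,n_k}$ itself lies in Case 1 with $I_{\text{even}}=\emptyset$. It then suffices to observe that $CS^r_{n_1,\cdots,n_k}\notin\mathcal{S}_K$. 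Its QASST has $Q_0=\text{sc}_r$, $Q_r$ complete, and zero star-spoke components. Case 3($r$) requires an odd number of ss components, and Case 2($r$) requires $Q_r$ to be sc, so neither applies, and Case 1 requires $Q_0$ complete. Hence $CS^r_{n_1,\cdots,n_k}\notin{\mathcal O}(K_{n_1,\cdots,n_k})$, and the orbits are disjoint.

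To prove closure, I take $G$ in Case 1, 2($j$) or 3($j$) and a vertex $v$, which is a leaf-node of some $Q_i$. By Corollary~\ref{thm:LC_graphs_STT} the split tree is unchanged. Algorithm~\ref{alg:LC_propagation} then tells us exactly which quotient graphs receive a local complement and at which node. It applies $c_v$ to $Q_i$. If the split-node $s_i^0$ is adjacent to $v$ in $Q_i$, it applies $c_{s_0^i}$ to $Q_0$. For each $Q_m$ whose split-node $s_0^m$ is adjacent to $s_0^i$ in $Q_0$, it applies $c_{s_m^0}$ to $Q_m$.

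The next step is to tabulate the effect of a local complement on a star or complete quotient with respect to a given node:
\begin{itemize}
\item at the center of a star, the star becomes complete;
\item at a node of a complete graph, it becomes a star centered there;
\item at a spoke of a star, the type is unchanged.
\end{itemize}
In particular, complementing a peripheral $Q_m$ at its split-node swaps sc$\leftrightarrow$c and leaves ss unchanged. Complementing a peripheral $Q_i$ at its center leaf turns ss into c, and at a spoke leaf leaves ss as ss. Complementing it at a leaf of a c component turns it into ss, and at a leaf of an sc component turns it into c.

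With these rules, each of the eleven subcases of Table~\ref{tab:LC_closure_complete_k-partite} reduces to bookkeeping. For example, in subcase 1-a the vertex $v$ is the center of an ss component $Q_i$, and $Q_0$ is complete. Then $Q_i$ becomes c, $Q_0$ becomes $\text{sc}_i$, and every other $Q_m$ is complemented at its split-node. The sc components become c and the ss components stay ss. The number of ss components drops by one, making it odd, so the result lies in Case 3($i$). I would run through the remaining subcases in the same way, checking each time that the parity of the number of ss components lands on the correct rule.

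I expect the main obstacle to be this parity bookkeeping, especially in the 2($j$) and 3($j$) subcases. There one must determine precisely which peripheral components are reached, which depends on whether $v$ is adjacent to $s_i^0$ and on the adjacency inside the star $Q_0$. One must also confirm that the resulting $Q_0$ points toward the correct index. I would also remark that the same tabulation, with the parity conditions exchanged, gives the closure of the clique-star classes (Table~\ref{tab:LC_closure_clique-star}). This is consistent with the disjointness but not needed for it.
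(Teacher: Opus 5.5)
Your strategy is the paper's own: show that the union of the three symmetry classes of Table~\ref{tab:complete_k-partite_quasst_equivalences} is closed under every primitive local complement (the eleven subcases of Table~\ref{tab:LC_closure_complete_k-partite}), and then observe that $CS^r_{n_1,\cdots,n_k}$ lies outside this union. Deriving the table from Algorithm~\ref{alg:LC_propagation} makes explicit what the paper leaves to figures. You are also right that closure on the $K_{n_1,\cdots,n_k}$ side alone suffices.

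However, one of your tabulated rules is wrong, and it fails exactly where it matters. A leaf-node of an sc component is a \emph{spoke} of that star. By your own third bullet, $c_v$ therefore leaves $Q_i$ unchanged: it stays sc and does \emph{not} become c. The only effect is that, since $v$ is adjacent to $s_i^0$, the complement is passed to $Q_0$ at $s_0^i$, and from there to the peripheral components adjacent to $s_0^i$ in $Q_0$.

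This error is not cosmetic. Run subcase 1-c with your stated rule: you get $Q_0=\text{sc}_i$, $Q_i$ complete, the other sc components turned complete, and the ss components unchanged. That leaves an \emph{even} number of ss components, which is clique-star Case 3($i$) of Table~\ref{tab:clique-star_quasst_equivalences}. The closure you are trying to prove would then appear to fail. Similarly, in subcase 2($j$)-a your rule makes $Q_j$ complete, while $Q_0$, complemented at its center $s_0^j$, also becomes complete. That is an invalid c--c adjacency.

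With the corrected rule ($Q_i$ stays sc), subcase 1-c lands in Case 2($i$) and subcase 2($j$)-a lands in Case 1, matching the paper's table. Your other rules are correct and handle the remaining subcases, so the argument goes through once this one rule is fixed.
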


\section{Edge Counting for Graphs QASST Equivalent to $K_{n_1,\cdots,n_k}$}
\label{app:edge_counting}

In many practical applications, one is interested in finding a minimum edge representative of the LC orbit~\cite{adcock2020mapping,sharma2025minimizing}.
With this application in mind, we will provide a general formula for the number of edges in any graph locally equivalent to the complete $k$-partite graph or a clique-star, along with some constraints by which a minimum edge representative may be determined. Throughout Appendix~\ref{app:edge_counting}, $n_i$ denotes the number of leaf-nodes in $Q_i$ (excluding the single split-node).

\subsection{Counting Edges via the QASST}

In general, the number of edges in a graph can be inferred from its QASST decomposition by adding up the number of edges contributed by each quotient graph.
Note that this is not as simple as summing all edges from all quotient graphs since split-nodes do not exist in the original graph.
Edges between leaf-nodes in the same quotient graph directly correspond to edges in the original graph, but counting the edges between leaf-nodes from different quotient graphs requires that we correctly account for the behavior of split-nodes.
However, this task is much simpler in the distance-hereditary case since quotient graphs are either star or complete, for which the number of edges only depends on the number of nodes.
Specifically, a star or complete graph with $n$ vertices contains $n-1$ or $\frac{n(n-1)}{2}$ edges, respectively.

For a graph $G$ QASST equivalent to the complete $k$-partite graph, where $QASST(G)$ consists of a central quotient graph $Q_0$ surrounded by a ring of quotient graphs $Q_1,\cdots,Q_k$, the number of edges in the graph can be inferred from the QASST decomposition via the following formula. 

\begin{widetext}
\begin{eqnarray}\label{eq:complete_k-partite_QASST_edge_formula}
|E(G)|&=&\underbrace{\left[\sum_{i=1}^k\begin{cases}\frac{n_i(n_i-1)}{2}&\text{if $Q_i$ is c}\\0&\text{if $Q_i$ is sc}\\n_i-1&\text{if $Q_i$ is ss}\end{cases}\right]}_{\text{edges between leaf-nodes within fixed $Q_{i>0}$}}+\underbrace{\left[\sum_{e=(Q_i,Q_j)}^{E(Q_0)}\begin{cases}n_in_j&\text{if ($Q_i$ is c or sc) and ($Q_j$ is c or sc)}\\ n_i&\text{if ($Q_i$ is c or sc) and ($Q_j$ is ss)}\\ n_j&\text{if ($Q_i$ is ss) and ($Q_j$ is c or sc)}\\ 1&\text{if ($Q_i$ is ss) and ($Q_j$ is ss)}\end{cases}\right]}_{\text{edges between leaf-nodes from different $Q_{i>0}$ and $Q_{j>0}$}}
\end{eqnarray}
\end{widetext}

In this formula, the contribution to the number of edges in $G$ from each quotient graph is divided into two parts: the internal edges between leaf-nodes from each $Q_{i>0}$, and the edges between leaf-nodes from different quotient graphs. Since $Q_0$ contains only split-nodes, no edges from $Q_0$ directly correspond to edges in $G$. Rather, each edge in $Q_0$ represents a connection between two different quotient graphs, and hence the notation $e=(Q_i,Q_j)\in E(Q_0)$ used in the second summation of Equation~\ref{eq:complete_k-partite_QASST_edge_formula}.
The number of edges in $G$ corresponding to each edge in $Q_0$ will depend on the structure of the adjacent quotient graphs, for which there are several cases depending on whether each quotient graph is complete, star-center, or star-spoke.

For example, when $G=K_{n_1,\cdots,n_k}$ is complete $k$-partite, each quotient graph $Q_{i>0}$ is star-center, contributing no edges between leaf-nodes. The central quotient graph $Q_0$ is complete, meaning all pairs of other quotient graphs $Q_i$ and $Q_j$ have a total of $n_in_j$ edges passing between their leaf-nodes in the original graph $G$. This yields a total number of edges:
\begin{eqnarray}
|E(K_{n_1,\cdots,n_k})|&=&\sum_{i=1}^k\sum_{j>i}^kn_in_j.
\end{eqnarray}

By contrast, if $G=CS^r_{n_1,\cdots,n_k}$ is a clique-star, each quotient graph $Q_{i>0}$ is complete, contributing $\frac{n_i(n_i-1)}{2}$ edges. The central quotient graph $Q_0$ is $\text{sc}_r$ pointing towards $Q_r$, and hence the only edges are those which connect $Q_r$ to all other $Q_{i>0}$, each of which represents $n_rn_i$ edges in the original graph $G$. This yields a total number of edges:
\begin{eqnarray}
|E(CS^r_{n_1,\cdots,n_k})|&=&\left(\sum_{i=1}^k\frac{n_i(n_i-1)}{2}\right)+\left(\sum_{i=1\neq r}^kn_in_r\right).\nonumber\\
\end{eqnarray}

For one more comparison if $G=MR_{n_1,\cdots,n_k}$, we may also recover the formula for the number of edges in a multi-leaf repeater graph from Equation~\ref{eq:complete_k-partite_QASST_edge_formula}. In this case, each quotient graph $Q_{i>0}$ is star-spoke and contributes exactly $n_i-1$ edges. The central quotient graph $Q_0$ is complete, but in this case each edge in $E(Q_0)$ only contributes 1 edge to $E(G)$, giving:
\begin{eqnarray}
|E(MR_{n_1,\cdots,n_k})|&=&\left(\sum_{i=1}^k(n_i-1)\right)+\frac{k(k-1)}{2}.\nonumber\\
\end{eqnarray}

\subsection{Minimal Edge Representatives}
\label{sect:complet_k-partite_minimum_edge_representative}

Although these three cases have especially nice symmetry, in general they will not correspond to a minimum edge representative of the local equivalence class.
Among the quotient graphs $Q_1,\cdots,Q_k$, each of these may be complete, star-center, or star-spoke. For a fixed $Q_i$, we infer the minimum number of edges that it will contribute to the original graph based on these three possibilities. This includes both the internal edges between leaf-nodes in $Q_i$ as well as additional edges to other leaf-nodes in other quotient graphs.
The number of additional edges depends on the structure of $Q_0$.
For each edge $(Q_i,Q_j)\in E(Q_0)$, the number of these additional edges is multiplied.

If $Q_i$ is complete, it contributes $\frac{n_i(n_i-1)}{2}$ internal edges and at least $n_i$ additional edges. If $Q_i$ is star-center, it contributes no internal edges but at least $n_i$ additional edges. If $Q_i$ is star-spoke, it contributes $n_i-1$ internal edges and at least 1 additional edge.
Of these possibilities, the lowest overall contribution occurs when $Q_i$ is star-spoke.
Hence, to minimize the number of edges in $G$, it is usually desirable to choose a representative of the LC orbit for which as many quotient graphs $Q_{i>0}$ are star-spoke as possible.
However, whether or not this graph has a minimal number of edges also depends on how large $k$ is relative to $n_1,\cdots,n_k$.

In Tables~\ref{tab:complete_k-partite_quasst_equivalences} and \ref{tab:clique-star_quasst_equivalences}, we enumerated the three broad symmetry classes within the LC orbits of the complete $k$-partite graph and clique-star.
For each of these cases, the quotient graphs which are star-spoke are determined by the indices in a choice of index set $I\subseteq[k]$ meeting certain conditions.
With the goal of identifying QASST decompositions with as many star-spoke quotient graphs as possible (and hence LC orbit representatives with as few edges as possible), we now present modified tables for which $I$ is chosen to be as large as possible in each of the cases considered previously.
We also give the explicit formula for the number of edges in this graph based on Equation~\ref{eq:complete_k-partite_QASST_edge_formula}.

\begin{table*}[t]
\centering
\begin{tabular}{|c|c|l|c|}
\hline
$k$&Case&Rules for $QASST(G)$&Number of Edges in $G$\\
\hline
even&1&$\begin{array}{l}\text{$Q_0$ is c}\\\text{All other $Q_{i\neq0}$ are ss}\end{array}$&$\mathlarger{\tfrac{k(k-1)}{2}+\sum_{i=1}^k(n_i-1)}$\\
&&&\\
&2(j)&$\begin{array}{l}\text{$Q_0$ is $\text{sc}_j$ and $Q_j$ is sc}\\\text{One $Q_{\ell\neq j,0}$ is c}\\\text{All other $Q_{i\neq j,\ell,0}$ are ss}\end{array}$&$\mathlarger{n_j((k-2)+n_\ell)+\tfrac{n_{\ell}(n_{\ell}-1)}{2}+\sum_{i=1\neq j,\ell}^k(n_i-1)}$\\
&&&\\
&3(j)&$\begin{array}{l}\text{$Q_0$ is $\text{sc}_j$ and $Q_j$ is c}\\\text{All other $Q_{i\neq j,0}$ are ss}\end{array}$&$\mathlarger{n_j(k-1)+\tfrac{n_j(n_j-1)}{2}+\sum_{i=1\neq j}^k(n_i-1)}$\\
\hline
odd&1&$\begin{array}{l}\text{$Q_0$ is c}\\\text{One $Q_{j\neq0}$ is sc}\\\text{All other $Q_{i\neq j,0}$ are ss}\end{array}$&$\mathlarger{n_j(k-1)+\tfrac{(k-1)(k-2)}{2}+\sum_{i=1\neq j}^k(n_i-1)}$\\
&&&\\
&2(j)&$\begin{array}{l}\text{$Q_0$ is $\text{sc}_j$ and $Q_j$ is sc}\\\text{All other $Q_{i\neq j,0}$ are ss}\end{array}$&$\mathlarger{n_j(k-1)+\sum_{i=1\neq j}^k(n_i-1)}$\\
&&&\\
&3(j)&$\begin{array}{l}\text{$Q_0$ is $\text{sc}_j$ and $Q_j$ is c}\\\text{One $Q_{\ell\neq j,0}$ is c}\\\text{All other $Q_{i\neq j,\ell,0}$ are ss}\end{array}$&$\mathlarger{\tfrac{n_j(n_j-1)}{2}+\tfrac{n_{\ell}(n_{\ell}-1)}{2}+n_j((k-2)+n_{\ell})+\sum_{i=1\neq j,\ell}^k(n_i-1)}$\\
\hline
\end{tabular}
\caption{Candidate graphs for minimum edge representatives of ${\mathcal O}(K_{n_1,\cdots,n_k})$ based on the cases from Table~\ref{tab:complete_k-partite_quasst_equivalences}, where Equation~\ref{eq:complete_k-partite_QASST_edge_formula} computes the number of edges in each graph.}
\label{tab:complete_k-partite_LC_orbit_minimum_edge_representatives}
\end{table*}

\begin{table*}[t]
\centering
\begin{tabular}{|c|c|l|c|}
\hline
$k$&Case&Rules for $QASST(G)$&Number of Edges in $G$\\
\hline
even&1&$\begin{array}{l}\text{$Q_0$ is c}\\\text{One $Q_{j\neq0}$ is sc}\\\text{All other $Q_{i\neq0}$ are ss}\end{array}$&$\mathlarger{n_j(k-1)+\tfrac{(k-1)(k-2)}{2}+\sum_{i=1\neq j}^k(n_i-1)}$\\
&&&\\
&2(j)&$\begin{array}{l}\text{$Q_0$ is $\text{sc}_j$ and $Q_j$ is sc}\\\text{All other $Q_{i\neq j,0}$ are ss}\end{array}$&$\mathlarger{n_j(k-1)+\sum_{i=1\neq j}^k(n_i-1)}$\\
&&&\\
&3(j)&$\begin{array}{l}\text{$Q_0$ is $\text{sc}_j$ and $Q_j$ is c}\\\text{One $Q_{\ell\neq j,0}$ is c}\\\text{All other $Q_{i\neq j,\ell,0}$ are ss}\end{array}$&$\mathlarger{\tfrac{n_j(n_j-1)}{2}+\tfrac{n_{\ell}(n_{\ell}-1)}{2}+n_j((k-2)+n_{\ell})+\sum_{i=1\neq j,\ell}^k(n_i-1)}$\\
\hline
odd&1&$\begin{array}{l}\text{$Q_0$ is c}\\\text{All other $Q_{i\neq0}$ are ss}\end{array}$&$\mathlarger{\tfrac{k(k-1)}{2}+\sum_{i=1}^k(n_i-1)}$\\
&&&\\
&2(j)&$\begin{array}{l}\text{$Q_0$ is $\text{sc}_j$ and $Q_j$ is sc}\\\text{One $Q_{\ell\neq j,0}$ is c}\\\text{All other $Q_{i\neq j,\ell,0}$ are ss}\end{array}$&$\mathlarger{n_j((k-2)+n_\ell)+\tfrac{n_{\ell}(n_{\ell}-1)}{2}+\sum_{i=1\neq j,\ell}^k(n_i-1)}$\\
&&&\\
&3(j)&$\begin{array}{l}\text{$Q_0$ is $\text{sc}_j$ and $Q_j$ is c}\\\text{All other $Q_{i\neq j,0}$ are ss}\end{array}$&$\mathlarger{n_j(k-1)+\tfrac{n_j(n_j-1)}{2}+\sum_{i=1\neq j}^k(n_i-1)}$\\
\hline
\end{tabular}
\caption{Candidate graphs for minimum edge representatives of ${\mathcal O}(CS^r_{n_1,\cdots,n_k})$ based on the cases from Table~\ref{tab:clique-star_quasst_equivalences}, where Equation~\ref{eq:complete_k-partite_QASST_edge_formula} computes the number of edges in each graph.}
\label{tab:clique-star_LC_orbit_minimum_edge_representatives}
\end{table*}

The basic procedure is to choose an index set $I$ which is as large as possible and which satisfies the rules of the symmetry class.
Since the parity of $|I|$ matters, we cannot choose $I=[k]$ in all cases, and so we cannot always force each quotient graph $Q_1,\cdots, Q_k$ to be star-spoke, but we can choose which indices are excluded from $I$ if necessary.
With this in mind, let $j\in[k]$ be the index for which $n_j=\text{min}\{n_1,\cdots,n_k\}$, and let $\ell\in[k]\setminus\{j\}$ be the index for which $n_\ell=\text{min}\{n_1,\cdots,n_k\}\setminus\{n_j\}$.
That is, $n_j$ and $n_{\ell}$ are the smallest and second smallest numbers from among $n_1,\cdots,n_k$.
Since the parity of $|I|$ also plays a role in the symmetry class rules, we must also divide each of the previous cases into two subcases distinguishing between whether $k$ is even or odd.

Table~\ref{tab:complete_k-partite_LC_orbit_minimum_edge_representatives} shows the possible candidates for a minimum edge representative of ${\mathcal O}(K_{n_1,\cdots,n_k})$ and Table~\ref{tab:clique-star_LC_orbit_minimum_edge_representatives} shows those for ${\mathcal O}(CS^r_{n_1,\cdots,n_k})$, each consisting of six cases.
For both LC orbits, the candidate graph with the fewest edges will also depend on the relationship between $k$ and $n_j$.
We consider when $k$ is even or odd separately for each of the two orbits.

\begin{enumerate}
\item ${\mathcal O}(K_{n_1,\cdots,n_k})$ when $k$ is even.

Let $G_1$, $G_2$, and $G_3$ denote the three candidate graphs in the first part of Table~\ref{tab:complete_k-partite_LC_orbit_minimum_edge_representatives}, for which $k$ is even.
After simplifying, observe that $|E(G_2)|-|E(G_3)|=(n_j-1)(n_{\ell}-1)+\left(\frac{n_{\ell}(n_{\ell}-1)}{2}-\frac{n_j(n_j-1)}{2}\right)$. Since $2\leq n_j\leq n_{\ell}$, this quantity is always positive, and so $G_3$ always has fewer edges than $G_2$.

Trying the same trick to compute $|E(G_3)|-|E(G_1)|$ and then simplifying gives an expression for a hyperbola in variables $k$ and $n_j$:
\begin{eqnarray}\label{eq:even_complete_k-partite_MER_hyperbola}
f(k,n_j)&=&(n_j-1)(k-1)+\tfrac{(n_j-2)(n_j-1)}{2}-\tfrac{(k-2)(k-1)}{2}.\nonumber\\
\end{eqnarray}
When $f(k,n_j)>0$, $G_1$ has fewer edges than $G_3$.
When $f(k,n_j)<0$, $G_3$ has fewer edges than $G_1$.
Hence, the minimum edge representative only depends on the relationship between $n_j=\text{min}\{n_1,\cdots,n_k\}$ and $k$.

The formula given by Equation~\ref{eq:even_complete_k-partite_MER_hyperbola} is exact, but a simple approximation can also be inferred numerically by examining the linear asymptotes of the hyperbola.
For sufficiently large values of $k$ and $n_j$, we can approximate these two cases via linear functions, noting that these may not hold for small values.
If $3\leq k\lessapprox 2.4n_j-0.9$, then $G_1$ is the minimum edge representative; if $k\gtrapprox 2.4n_j-0.9$, then $G_3$ is the minimum edge representative.

Figure~\ref{fig:complete_k-partite_LC_orbit_MER_examples} shows a comparison of the graphs $G_1$ and $G_3$ with two examples, one for $f(k,n_j)>0$ and the other for $f(k,n_j)<0$. The difference in the number of edges is computed by $f(k,n_j)$.

\item ${\mathcal O}(K_{n_1,\cdots,n_k})$ when $k$ is odd.

Now let $G_1$, $G_2$, and $G_3$ denote the three candidate graphs in the second part of Table~\ref{tab:complete_k-partite_LC_orbit_minimum_edge_representatives}, for which $k$ is odd.
Using the same technique to calculate the difference in the number of edges for each candidate graph, we compute $|E(G_1)|-|E(G_2)|=\frac{(k-2)(k-1)}{2}$, which is always positive since $k\geq3$.
Hence, $G_2$ always has fewer edges than $G_1$.

Next, simplifying the difference in the number of edges between $G_3$ and $G_2$, we compute $|E(G_3)|-|E(G_2)|=\left(\frac{n_j(n_j-1)}{2}-n_j\right)+\left(\frac{n_{\ell}(n_{\ell}-1)}{2}-n_{\ell}\right)+n_jn_{\ell}+1$.
Since $2\leq n_j\leq n_{\ell}$, this quantity always positive.
Hence, $G_2$ also always has fewer edges than $G_3$.

We conclude that the graph $G_2$ is always the minimum edge representative of ${\mathcal O}(K_{n_1,\cdots,n_k})$ when $k$ is odd.

\item ${\mathcal O}(CS^r_{n_1,\cdots,n_k})$ when $k$ is even.

In the case of the clique-star, we use exactly the same argument as the complete $k$-partite graph, except referring to Table~\ref{tab:clique-star_LC_orbit_minimum_edge_representatives} instead.
Consider the first three graphs $G_1$, $G_2$, and $G_3$ for which $k$ is even.
Observe that the formulas for the number of edges in this first block of Table~\ref{tab:clique-star_LC_orbit_minimum_edge_representatives} exactly coincide with the second block of formulas from Table~\ref{tab:complete_k-partite_LC_orbit_minimum_edge_representatives}, as do the rules for the quotient graphs in the QASST.
Therefore, we may immediately conclude that $G_2$ defines the minimum edge representative in this case.

\item ${\mathcal O}(CS^r_{n_1,\cdots,n_k})$ when $k$ is odd.

Finally, we let $G_1$, $G_2$, and $G_3$ denote the second batch of graphs in Table~\ref{tab:clique-star_LC_orbit_minimum_edge_representatives}, for which $k$ is odd. Again, observe that the corresponding edge formulas exactly coincide with the formulas from the first block of Table~\ref{tab:complete_k-partite_LC_orbit_minimum_edge_representatives}.
Hence, the minimum edge representative is either $G_1$ or $G_3$ and will depend on the relationship of $k$ and $n_j$ defined by Equation~\ref{eq:even_complete_k-partite_MER_hyperbola}.
If $f(k,n_j)>0$, then $G_1$ has fewer edges than $G_3$.
If $f(k,n_j)<0$, then $G_3$ has fewer edges than $G_1$.

\end{enumerate}

\begin{figure*}[tp]
\centering

\begin{subfigure}{0.5\textwidth}
\centering
\includegraphics[width=0.35\linewidth,page=1]{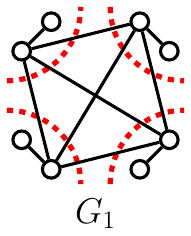}
\includegraphics[width=0.35\linewidth,page=2]{Figures/Appendix_Edge_Counting_and_Isomorphisms.pdf}\\
$f(4,2)=0$ and so both minimal\\
$|E(G_1)|=10$ and $|E(G_3)|=10$
\caption{
$k=4$ and $n_i=2$
}
\label{fig:complete_k-partite_LC_orbit_even_MER_k4n2}
\end{subfigure}\begin{subfigure}{0.5\textwidth}
\centering
\includegraphics[width=0.35\linewidth,page=3]{Figures/Appendix_Edge_Counting_and_Isomorphisms.pdf}
\includegraphics[width=0.35\linewidth,page=4]{Figures/Appendix_Edge_Counting_and_Isomorphisms.pdf}\\
$f(4,3)=4>0$ and so $G_1$ is minimal\\
$|E(G_1)|=14$ and $|E(G_3)|=18$
\caption{
$k=4$ and $n_i=3$
}
\label{fig:complete_k-partite_LC_orbit_even_MER_k4n3}
\end{subfigure}

\begin{subfigure}{0.5\textwidth}
\centering
\includegraphics[width=0.35\linewidth,page=5]{Figures/Appendix_Edge_Counting_and_Isomorphisms.pdf}
\includegraphics[width=0.35\linewidth,page=6]{Figures/Appendix_Edge_Counting_and_Isomorphisms.pdf}\\
$f(6,2)=-5<0$ and so $G_3$ is minimal\\
$|E(G_1)|=21$ and $|E(G_3)|=16$
\caption{
$k=6$ and $n_i=2$
}
\label{fig:complete_k-partite_LC_orbit_even_MER_k6n2}
\end{subfigure}\begin{subfigure}{0.5\textwidth}
\centering
\includegraphics[width=0.4\linewidth,page=7]{Figures/Appendix_Edge_Counting_and_Isomorphisms.pdf}
\includegraphics[width=0.4\linewidth,page=8]{Figures/Appendix_Edge_Counting_and_Isomorphisms.pdf}\\
$f(6,3)=1>0$ and so $G_1$ is minimal\\
$|E(G_1)|=27$ and $|E(G_3)|=28$
\caption{
$k=6$ and $n_i=3$
}
\label{fig:complete_k-partite_LC_orbit_even_MER_k6n3}
\end{subfigure}

\begin{subfigure}{0.25\textwidth}
\centering
\includegraphics[width=0.8\textwidth,page=9]{Figures/Appendix_Edge_Counting_and_Isomorphisms.pdf}
$|E(G_2)|=6$
\caption{
$k=3$ and $n_i=2$
}
\label{fig:complete_k-partite_LC_orbit_odd_MER_k3n2}
\end{subfigure}\begin{subfigure}{0.25\textwidth}
\centering
\includegraphics[width=0.8\textwidth,page=10]{Figures/Appendix_Edge_Counting_and_Isomorphisms.pdf}
$|E(G_2)|=10$
\caption{
$k=3$ and $n_i=3$
}
\label{fig:complete_k-partite_LC_orbit_odd_MER_k3n3}
\end{subfigure}\begin{subfigure}{0.25\textwidth}
\centering
\includegraphics[width=0.8\textwidth,page=11]{Figures/Appendix_Edge_Counting_and_Isomorphisms.pdf}
$|E(G_2)|=12$
\caption{
$k=5$ and $n_i=2$
}
\label{fig:complete_k-partite_LC_orbit_odd_MER_k5n2}
\end{subfigure}\begin{subfigure}{0.25\textwidth}
\centering
\includegraphics[width=0.8\textwidth,page=12]{Figures/Appendix_Edge_Counting_and_Isomorphisms.pdf}
$|E(G_2)|=20$
\caption{
$k=5$ and $n_i=3$
}
\label{fig:complete_k-partite_LC_orbit_odd_MER_k5n3}
\end{subfigure}

\caption{
Examples of minimal edge representatives for ${\mathcal O}(K_{n_1,\cdots,n_k})$ for various values of $k$ and $n_1=\cdots=n_k$.
When $k$ is even (a--d), the minimal edge graph is either $G_1$ or $G_3$ from the first block of Table~\ref{tab:complete_k-partite_LC_orbit_minimum_edge_representatives} depending on the value of $f(k,n_j)$ by Equation~\ref{eq:even_complete_k-partite_MER_hyperbola}.
When $k$ is odd (e--h), $G_2$ from this table always has the fewest edges.
}
\label{fig:complete_k-partite_LC_orbit_MER_examples}
\end{figure*}

We formalize these cases with a theorem to characterize the minimum edge representatives of the two local equivalence classes.
Figure~\ref{fig:complete_k-partite_LC_orbit_MER_examples} shows a number of examples of these graphs for small values of $k$ and $n_i$.
Note that such a graph is non-unique in general; there could be many isomorphic locally equivalent graphs which achieve this minimum.
Furthermore, there may be non-isomorphic graphs in the LC orbit which are both minimal with respect to the number of edges when $f(k,n_j)=0$, as in the example of Figure~\ref{fig:complete_k-partite_LC_orbit_even_MER_k4n2}.

\begin{theorem}
\label{thm:complete_k-partite_minimal_edge_representative}
Let $K_{n_1,\cdots,n_k}$ be a complete $k$-partite graph.
Take the smallest term to be $n_j=\text{min}\{n_1,\cdots,n_k\}$ and let $f(k,n_j)$ be as specified by Equation~\ref{eq:even_complete_k-partite_MER_hyperbola}.
Let $G\in{\mathcal O}(K_{n_1,\cdots,n_k})$ be a minimum edge representative of the local equivalence class with a QASST decomposition defined by the quotient graphs $Q_0,Q_1,\cdots,Q_k$.
The structure of $G$ can be identified via its quotient graphs according to the following cases.
\begin{enumerate}
\item If $k$ is even and $f(k,n_j)>0$, then $Q_0$ is complete and each $Q_{i\neq0}$ is star-spoke (in this case, $G$ is a multi-leaf repeater graph). The number of edges is
\begin{eqnarray}
|E(G)|=\frac{k(k-1)}{2}+\sum_{i=1}^k(n_i-1).
\end{eqnarray}
\item If $k$ is even and $f(k,n_j)<0$, then $Q_0$ is a star pointing towards $Q_j$, $Q_j$ is complete, and all other $Q_{i\neq j,0}$ are star-spoke. The number of edges is
\begin{eqnarray}
|E(G)|&=&n_j(k-1)+\frac{n_j(n_j-1)}{2}+\sum_{i=1\neq j}^k(n_i-1).\nonumber\\
\end{eqnarray}
If $f(k,n_j)=0$, then Case 1 and 2 define graphs with the same number of edges, both of which are minimal in the LC orbit.
\item If $k$ is odd, then $Q_0$ is a star pointing towards $Q_j$, $Q_j$ is star-center, and all other $Q_{i\neq j,0}$ are star-spoke. The number of edges is
\begin{eqnarray}
|E(G)|=n_j(k-1)+\sum_{i=1\neq j}^k(n_i-1).
\end{eqnarray}
\end{enumerate}
\end{theorem}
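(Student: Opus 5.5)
By Theorem~\ref{thm:complete_k-partite_LC_orbit_size}, the orbit ${\mathcal O}(K_{n_1,\cdots,n_k})$ is exactly the union of the symmetry classes 1, 2(j) and 3(j) of Table~\ref{tab:complete_k-partite_quasst_equivalences}. The plan is therefore to minimize $|E(G)|$ over these classes using Equation~\ref{eq:complete_k-partite_QASST_edge_formula}. I will do this in two stages. First, within each class and for each choice of the distinguished index, I find the minimizing assignment of ss/sc/c labels. Second, I compare the three resulting class minima, as in the case analysis preceding the theorem.

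\textbf{Stage 1 (within a class).} I will show that switching a component to ss never increases the edge count when parity allows.

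In case 1, $Q_0$ is complete. Changing $Q_i$ from sc to ss changes its internal contribution from $0$ to $n_i-1$. Its contribution across each edge $(Q_i,Q_m)$ of $Q_0$ drops by a factor of $n_i$: from $n_in_m$ to $n_m$, or from $n_i$ to $1$. The net change is strictly negative whenever $k\ge3$ and $n_i\ge2$. Hence the optimum is the maximal even index set, namely $I=[k]$ if $k$ is even and $I=[k]\setminus\{j\}$ if $k$ is odd. Expanding the cost shows that the excluded index should carry the smallest $n$.

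In cases 2(j) and 3(j), $Q_0$ is $\text{sc}_j$, so only the edges $(Q_j,Q_i)$ contribute. Comparing ss with c for $Q_i$ with $i\ne j$, the cost is $(n_i-1)+n_j$ for ss versus $\binom{n_i}{2}+n_in_j$ for c. The difference is $(n_i-1)(n_j+n_i/2-1)>0$, so ss is always preferred. Parity may force one $Q_\ell$ to be c, and the cheapest such choice is the second-smallest $n_\ell$. Likewise, the choice of $j$ should be the index of minimal $n_j$, since the cost is increasing in $n_j$. This recovers exactly the rows of Table~\ref{tab:complete_k-partite_LC_orbit_minimum_edge_representatives}.

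\textbf{Stage 2 (across classes).} I use the differences already computed in the text.

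For $k$ odd, $|E(G_1)|-|E(G_2)|=\tfrac{(k-1)(k-2)}{2}>0$, and $|E(G_3)|-|E(G_2)|>0$ by the displayed expression. So $G_2$ is the minimum, which is Case 3 of the theorem, together with its edge formula.

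For $k$ even, $|E(G_2)|-|E(G_3)|>0$. Also $|E(G_3)|-|E(G_1)|=f(k,n_j)$, which I will verify by direct algebraic simplification of the table entries. The sign of $f$ then decides between $G_1$ (the multi-leaf repeater, Case 1) and $G_3$ (Case 2), with a tie when $f=0$.

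\textbf{Main obstacle.} The delicate step is making the Stage~1 monotonicity rigorous in case 1, where several ss components interact through the terms $n_m$ and $1$. I plan to write the case-1 cost as a function of $I$ and show that removing any two indices from $I$ strictly increases it. This gives global optimality of the maximal even $I$, rather than only optimality under single swaps, which is all the parity constraint permits. I also need to confirm that excluding the index with minimal $n_j$ is optimal in odd case 1. That cost is $n_j(k-1)-(n_j-1)+\text{const}$, which is increasing in $n_j$, so the minimal index is the right one to exclude.
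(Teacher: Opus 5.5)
Your overall route is the paper's: the argument preceding the theorem is exactly your Stage~2, the pairwise differences among the tabulated candidates $G_1,G_2,G_3$. The within-class optimality you set out to prove in Stage~1 is only asserted heuristically there.

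Your case-1 argument is sound, and easier than you fear. Let $w_m=1$ if $Q_m$ is ss and $w_m=n_m$ otherwise. Switching $Q_i$ from sc to ss changes the count by $(n_i-1)\bigl(1-\sum_{m\neq i}w_m\bigr)<0$, and this holds in every configuration. So the cost is strictly monotone on the whole lattice of index sets, and adding any pair helps. The separability argument for classes 2(j)/3(j) and all of the odd-$k$ choices are also correct.

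The step that fails is the claim that choosing $j$ and $\ell$ coordinatewise ``recovers exactly the rows'' for class 2(j) with $k$ even. There the count is $\binom{n_\ell}{2}+n_jn_\ell+(k-3)n_j-n_\ell+\text{const}$, which is not symmetric in the roles of $j$ and $\ell$. Let $a<b$ be the two smallest parts. The tabulated choice puts $a$ on the component $Q_j$ that $Q_0$ points to and $b$ on the complete $Q_\ell$. It exceeds the swapped choice by $\tfrac12(b-a)(a+b+3-2k)$, which is positive once $a+b+3>2k$. For example, $k=4$ and $(n_i)=(3,4,10,10)$ give $42$ edges for the tabulated $G_2$ but $41$ after swapping.

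Hence $|E(G_2)|>|E(G_3)|$ does not by itself exclude class 2; the paper's own comparison has the same omission. The repair is to compare an arbitrary class-2 configuration with $G_3$ directly. With $a=\min_i n_i$,
\[
|E|-|E(G_3)|=\binom{n_\ell}{2}-\binom{a}{2}+(k-2)(n_j-a)+(n_j-1)(n_\ell-1)\ \ge\ 1 .
\]
After this, your sign analysis of $f(k,n_j)$ finishes the proof unchanged.
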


\begin{theorem}
\label{thm:clique-star_minimal_edge_representative}
Let $CS^r_{n_1,\cdots,n_k}$ be a clique-star.
Take the smallest term to be $n_j=\text{min}\{n_1,\cdots,n_k\}$ and let $f(k,n_j)$ be as specified by Equation~\ref{eq:even_complete_k-partite_MER_hyperbola}.
Let $G\in{\mathcal O}(CS^r_{n_1,\cdots,n_k})$ be a minimum edge representative of the local equivalence class with a QASST decomposition defined by the quotient graphs $Q_0,Q_1,\cdots,Q_k$.
The structure of $G$ can be identified via its quotient graphs according to the following cases.
\begin{enumerate}
\item If $k$ is even, then $Q_0$ is a star pointing towards $Q_j$, $Q_j$ is star-center, and all other $Q_{i\neq j,0}$ are star-spoke. The number of edges is
\begin{eqnarray}
|E(G)|=n_j(k-1)+\sum_{i=1\neq j}^k(n_i-1).
\end{eqnarray}
\item If $k$ is odd and $f(k,n_j)>0$, then $Q_0$ is complete and all other $Q_{i\neq0}$ are star-spoke (in this case, $G$ is a multi-leaf repeater graph). The number of edges is
\begin{eqnarray}
|E(G)|=\frac{k(k-1)}{2}+\sum_{i=1}^k(n_i-1).
\end{eqnarray}
\item If $k$ is odd and $f(k,n_j)<0$, then $Q_0$ is a star pointing towards $Q_j$, $Q_j$ is complete, and all other $Q_{i\neq j,0}$ are star-spoke. The number of edges is
\begin{eqnarray}
|E(G)|&=&n_j(k-1)+\frac{n_j(n_j-1)}{2}+\sum_{i=1\neq j}^k(n_i-1).\nonumber\\
\end{eqnarray}
If $f(k,n_j)=0$, then Case 2 and 3 define graphs with the same number of edges, both of which are minimal in the LC orbit.
\end{enumerate}
\end{theorem}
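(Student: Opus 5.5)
The plan is to reduce the minimisation over $\mathcal{O}(CS^r_{n_1,\cdots,n_k})$ to a finite comparison of the candidates in Table~\ref{tab:clique-star_LC_orbit_minimum_edge_representatives}.

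First, I would pin down the whole orbit. By Theorem~\ref{thm:clique-star_LC_orbit_size}, together with the closure argument of Table~\ref{tab:LC_closure_clique-star}, every $G\in\mathcal{O}(CS^r_{n_1,\cdots,n_k})$ falls into exactly one of the cases 1, 2(j), 3(j) of Table~\ref{tab:clique-star_quasst_equivalences}. Such a $G$ is determined, up to the choice of star centres, by three pieces of data: the case, the index $j$ (when $Q_0$ is a star), and the index set $I$ of star-spoke components. The parity of $|I|$ is prescribed by the case: odd in cases 1 and 2(j), even in case 3(j). The choice of star centres does not change $|E(G)|$. So $|E(G)|$ is a function of (case, $j$, $I$), given explicitly by Equation~\ref{eq:complete_k-partite_QASST_edge_formula}.

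Second, for each fixed case I would minimise over $I$ and $j$ with an exchange argument using Equation~\ref{eq:complete_k-partite_QASST_edge_formula}.
\begin{itemize}
\item In cases 2(j) and 3(j), every component outside $I\cup\{j\}$ is complete. Turning two such complete components $Q_a,Q_b$ into star-spoke keeps the parity of $|I|$. It replaces $\frac{n_a(n_a-1)}{2}+\frac{n_b(n_b-1)}{2}$ internal edges by $(n_a-1)+(n_b-1)$. It also replaces the cross edges $n_jn_a+n_jn_b$ by $2n_j$, since $Q_0=\text{sc}_j$ has edges only at $j$. With $n_a,n_b\geq2$ this strictly decreases the count. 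So an optimal $I$ leaves at most one index outside $I\cup\{j\}$.
\item If one index $\ell$ must be left out for parity, its contribution $\frac{n_\ell(n_\ell-1)}{2}+n_jn_\ell-(n_\ell-1)$ is increasing in $n_\ell$. So $\ell$ should be the index of the second-smallest $n$.
\item The centre index $j$ enters through terms like $n_j(k-1)$ or $n_j(k-2+n_\ell)$, which are increasing in $n_j$. So $j$ should be the index of the minimal $n$.
\item In case 1 ($Q_0$ complete), the non-star-spoke components are star-center. Converting two of them to star-spoke replaces $n_an_b$ cross edges by $1$, and changes the edges to every other component by the factor $n_a\to1$, a net decrease. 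Again at most one index is excluded, and it should have minimal $n$.
\end{itemize}
This yields exactly the six rows of Table~\ref{tab:clique-star_LC_orbit_minimum_edge_representatives}, split by the parity of $k$.

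Third, I would compare the three survivors for each parity. The observation already made in the text does this: the edge formulas for $k$ even in Table~\ref{tab:clique-star_LC_orbit_minimum_edge_representatives} coincide with those for $k$ odd in Table~\ref{tab:complete_k-partite_LC_orbit_minimum_edge_representatives}, and vice versa. So I would reuse the computed differences directly.
\begin{itemize}
\item For $k$ even: $|E(G_1)|-|E(G_2)|=\frac{(k-1)(k-2)}{2}>0$, and $|E(G_3)|-|E(G_2)|>0$. Hence $G_2$ is minimal, which is item 1.
\item For $k$ odd: $|E(G_2)|-|E(G_3)|>0$, and $|E(G_3)|-|E(G_1)|=f(k,n_j)$ from Equation~\ref{eq:even_complete_k-partite_MER_hyperbola}. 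This gives items 2 and 3, including the tie when $f=0$.
\item In the multi-leaf repeater case I would note that $Q_0$ complete with all $Q_i$ star-spoke is precisely $MR_{n_1,\cdots,n_k}$, consistent with Theorem~\ref{thm:multi-leaf_repeater_graph_LC_equivalence}.
\end{itemize}

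The main obstacle is the exchange step in case 1. There, the count couples all pairs $(i,i')$ through products $n_in_{i'}$ versus $n_i$ versus $1$, so monotonicity under adding two indices to $I$ needs a careful but elementary expansion. I would first write $|E(G)|$ in case 1 as a symmetric function of the weights $w_i=n_i$ for $i\notin I$ and $w_i=1$ for $i\in I$: namely $\sum_{i\in I}(n_i-1)+\sum_{i<i'}w_iw_{i'}$. This is coordinatewise increasing in each $w_i$, which makes the reduction immediate.
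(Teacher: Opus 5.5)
Your route is the paper's own: reduce to the six candidates of Table~\ref{tab:clique-star_LC_orbit_minimum_edge_representatives}, then reuse the differences computed for Table~\ref{tab:complete_k-partite_LC_orbit_minimum_edge_representatives} with the parity of $k$ swapped. The paper simply declares that $j,\ell$ index the smallest and second-smallest $n_i$. Your exchange step tries to justify that choice, and it fails in one place: case 2(j) with $k$ odd. Your pair-conversion argument there correctly leaves exactly one complete component $Q_\ell$. With $S=\sum_{i=1}^k(n_i-1)$ the count is
\[
|E(G)|=n_j(k-3)+n_jn_\ell+\tfrac{n_\ell(n_\ell-1)}{2}-n_\ell+S+2 .
\]
Two features matter here: the term $-(n_j-1)$ hidden in $\sum_{i\neq j,\ell}(n_i-1)$, and the quadratic term in $n_\ell$. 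Because of them, putting values $x<y$ at $(j,\ell)$ rather than at $(\ell,j)$ costs $(y-x)\bigl[\tfrac{x+y-3}{2}-(k-3)\bigr]$ extra edges. This is positive, for example, whenever $k=3$. Concretely, take $k=3$ and $(n_1,n_2,n_3)=(2,3,3)$. The table row $(j,\ell)=(1,2)$ has $13$ edges, while $(j,\ell)=(2,1)$, also a valid case-2(j) graph, has $12$. So your claim that ``$j$ should be the index of the minimal $n$'' is false in this case, and the six rows are not the per-case minima. The inequality $|E(G_2)|-|E(G_3)|>0$ therefore does not exclude every case-2(j) graph, which is what item 2 and item 3 need.

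The theorem survives, but you need a direct bound over all of case 2(j). The displayed expression is increasing in both $n_j$ and $n_\ell$. So, with $m=\min_i n_i$, every case-2(j) graph has at least $m(k-3)+m^2+\tfrac{m(m-1)}{2}-m+S+2$ edges. The best case-3(j) graph has $m(k-2)+\tfrac{m(m-1)}{2}+S+1$ edges, and the difference is $(m-1)^2\ge 1$. For $k$ even the problem does not arise: the swap comparison in case 3(j) gives $(x-y)(k-2)<0$, so item 1 goes through as you argued.

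A smaller point concerns case 1. Coordinatewise monotonicity in the $w_i$ alone does not make the reduction immediate, because enlarging $I$ also adds $n_i-1$ internal edges. What you need is that lowering $w_i$ from $n_i$ to $1$ removes $(n_i-1)\sum_{i'\neq i}w_{i'}\ge(n_i-1)(k-1)$ cross edges, which outweighs those internal edges. Your pairwise computation is correct once the $(n_a-1)+(n_b-1)$ added internal edges are included.
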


\section{Maximum Vertex Degrees for Graphs QASST Equivalent to $K_{n_1,\cdots,n_k}$}
\label{app:max_vertex_degrees}

Maximum vertex degree is another graph parameter relevant for some practical applications. For example, this parameter can be used to determine the lower bound of the circuit depth required to implement a corresponding graph state~\cite{cabello2011optimal}.
This value is equivalent to the \textit{edge chromatic number} of the graph (the minimal number of distinct colors needed to label each edge in the graph such that no two edges incident to the same vertex have the same color).
Vizing proved that this number is either $\Delta$ (the maximum vertex degree of the graph) or $\Delta+1$~\cite{vizing1964estimate}.
Much like with edge counting, we can derive a general formula for the maximum vertex degree for graphs locally equivalent to a complete $k$-partite graph or a clique-star by using the QASST, and then search for the minimum $\Delta$ across the LC orbit.

\subsection{Counting Vertex Degrees via the QASST}

Using the QASST, we will derive a general formula for the maximum vertex degree.
We do this by computing the maximum vertex degrees of leaf-nodes for each quotient graph individually (split-nodes are excluded and degrees are with respect to the entire graph, not just the quotient graph).
The maximum vertex degree of the entire graph is then taken to be the maximum leaf-node degree across all quotient graphs.

Before defining a general function, we introduce the following notation.
For a graph $G$, let the maximum vertex degree of this graph be denoted as $\Delta(G)$.
Let $G$ be any graph QASST equivalent to $K_{n_1,\cdots,n_k}$ with split decomposition given by $QASST(G)=\{Q_0,Q_1,\cdots,Q_k\}$.
Recall that the edges in $Q_0$ represent connections between other quotient graphs, and so let $e=(Q_i,Q_j)\in E(Q_0)$ be a shorthand for denoting these.
For a fixed choice of $Q_0$, define a function $\delta_{e=(Q_i,Q_j)}^{E(Q_0)}$ according to the following rule:
\begin{eqnarray}
\delta_{e=(Q_i,Q_j)}^{E(Q_0)}&=&\begin{cases}1&\text{if there exists }e=(Q_i,Q_j)\in E(Q_0)\\0&\text{otherwise}\end{cases}.\nonumber\\
\end{eqnarray}

Let $Q_i\in\{Q_1,\cdots,Q_k\}$ be a quotient graph. The degree of any leaf-node $v\in V(Q_i)$ is the sum of the internal contributions to degree from other leaf-nodes in $Q_i$ and the external contributions to degree from leaf-nodes in other quotient graphs $Q_j$ connected to $Q_i$ by an edge $(Q_i,Q_j)\in E(Q_0)$.
There are three cases for the internal degree, depending on the structure of $Q_i$ (c, sc, or ss). If $Q_i$ is complete, all leaf-nodes have the same internal degree $n_i-1$. If $Q_i$ is star-center, all leaf-nodes have the same internal degree 0. In these two cases, the leaf-nodes of $Q_i$ have identical external neighborhoods as determined by $Q_0$. If $Q_i$ is star-spoke, the center leaf-node of the star has internal degree $n_i-1$, but the spoke leaf-nodes each have degree 1. In other words, the center leaf-node has the maximum degree of the quotient graph in this case.
The contributions to external leaf-node degree are likewise determined by the structure of any connected quotient graphs $Q_j$.
This gives rise to the following formula for the maximum vertex degree of any \textit{leaf-node} in $Q_i$ (we exclude split-nodes).
\begin{widetext}
\begin{eqnarray}\label{eq:mvd_quotient}
\Delta(Q_i)&=&\underbrace{\begin{cases}0&\text{if $Q_i$ is sc}\\n_i-1&\text{if $Q_i$ is c or ss}\end{cases}}_{\text{internal contribution to degree}}+
\underbrace{\sum_{j=1\neq i}^k\delta_{e=(Q_i,Q_j)}^{E(Q_0)}\begin{cases}1&\text{if $Q_j$ is ss}\\ n_j&\text{if $Q_j$ is c or sc}\end{cases}}_{\text{external contribution to degree}}
\end{eqnarray}
\end{widetext}
Hence, the maximum vertex degree of $G$ is computed as
\begin{eqnarray}
\Delta(G)&=&\max\{\Delta(Q_1),\cdots,\Delta(Q_k)\}.
\end{eqnarray}

\subsection{Simplifying Quotient Graph Maximum Vertex Degree by Cases}

When the QASST structure of $G$ is known, the formula can be simplified.
We derive three simplifications for $\Delta(G)$ based on the three broad symmetry classes described in Tables~\ref{tab:complete_k-partite_quasst_equivalences} and \ref{tab:clique-star_quasst_equivalences}.
Let $I\subseteq[k]$ be a set of indices corresponding to the star-spoke quotient graphs of $G$. Let $I^C=[k]\setminus I$ denote its complement.
For a fixed index $j\in[k]$, let $I^{\setminus j}\subseteq[k]\setminus\{j\}$ with complement $(I^{\setminus j})^C=[k]\setminus(I\cup\{j\})$.
Define the following three cases.
\begin{enumerate}
\item $Q_0$ is c, $Q_i$ is ss for $i\in I$, and $Q_i$ is sc for $i\in I^C$.
\item For $j\in[k]$, $Q_0$ is $\text{sc}_j$, $Q_j$ is sc, $Q_i$ is ss for $i\in I^{\setminus j}$, and $Q_i$ is c for $i\in(I^{\setminus j})^C$.
\item For $j\in[k]$, $Q_0$ is $\text{sc}_j$, $Q_j$ is c, $Q_i$ is ss for $i\in I^{\setminus j}$, and $Q_i$ is c for $i\in(I^{\setminus j})^C$.
\end{enumerate}
These correspond exactly to the symmetry classes 1, 2(j), and 3(j) considered earlier, where we do not make a distinction between $|I|$ being even or odd.

Assume case 1, so that $G$ has split decomposition $QASST(G)=\{Q_0,Q_1,\cdots,Q_k\}$ with $Q_0=\text{c}$, $Q_i=\text{ss}$ for $i\in I\subseteq[k]$, and $Q_i=\text{sc}$ for $i\in I^C$.
Since $Q_0$ is complete, it contains all edges and so $\delta_{e=(Q_i,Q_j)}^{E(Q_0)}=1$ for all choices of indices.
For any quotient graph $Q_{i\neq0}$, we may simplify Equation~\ref{eq:mvd_quotient} as follows. Here, we indicate in a strike-through in red the conditions of Equation~\ref{eq:mvd_quotient} that cannot occur in case 1.
\begin{widetext}
\begin{eqnarray}\label{eq:mvd1_Qi}
\Delta_1(Q_i)&=&\begin{cases}0&\text{if $Q_i$ is sc}\\n_i-1&\text{if $Q_i$ is ss or {\color{red}\sout{c}}}\end{cases}+\sum_{j=1\neq i}^k\begin{cases}1&\text{if $Q_j$ is ss}\\ n_j&\text{if $Q_j$ is {\color{red}\sout{c}} or sc}\end{cases}\nonumber\\
&=&\begin{cases}0&\text{if $i\in I^C$}\\n_i-1&\text{if $i\in I$}\end{cases}+\sum_{j=1\neq i}^k\begin{cases}1&\text{if $j\in I$}\\ n_j&\text{if $j\in I^C$}\end{cases}\nonumber\\
&=&\left(\sum_{j\in I^C\setminus\{i\}}n_j\right)+|I\setminus\{i\}|+\begin{cases}0&\text{if $i\in I^C$}\\n_i-1&\text{if $i\in I$}\end{cases}
\end{eqnarray}
\end{widetext}

Assume case 2($j$) for some fixed choice of $j\in[k]$, so that $Q_0=\text{sc}_j$, $Q_j=\text{sc}$, $Q_i=ss$ for $i\in I^{\setminus j}$ and $Q_i=\text{c}$ for $i\in(I^{\setminus j})^C$. In this case, the edges of $Q_0$ represent connections with $Q_j$ with all other quotient graphs, but there exist no connections between other pairs of quotient graphs.
Hence, in this case we have that $\delta_{e=(Q_i,Q_{\ell})}^{E(Q_0)}=\delta_{\ell}^j$.
Now we may simplify Equation~\ref{eq:mvd_quotient} for $Q_j$ and $Q_{i\neq j}$ as follows.
\begin{widetext}
\begin{eqnarray}\label{eq:mvd2j_Qj}
\Delta_{\text{2($j$)}}(Q_j)&=&\begin{cases}0&\text{if $Q_j$ is sc}\\n_j-1&\text{if $Q_j$ is {\color{red}\sout{ss or c}}}\end{cases}+\sum_{\ell=1\neq j}^k\begin{cases}1&\text{if $Q_\ell$ is ss}\\ n_\ell&\text{if $Q_\ell$ is c or {\color{red}\sout{sc}}}\end{cases}\nonumber\\
&=&0+\sum_{\ell=1\neq j}^k\begin{cases}1&\text{if $\ell\in I^{\setminus j}$}\\ n_{\ell}&\text{if $\ell\in (I^{\setminus j})^C$}\end{cases}\nonumber\\
&=&|I^{\setminus j}|+\sum_{\ell\in(I^{\setminus j})^C}n_{\ell}
\end{eqnarray}
\begin{eqnarray}\label{eq:mvd2j_Qi}
\Delta_{\text{2($j$)}}(Q_{i\neq j})&=&\begin{cases}0&\text{if $Q_i$ is {\color{red}\sout{sc}}}\\n_{i}-1&\text{if $Q_i$ is ss or c}\end{cases}+\delta_{\ell}^j\sum_{\ell=1\neq i}^k\begin{cases}1&\text{if $Q_\ell$ is ss}\\ n_{\ell}&\text{if $Q_\ell$ is c or sc}\end{cases}\nonumber\\
&=&n_i-1+n_j
\end{eqnarray}

Finally, assume case 3(j) for some fixed choice of $j\in[k]$, so that $Q_0=\text{sc}_j$, $Q_j=\text{c}$, $Q_i=ss$ for $i\in I^{\setminus j}$ and $Q_i=\text{c}$ for $i\in(I^{\setminus j})^C$.
Again we have that $\delta_{e=(Q_i,Q_{\ell})}^{E(Q_0)}=\delta_{\ell}^j$.
As in the preceding case, we compute the maximum vertex degree for $Q_j$ and $Q_{i\neq j}$ separately to derive the following.
\begin{eqnarray}\label{eq:mvd3j_Qj}
\Delta_{\text{3($j$)}}(Q_j)&=&\begin{cases}0&\text{if $Q_j$ is {\color{red}\sout{sc}}}\\n_j-1&\text{if $Q_j$ is {\color{red}\sout{ss}} or c}\end{cases}+\sum_{\ell=1\neq j}^k\begin{cases}1&\text{if $Q_\ell$ is ss}\\ n_\ell&\text{if $Q_\ell$ is c or {\color{red}\sout{sc}}}\end{cases}\nonumber\\
&=&(n_j-1)+\sum_{\ell=1\neq j}^k\begin{cases}1&\text{if $\ell\in I^{\setminus j}$}\\ n_{\ell}&\text{if $\ell\in (I^{\setminus j})^C$}\end{cases}\nonumber\\
&=&(n_j-1)+|I^{\setminus j}|+\sum_{\ell\in(I^{\setminus j})^C}n_{\ell}
\end{eqnarray}
\begin{eqnarray}\label{eq:mvd3j_Qi}
\Delta_{\text{3($j$)}}(Q_{i\neq j})&=&\begin{cases}0&\text{if $Q_i$ is {\color{red}\sout{sc}}}\\n_{i}-1&\text{if $Q_i$ is ss or c}\end{cases}+\delta_{\ell}^j\sum_{\ell=1\neq i}^k\begin{cases}1&\text{if $Q_\ell$ is ss}\\ n_{\ell}&\text{if $Q_\ell$ is c or {\color{red}\sout{sc}}}\end{cases}\nonumber\\
&=&n_i-1+n_j
\end{eqnarray}
\end{widetext}
Notice that $\Delta(Q_{i\neq j})$ is the same for both cases 2(j) and 3(j).

\subsection{Minimizing the Maximum Vertex Degree across the LC Orbit}

In \ref{app:edge_counting}, we search the LC orbit for a representative graph with a minimal number of edges.
Now we will do something similar for the maximum vertex degree.
Examining the preceding equations, we see that the maximum vertex degree for any given quotient graph can be minimized by choosing $I$ or $I^{\setminus j}$ to be as large as possible; this corresponds to maximizing the number of star-spoke components. 

By Equation~\ref{eq:mvd_quotient}, the external contribution to the degree of a leaf-node in a quotient graph $Q_i$ is determined solely by the adjacencies of $Q_i$ in the central quotient graph $Q_0$: an adjacency to a star-spoke quotient graph contributes 1, whereas an adjacency to a complete or star-center quotient graph $Q_j$ contributes $n_j$. Consequently, for a fixed choice of the type of each $Q_i$, enlarging $I$ (equivalently, reducing $I^C$) replaces contributions of the form $n_j$ by 1, thereby decreasing the external contribution and tending to reduce $\Delta(G)$. In the global minimization of $\Delta(G)$, this decrease must be weighed against the possible increase in the internal contribution when a star-center component is replaced by a star-spoke component.

There will be several cases to consider depending on the local equivalence class and parity of $k$.
With this in mind, let $j,t,\ell\in[k]$ be indices such that $n_j=\min\{n_1,\cdots,n_k\}$, $n_t=\min\{n_1,\cdots,n_k\}\setminus\{n_j\}$, and $n_{\ell}=\max\{n_1,\cdots,n_k\}$ (that is, the indices corresponding to the lowest, the second lowest, and maximum $n_i$).
Note that $k\geq3$, so these three indices exist and are distinct.

Consider a graph $G\cong_{LC}K_{n_1,\cdots,n_k}$, where we assume $k$ is even. We consider separately choices of the index set $I$ for the three cases of Table~\ref{tab:complete_k-partite_quasst_equivalences}.
\begin{enumerate}
\item $I=[k]$, so that $|I|=k$ is even, with quotient graphs satisfying $Q_0=\text{c}$ and $Q_{i\neq0}=\text{sc}$. Note that $G$ is a multi-leaf repeater graph if it has this form.
We compute the maximum vertex degree of $G$ by taking a maximum across each of the quotient graphs using Equation~\ref{eq:mvd1_Qi}.
\begin{eqnarray}\label{eq:mvd1_G_ckp_ek}
\Delta_1(G)&=&\max_{i\in[k]}\{\Delta_1(Q_i)\}\nonumber\\
&=&\max_{i\in[k]}\{(k-1)+(n_i-1)\}\nonumber\\
&=&n_{\ell}+k-2
\end{eqnarray}
\item $I^{\setminus j}=[k]\setminus\{j,t\}$, so that $|I^{\setminus j}|$ is even, with quotient graphs satisfying $Q_0=\text{sc}_j$, $Q_j=\text{sc}$, $Q_t=\text{c}$, and $Q_{i\neq0,j,t}=\text{ss}$.
Equations~\ref{eq:mvd2j_Qj} and \ref{eq:mvd2j_Qi} simplify as follows.
\begin{eqnarray}
\Delta_{2(j)}(Q_j)&=&k-2+n_t\\
\Delta_{2(j)}(Q_{i\neq j})&=&n_i-1+n_j
\end{eqnarray}
We use these to compute the maximum vertex degree of $G$.
\begin{eqnarray}\label{eq:mvd2j_G_ckp_ek}
\Delta_{2(j)}(G)&=&\max_{i\in[k]}\{\Delta_{2(j)}(Q_i)\}\nonumber\\
&=&\max\{\Delta_{2(j)}(Q_j),\Delta_{2(j)}(Q_\ell)\}\nonumber\\
&=&\max\{(k-2+n_t),(n_{\ell}-1+n_j)\}
\end{eqnarray}
\item $I^{\setminus j}=[k]\setminus\{j\}$, so that $|I^{\setminus j}|$ is odd with $Q_0=\text{sc}_j$, $Q_j=\text{c}$, and $Q_{i\neq0,j}=\text{ss}$. Equations~\ref{eq:mvd3j_Qj} and \ref{eq:mvd3j_Qi} simplify as follows.
\begin{eqnarray}
\Delta_{3(j)}(Q_j)&=&n_j+k-2\\
\Delta_{3(j)}(Q_{i\neq j})&=&n_i-1+n_j
\end{eqnarray}
Again we use these to compute the maximum vertex degree of $G$.
\begin{eqnarray}\label{eq:mvd3j_G_ckp_ek}
\Delta_{3(j)}(G)&=&\max_{i\in[k]}\{\Delta_{3(j)}(Q_i)\}\nonumber\\
&=&\max\{\Delta_{3(j)}(Q_j),\Delta_{3(j)}(Q_\ell)\}\nonumber\\
&=&\max\{(n_j+k-2),(n_{\ell}-1+n_j)\}
\end{eqnarray}
\end{enumerate}

Now consider a graph $G\cong_{LC}K_{n_1,\cdots,n_k}$, where we assume $k$ is odd. We derive an analogous computation based on the index set $I$ for the three cases of Table~\ref{tab:complete_k-partite_quasst_equivalences}.
\begin{enumerate}
\item $I=[k]\setminus\{j\}$, so that $|I|$ is even, with quotient graphs satisfying $Q_0=\text{c}$, $Q_j=\text{sc}$, and $Q_{i\neq0,j}=\text{ss}$. Equation~\ref{eq:mvd1_Qi} simplifies as follows for $Q_j$ and $Q_{i\neq j}$.
\begin{eqnarray}
\Delta_1(Q_j)&=&k-1\\
\Delta_1(Q_{i\neq j})&=&n_j+(k-2)+(n_i-1)
\end{eqnarray}
We use these to compute the maximum vertex degree of $G$.
\begin{eqnarray}\label{eq:mvd1_G_ckp_ok}
\Delta_1(G)&=&\max_{i\in[k]}\{\Delta_1(Q_i)\}\nonumber\\
&=&\max\{\Delta_1(Q_j),\Delta_1(Q_\ell)\}\nonumber\\
&=&\max\{(k-1),(n_i+n_j+k-3)\}\nonumber\\
&=&n_i+n_j+k-3
\end{eqnarray}
$I^{\setminus j}=[k]\setminus\{j\}$, so that $|I^{\setminus j}|$ is even with $Q_0=\text{sc}_j$, $Q_j=\text{sc}$, and $Q_{i\neq j}=\text{ss}$. Equations~\ref{eq:mvd2j_Qj} and \ref{eq:mvd2j_Qi} simplify as follows.
\begin{eqnarray}
\Delta_{2(j)}(Q_j)&=&k-1\\
\Delta_{2(j)}(Q_{i\neq j})&=&n_i-1+n_j
\end{eqnarray}
We use these to compute the maximum vertex degree of $G$.
\begin{eqnarray}\label{eq:mvd2j_G_ckp_ok}
\Delta_{2(j)}(G)&=&\max_{i\in[k]}\{\Delta_{2(j)}(Q_i)\}\nonumber\\
&=&\max\{\Delta_{2(j)}(Q_j),\Delta_{2(j)}(Q_{\ell})\}\nonumber\\
&=&\max\{(k-1),(n_{\ell}-1+n_j)\}
\end{eqnarray}
\item $I^{\setminus j}=[k]\setminus\{j,t\}$, so that $|I^{\setminus j}|$ is odd with $Q_0=\text{sc}_j$, $Q_j=\text{c}$, $Q_t=\text{c}$, and $Q_{i\neq0,j,t}=\text{ss}$. Equations~\ref{eq:mvd3j_Qj} and \ref{eq:mvd3j_Qi} simplify as follows.
\begin{eqnarray}
\Delta_{3(j)}(Q_j)&=&(n_j-1)+(k-2)+n_t\nonumber\\
&=&n_j+n_t+k-3\\
\Delta_{3(j)}(Q_{i\neq j})&=&n_i-1+n_j
\end{eqnarray}
Finally, we use these to compute the maximum vertex degree of $G$.
\begin{eqnarray}
\Delta_{3(j)}(G)&=&\max_{i\in[k]}\{\Delta_{3(j)}(Q_i)\}\nonumber\\
&=&\max\{\Delta_{3(j)}(Q_j),\Delta_{3(j)}(Q_\ell)\}\nonumber\\
&=&\max\{(n_j+n_t+k-3),(n_{\ell}-1+n_j)\}\nonumber\\
\end{eqnarray}
\end{enumerate}

If $G$ is a graph locally equivalent to $K_{n_1,\cdots,n_k}$ which minimizes the maximum vertex degree across the LC orbit, then it must belong to one of the three QASST symmetry cases 1, 2(j), or 3(j) considered above.
However, the values of $n_j,n_t,n_{\ell},k$ and the parity of $k$ determine exactly which case achieves the minimum and the value of this minimum.
Table~\ref{tab:mvd_cases_complete_k-partite} summarizes these possibilities and their conditions.
An analogous approach can be used to examine the maximum vertex degrees of graphs locally equivalent to a clique-star. The derived formulas are identical except reversed for the parity of $k$.
These results are included in Table~\ref{tab:mvd_cases_clique-star} for completion but without an explicit derivation shown here.
We state these facts here as theorems.

\begin{table*}[t]
\centering
\begin{tabular}{|c|c|c|l|l|}
\hline
LC Orbit of $G$&$k$&Case&Rules for $QASST(G)$&Maximum Vertex Degree of $G$\\
\hline
&&&&\\
${\mathcal O}(K_{n_1,\cdots,n_k})$&even&$1$&$\begin{array}{l}\text{$Q_0$ is c}\\\text{All other $Q_{i}$ are ss}\end{array}$&$n_{\ell}+k-2$\\
&&&&\\
&&$2(j)$&$\begin{array}{l}\text{$Q_0$ is $\text{sc}_j$ and $Q_{j}$ is sc}\\\text{$Q_t$ is c}\\\text{All other $Q_i$ are ss}\end{array}$&$\max\left\{\begin{array}{l}(k-1+n_t),\\(n_\ell-1+n_j)\end{array}\right\}$\\
&&&&\\
&&$3(j)$&$\begin{array}{l}\text{$Q_0$ is $\text{sc}_j$ and $Q_{j}$ is c}\\\text{All other $Q_i$ are ss}\end{array}$&$\max\left\{\begin{array}{l}(n_j+k-2),\\(n_\ell-1+n_j)\end{array}\right\}$\\
&&&&\\
\hline
&&&&\\
${\mathcal O}(K_{n_1,\cdots,n_k})$&odd&1&$\begin{array}{l}\text{$Q_0$ is c and $Q_j$ is sc}\\\text{All other $Q_{i}$ are ss}\end{array}$&$n_{\ell}+n_j+k-3$\\
&&&&\\
&&$2(j)$&$\begin{array}{l}\text{$Q_0$ is $\text{sc}_j$ and $Q_{j}$ is sc}\\\text{All other $Q_i$ are ss}\end{array}$&$\max\left\{\begin{array}{l}(k-1),\\(n_\ell-1+n_j)\end{array}\right\}$\\
&&&&\\
&&$3(j)$&$\begin{array}{l}\text{$Q_0$ is $\text{sc}_j$ and $Q_{j}$ is c}\\\text{$Q_t$ is c}\\\text{All other $Q_i$ are ss}\end{array}$&$\max\left\{\begin{array}{l}(n_j+n_t+k-3),\\(n_\ell-1+n_j)\end{array}\right\}$\\
&&&&\\
\hline
\end{tabular}
\caption{
The possibilities for a graph $G$ locally equivalent to a complete $k$-partite graph $K_{n_1,\cdots,n_k}$ which is minimal with respect to maximum vertex degree across the entire LC orbit.
The exact case depends on $k$ and its parity and the values of $n_j=\min\{n_1,\cdots,n_k\}$, $n_t=\min\{n_1,\cdots,n_k\}\setminus\{n_j\}$, and $n_\ell=\max\{n_1,\cdots,n_k\}$.
}
\label{tab:mvd_cases_complete_k-partite}
\end{table*}

\begin{table*}[t]
\centering
\begin{tabular}{|c|c|c|l|l|}
\hline
LC Orbit of $G$&$k$&Case&Rules for $QASST(G)$&Maximum Vertex Degree of $G$\\
\hline
&&&&\\
${\mathcal O}(CS^r_{n_1,\cdots,n_k})$&even&1&$\begin{array}{l}\text{$Q_0$ is c and $Q_j$ is sc}\\\text{All other $Q_{i}$ are ss}\end{array}$&$n_{\ell}+n_j+k-3$\\
&&&&\\
&&$2(j)$&$\begin{array}{l}\text{$Q_0$ is $\text{sc}_j$ and $Q_{j}$ is sc}\\\text{All other $Q_i$ are ss}\end{array}$&$\max\left\{\begin{array}{l}(k-1),\\(n_\ell-1+n_j)\end{array}\right\}$\\
&&&&\\
&&$3(j)$&$\begin{array}{l}\text{$Q_0$ is $\text{sc}_j$ and $Q_{j}$ is c}\\\text{$Q_t$ is c}\\\text{All other $Q_i$ are ss}\end{array}$&$\max\left\{\begin{array}{l}(n_j+n_t+k-3),\\(n_\ell-1+n_j)\end{array}\right\}$\\
&&&&\\
\hline
&&&&\\
${\mathcal O}(CS^r_{n_1,\cdots,n_k})$&odd&1&$\begin{array}{l}\text{$Q_0$ is c}\\\text{All other $Q_{i}$ are ss}\end{array}$&$n_{\ell}+k-2$\\
&&&&\\
&&$2(j)$&$\begin{array}{l}\text{$Q_0$ is $\text{sc}_j$ and $Q_{j}$ is sc}\\\text{$Q_t$ is c}\\\text{All other $Q_i$ are ss}\end{array}$&$\max\left\{\begin{array}{l}(k-1+n_t),\\(n_\ell-1+n_j)\end{array}\right\}$\\
&&&&\\
&&$3(j)$&$\begin{array}{l}\text{$Q_0$ is $\text{sc}_j$ and $Q_{j}$ is c}\\\text{All other $Q_i$ are ss}\end{array}$&$\max\left\{\begin{array}{l}(n_j+k-2),\\(n_\ell-1+n_j)\end{array}\right\}$\\
&&&&\\
\hline
\end{tabular}
\caption{
The possibilities for a graph $G$ locally equivalent to a clique-star $CS^r_{n_1,\cdots,n_k}$ which is minimal with respect to maximum vertex degree across the entire LC orbit.
The exact case depends on $k$ and its parity and the values of $n_j=\min\{n_1,\cdots,n_k\}$, $n_t=\min\{n_1,\cdots,n_k\}\setminus\{n_j\}$, and $n_\ell=\max\{n_1,\cdots,n_k\}$.
These are identical to the complete $k$-partite case, but with the parity of $k$ reversed.
}
\label{tab:mvd_cases_clique-star}
\end{table*}

\begin{widetext}
\begin{theorem}\label{thm:ckp_min_mvd}
Suppose that $G\in{\mathcal O}(K_{n_1,\dots,n_k})$ is a graph locally equivalent to a complete $k$-partite graph and which has minimal maximum vertex degree across the LC orbit.
Let $n_j=\min\{n_1,\cdots,n_k\}$, $n_t=\min\{n_1,\cdots,n_k\}\setminus\{n_j\}$, and $n_\ell=\max\{n_1,\cdots,n_k\}$.
\begin{enumerate}
\item If $k$ is even, then
\begin{eqnarray}
\Delta(G)&=&\min\begin{cases}n_\ell+k-2&\text{(Case 1)}\\\max\{(k-1+n_t),(n_\ell-1+n_j)\}&\text{(Case 2)}\\\max\{(n_j+k-2),(n_\ell-1+n_j)\}&\text{(Case 3)}\end{cases}.
\end{eqnarray}
\item If $k$ is odd, then
\begin{eqnarray}
\Delta(G)&=&\min\begin{cases}n_\ell+n_j+k-3&\text{(Case 1)}\\\max\{(k-1),(n_\ell-1+n_j)\}&\text{(Case 2)}\\\max\{(n_j+n_t+k-3),(n_\ell-1+n_j)\}&\text{(Case 3)}\end{cases}.
\end{eqnarray}
\end{enumerate}
The structure of $QASST(G)$ matches the corresponding case of Table~\ref{tab:mvd_cases_complete_k-partite}.
\end{theorem}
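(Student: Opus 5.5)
The plan is to reduce the global minimization to a finite comparison over the QASST symmetry classes. By Theorem~\ref{thm:complete_k-partite_LC_orbit_size} and the exhaustion argument preceding it, every $G\in{\mathcal O}(K_{n_1,\cdots,n_k})$ belongs to exactly one of the classes 1, 2($j'$), 3($j'$) of Table~\ref{tab:complete_k-partite_quasst_equivalences}. Each class is specified by an index set $I$ or $I^{\setminus j'}$ of star-spoke components, subject to the parity constraint recorded there. For every such $G$, Equation~\ref{eq:mvd_quotient} together with $\Delta(G)=\max_i\Delta(Q_i)$ gives $\Delta(G)$ exactly, since the center of a star-spoke component attains that component's maximum. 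The specialized formulas \ref{eq:mvd1_Qi}, \ref{eq:mvd2j_Qj}--\ref{eq:mvd3j_Qi} then express $\Delta$ purely in terms of $j'$, the index set, and the $n_i$. So the minimum over the orbit equals the minimum, over the three cases, of the minimum within each case, and the statement follows once each within-case minimum is identified with the entry of Table~\ref{tab:mvd_cases_complete_k-partite}.

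For cases 2($j'$) and 3($j'$), I would first fix $j'$. The non-pointed components all have degree $n_i-1+n_{j'}$ regardless of $I^{\setminus j'}$, so their maximum is $n_\ell-1+n_{j'}$, with $\ell$ replaced by the largest index other than $j'$. The pointed component has degree equal to a constant, $0$ or $n_{j'}-1$, plus $|I^{\setminus j'}|+\sum_{(I^{\setminus j'})^C}n_i$. Since each $n_i\geq2>1$, this sum strictly decreases whenever an index is moved into $I^{\setminus j'}$. Hence the optimum takes $I^{\setminus j'}$ as large as the parity allows: either all of $[k]\setminus\{j'\}$, or all but one index, and in the latter case the omitted index should be the one with smallest $n$. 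Next, both terms are nondecreasing in $n_{j'}$ and in the excluded value, so choosing $j'=j$ and excluding $t$ minimizes them. One subtlety is that when $j'=\ell$ the second term changes; I would check directly that this never does better, because the pointed term then contains $n_j$ or worse.

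For case 1 the argument is analogous but needs more care. The degree of $Q_i$ is $\sum_{I^C\setminus\{i\}}n_m+|I\setminus\{i\}|+[i\in I](n_i-1)$. Moving an index $m$ from $I^C$ into $I$ changes other components' degrees by $1-n_m<0$ and the component $Q_m$ itself by at most $(n_m-1)+1-n_m\le$ a bounded amount. I would show that the maximum over components is nonincreasing under this move by comparing the largest term before and after. This yields $I=[k]$ for even $k$ and $I=[k]\setminus\{j\}$ for odd $k$, giving values $n_\ell+k-2$ and $n_\ell+n_j+k-3$ respectively.

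Taking the minimum of the three case values then gives the displayed formulas, and the minimizer's QASST matches the corresponding row of Table~\ref{tab:mvd_cases_complete_k-partite}. I expect the main obstacle to be the monotonicity step in case 1, together with the parity-forced exclusion. There, enlarging $I$ raises the internal contribution of the newly star-spoke component, and one must verify that this increase never creates a new maximum exceeding the previous one. I would handle this by an explicit pairwise comparison of the two largest leaf-node degrees before and after the move, using $n_i\ge2$ and $k\ge3$. I would also recheck the constant in the case-2 entry ($k-2+n_t$ from Equation~\ref{eq:mvd2j_Qj} versus $k-1+n_t$ in the table) and state whichever value the computation confirms.
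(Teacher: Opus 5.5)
Your overall route is the paper's route. Every graph in ${\mathcal O}(K_{n_1,\cdots,n_k})$ lies in class 1, 2($j'$) or 3($j'$), $\Delta$ is read off Equation~\ref{eq:mvd_quotient}, and the star-spoke sets are taken as large as parity allows. The paper only evaluates those configurations and asserts optimality, so your monotonicity step is an attempt to supply the missing justification. You are also right that Equation~\ref{eq:mvd2j_Qj} gives $k-2+n_t$, not $k-1+n_t$.

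\textbf{Class 2, even $k$: the joint choice of $(j',t')$ fails.} The step ``choosing $j'=j$ and excluding $t$ minimizes them'' is false here. The pointed component has degree $(k-2)+n_{t'}$, where $t'\neq j'$ is the index that parity forces out. This term does not depend on $n_{j'}$, and it is minimized by $t'=j$, which is incompatible with $j'=j$. Your check of $j'=\ell$ fails for the same reason: it frees you to exclude $j$. Take $k=6$ and $(n_1,\dots,n_6)=(2,3,3,3,3,3)$:
\begin{itemize}
\item The tabulated configuration has $\Delta=4+3=7$.
\item The class 2(2) graph with $Q_0=\text{sc}_2$, $Q_2$ sc, $Q_1$ c and $Q_3,\dots,Q_6$ ss has $\Delta=\max\{4+2,\,1+3,\,2+3\}=6$.
\end{itemize}
So the class-2 minimum is not the table entry, and your plan of identifying each within-class minimum with its row cannot be completed.

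What rescues the theorem is dominance, not within-class optimization. For any class 2($j'$) graph with $k$ even, the excluded set $E=[k]\setminus(\{j'\}\cup I^{\setminus j'})$ is nonempty, so $\Delta(Q_{j'})=(k-1)+\sum_{i\in E}(n_i-1)\geq k-2+n_j$. The remaining components give $\max_{i\neq j'}n_i+n_{j'}-1\geq n_\ell+n_j-1$. Hence no class-2 graph beats the class-3($j$) candidate. The displayed minimum is then correct with either constant, and the Case 2 row is redundant. The structural clause can only mean that \emph{some} minimizer has the tabulated form.

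\textbf{Class 1: single-move monotonicity fails at $I=\emptyset$.} That graph is $K_{n_1,\cdots,n_k}$ itself, with $\Delta=\sum_{i\neq j}n_i$. Moving one index $m$ into $I$ creates a star-spoke center of degree $\sum_i n_i-1$, which is larger. The clean statement is that for $I\neq\emptyset$ the star-spoke centers dominate, because $\max_{i\in I}n_i\geq 2$. Hence
\begin{equation*}
\Delta(G)=\sum_{m\in I^C}n_m+|I|+\max_{i\in I}n_i-2 .
\end{equation*}
Adding any index $m$ to a nonempty $I$ lowers this by $n_m-1$ or by $\max_{i\in I}n_i-1$, so strictly. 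Handle $I=\emptyset$ separately using $\sum_{i\neq j}n_i\geq n_\ell+(k-2)n_j$, which is at least both candidate values. For odd $k$, monotonicity only forces $|I|=k-1$. The excluded index $e$ must then be chosen to minimize $n_e+\max_{i\neq e}n_i$, which gives $e=j$ and the value $n_\ell+n_j+k-3$.
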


\begin{theorem}\label{thm:cs_min_mvd}
Suppose that $G\in{\mathcal O}(CS^r_{n_1,\dots,n_k})$ is a graph locally equivalent to a clique-star graph and which has minimal maximum vertex degree across the LC orbit.
Let $n_j=\min\{n_1,\cdots,n_k\}$, $n_t=\min\{n_1,\cdots,n_k\}\setminus\{n_j\}$, and $n_\ell=\max\{n_1,\cdots,n_k\}$.
\begin{enumerate}
\item If $k$ is even, then
\begin{eqnarray}
\Delta(G)&=&\min\begin{cases}n_\ell+n_j+k-3&\text{(Case 1)}\\\max\{(k-1),(n_\ell-1+n_j)\}&\text{(Case 2)}\\\max\{(n_j+n_t+k-3),(n_\ell-1+n_j)\}&\text{(Case 3)}\end{cases}.
\end{eqnarray}
\item If $k$ is odd, then
\begin{eqnarray}
\Delta(G)&=&\min\begin{cases}n_\ell+k-2&\text{(Case 1)}\\\max\{(k-1+n_t),(n_\ell-1+n_j)\}&\text{(Case 2)}\\\max\{(n_j+k-2),(n_\ell-1+n_j)\}&\text{(Case 3)}\end{cases}.
\end{eqnarray}
\end{enumerate}
The structure of $QASST(G)$ matches the corresponding case of Table~\ref{tab:mvd_cases_clique-star}.
\end{theorem}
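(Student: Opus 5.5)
The plan is to repeat the argument used for Theorem~\ref{thm:ckp_min_mvd}, replacing the complete $k$-partite symmetry classes with the clique-star classes of Table~\ref{tab:clique-star_quasst_equivalences}.

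By Theorems~\ref{thm:clique-star_LC_orbit_size} and \ref{thm:LC_non-equivalence}, every $G\in{\mathcal O}(CS^r_{n_1,\cdots,n_k})$ falls into exactly one of three classes:
\begin{enumerate}
\item Case~1: $Q_0$ is c, with an odd number of ss components and all others sc.
\item Case~2$(j')$: $Q_0$ is $\text{sc}_{j'}$ and $Q_{j'}$ is sc, with an odd number of ss components among the rest and all others c.
\item Case~3$(j')$: $Q_0$ is $\text{sc}_{j'}$ and $Q_{j'}$ is c, with an even number of ss components among the rest and all others c.
\end{enumerate}
Here I write $j'$ for the pointing index to keep it separate from the fixed index $j$ of the minimum. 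Every graph in the orbit has the form assumed by Equations~\ref{eq:mvd1_Qi}--\ref{eq:mvd3j_Qi}, and those formulas do not depend on the parity of $|I|$. So they give $\Delta(G)=\max_i\Delta(Q_i)$ for every graph in the orbit, and $\min_{G}\Delta(G)$ is the minimum over the three classes of the best value achievable within each class.

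The main step is to minimize within each class over the admissible $I$ (and $j'$); I would do this with an exchange argument in each case.

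\textbf{Case~1.} By Equation~\ref{eq:mvd1_Qi}, moving an index $m$ from $I^C$ into $I$ has two effects. For every $i\neq m$ it replaces a term $n_m\geq2$ by $1$. For $Q_m$ itself it raises the internal degree to $n_m-1$ but cuts the external sum. For $i\neq m$ the maximum grows with $I^C$, so the optimal $I$ has $|I^C|$ as small as the parity constraint allows.
\begin{itemize}
\item For $k$ odd, $I=[k]$ is admissible. Then $\Delta_1(Q_i)=(k-1)+(n_i-1)$, giving $n_\ell+k-2$.
\item For $k$ even, $|I^C|\geq1$, and I take $I^C=\{m\}$. Then $\Delta_1(Q_m)=k-1$ and $\Delta_1(Q_{i\neq m})=n_m+(k-2)+(n_i-1)$. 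The maximum is $n_m+\max_{i\neq m}n_i+k-3$, minimized at $m=j$, which gives $n_\ell+n_j+k-3$.
\end{itemize}
I still need to check that $|I^C|=3,5,\dots$ never does better. I expect to see this directly from the formula, since each extra sc component adds at least $n_m-1\geq1$ to all other leaf degrees.

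\textbf{Cases~2$(j')$ and 3$(j')$.} Equations~\ref{eq:mvd2j_Qi} and \ref{eq:mvd3j_Qi} give $\Delta(Q_{i\neq j'})=n_i-1+n_{j'}$. The largest of these is minimized by taking $j'=j$, which gives $n_\ell-1+n_j$; this also makes the $Q_{j'}$ term no worse.

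For $\Delta(Q_{j'})$, enlarging $I^{\setminus j'}$ replaces $n_m$ by $1$, so I take $I^{\setminus j'}$ as large as parity permits.
\begin{itemize}
\item For $k$ even, Case~2 allows $I^{\setminus j}=[k]\setminus\{j\}$, giving $k-1$. Case~3 must leave one extra c component, best chosen as $t$, giving $n_j+n_t+k-3$.
\item For $k$ odd, Case~2 must leave $t$ out, giving $k-2+n_t$. Case~3 allows all of $[k]\setminus\{j\}$, giving $n_j+k-2$.
\end{itemize}
Taking the maximum with $n_\ell-1+n_j$ in each of these yields the entries of Table~\ref{tab:mvd_cases_clique-star}.

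The main obstacle is making the exchange arguments airtight, especially the choice of $j'$. Changing $j'$ to $j$ lowers the $Q_{i\neq j'}$ terms, but it also changes which component's $n$ is absent from the external sum of $Q_{j'}$. So I would compare $\max\{\Delta(Q_{j'}),\,n_\ell-1+n_{j'}\}$ directly for a general $j'$ against the value at $j'=j$. In the subcase where $Q_{j'}$ would prefer to drop a large $n_m$, I would argue that the term $n_\ell-1+n_{j'}$ already dominates the value obtained at $j$. I would also handle the degenerate case $\ell=j$ (all $n_i$ equal) separately and note that the formulas still hold there.

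Taking the minimum over the three cases then gives the stated expression, with the structure of $QASST(G)$ as in Table~\ref{tab:mvd_cases_clique-star}. The whole computation is the complete $k$-partite one from Theorem~\ref{thm:ckp_min_mvd} with the parity of $k$ reversed.
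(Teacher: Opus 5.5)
Your route is the paper's (the paper only asserts the clique-star case by analogy with the $k$-partite computation). Every subcase except Case~2 with $k$ odd goes through as you describe; your deferred check that $|I^C|\ge 3$ never helps in Case~1 is routine. In Case~2 with $k$ odd the argument breaks. There $[k]\setminus\{j'\}$ has even size, so an odd $I$ must leave at least one $m\neq j'$ complete. At best this gives $\Delta(Q_{j'})=(k-2)+\min_{m\neq j'}n_m$. Pointing at $j'=j$ forces this to be $k-2+n_t$. Pointing elsewhere lets you leave $Q_j$ complete and get $k-2+n_j$. So your claim that $j'=j$ ``also makes the $Q_{j'}$ term no worse'' is false here. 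Your sketched patch, that $n_\ell-1+n_{j'}$ then dominates the value at $j$, also fails. Concretely, take $k=7$ and $(n_1,\dots,n_7)=(2,3,3,3,3,3,3)$:
\begin{itemize}
\item pointing at $j$ gives $\max\{5+3,\,3-1+2\}=8$;
\item pointing at $t$ with $Q_j$ complete gives $\max\{5+2,\,3-1+3\}=7$;
\item and $n_\ell-1+n_{j'}=5<8$.
\end{itemize}
Separately, your value $k-2+n_t$ for the $j'=j$ configuration is the correct count; it is also what the paper itself derives for $\Delta_{2(j)}(Q_j)$ in the $k$-partite case. It is not, however, the stated entry $k-1+n_t$. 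The statement inherits an off-by-one from Theorem~\ref{thm:ckp_min_mvd}, so your claim that your computation reproduces Table~\ref{tab:mvd_cases_clique-star} is false for that row.

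The missing idea is that for $k$ odd you do not need the exact optimum of Case~2, only a bound showing Case~3 dominates it. In any Case~2$(j')$ graph, $|[k]\setminus\{j'\}|=k-1$ is even and $|I|$ is odd, so at least one $m\neq j'$ is complete. That component contributes $n_m\ge n_j\ge 2$ instead of $1$, so $\Delta(Q_{j'})\ge k-2+n_j$. Also $\max_{i\neq j'}\Delta(Q_i)=n_{j'}+\max_{i\neq j'}n_i-1\ge n_j+n_\ell-1$: this equals $n_{j'}+n_\ell-1$ if $j'$ is not a maximizer, and $n_\ell+\max_{i\neq j'}n_i-1$ otherwise. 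Hence every Case~2 graph has $\Delta\ge\max\{n_j+k-2,\,n_\ell-1+n_j\}$, which Case~3$(j)$ attains with $I=[k]\setminus\{j\}$. So the Case~2 entry, whether written $k-1+n_t$ or $k-2+n_t$, never affects the minimum. That is why the stated formula for $\Delta(G)$ is still correct, and your proof should say this rather than claim the entries agree. The same example shows the theorem's last sentence can only be proved in a weak form: \emph{some} minimizer has a listed structure. The graph pointing at $t$ is a minimizer, tied at $7$ with Case~3$(j)$, and its QASST is not one of the table's rows.
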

\end{widetext}

\subsection{Some Special Cases of Graphs Minimizing the Maximum Vertex Degree}

For both complete $k$-partite graphs and clique-stars, recall that the conditions $2\leq n_j\leq n_t\leq n_\ell$ and $3\leq k$ must always be satisfied, but there are many possibilities for the value of $k$ relative to $n_j$, $n_t$, and $n_\ell$.
Although the exact structure of a graph minimizing the maximum vertex degree depends on these relationships, we summarize a few special cases in Table~\ref{tab:special_conditions_minimizing_mvd}.
Depending on the condition, we see that it is possible to find a minimizing graph with all three possibilities of the QASST structure.
Finally, Figures~\ref{fig:ex_special_cases_minimizing_mvd_complete_k_partite} and \ref{fig:ex_special_cases_minimizing_mvd_clique_star} show a few illustrations of these for some fixed parameter values.

\begin{table*}[t]
\centering
\begin{tabular}{|c|c|c|c|c|}
\hline
LC Orbit of $G$&$k$&Condition&Minimum $\Delta(G)$&Case\\
\hline
${\mathcal O}(K_{n_1,\cdots,n_k})$&even&$3\leq k\leq n_j\leq n_t\leq n_{\ell}$&$n_{\ell}+k-2$&1\\
&&$2\leq n_j\leq n_t\leq n_{\ell}<k$&$n_j+k-2$&$3(j)$\\
\hline
${\mathcal O}(K_{n_1,\cdots,n_k})$&odd&$3\leq k\leq n_j\leq n_t\leq n_{\ell}$&$n_\ell-1+n_j$&$2(j)$\\
&&$n_\ell+n_j<k$&$k-1$&$2(j)$\\
\hline
${\mathcal O}(CS^r_{n_1,\cdots,n_k})$&even&$3\leq k\leq n_j\leq n_t\leq n_{\ell}$&$n_\ell-1+n_j$&$2(j)$\\
&&$n_\ell+n_j<k$&$k-1$&$2(j)$\\
\hline
${\mathcal O}(CS^r_{n_1,\cdots,n_k})$&odd&$3\leq k\leq n_j\leq n_t\leq n_{\ell}$&$n_{\ell}+k-2$&1\\
&&$2\leq n_j\leq n_t\leq n_{\ell}<k$&$n_j+k-2$&$3(j)$\\
\hline
\end{tabular}
\caption{
Examples of some special cases for graphs minimizing the maximum vertex degree across the LC orbit based on certain conditions for the parameters. Based on these relationships, any of the three QASST structures is possible.
Observe that clique-stars satisfy the same relationships as complete $k$-partite graphs, but reversed for the parity of $k$.
}
\label{tab:special_conditions_minimizing_mvd}
\end{table*}

\begin{figure*}[t]
\centering
\begin{subfigure}{0.25\textwidth}
\centering
\includegraphics[width=0.7\textwidth,page=15]{Figures/Appendix_Edge_Counting_and_Isomorphisms.pdf}
$\Delta(G)=6$
\caption{
$k=4$ and $n_i=5$
}
\label{fig:mvd_ckp_k4_n5}
\end{subfigure}\begin{subfigure}{0.25\textwidth}
\centering
\includegraphics[width=0.7\textwidth,page=16]{Figures/Appendix_Edge_Counting_and_Isomorphisms.pdf}
$\Delta(G)=5$
\caption{
$k=4$ and $n_i=3$
}
\label{fig:mvd_ckp_k4_n3}
\end{subfigure}\begin{subfigure}{0.25\textwidth}
\centering
\includegraphics[width=0.75\textwidth,page=17]{Figures/Appendix_Edge_Counting_and_Isomorphisms.pdf}
$\Delta(G)=5$
\caption{
$k=3$ and $n_i=3$
}
\label{fig:mvd_ckp_k3_n3}
\end{subfigure}\begin{subfigure}{0.25\textwidth}
\centering
\includegraphics[width=0.8\textwidth,page=18]{Figures/Appendix_Edge_Counting_and_Isomorphisms.pdf}
$\Delta(G)=4$
\caption{
$k=5$ and $n_i=2$
}
\label{fig:mvd_ckp_k5_n2}
\end{subfigure}

\caption{
Some examples of graphs in ${\mathcal O}(K_{n_1,\cdots,n_k})$ which minimize the maximum the vertex degree.
Those vertices achieving this maximum degree are highlighted.
These match the special cases outlined in Table~\ref{tab:special_conditions_minimizing_mvd}.
The strong splits are shown and these match the QASST structure in the corresponding cases described by Table~\ref{tab:mvd_cases_complete_k-partite}.
For simplicity, we assume that $n_1=\cdots=n_k$.
}
\label{fig:ex_special_cases_minimizing_mvd_complete_k_partite}
\end{figure*}

\begin{figure*}[t]
\centering
\begin{subfigure}{0.25\textwidth}
\centering
\includegraphics[width=0.7\textwidth,page=19]{Figures/Appendix_Edge_Counting_and_Isomorphisms.pdf}
$\Delta(G)=7$
\caption{
$k=4$ and $n_i=4$
}
\label{fig:mvd_cs_k4_n4}
\end{subfigure}\begin{subfigure}{0.25\textwidth}
\centering
\includegraphics[width=0.7\textwidth,page=20]{Figures/Appendix_Edge_Counting_and_Isomorphisms.pdf}
$\Delta(G)=5$
\caption{
$k=6$ and $n_i=2$
}
\label{fig:mvd_cs_k6_n3}
\end{subfigure}\begin{subfigure}{0.25\textwidth}
\centering
\includegraphics[width=0.7\textwidth,page=21]{Figures/Appendix_Edge_Counting_and_Isomorphisms.pdf}
$\Delta(G)=4$
\caption{
$k=3$ and $n_i=3$
}
\label{fig:mvd_cs_k3_n3}
\end{subfigure}\begin{subfigure}{0.25\textwidth}
\centering
\includegraphics[width=0.7\textwidth,page=22]{Figures/Appendix_Edge_Counting_and_Isomorphisms.pdf}
$\Delta(G)=3$
\caption{
$k=3$ and $n_i=2$
}
\label{fig:mvd_cs_k3_n2}
\end{subfigure}

\caption{
Some examples of graphs in ${\mathcal O}(CS^r_{n_1,\cdots,n_k})$ which minimize the maximum the vertex degree.
Those vertices achieving this maximum degree are highlighted.
These match the special cases outlined in Table~\ref{tab:special_conditions_minimizing_mvd}.
The strong splits are shown and these match the QASST structure in the corresponding cases described by Table~\ref{tab:mvd_cases_clique-star}.
For simplicity, we assume that $n_1=\cdots=n_k$.
}
\label{fig:ex_special_cases_minimizing_mvd_clique_star}
\end{figure*}

\section{Relating Local Equivalence to Isomorphisms}
\label{app:LC_and_isomorphisms}

In many cases, graphs in the same LC orbit will be isomorphic.
In other cases, there exist isomorphic graphs belonging to disjoint LC orbits.
Here we briefly examine the relationship between isomorphic graphs and locally equivalent graphs in some of the special cases we have studied.

\subsection{Isomorphic Graphs in the Same Local Equivalence Class}
\label{app:complte_k-partite_ismorphic_LC_equivalent_graphs}

If two graphs are isomorphic, then they will have isomorphic QASST decompositions. More concretely, a graph isomorphism will induce an isomorphism on the strong split tree and quotient graphs.
Since a graph can always be reconstructed from its QASST by merging together quotient graphs, the reverse is also true.
An isomorphism of QASST decompositions will induce an isomorphism of graphs.
These observations also show that isomorphic graphs must be QASST equivalent, although they need not be locally equivalent.

In the context of graphs QASST equivalent to the complete $k$-partite graph, we would like to characterize those locally equivalent graphs in ${\mathcal O}(K_{n_1,\cdots,n_k})$ or ${\mathcal O}(CS^r_{n_1,\cdots,n_k})$ which are also isomorphic.
Since any graphs in these LC orbits have a QASST decomposition described by the rules in Tables~\ref{tab:complete_k-partite_quasst_equivalences} and \ref{tab:clique-star_quasst_equivalences}, graphs can only be isomorphic if they are contained in the same broad symmetry class.
However, the converse need not hold in general; determining whether two such graphs are isomorphic will also depend on matching the quotient graphs based on their symmetry class.

\begin{figure}[t]
\centering
\includegraphics[width=0.8\linewidth,page=13]{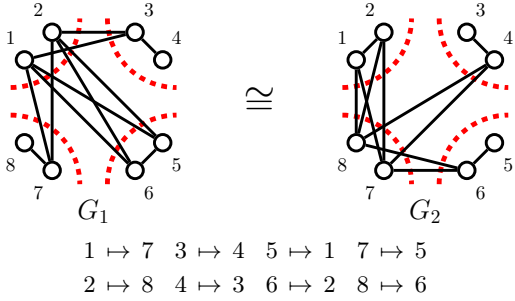}
{\small
\begin{tabular}{ccccccccccccccc}
1&$\mapsto$&7&&3&$\mapsto$&4&&5&$\mapsto$&1&&7&$\mapsto$&5\\
2&$\mapsto$&8&&4&$\mapsto$&3&&6&$\mapsto$&2&&8&$\mapsto$&6
\end{tabular}
}
\caption{
Example of two locally equivalent graphs in ${\mathcal O}(K_{2,2,2,2})$ which are isomorphic via the explicit bijection of vertices given above.
Each belongs to the second symmetry class of Table~\ref{tab:complete_k-partite_quasst_equivalences}.
}
\label{fig:example_isomorphic_graphs_in_LC_orbit}
\end{figure}

This goal is difficult in general, and so we will consider a simplification by restricting to the special case where $n_1=\cdots=n_k$.
Figure~\ref{fig:example_isomorphic_graphs_in_LC_orbit} shows an example of a graph isomorphism between locally equivalent graphs.
Intuitively, identifying such an isomorphism reduces to matching the quotient graphs from the QASST decompositions of each graph.
We formalize this essential idea in the following lemma.

\begin{lemma}\label{thm:complete_k-partite_orbit_isomorphic_graphs}
Let $G,G'\in{\mathcal O}(K_{n_1,\cdots,n_k})$, where $n_1=\cdots=n_k$.
Suppose that $G$ and $G'$ have QASST decompositions $QASST(G)=\{Q_0,Q_1\cdots,Q_k\}$ and $QASST(G')=\{Q_0',Q_1',\cdots,Q_k'\}$, respectively.
Let $I\subseteq[k]$ denote the subset of indices for which the quotient graphs $Q_1,\cdots,Q_k$ of $G$ are star-spoke. Define $I'\subseteq[k]$ similarly for $G'$.
If $G$ and $G'$ belong to the same symmetry class defined in Table~\ref{tab:complete_k-partite_quasst_equivalences} and if $|I|=|I'|$, then $G$ and $G'$ are isomorphic.
\end{lemma}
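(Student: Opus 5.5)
The plan is to build an explicit isomorphism at the level of the QASST and then invoke the earlier observation (start of this subsection) that an isomorphism of QASST decompositions induces an isomorphism of the reconstructed graphs. Since $G$ and $G'$ are both QASST equivalent to $K_{n_1,\cdots,n_k}$, they share the same strong split tree: a star with center $Q_0$ and leaves $Q_1,\cdots,Q_k$. So an isomorphism of decompositions amounts to three things. First, a permutation $\sigma$ of $[k]$, extended by $\sigma(0)=0$. Second, for each $i$, a bijection of the leaf-nodes of $Q_i$ onto those of $Q'_{\sigma(i)}$. Third, the requirement that these maps carry $Q_i$ isomorphically onto $Q'_{\sigma(i)}$, sending split-nodes to the corresponding split-nodes. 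The hypothesis $n_1=\cdots=n_k$ ensures that any $\sigma$ is compatible with the leaf-node counts.

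I will choose $\sigma$ case by case. In Case 1, $Q_0$ and $Q'_0$ are both complete, so any $\sigma$ preserves the central quotient graph. Since $|I|=|I'|$, pick $\sigma$ with $\sigma(I)=I'$; it then also maps $[k]\setminus I$ onto $[k]\setminus I'$, which are exactly the sc components.

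In Cases 2 and 3, $G$ is in class 2$(j)$ or 3$(j)$ and $G'$ is in the same class with some index $j'$. Here $Q_0=\text{sc}_j$ and $Q'_0=\text{sc}_{j'}$, so $\sigma$ must satisfy $\sigma(j)=j'$; this guarantees that the star $\text{sc}_j$ maps onto $\text{sc}_{j'}$. The sets $I$ and $I'$ exclude $j$ and $j'$ respectively and have equal size, so $\sigma$ can send $I$ onto $I'$ and the remaining indices (the c components) onto the remaining indices. $Q_j$ and $Q'_{j'}$ have the same type, sc in Case 2 and c in Case 3, because the class is the same.

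With $\sigma$ fixed, each matched pair $(Q_i,Q'_{\sigma(i)})$ has the same type and the same number of leaf-nodes. For c or sc pairs, any bijection of leaf-nodes works, with split-node sent to split-node. For ss pairs, I send the center leaf-node to the center leaf-node, the split-node (a spoke) to the split-node, and the remaining spokes arbitrarily. Finally, I check that the red edges correspond: the split-node pair $(s_0^i,s_i^0)$ must map to $(s_0^{\sigma(i)},s_{\sigma(i)}^0)$, which holds by construction.

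The step needing the most care is justifying that an isomorphism of graph-labeled trees really yields a graph isomorphism. I would argue this directly from the reconstruction procedure of Section~\ref{sect:qasst_construction}: two leaf-nodes are adjacent in $G$ exactly when they are joined by an alternating path of quotient-graph edges and red edges, and the constructed map preserves that property. The remaining point is bookkeeping, namely that the index $j$ in the class label is part of the data. Sameness of class allows $j\neq j'$, and this is absorbed by $\sigma$.
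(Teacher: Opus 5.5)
Your proposal is correct and takes essentially the same route as the paper. You pick a permutation of $[k]$ sending $I$ to $I'$ (and $j$ to $j'$ in Cases 2 and 3), patch together split-node-preserving isomorphisms $Q_i\to Q'_{\sigma(i)}$ with the induced map on $Q_0$, and check adjacency through the chain of split-nodes $s_i^0$, $s_0^i$, $s_0^\ell$, $s_\ell^0$ from the reconstruction. Your ``exactly when'' phrasing of that check also covers non-edges, which the paper leaves implicit, since it only verifies that edges of $G$ map to edges of $G'$.
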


\begin{proof}
Suppose that $G$ and $G'$ both belong to the first symmetry class of Table~\ref{tab:complete_k-partite_quasst_equivalences}: $Q_0$ and $Q_0'$ are complete graphs; $Q_i$ is star-spoke for each $i\in I$ and $Q_i'$ is star-spoke for each $i\in I'$; $Q_i$ is star-center for each $i\in[k]\setminus I$ and $Q_i'$ is star-center for each $i\in[k]\setminus I'$.
In this case, $|I|=|I'|$ is an even number.
To prove the claim, we will construct an explicit isomorphism $A:G\rightarrow G'$ by patching together isomorphisms on quotient graphs.

Since $I$ and $I'$ have the same size, there exists a bijection of index sets $\alpha:[k]\rightarrow[k]$ for which $\alpha(I)=I'$.
Hence, if $i\in I$, then $Q_i$ is star-spoke in $QASST(G)$ and $Q_{\alpha(i)}'$ is star-spoke in $QASST(G')$.
Since we assumed that $n_1=\cdots=n_k$, we know that $Q_i$ and $Q_{\alpha(i)}'$ have the same number of vertices and are hence isomorphic.
Hence, there exists a quotient graph isomorphism $\alpha_i:Q_i\rightarrow Q_{\alpha(i)}'$, and we may assume that $\alpha_i$ preserves the split-node.
That is, for the split-node $s_i^0\in V(Q_i)$, we have that $\alpha(s_i^0)=s_{\alpha(i)}^0\in V(Q_{\alpha(i)}')$ is the split-node in $Q_{\alpha(i)}'$.

For the central quotient graphs, use the isomorphisms $\alpha_1,\cdots,\alpha_k$ defined above to define a map $\alpha_0:Q_0\rightarrow Q_0'$ in the following way.
For each split-node $s_0^i\in V(Q_0)$, define $\alpha_0(s_0^i)=s_0^{\alpha(i)}\in Q_0'$.
Since $Q_0$ and $Q_0'$ both consist of $k$ split-nodes of this form, this accounts for all of the vertices, and shows that $\alpha_0$ is a bijection of vertex sets.
Since we assumed $G$ and $G'$ are in the first symmetry class of Table~\ref{tab:complete_k-partite_quasst_equivalences}, both $Q_0$ and $Q_0'$ are complete graphs. Thus, $\alpha_0$ is an isomorphism since any bijection of vertex sets between two complete graphs is an isomorphism.

Finally, patch these quotient graph isomorphisms $\alpha_0,\alpha_1,\cdots,\alpha_k$ into a map $A:G\rightarrow G'$.
Since each vertex in $v\in V(G)$ corresponds to a unique leaf-node in some quotient graph $Q_{i>0}$, define $A(v)=\alpha_i(v)\in V(G')$ to be the vertex in $G'$ associated to the corresponding leaf-node $\alpha_i(v)$ in $Q_{\alpha(i)}'$.
It remains to show that $A$ respects edges.

Let $(v,w)\in E(G)$ be an edge. If $v,w\in V(G)$ correspond to leaf-nodes in the same quotient graph $Q_{i>0}$, then we know there exists a corresponding edge $(v,w)\in E(Q_i)$. Since $\alpha_i:Q'\rightarrow Q_{\alpha(i)}'$ is a graph isomorphism, $(\alpha_i(v),\alpha_i(w))\in E(Q_{\alpha(i)}')$ is an edge between leaf-nodes in $Q_{\alpha(i)}'$ and hence corresponds to an edge $(\alpha(v),\alpha(w))\in E(G')$.

Now suppose that $(v,w)\in E(G)$ is an edge for which $v,w\in V(G)$ correspond to leaf-nodes in different quotient graphs, say $v\in V(Q_i)$ and $w\in V(Q_{\ell})$. Knowing that $v$ and $w$ are connected in $G$ implies that we have quotient graph edges $(v,s_i^0)\in E(Q_i)$, $(s_0^i,s_0^{\ell})\in E(Q_0)$, and $(s_{\ell}^0,w)\in E(Q_{\ell})$.
However, under the maps $\alpha_i$, $\alpha_0$, and $\alpha_{\ell}$, these correspond to edges $(\alpha_i(v),\alpha_i(s_i^0))=(\alpha_i(v),s_{\alpha(i)}^0)\in E(Q_{\alpha(i)}')$, $(\alpha_0(s_0^i),\alpha_0(s_0^{\ell}))=(s_0^{\alpha(i)},s_0^{\alpha(\ell)})\in E(Q_0')$, and $(\alpha_{\ell}(s_{\ell}^0),\alpha_{\ell}(w))=(s_{\alpha(\ell)}^0,\alpha_{\ell}(w))\in E(Q_{\alpha(\ell)}')$.
These edges imply that $(\alpha_i(v),\alpha_{\ell}(w))=(\alpha(v),\alpha(w))\in E(G')$ is an edge.
This establishes that $A:G\rightarrow G'$ is an edge-preserving bijection and thus an isomorphism.

Therefore, we may conclude that $G$ and $G'$ are indeed isomorphic graphs when they belong to the first symmetry class of Table~\ref{tab:complete_k-partite_quasst_equivalences}.
If they belong to the second or third symmetry class, the proof is essentially the same with a slight modification.
In this case, $Q_0$ and $Q_0'$ are star graphs pointing towards quotient graphs $Q_j$ and $Q_{j'}'$, respectively. However, $j,j'\notin[k]$ and hence $j\notin I$ and $j'\notin I'$.
Thus, we may require that the isomorphism $\alpha:[k]\rightarrow[k]$ is such that $\alpha(j)=j'$ without affecting the rest of the construction.
This guarantees that $\alpha_j:Q_j\rightarrow Q_{\alpha(j)}'=Q_{j'}'$ and that $\alpha_0:Q_0\rightarrow Q_0'$ is an isomorphism.
\end{proof}

This result was proven for graphs locally equivalent to a complete $k$-partite graph, but the same proof also works for the case of a clique-star. Hence, we state this fact here as an additional lemma.

\begin{lemma}\label{thm:clique-star_orbit_isomorphic_graphs}
Let $G,G'\in{\mathcal O}(CS^r_{n_1,\cdots,n_k})$, where $n_1=\cdots=n_k$.
Suppose that $G$ and $G'$ have QASST decompositions $QASST(G)=\{Q_0,Q_1\cdots,Q_k\}$ and $QASST(G')=\{Q_0',Q_1',\cdots,Q_k'\}$, respectively.
Let $I\subseteq[k]$ denote the subset of indices for which the quotient graphs $Q_1,\cdots,Q_k$ of $G$ are star-spoke. Define $I'\subseteq[k]$ similarly for $G'$.
If $G$ and $G'$ belong to the same symmetry class defined in Table~\ref{tab:clique-star_quasst_equivalences} and if $|I|=|I'|$, then $G$ and $G'$ are isomorphic.
\end{lemma}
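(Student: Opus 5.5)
The plan is to reuse the proof of Lemma~\ref{thm:complete_k-partite_orbit_isomorphic_graphs} verbatim. Its only ingredients were: the shape of the QASST (a central $Q_0$ on $k$ split-nodes joined to $Q_1,\cdots,Q_k$), the assumption $n_1=\cdots=n_k$, and a symmetry-class rule in which $Q_0$ is either complete or a star $\text{sc}_j$. None of these depends on whether the graphs lie in ${\mathcal O}(K_{n_1,\cdots,n_k})$ or in ${\mathcal O}(CS^r_{n_1,\cdots,n_k})$. The symmetry classes of Table~\ref{tab:clique-star_quasst_equivalences} have exactly the same structure as those of Table~\ref{tab:complete_k-partite_quasst_equivalences}. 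The only change is the parity of the number of star-spoke components, and the hypothesis $|I|=|I'|$ already fixes that parity.

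First I take $G,G'$ in Case~1 of Table~\ref{tab:clique-star_quasst_equivalences}. Here $Q_0$ and $Q_0'$ are complete, and the remaining quotient graphs are ss on $I$ (respectively $I'$) and sc elsewhere.
\begin{enumerate}
\item Since $|I|=|I'|$, choose a bijection $\alpha:[k]\rightarrow[k]$ with $\alpha(I)=I'$.
\item For each $i\in[k]$, $Q_i$ and $Q'_{\alpha(i)}$ have the same type and the same number $n_i+1$ of vertices. Pick an isomorphism $\alpha_i$ between them that sends the split-node $s_i^0$ to $s_{\alpha(i)}^0$. This is possible because in an ss star the split-node is a spoke and all spokes are equivalent, and in an sc star the split-node is the center.
\item Define $\alpha_0(s_0^i)=s_0^{\alpha(i)}$. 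This is an isomorphism $Q_0\rightarrow Q_0'$ because both graphs are complete.
\item Patch these into $A:V(G)\rightarrow V(G')$ by setting $A(v)=\alpha_i(v)$ for each leaf-node $v\in Q_i$.
\end{enumerate}

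Next I check that $A$ preserves edges, using the reconstruction rule of Subsection~\ref{sect:qasst_construction}. Two leaf-nodes in the same $Q_i$ are adjacent in $G$ iff they are adjacent in $Q_i$. Leaf-nodes $v\in Q_i$ and $w\in Q_\ell$ with $i\neq\ell$ are adjacent iff $(v,s_i^0)\in E(Q_i)$, $(s_0^i,s_0^\ell)\in E(Q_0)$ and $(s_\ell^0,w)\in E(Q_\ell)$. Each of these three conditions is transported by the corresponding $\alpha$'s, in both directions, so $A$ is an edge-preserving bijection.

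For Cases~2(j) and 3(j), $Q_0=\text{sc}_j$ and $Q_0'=\text{sc}_{j'}$. Since $j\notin I$ and $j'\notin I'$, I additionally require $\alpha(j)=j'$. The sets $I\subseteq[k]\setminus\{j\}$ and $I'\subseteq[k]\setminus\{j'\}$ have equal size, so $\alpha$ can still map $I$ onto $I'$. Then $Q_j$ and $Q'_{j'}$ have the same type (sc in Case~2, c in Case~3), and the remaining non-ss components are all c. The map $\alpha_0$ sends the star center $s_0^j$ to $s_0^{j'}$ and is therefore an isomorphism of stars. The edge check is identical to Case~1.

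The only point needing care is step~2, realizing each local isomorphism while fixing the split-node. This is immediate for complete graphs and stars by the symmetry noted above. Everything else carries over from the previous lemma.
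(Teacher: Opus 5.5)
Your proposal is correct and follows the paper's approach. The paper proves this lemma simply by observing that the proof of Lemma~\ref{thm:complete_k-partite_orbit_isomorphic_graphs} carries over verbatim to the clique-star orbit. That proof chooses $\alpha:[k]\rightarrow[k]$ with $\alpha(I)=I'$ (and $\alpha(j)=j'$ in Cases 2 and 3), then patches together split-node-preserving quotient isomorphisms. You spell out exactly this construction, and slightly more carefully than the paper: you check adjacency in both directions, and you correctly use $j\notin I$, $j'\notin I'$ where the paper's text has a slip.
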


Although we have only proven these results for the special case when $n_1=\cdots=n_k$, one can see how the proof idea could be generalized.
In this more general case, the quotient graphs can be divided up based on how many vertices they contain. For those quotient graphs with a fixed number of vertices $n_i$, the number of these quotient graphs which are star-spoke in $G$ must match the number of these quotient graphs which are star-spoke in $G'$. In essence, isomorphic graphs in this particular LC orbit must have the same proportion of quotient graphs of a given size and structure.
An explicit isomorphism can only send one such quotient graph in $G$ to another such quotient graph in $G'$.

In this more general case, it becomes challenging to identify the total number of isomorphism classes within the local equivalence class. Hence, we neglect this in favor of the special case where all $n_i$ have the same size.

\subsection{Local Equivalence Classes up to Isomorphism}
\label{app:complete_k-partite_LC_orbits_up_to_isomorphism}

Finally, we take some time to discuss counting the number of distinct isomorphism classes in the LC orbit of complete $k$-partite graphs and clique-stars in the special case when $n_1=\cdots=n_k$.
Since we work with labeled graphs, every member of the local equivalence class is considered to be distinct. However, many of the graphs in the LC orbit will be isomorphic via a relabeling of vertices, as we showed in Figure~\ref{fig:example_isomorphic_graphs_in_LC_orbit}.
The question of whether two locally equivalent graphs are isomorphic has previously been considered in connection with enumerating the LC orbit~\cite{adcock2020mapping}; Adcock et al. enumerated both the size of the full LC orbit as well as the size of LC orbits up to isomorphism, wherein isomorphic graphs are considered equal.

In the case of graphs locally equivalent to $K_{n_1,\cdots,n_k}$ or $CS^r_{n_1,\cdots,n_k}$, Lemmas~\ref{thm:complete_k-partite_orbit_isomorphic_graphs} and \ref{thm:clique-star_orbit_isomorphic_graphs} show that graph isomorphism classes are determined by their symmetry class as defined in Tables~\ref{tab:complete_k-partite_quasst_equivalences} or \ref{tab:clique-star_quasst_equivalences} and by the number of quotient graphs which are star-spoke (as defined by a choice of index set $I\subseteq[k])$. Hence, the total number of distinct isomorphism classes in ${\mathcal O}(K_{n_1,\cdots,n_k})$ or ${\mathcal O}(CS^r_{n_1,\cdots,n_k})$ corresponds to the number of combinations of symmetry classes and index set sizes $|I|$.

For each of the three broad symmetry classes, this question reduces to counting the number of even or odd integers between 0 and $k$ (for Case 1) or between 0 and $k-1$ (for Cases 2 and 3).
Since we assume that $n_1=\cdots=n_k$, the choice $j\in[k]$ for Cases 2 and 3 is unimportant since all quotient graphs are symmetric, but we must exclude a single index from $[k]$ when counting the size of possible subsets.
For Case 1, the number of even integers between 0 and $k$ is $\lfloor\frac{k}{2}\rfloor+1$ and the number of odd integers is $\lceil\frac{k}{2}\rceil$.
The same formulas work for Cases 2 and 3 using $k-1$ instead of $k$.
Combining these and simplifying gives the formulas obtained in the following theorems.

\begin{theorem}\label{thm:complete_k-partite_LC_orbit_up_to_isomorphism}
Consider the LC orbit of the complete $k$-partite graph $K_{n_1,\cdots,n_k}$, where $n_1=\cdots=n_k\geq2$ and $k\geq3$. The number of distinct graph isomorphism classes in this orbit is
\begin{eqnarray}\label{eq:complete_k-partite_LC_orbit_size_up_to_isomorphism}
\left|{\mathcal O}(K_{n_1,\cdots,n_k})/\cong\right|&=&\lfloor\tfrac{k}{2}\rfloor+k+1.
\end{eqnarray}
Each isomorphism class corresponds to a choice of symmetry class in Table~\ref{tab:complete_k-partite_quasst_equivalences} and choice of index set $I\subseteq[k]$, where only the size of $I$ is important.
\end{theorem}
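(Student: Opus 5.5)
The count reduces to identifying the isomorphism classes inside ${\mathcal O}(K_{n_1,\cdots,n_k})$ with pairs (symmetry class, $|I|$), and then counting admissible pairs. By Theorem~\ref{thm:complete_k-partite_LC_orbit_size} and its derivation, every graph in the orbit falls into exactly one of Cases 1, 2($j$), 3($j$) of Table~\ref{tab:complete_k-partite_quasst_equivalences}. The number $|I|$ of star-spoke components is determined by the graph itself, and so is the type of $Q_0$ (complete versus star) and of the distinguished $Q_j$.

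\textbf{Step 1 (surjectivity and sufficiency).} Lemma~\ref{thm:complete_k-partite_orbit_isomorphic_graphs} gives that two graphs in the same case with $|I|=|I'|$ are isomorphic. Cases 2($j$) for different $j$ are treated as one class, since the lemma's proof allows $\alpha(j)=j'$. So the number of isomorphism classes is at most the number of admissible (case, $|I|$) pairs. Every admissible size is realised, since for any admissible size we can choose a subset $I$ of that size and apply the transformations of the table.

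\textbf{Step 2 (distinctness).} This is the main obstacle. I would argue that an isomorphism of graphs induces an isomorphism of QASSTs, as noted at the start of this appendix: it maps strong splits to strong splits and quotient graphs to quotient graphs. It therefore preserves the central node $Q_0$ (the unique node of degree $k\ge3$ in the SST) together with its type. It also preserves the type of each ring component $Q_i$ (c, sc, or ss). One subtlety is whether a component can be recognised as sc versus ss. Since $n_i\ge2$, a star-spoke $Q_i$ has $n_i+1\ge3$ vertices, so the position of the split-node (center or spoke) is an isomorphism invariant of the pair (quotient graph, marked split-node). Hence $Q_0$'s type, the type of the component $Q_0$ points to, and the number of ss components are all invariant. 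Consequently, different pairs give non-isomorphic graphs.

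\textbf{Step 3 (counting).} In Case 1, $|I|$ ranges over the even integers in $[0,k]$, giving $\lfloor k/2\rfloor+1$ possibilities. In Case 2, $|I|$ ranges over the even integers in $[0,k-1]$, giving $\lfloor (k-1)/2\rfloor+1$. In Case 3, $|I|$ ranges over the odd integers in $[1,k-1]$, giving $\lceil (k-1)/2\rceil$. Cases 2 and 3 together therefore contribute exactly $k$, one for each integer in $[0,k-1]$. The total is $\lfloor k/2\rfloor+1+k$, matching Equation~\eqref{eq:complete_k-partite_LC_orbit_size_up_to_isomorphism}.
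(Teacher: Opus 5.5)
Your proposal is correct and follows the paper's route: Lemma~\ref{thm:complete_k-partite_orbit_isomorphic_graphs} shows that the pair (case, $|I|$) determines the isomorphism class, and counting the admissible pairs gives $\lfloor k/2\rfloor+1+k$; your Step~2 spells out the distinctness direction, which the paper only invokes through its remark that graph isomorphisms induce QASST isomorphisms. One small fix in Step~2: $Q_0$ is not in general the unique SST node of degree $k$, since each ring node has SST degree $n_i+1$ (for example, all four internal nodes have degree $3$ when $k=3$, $n_i=2$), so characterise it instead as the only internal node with no leaf neighbours, i.e.\ the only node of degree at least $2$ in the QASST.
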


\begin{theorem}\label{thm:clique-star_LC_orbit_up_to_isomorphism}
Consider the LC orbit of the clique-star $CS^r_{n_1,\cdots,n_k}$, where $n_1=\cdots=n_k\geq2$ and $k\geq3$. The number of distinct graph isomorphism classes in this orbit is
\begin{eqnarray}\label{eq:clique-star_LC_orbit_up_to_isomorphism}
\left|{\mathcal O}(CS^r_{n_1,\cdots,n_k})/\cong\right|&=&\lceil\tfrac{k}{2}\rceil+k.
\end{eqnarray}
Each isomorphism class corresponds to a choice of symmetry class in Table~\ref{tab:clique-star_quasst_equivalences} and choice of index set $I\subseteq[k]$, where only the size of $I$ is important.
\end{theorem}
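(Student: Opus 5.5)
The plan is to count isomorphism classes by combining two facts. Lemma~\ref{thm:clique-star_orbit_isomorphic_graphs} says two graphs in the orbit are isomorphic when they lie in the same symmetry class of Table~\ref{tab:clique-star_quasst_equivalences} and have $|I|=|I'|$. For the converse, I will use the observation at the start of this appendix: an isomorphism of graphs induces an isomorphism of QASST decompositions.

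By Theorem~\ref{thm:clique-star_LC_orbit_size}, every graph in ${\mathcal O}(CS^r_{n_1,\cdots,n_k})$ lies in one of Cases 1, 2($j$), 3($j$). So it suffices to show that isomorphism classes correspond exactly to the pairs (case type, $|I|$), and then to count those pairs.

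For the converse direction, let $A:G\to G'$ be an isomorphism. It maps strong splits to strong splits and hence induces an isomorphism of strong split trees. This tree isomorphism fixes the central node $Q_0$, which is the unique internal node of degree $k\geq3$. It also carries each quotient graph $Q_i$ to some $Q'_{\sigma(i)}$ isomorphically, with split-nodes sent to split-nodes.

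Since an isomorphism preserves the position of the split-node, the type of each quotient graph is preserved:
\begin{itemize}
\item c is preserved as c,
\item sc is preserved as sc,
\item ss is preserved as ss.
\end{itemize}
The type of $Q_0$ (complete versus star) is also preserved. For $k\geq3$, a complete graph and a star on $k$ vertices are not isomorphic, since $K_3\neq S_2$ as graphs. Hence $|I|=|I'|$.

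It remains to see that Case 2($j$) and Case 3($j$) are preserved. These cases are distinguished by whether the quotient graph that $Q_0$ points to is sc or c. The pointed-to index is sent to the pointed-to index, because the star centre of $Q_0$ maps to the star centre. Therefore the two cases cannot be mixed.

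With this correspondence, the count is a parity exercise, and this is the step that needs care. Case 1 requires $|I|$ odd in $\{0,\dots,k\}$, which gives $\lceil k/2\rceil$ choices. Case 2 requires $|I^{\setminus j}|$ odd in $\{0,\dots,k-1\}$, which gives $\lceil (k-1)/2\rceil$ choices. Case 3 requires $|I^{\setminus j}|$ even in $\{0,\dots,k-1\}$, which gives $\lfloor (k-1)/2\rfloor+1$ choices. The choice of $j$ is irrelevant because all $n_i$ are equal. Each such size is realised: since $k-1\geq2$ and every subset size is allowed by the table, each size up to the relevant bound occurs.

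Adding, Cases 2 and 3 together cover all integers $0,\dots,k-1$, so they contribute exactly $k$. The total is therefore $\lceil k/2\rceil+k$, as claimed.

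The main obstacle is the converse: showing that non-matching pairs really give non-isomorphic graphs. This requires the uniqueness of the split decomposition (Cunningham) to justify that a graph isomorphism transports the QASST together with the quotient-graph types. I would invoke the remark at the start of Section~\ref{app:complte_k-partite_ismorphic_LC_equivalent_graphs} for this. As a sanity check, within Case 1 the edge count formula of Equation~\ref{eq:complete_k-partite_QASST_edge_formula} differs for different $|I|$.
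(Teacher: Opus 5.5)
Your proof is correct and follows the paper's route: Lemma~\ref{thm:clique-star_orbit_isomorphic_graphs} gives the forward direction, and the remark that graph isomorphisms transport the QASST gives the converse (you spell this converse out more carefully than the paper does), followed by the same parity count over $\{0,\dots,k\}$ and $\{0,\dots,k-1\}$. One small fix: in the strong split tree each $Q_i$ has degree $n_i+1$, which equals $k$ when, e.g.,\ $n_i=2$ and $k=3$, so instead identify $Q_0$ as the unique internal node with no leaf neighbours (equivalently, the unique node of degree $k$ in the QASST, where every $Q_i$ has degree $1$).
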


\begin{figure}[t]
\centering
\includegraphics[width=\linewidth,page=14]{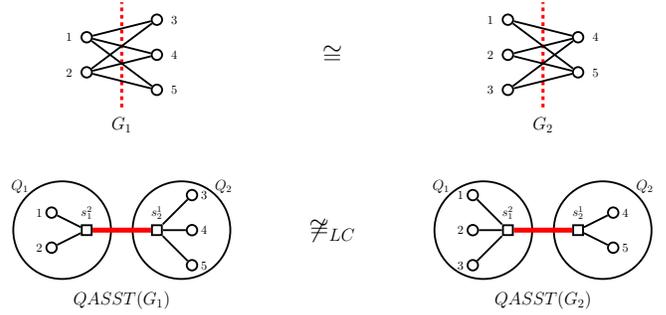}
\caption{
Two isomorphic graphs which are not locally equivalent as labeled graphs (i.e.~these graphs are not contained in the same LC orbit), a fact made clear by comparing the QASST decompositions.
}
\label{fig:example_of_non_LC_equivalent_isomorphic_graphs}
\end{figure}

We end this subsection with a final comment about the relationship between isomorphic graphs and LC orbits.
In general, two isomorphic graphs defined on the same vertex set need not be in the same LC orbit (Figure~\ref{fig:example_of_non_LC_equivalent_isomorphic_graphs}).
However, if $G_1\cong G_2$ are two isomorphic graphs, then the graph-labeled multi-graphs defined by their LC orbits (in the sense of Figure~\ref{fig:example_K4_orbit}) will also be isomorphic, but they may or may not be identical.
If we only consider graphs up to isomorphism (i.e.~if we work with unlabeled graphs rather than labeled graphs), then there is no distinction between these multi-graphs representing the LC orbit.

\end{document}